\DeclareMathAlphabet{\mathpzc}{OT1}{pzc}{m}{it}
\newcommand{\EO}[1]{\noindent{\textcolor{black}{#1}}}
\newcommand{\GC}[1]{\noindent{\textcolor{black}{#1}}}
\newcommand{\AAF}[1]{\noindent{\textcolor{black}{#1}}}
\newcommand{\norm}[1]{{\left\vert\kern-0.25ex\left\vert\kern-0.25ex\left\vert #1 
\right\vert\kern-0.25ex\right\vert\kern-0.25ex\right\vert}}
\newcommand{\TheTitle}{
Bilinear optimal control for the Stokes--Brinkman equations: a priori and a posteriori error analyses}
\newcommand{\ShortTitle}{The bilinear optimal control of the Stokes--Brinkman equations}
\newcommand{\TheAuthors}{A. Allendes, G. Campa\~na and E.~Ot\'arola}
\headers{\ShortTitle}{\TheAuthors}
\title{{\TheTitle}\thanks{AA is partially supported by ANID through FONDECYT project \GC{1251531} and \GC{by USM through USM project 2025 PI\_LII\_25\_12}. GC is partially supported by \EO{USM through USM project 2025 PI\_LII\_25\_12.}
EO is partially supported by ANID through FONDECYT project 1220156 and \EO{by USM through USM project 2025 PI\_LII\_25\_12.}}}
\author{Alejandro Allendes\thanks{Departamento de Matem\'atica, Universidad T\'ecnica Federico Santa Mar\'ia, Valpara\'iso, Chile.
(\email{alejandro.allendes@usm.cl}, \url{http://aallendes.mat.utfsm.cl/}).}
\and 
Gilberto Campa\~na\thanks{Departamento de Ciencias, Universidad T\'ecnica Federico Santa Mar\'ia, Valpara\'iso, Chile.
(\email{gilberto.campana@usm.cl}.}
\and
Enrique Ot\'arola\thanks{Departamento de Matem\'atica, Universidad T\'ecnica Federico Santa Mar\'ia, Valpara\'iso, Chile.
(\email{enrique.otarola@usm.cl}, \url{http://eotarola.mat.utfsm.cl/}).}   
}
\date{Draft version of \today.}
\begin{document}

\maketitle

\begin{abstract}
We analyze a bilinear optimal control problem for the Stokes--Brinkman equations: the control variable enters the state equations as a coefficient. In two- and three-dimensional Lipschitz domains, we perform a complete continuous analysis that includes the existence of solutions and first- and second-order optimality conditions. We also develop two finite element methods that differ fundamentally in whether the admissible control set is discretized or not. For each of the proposed methods, we perform a convergence analysis and derive a priori error estimates; the latter under the assumption that the domain is convex. Finally, assuming that the domain is Lipschitz, we develop an a posteriori error estimator for each discretization scheme, obtain a global reliability bound, and investigate local efficiency estimates.
\end{abstract}

\begin{keywords}
bilinear optimal control, Stokes--Brinkman equations, first- and second-order optimality conditions, finite elements, convergence, a priori error bounds, a posteriori error bounds.
\end{keywords}

\begin{AMS}
35Q35,  
49K20,  
49M25,  
65K10,  
65N15,  
65N30,  
65N50,  
76D07.  
\end{AMS}


\section{Introduction}
\label{sec:intro}
In this paper we deal with the \emph{analysis} and \emph{discretization} of a bilinear optimal control problem for the Stokes--Brinkman equations: the control variable enters the state equations as a coefficient and not as a source term. To be precise, we let $d\in\{2,3\}$ and $\Omega \subset \mathbb{R}^d$ be an open and bounded domain with Lipschitz boundary $\partial\Omega$. Given a regularization parameter $\alpha>0$ and a desired velocity field $\mathbf{y}_\Omega\in \mathbf{L}^2(\Omega)$, we introduce the cost functional
\begin{equation}\label{eq:functional}
J(\mathbf{y},u) :=
\dfrac{1}{2}\|\mathbf{y}-\mathbf{y}_\Omega\|_{\mathbf{L}^2(\Omega)}^2
+
\dfrac{\alpha}{2}\|u\|^2_{L^2(\Omega)}.
\end{equation}
\EO{Given the body force $\mathbf{f} \in \mathbf{H}^{-1}(\Omega)$ acting on the fluid,} the bilinear optimal control problem is as follows: Find $\min J(\mathbf{y},u)$ subject to the \emph{Stokes--Brinkman} equations
\begin{align}\label{eq:state}
-\Delta \mathbf{y} + u\mathbf{y}+\nabla \mathsf{p} =  \mathbf{f}
\text{ in } \Omega,
\quad
\text{div}~\mathbf{y}  =  0
\text{ in } \Omega,
\quad
\mathbf{y}  =  0
\text{ on } \partial\Omega,
\end{align}
where $\mathbf{y}$ is the velocity field \EO{of the fluid}, $\mathsf{p}$ is the pressure of the fluid, and $u$ corresponds to the control variable, which is chosen so that
\begin{equation}\label{eq:set_adm}
u\in \mathbb{U}_{ad},
\qquad
\GC{\mathbb{U}_{ad}:=
\{
v \in L^2(\Omega): \mathsf{a}\leq v\leq \mathsf{b} \text{ a.e. in } \Omega
\}}.
\end{equation}
The control bounds $\mathsf{a}$ and $\mathsf{b}$ both belong to $\mathbb{R}$ and are chosen so that \GC{$0\leq\mathsf{a} < \mathsf{b}<\infty$}.

When the state equations are replaced by the problem $-\Delta u + q u = f$ in $\Omega$ and $u = 0$ on $\partial \Omega$, there are several works that provide error estimates for finite element discretizations of \eqref{eq:functional}--\eqref{eq:set_adm}. To the best of our knowledge, \cite{MR2536007} is the first paper that provides an analysis. In this paper, the authors investigate discretization methods and derive error bounds \cite[Corollaries 5.6 and 5.10]{MR2536007}. These results were later extended to mixed and stabilized methods in \cite{MR3103238} and \cite{MR3693332}, respectively. Regarding the analysis of \emph{a posteriori} error estimates, we refer the reader to \cite{MR2680928} and \cite{MR4450052}. We would also like to refer to the paper \cite{MR4081184}, in which the author studies an optimal control problem with a bilinear term in the boundary condition of the state equation. Finally, we mention the recent advances in the semilinear scenario presented in \EO{\cite{MR4679130,MR4858133,OSQ:25}.}

As far as we know, this is the \emph{first} paper that investigates optimality conditions and finite element schemes for the control problem \EO{\eqref{eq:functional}--\eqref{eq:set_adm}}. The analysis is not trivial and involves a number of difficulties. To overcome them, we had to provide several results. In the following, we list what we consider to be the main contributions of our work:

$\bullet$ \emph{Existence results and optimality conditions:} We prove that the optimal control problem admits at least one optimal solution and derive first- and necessary and sufficient second-order optimality conditions with a minimal gap.

$\bullet$ \emph{Finite element schemes:} We propose two different schemes: a semidiscrete method in which the control set is not discretized, and a fully discrete method in which such a set is discretized with piecewise constant functions. For each scheme, we prove convergence results and a priori error bounds: $\mathcal{O}(h)$ for the fully discrete scheme and $\mathcal{O}(h^2)$ for the semidiscrete one. We also study a posteriori error bounds.


\section{Notation and preliminary remarks}
\label{sec:notation}
Let us establish the notation and introduce the framework within which we will work on our manuscript.


\subsection{Notation}
Throughout the text, we use the classical notation for Lebesgue and Sobolev spaces and denote the vector-valued counterparts of these spaces with bold letters. In particular, we set
$
\mathbf{V}(\Omega):=\{\mathbf{v}\in\mathbf{H}_0^1(\Omega):\text{div }\mathbf{v}=0\}.
$

Let $\mathcal{X}$ and $\mathcal{Y}$ be Banach function spaces. We write $\mathcal{X} \hookrightarrow \mathcal{Y}$ to indicate that $\mathcal{X}$ is continuously embedded in $\mathcal{Y}$. We denote by $\mathcal{W}'$ and $\|\cdot\|_{\mathcal{W}}$ the dual space and the norm of $\mathcal{W}$, respectively. Given $\mathtt{p}\in(1, \infty)$, we denote by $\mathtt{p}\prime$ its H\"older conjugate, which is such that $1/\mathtt{p} + 1/\mathtt{p}\prime = 1$. The relation $a\lesssim b$ states that $a\leq Cb$ with a positive constant $C$ that does not depend on $a$, $b$, or the discretization parameters. The value of $C$ might change at each occurrence.

\section{The Stokes--Brinkman problem}
\label{sec:Stokes-Brinkman}
We now present some of the most important results related to the well-posedness and regularity of solutions of the Stokes--Brinkman equations \eqref{eq:state}. \EO{For this purpose, we follow \cite{MR4858133} and introduce the set}
\begin{equation}\label{eq:set_A}
\GC{
\mathcal{A}_0:=\{v\in L^2(\Omega): v \geq 0~\mathrm{a.e.}~ \mathrm{in}~\Omega\}.
}
\end{equation}
Let $\boldsymbol{\mathfrak{f}}$ be in $\mathbf{H}^{-1}(\Omega)$, and let $\mathfrak{u}$ be an arbitrary function in  \GC{$\mathcal{A}_{0}$}. Under these assumptions, we introduce the following weak formulation: Find $(\boldsymbol{\mathsf{y}},\mathfrak{p})\in \mathbf{H}_0^1(\Omega)\times L_0^2(\Omega)$ such that
\begin{equation}
\label{eq:brinkman_problem}
(\nabla \boldsymbol{\mathsf{y}},\nabla \mathbf{v})_{\mathbf{L}^2(\Omega)}
+
\EO{( \mathfrak{u}, \boldsymbol{\mathsf{y}} \cdot \mathbf{v})_{L^2(\Omega)}}
-
(\mathfrak{p},\text{div }\mathbf{v})_{L^2(\Omega)}
=
\langle \boldsymbol{\mathfrak{f}},\mathbf{v} \rangle,
\quad
(\mathsf{q},\text{div }\boldsymbol{\mathsf{y}})_{L^2(\Omega)}
=
0
\end{equation}
for all $(\mathbf{v},\mathsf{q}) \in \mathbf{H}_0^1(\Omega) \times L_0^2(\Omega)$. As a first ingredient to analyze \eqref{eq:brinkman_problem}, we recall that the divergence operator is surjective from $\mathbf{H}_0^1(\Omega)$ to $L_0^2(\Omega)$. This implies the existence of a constant $\beta>0$ such that \cite[Chapter I, Section 5.1]{MR851383}, \cite[Corollary B.71]{Guermond-Ern}
\begin{equation}\label{eq:infsup}
\sup_{\mathbf{v}\in \mathbf{H}_0^1(\Omega)}\dfrac{(\mathsf{q},\text{div }\mathbf{v})_{L^2(\Omega)}}{\|\nabla \mathbf{v}\|_{\mathbf{L}^2(\Omega)}}\geq \beta \|\mathsf{q}\|_{L^2(\Omega)}\quad\forall \mathsf{q}\in L_0^2(\Omega).
\end{equation}
\EO{As a second ingredient, we note that if $\mathfrak{u}\in\mathcal{A}_0$, then $(\mathbf{w},\mathbf{v}) \mapsto (\nabla \mathbf{w},\nabla \mathbf{v})_{\mathbf{L}^2(\Omega)}
+
( \mathfrak{u},\mathbf{w} \cdot \mathbf{v})_{L^2(\Omega)}$ defines a bilinear, continuous, and coercive form in $\mathbf{H}_0^1(\Omega) \times \mathbf{H}_0^1(\Omega)$. Thus, we can apply the standard inf-sup theory for saddle point problems to obtain the well-posedness of problem \eqref{eq:brinkman_problem}. Moreover, we have the estimates}
\begin{equation}
\label{eq:well_posedness_brinkman}
\EO{\|\nabla \boldsymbol{\mathsf{y}}\|_{\mathbf{L}^2(\Omega)} \leq \| \boldsymbol{\mathfrak{f}}\|_{\mathbf{H}^{-1}(\Omega)}},
\qquad
\EO{\|\mathfrak{p}\|_{L^2(\Omega)} \leq \beta^{-1} \left[ 2 + C_{4 \hookrightarrow 2}^2 \| \mathfrak{u} \|_{L^2(\Omega)} \right] \| \boldsymbol{\mathfrak{f}}\|_{\mathbf{H}^{-1}(\Omega)},}
\end{equation}
\EO{where $C_{4 \hookrightarrow 2}$ denotes the best constant in the Sobolev embedding $\mathbf{H}_0^1(\Omega) \hookrightarrow \mathbf{L}^4(\Omega)$.}

\subsection{\EO{Regularity results}}
The next result shows that it is possible to obtain an $\mathbf{L}^{\infty}(\Omega)$-regularity result for the velocity component if we assume better regularity properties for the datum $\boldsymbol{\mathfrak{f}}$.

\begin{theorem}[Sobolev and H\"older regularity]\label{theorem_regularity_Linfty}
\EO{Let $\boldsymbol{\mathfrak{f}}$ and $\mathfrak{u}$ be in $L^2(\Omega)$. If the domain $\Omega$ is Lipschitz and $(\boldsymbol{\mathsf{y}},\mathfrak{p})$ is a solution of problem \eqref{eq:brinkman_problem}, then there exists $\kappa$ such that $(\boldsymbol{\mathsf{y}},\mathfrak{p})$ belongs to $\mathbf{H}_0^{1}(\Omega) \cap \mathbf{W}^{1,\kappa}(\Omega) \times L_0^2(\Omega) \cap L^{\kappa}(\Omega)$, where $\kappa > 4$ if $d=2$ and $\kappa >3$ if $d=3$. Moreover,
\begin{equation}\label{eq:estimate_extra}
\| \nabla \boldsymbol{\mathsf{y}}\|_{\mathbf{L}^{\kappa}(\Omega)}
+
\|\mathfrak{p}\|_{L^{\kappa}(\Omega)}
\lesssim
\| \boldsymbol{\mathfrak{f}}\|_{\mathbf{W}^{-1,\kappa}(\Omega)}
\lesssim
\| \boldsymbol{\mathfrak{f}}\|_{\mathbf{L}^{2}(\Omega)},
\end{equation}
where the hidden constants depend on $\| u \|_{L^{2}(\Omega)}$. In particular, $\boldsymbol{\mathsf{y}} \in \mathbf{C}^{0,\varsigma}(\bar \Omega)$, where $\varsigma$ is such that $0 < \varsigma \leq 1-d/\kappa$.}
\end{theorem}
\begin{proof}
\EO{We present a proof in three dimensions; the two-dimensional case is simpler. We begin the analysis by writing the momentum equation of \eqref{eq:brinkman_problem} as $-\Delta \boldsymbol{\mathsf{y}} +\nabla \mathfrak{p} = \boldsymbol{\mathfrak{f}} - \mathfrak{u} \boldsymbol{\mathsf{y}} =: \boldsymbol{\mathfrak{g}}$ in $\Omega$. To derive a first regularity result, we use the Sobolev embeddings
\[
 \mathbf{H}_0^1(\Omega) \hookrightarrow \mathbf{L}^6(\Omega),
 \qquad
 \mathbf{W}_0^{1,3/2}(\Omega) \hookrightarrow \mathbf{L}^3(\Omega),
 \qquad
 (d=3)
\]
(see \cite[Theorem 4.12, Part I, Case C]{MR2424078}) to obtain that $\boldsymbol{\mathfrak{g}}\in \mathbf{W}^{-1,3}(\Omega)$. We also have
\begin{equation}
 \| \boldsymbol{\mathfrak{g}} \|_{\mathbf{W}^{-1,3}(\Omega)} \leq \| \boldsymbol{\mathfrak{f}} \|_{\mathbf{W}^{-1,3}(\Omega)} + C \| \mathfrak{u} \|_{L^2(\Omega)} \| \nabla \boldsymbol{\mathsf{y}}\|_{\mathbf{L}^2(\Omega)}
 \lesssim (1 + \| \mathfrak{u} \|_{L^2(\Omega)}) \| \boldsymbol{\mathfrak{f}} \|_{\mathbf{W}^{-1,3}(\Omega)}.
\end{equation}
Since $\Omega$ is Lipschitz, an application of \cite[(1.52)]{MR2987056} shows that $\boldsymbol{\mathsf{y}} \in \mathbf{W}^{1,3}(\Omega)$. Based on a Sobolev embedding \cite[Theorem 4.12, Part I, Case B]{MR2424078}, this regularity result for $\boldsymbol{\mathsf{y}}$ allows us to conclude that $\boldsymbol{\mathsf{y}} \in \mathbf{L}^{\sigma}(\Omega)$ for every $\sigma < \infty$. This, in turns, implies the existence of some $\kappa > 3$, so that $\boldsymbol{\mathfrak{g}}\in \mathbf{W}^{-1,\kappa}(\Omega)$ together with the bound
\begin{multline*}
 \| \boldsymbol{\mathfrak{g}} \|_{\mathbf{W}^{-1,\kappa}(\Omega)}
 \leq \| \boldsymbol{\mathfrak{f}} \|_{\mathbf{W}^{-1,\kappa}(\Omega)} + C \| \mathfrak{u} \|_{L^2(\Omega)} \| \nabla \boldsymbol{\mathsf{y}}\|_{\mathbf{L}^3(\Omega)}
 \lesssim
 \| \boldsymbol{\mathfrak{f}} \|_{\mathbf{W}^{-1,\kappa}(\Omega)}
 \\
 + \| \mathfrak{u} \|_{L^2(\Omega)} (1 + \| \mathfrak{u} \|_{L^2(\Omega)}) \| \boldsymbol{\mathfrak{f}} \|_{\mathbf{W}^{-1,3}(\Omega)}
 \lesssim \left[ 1 + \| \mathfrak{u} \|_{L^2(\Omega)} (1 + \| \mathfrak{u} \|_{L^2(\Omega)}) \right] \| \boldsymbol{\mathfrak{f}} \|_{\mathbf{W}^{-1,\kappa}(\Omega)}.
\end{multline*}
To obtain the last bound, we have used that $\mathbf{W}^{-1,\kappa}(\Omega) \hookrightarrow \mathbf{W}^{-1,3}(\Omega)$. We can therefore apply \cite[(1.52)]{MR2987056} again to conclude that $\boldsymbol{\mathsf{y}} \in \mathbf{W}^{1,\kappa}(\Omega)$ for some $\kappa > 3$ and that $\| \nabla \boldsymbol{\mathsf{y}}\|_{\mathbf{L}^{\kappa}(\Omega)} \lesssim \| \boldsymbol{\mathfrak{g}} \|_{\mathbf{W}^{-1,\kappa}(\Omega)}$; $\kappa$ may be further restricted. The first stament thus follows from \cite[Corollary 1.2.1]{MR2987056}. Finally, H\"older's regularity result follows from \cite[Theorem 4.12, Part II]{MR2424078}. This concludes the proof.}
\end{proof}

We also present the following regularity result for \EO{a pair $(\boldsymbol{\mathsf{y}},\mathfrak{p})$ that solves \eqref{eq:brinkman_problem} in the case that the domain $\Omega$ is convex.}

\begin{theorem}[\GC{$\mathbf{H}^2(\Omega)\times H^1(\Omega)$-regularity}]
\label{eq:theorem_regularity}
\EO{Let $\boldsymbol{\mathfrak{f}}$ and $\mathfrak{u}$ be in $L^2(\Omega)$. If the domain $\Omega$ is a convex polytope and $(\boldsymbol{\mathsf{y}},\mathfrak{p})$ is a solution of problem \eqref{eq:brinkman_problem}, then $(\boldsymbol{\mathsf{y}},\mathfrak{p})$ belongs to $\mathbf{H}^2(\Omega)\cap\mathbf{H}_0^1(\Omega) \times H^1(\Omega)\cap L_0^2(\Omega)$. In addition, we have the bound}
\begin{align}\label{eq:regularity}
\|\boldsymbol{\mathsf{y}}\|_{\mathbf{H}^2(\Omega)}+\|\mathfrak{p}\|_{H^1(\Omega)}\lesssim \|\boldsymbol{\mathfrak{f}}\|_{\mathbf{L}^2(\Omega)},
\end{align}
\EO{with a hidden constant that is independent of $(\boldsymbol{\mathsf{y}},\mathfrak{p})$ but depends on $\| \mathfrak{u} \|_{L^2(\Omega)}$.}
\end{theorem}
\begin{proof}
\EO{We begin the proof by noting that the results of Theorem \ref{theorem_regularity_Linfty} guarantee that $\boldsymbol{\mathsf{y}} \in \mathbf{C}(\bar \Omega)$ together with $\|\boldsymbol{\mathsf{y}}\|_{\mathbf{C}(\bar \Omega)} \leq C \| \boldsymbol{\mathfrak{f}} \|_{\mathbf{L}^2(\Omega)}$, where $C$ depends on $\| \mathfrak{u} \|_{L^{2}(\Omega)}$. It follows directly from this that $\boldsymbol{\mathfrak{g}} = \boldsymbol{\mathfrak{f}} - \mathfrak{u} \boldsymbol{\mathsf{y}} \in \mathbf{L}^2(\Omega)$ and that
\[
 \| \boldsymbol{\mathfrak{g}} \|_{\mathbf{L^2(\Omega)}} \leq \| \boldsymbol{\mathfrak{f}} \|_{\mathbf{L^2(\Omega)}} + \| \mathfrak{u} \|_{L^2(\Omega)} \|\boldsymbol{\mathsf{y}}\|_{\mathbf{C}(\bar \Omega)} \leq (1 + C  \| \mathfrak{u} \|_{L^2(\Omega)} )\| \boldsymbol{\mathfrak{f}} \|_{\mathbf{L^2(\Omega)}}.
\]
Since $\Omega$ is convex, the desired result} follows directly from an application of \cite{MR0977489,MR1301452} for $d=3$ and \cite[Theorem 2]{MR0404849} for $d=2$.
\end{proof}


\subsection{\EO{Differentiability properties of the map $\mathfrak{u} \to (\boldsymbol{\mathsf{y}},\mathfrak{p})$}}
\EO{We study an instrumental result that establishes differentiability properties of the map $\mathfrak{u} \mapsto (\boldsymbol{\mathsf{y}},\mathfrak{p})$, where $(\boldsymbol{\mathsf{y}},\mathfrak{p})$ corresponds to the solution of \eqref{eq:brinkman_problem}. For this purpose, we define the space
\begin{equation*}
D(\Omega):=\{(\mathbf{v},\mathsf{q}) \in \mathbf{V}(\Omega)\times L_0^2(\Omega): -\Delta \mathbf{v}+\nabla \mathsf{q}\in\mathbf{L}^2(\Omega)\}
\end{equation*}
and the norm $\| \cdot \|_{D(\Omega)}: D(\Omega) \rightarrow \mathbb{R}$ as $\|(\mathbf{v},\mathsf{q})\|_{D(\Omega)}:=\|\nabla\mathbf{v}\|_{\mathbf{L}^2(\Omega)}+\|\mathsf{q}\|_{L^2(\Omega)}+\|-\Delta \mathbf{v}+\nabla\mathsf{q}\|_{\mathbf{L}^2(\Omega)}$. The space $D(\Omega)$ equipped with $\| \cdot \|_{D(\Omega)}$ is a Banach space.}

\begin{lemma}[embedding]
\EO{We have that $D(\Omega) \hookrightarrow \mathbf{V}(\Omega) \cap \mathbf{W}^{1,\kappa}(\Omega) \times L_0^2(\Omega) \cap L^{\kappa}(\Omega)$, for some $\kappa > 4$ when $d=2$ and some $\kappa >3$ when $d=3$.}
\end{lemma}
\begin{proof}
\EO{Let $(\mathbf{v},\mathsf{q}) \in D(\Omega)$. We note that the pair $(\mathbf{v},\mathsf{q})$ can be regarded as the unique weak solution in $\mathbf{H}_0^{1}(\Omega) \times L_0^2(\Omega)$ of the following Stokes problem:
\[
 (\nabla \mathbf{v}, \nabla \mathbf{w})_{\mathbf{L}^2(\Omega)} -  (\mathsf{q}, \mathrm{div} \mathbf{w})_{L^2(\Omega)} = (\mathbf{g}, \mathbf{w})_{\mathbf{L}^2(\Omega)},
 \quad
 (\mathsf{r},\mathrm{div} \mathbf{v})_{L^2(\Omega)} = 0
\]
for all $\mathbf{w} \in \mathbf{H}_0^1(\Omega)$ and $\mathsf{r} \in L_0^2(\Omega)$, respectively. Here, $\mathbf{g} \in \mathbf{L}^2(\Omega)$. Since $\Omega$ is Lipschitz, an application of \cite[Corollary 1.2.1]{MR2987056} shows the existence of $\kappa > 4$ when $d = 2$ and $\kappa > 3$ when $d = 3$ such that $(\mathbf{v},\mathsf{q})\in \mathbf{V}(\Omega) \cap \mathbf{W}^{1,\kappa}(\Omega) \times L_0^2(\Omega) \cap L^{\kappa}(\Omega)$  together with
\[
 \| \nabla\mathbf{v} \|_{\mathbf{L}^{\kappa}(\Omega)}
 +
 \|\mathsf{q}\|_{L^{\kappa}(\Omega)}
 \leq C
 \| \mathbf{g} \|_{\mathbf{L}^{2}(\Omega)}
 =
 C \|-\Delta \mathbf{v}+\nabla\mathsf{q}\|_{\mathbf{L}^2(\Omega)}
 \leq
 C \|(\mathbf{v},\mathsf{q})\|_{D(\Omega)}.
\]
We have thus proved the desired embedding. This concludes the proof.}
\end{proof}

\begin{theorem}[differentiability properties of $\mathfrak{u} \to (\boldsymbol{\mathsf{y}},\mathfrak{p})$]
\label{theorem:differentiablity_stokes_brinkman}
\EO{Let $\mathbf{f} \in \mathbf{L}^2(\Omega)$. Then, there exists an open set $\mathcal{A}$ in $L^2(\Omega)$ such that $\mathcal{A}_0\subset\mathcal{A}$ and for every $\mathfrak{u} \in\mathcal{A}$ the problem \eqref{eq:brinkman_problem} has a unique solution $(\boldsymbol{\mathsf{y}},\mathfrak{p}) \in \mathbf{H}_0^{1}(\Omega) \cap \mathbf{W}^{1,\kappa}(\Omega) \times L_0^2(\Omega) \cap L^{\kappa}(\Omega)$, where $\kappa > 4$ if $d=2$ and $\kappa >3$ if $d=3$. Moreover, there exists a map $\mathcal{S}:\mathcal{A} \to D(\Omega)$ of class $C^2$ so that for every $\mathfrak{u}\in\mathcal{A}$ we have the following properties:}
\begin{itemize}
\item[(i)] \EO{$\mathcal{S}\GC{(\mathfrak{u})} = (\boldsymbol{\mathsf{y}},\mathfrak{p}) \in \EO{\mathbf{H}_0^{1}(\Omega) \cap \mathbf{W}^{1,\kappa}(\Omega) \times L_0^2(\Omega) \cap L^{\kappa}(\Omega)}$, where $(\boldsymbol{\mathsf{y}},\mathfrak{p})$ is the unique solution to \eqref{eq:brinkman_problem},}
\item[(ii)] \EO{\GC{for every $u\in\mathcal{A}$, and for every $v\in L^{2}(\Omega)$, the pair} $(\boldsymbol{\varphi},\zeta) = \mathcal{S}'(u)v \in \EO{\mathbf{H}_0^{1}(\Omega) \cap \mathbf{W}^{1,\kappa}(\Omega) \times L_0^2(\Omega) \cap L^{\kappa}(\Omega)}$ is the unique solution to the problem}
\begin{equation}\label{eq:problem_S_first}
\begin{array}{rl}
(\nabla \boldsymbol{\varphi},\nabla \mathbf{v})_{\mathbf{L}^2(\Omega)}+( u\boldsymbol{\varphi}, \mathbf{v})_{\mathbf{L}^2(\Omega)}-(\zeta,\mathrm{div~} \mathbf{v})_{L^2(\Omega)}
=
&
\hspace{-2mm}
-(v\mathbf{y},\mathbf{v})_{\mathbf{L}^2(\Omega)},
\\
(\mathsf{s},\mathrm{div~} \boldsymbol{\varphi})_{L^2(\Omega)}
=
&
\hspace{-2mm}
0
\end{array}
\end{equation}
for all $(\mathbf{v},\mathsf{s}) \in \mathbf{H}_0^1(\Omega) \times L_0^2(\Omega)$.
\item[(iii)] \EO{for every $v_1,v_2 \in L^{2}(\Omega)$, \EO{the pair $(\boldsymbol{\psi},\xi)=\mathcal{S}''(u)(v_1,v_2)$ belongs to $\mathbf{H}_0^{1}(\Omega) \cap \mathbf{W}^{1,\kappa}(\Omega) \times L_0^2(\Omega) \cap L^{\kappa}(\Omega)$ and is the unique solution to the problem}}
\begin{multline}\label{eq:problem_S_second}
(\nabla \boldsymbol{\psi},\nabla \mathbf{v})_{\mathbf{L}^2(\Omega)}+( u\boldsymbol{\psi},\mathbf{v})_{\mathbf{L}^2(\Omega)}-(\xi,\mathrm{div~} \mathbf{v})_{L^2(\Omega)}
\\
=
-(v_2\boldsymbol{\varphi}_{v_1},\mathbf{v})_{\mathbf{L}^2(\Omega)}-(v_1\boldsymbol{\varphi}_{v_2},\mathbf{v})_{\mathbf{L}^2(\Omega)},
\qquad
(\mathsf{s},\mathrm{div~} \boldsymbol{\psi})_{L^2(\Omega)}  = 0
\end{multline}
for all $(\mathbf{v},\mathsf{s}) \in \mathbf{H}_0^1(\Omega) \times L_0^2(\Omega)$. Here, $(\boldsymbol{\varphi}_{v_i},\zeta_{v_i})=\mathcal{S}'(u)v_i$ and $i\in\{1,2\}$.
\end{itemize}
\end{theorem}

\begin{proof}
\GC{Following the arguments presented in the proof of \cite[Theorem 2.5]{MR4858133}, we proceed on the basis of the implicit function theorem \cite[Theorem 7.13-1]{MR3136903}. We will apply such a theorem to the map $\mathcal{F}$ defined as follows:
\begin{align*}
\mathcal{F}: D(\Omega)\times L^2(\Omega)\to \mathbf{L}^{2}(\Omega),\quad \mathcal{F}(\EO{(\mathbf{y},\mathsf{p})},u):=-\Delta \mathbf{y}+u\mathbf{y}+\nabla \mathsf{p}-\mathbf{f}.
\end{align*}
\EO{It follows directly from the definition that $\mathcal{F}$ is well-defined and that $\mathcal{F}$ is of class $C^2$. Now let $\bar{u}\in\mathcal{A}_0$, and let $(\bar{\mathbf{y}},\bar{\mathsf{p}}) \in D(\Omega)$ be such that it solves problem \eqref{eq:brinkman_problem}, where $\mathfrak{u}$ is replaced by $\bar u$. We note that $\mathcal{F}((\bar{\mathbf{y}},\bar{\mathsf{p}}),\bar{u})=\mathbf{0}$ and that the derivative $T := \partial \mathcal{F}((\bar{\mathbf{y}},\bar{\mathsf{p}}),\bar{u}) / (\mathbf{y},\mathsf{p})$ is such that}
\begin{align*}
\dfrac{\partial \mathcal{F}}{\partial(\mathbf{y},\mathsf{p})}((\bar{\mathbf{y}},\bar{\mathsf{p}}),\bar{u}):D(\Omega)\to \mathbf{L}^2(\Omega),\quad \dfrac{\partial \mathcal{F}}{\partial(\mathbf{y},\mathsf{p})}(\bar{\mathbf{y}},\bar{\mathsf{p}},\bar{u})(\boldsymbol{\varphi},\zeta):=-\Delta \boldsymbol{\varphi} + \bar{u}\boldsymbol{\varphi}+ \nabla \zeta.
\end{align*}
\EO{It is clear that $T$ is well-defined and linear. It can also be shown that $T$ is continuous. Since the Stokes--Brinkman problem underlying the definition of $T$ is well-posed (see Section \ref{sec:Stokes-Brinkman}), we can also deduce that $T$ is a bijection. An application of a corollary of the open mapping theorem \cite[Theorem 5.6-2]{MR3136903} shows that $T$ is an isomorphism. Thus we have all the prerequisites to apply the implicit function theorem and deduce the existence of} open neighborhoods $B_{\epsilon_{\bar{u}}}(\bar{u})\subset L^2(\Omega)$ and $B_{\epsilon_{\bar{\mathbf{y}}}}((\bar{\mathbf{y}},\bar{\mathsf{p}}))\subset D(\Omega)$, where $\epsilon_{\bar{u}}>0$ and $\epsilon_{\bar{\mathbf{y}}}>0$, such that for every $u \in B_{\epsilon_{\bar{u}}}(\bar{u})$ the equation
\begin{align}
\mathcal{F}((\mathbf{y},\mathsf{p}),u)=\mathbf{0}
\end{align}
has a unique solution $(\mathbf{y},\mathsf{p})$ in $B_{\epsilon_{\bar{\mathbf{y}}}}((\bar{\mathbf{y}},\bar{\mathsf{p}}))$. Furthermore, $\mathcal{S}: B_{\epsilon_{\bar{u}}}(\bar{u}) \rightarrow B_{\epsilon_{\bar{\mathbf{y}}}}((\bar{\mathbf{y}},\bar{\mathsf{p}}))$ defined as $u \mapsto (\mathbf{y},\mathsf{p})$ is of class $C^2$. \EO{We would now like to extend the definition of $\mathcal{S}$. To  do this, we must prove that for every $u \in B_{\epsilon_{\bar{u}}}(\bar{u})$, problem \eqref{eq:brinkman_problem}, where $\mathfrak{u}$ is replaced by $u$, admits a unique solution. To prove this, we let $(\mathbf{y}_1,\mathsf{p}_1)$ and $(\mathbf{y}_2,\mathsf{p}_2)$ be two solutions and note that $(\mathbf{y}_1 - \mathbf{y}_2,\mathsf{p}_1-\mathsf{p}_2)$ solves
\begin{multline}
 (\nabla (\mathbf{y}_1 - \mathbf{y}_2), \nabla \mathbf{v})_{\mathbf{L}^2(\Omega)}
 +
 (\bar{u}(\mathbf{y}_1 - \mathbf{y}_2),\mathbf{v})_{\mathbf{L}^2(\Omega)}
 \\
 +
 ((u-\bar{u})(\mathbf{y}_1 - \mathbf{y}_2),\mathbf{v})_{\mathbf{L}^2(\Omega)}
 -
 (\mathsf{p}_1-\mathsf{p}_2, \mathrm{div} \mathbf{v})_{L^2(\Omega)}
 = 0
\end{multline}
and $(\mathsf{q}, \mathrm{div} (\mathbf{y}_1 - \mathbf{y}_2))_{L^2(\Omega)} = 0$ for all $\mathbf{v} \in \mathbf{H}_0^1(\Omega)$ and for all $\mathsf{q} \in L_0^2(\Omega)$. If we set $\mathbf{v} = \mathbf{y}_1 - \mathbf{y}_2 \in \mathbf{H}_0^1(\Omega)$, we can conclude that
\[
 \| \nabla (\mathbf{y}_1 - \mathbf{y}_2) \|_{\mathbf{L}^2(\Omega)}^2 \leq C_{4 \hookrightarrow 2}^2 \| u - \bar{u} \|_{L^2(\Omega)}
 \| \nabla (\mathbf{y}_1 - \mathbf{y}_2) \|_{\mathbf{L}^2(\Omega)}^2.
\]
If $\epsilon_{\bar u} < C_{4 \hookrightarrow 2}^{-2}$, we can tus conclude that $\mathbf{y}_1 = \mathbf{y}_2$. The fact that $\mathsf{p}_1 = \mathsf{p}_2$ follows from the inf-sup condition \eqref{eq:infsup}. Uniqueness can thus be guaranteed if $\epsilon_{\bar u} < C_{4 \hookrightarrow 2}^{-2}$. With this result at hand, we can therefore extend the definition of operator $\mathcal{S}$ as follows:
}
\begin{equation}
\label{eq:S}
\EO{\mathcal{S}: \mathcal{A} \rightarrow D(\Omega)},
\quad
\mathcal{A} \subset L^2(\Omega),
\quad
\EO{\mathcal{A}:= \bigcup \{ B_{\epsilon_{\bar{u}}}(\bar{u}): \bar{u} \in \mathcal{A}_0 \}},
\quad
\epsilon_{\bar u} < C_{4 \hookrightarrow 2}^{-2}.
\end{equation}
For $u \in \mathcal{A}$, the equation $\mathcal{F}((\mathbf{y},\mathsf{p}),u)=\mathbf{0}$ has a unique solution $(\mathbf{y},\mathsf{p}) = \mathcal{S}(u)$ in $D(\Omega)$, i.e., $\mathcal{F}(\mathcal{S}(u),u)=\mathbf{0}$, and the map $\mathcal{S}$ defined in \eqref{eq:S} is of class $C^2$.
}

The fact that $(\boldsymbol{\psi},\xi)\in \mathbf{H}_0^1(\Omega)\times L_0^2(\Omega)$ solves \eqref{eq:problem_S_second} follows from the arguments developed in \cite[Theorem 4.24]{MR2583281}. This concludes the proof.

\end{proof}

\section{The optimal control problem}
In this section, we introduce a weak formulation for the optimal control problem \EO{\eqref{eq:functional}--\eqref{eq:set_adm}} and analyze an existence result and first- and second-order optimality conditions.

\subsection{Weak formulation}\label{section:weak_formulation}
\EO{Given $\mathbf{f} \in \mathbf{L}^{2}(\Omega)$, we propose the following weak formulation for the optimal control problem \eqref{eq:functional}--\eqref{eq:set_adm}}: Find
\begin{equation}
\label{eq:min_functional}
\min \{J(\mathbf{y},u):(\mathbf{y},u)\in\mathbf{H}_0^1(\Omega)\times \mathbb{U}_{ad}\}
\end{equation}
subject to the \emph{state equations} 
\begin{equation}\label{eq:brinkamn_problem_state}
(\nabla \mathbf{y},\nabla \mathbf{v})_{\mathbf{L}^2(\Omega)}
+
\EO{( u, \mathbf{y} \cdot \mathbf{v})_{L^2(\Omega)}}
-
(\mathsf{p},\text{div }\mathbf{v})_{L^2(\Omega)}
=
\langle \mathbf{f},\mathbf{v} \rangle,
\quad
(\mathsf{q},\text{div }\mathbf{y})_{L^2(\Omega)}
=
0
\end{equation}
for all $\mathbf{v}\in \mathbf{H}_0^1(\Omega)$ and $\mathsf{q}\in L_0^2(\Omega)$, respectively. We recall that $J$ is the cost functional defined in \eqref{eq:functional}. We note that the state equations \eqref{eq:brinkamn_problem_state} are equivalent to the following \emph{constrained formulation} due to de Rham's theorem \cite[Section 4.1.3]{Guermond-Ern}:
\begin{equation}\label{eq:brinkamn_problem_reduced}
\mathbf{y}\in \mathbf{V}(\Omega):
\quad
(\nabla \mathbf{y},\nabla \mathbf{v})_{\mathbf{L}^2(\Omega)}
+
\EO{( u, \mathbf{y} \cdot \mathbf{v})_{L^2(\Omega)}}
=  
\langle \mathbf{f},\mathbf{v} \rangle
\quad 
\forall\mathbf{v}\in \mathbf{V}(\Omega).
\end{equation}

\begin{remark}[the admissible control set]
\EO{We note that $\mathbb{U}_{ad} \subset \mathcal{A}_0 \subset \mathcal{A}$. We recall that the sets $\mathbb{U}_{ad}$, $\mathcal{A}_0$, and $\mathcal{A}$ are defined in \eqref{eq:set_adm}, \eqref{eq:set_A}, and \eqref{eq:S}, respectively.}
\end{remark}

\begin{remark}[the state equations]
The \EO{results of Theorem \ref{theorem:differentiablity_stokes_brinkman} guarantee that for $u \in \mathcal{A}$, the state equation \eqref{eq:brinkamn_problem_state} are well-posed. Moreover, a direct application of Theorem \ref{theorem_regularity_Linfty} shows that $\mathbf{y} \in \mathbf{C}(\bar \Omega)$, so that for $\mathbf{v} \in \mathbf{H}_0^1(\Omega)$ the term $( u, \mathbf{y} \cdot \mathbf{v})_{L^2(\Omega)}$ in \eqref{eq:brinkamn_problem_state} can be rewritten as $( u \mathbf{y}, \mathbf{v})_{\mathbf{L}^2(\Omega)}$.}
\end{remark}

\subsection{Existence of solutions}\label{section:existence}
The existence of an optimal solution is as follows.
 
\begin{theorem}[existence of an optimal solution]\label{theorem_existence_optimalcontrol} The optimal control problem \eqref{eq:min_functional}--\eqref{eq:brinkamn_problem_state} admits at least one global solution $(\bar{\mathbf{y}},\bar{\mathsf{p}},\bar{u})\in \mathbf{H}_0^1(\Omega)\times L_0^2(\Omega)\times \mathbb{U}_{ad}$.
\end{theorem}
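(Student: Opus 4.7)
The plan is to apply the direct method of the calculus of variations, with the sole delicate point being the passage to the limit in the bilinear coefficient term $u\mathbf{y}$ appearing in \eqref{eq:brinkamn_problem_state}. Since $\mathbb{U}_{ad}$ is nonempty (for example $u\equiv(\mathsf{a}+\mathsf{b})/2$ is admissible) and $J\geq 0$, the infimum of $J$ over admissible triples is finite; I pick a minimizing sequence $\{(\mathbf{y}_n,\mathsf{p}_n,u_n)\}\subset \mathbf{H}_0^1(\Omega)\times L_0^2(\Omega)\times \mathbb{U}_{ad}$ with each $(\mathbf{y}_n,\mathsf{p}_n)$ solving \eqref{eq:brinkamn_problem_state} with control $u_n$.

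The next step is compactness. The pointwise bounds $\mathsf{a}\leq u_n\leq \mathsf{b}$ make $\{u_n\}$ bounded in $L^\infty(\Omega)$, while the a priori estimate \eqref{eq:well_posedness_brinkman}, uniform in $\mathfrak{u}\in\mathbb{U}_{ad}$, makes $\{(\mathbf{y}_n,\mathsf{p}_n)\}$ bounded in $\mathbf{H}_0^1(\Omega)\times L_0^2(\Omega)$. Passing to a (nonrelabeled) subsequence, I would obtain
\begin{equation*}
u_n\rightharpoonup^{\ast}\bar u \text{ in } L^\infty(\Omega),\quad \mathbf{y}_n\rightharpoonup\bar{\mathbf{y}} \text{ in } \mathbf{H}_0^1(\Omega),\quad \mathsf{p}_n\rightharpoonup\bar{\mathsf{p}} \text{ in } L_0^2(\Omega),
\end{equation*}
and Rellich's theorem would yield $\mathbf{y}_n\to\bar{\mathbf{y}}$ strongly in $\mathbf{L}^2(\Omega)$. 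Since $\mathbb{U}_{ad}$ is convex and strongly closed in $L^2(\Omega)$, it is weakly (and weakly-$\ast$) closed, so $\bar u\in\mathbb{U}_{ad}$.

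The crux is passing to the limit in the coefficient term. For any fixed test function $\mathbf{v}\in\mathbf{H}_0^1(\Omega)$ I would split
\begin{equation*}
(u_n\mathbf{y}_n,\mathbf{v})_{\mathbf{L}^2(\Omega)} = (u_n(\mathbf{y}_n-\bar{\mathbf{y}}),\mathbf{v})_{\mathbf{L}^2(\Omega)} + (u_n\bar{\mathbf{y}},\mathbf{v})_{\mathbf{L}^2(\Omega)}.
\end{equation*}
The first summand is bounded by $\mathsf{b}\,\|\mathbf{y}_n-\bar{\mathbf{y}}\|_{\mathbf{L}^2(\Omega)}\|\mathbf{v}\|_{\mathbf{L}^2(\Omega)}$ and vanishes as $n\to\infty$, while the second converges to $(\bar u\bar{\mathbf{y}},\mathbf{v})_{\mathbf{L}^2(\Omega)}$ by duality between the weak-$\ast$ convergence $u_n\rightharpoonup^{\ast}\bar u$ in $L^\infty(\Omega)$ and the fact that $\bar{\mathbf{y}}\cdot\mathbf{v}\in L^1(\Omega)$. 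All remaining terms in \eqref{eq:brinkamn_problem_state} are linear in $(\mathbf{y},\mathsf{p})$ and pass to the limit immediately, so $(\bar{\mathbf{y}},\bar{\mathsf{p}},\bar u)$ is feasible.

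Finally, $J$ is weakly lower semicontinuous along this sequence: the tracking term is continuous in the strong $\mathbf{L}^2$-topology, and $u\mapsto \tfrac{\alpha}{2}\|u\|_{L^2(\Omega)}^2$ is convex and continuous, hence weakly l.s.c. on $L^2(\Omega)$. Therefore $J(\bar{\mathbf{y}},\bar u)\leq \liminf_n J(\mathbf{y}_n,u_n) = \inf J$, and $(\bar{\mathbf{y}},\bar{\mathsf{p}},\bar u)$ is a global minimizer. The main obstacle I anticipate is precisely the nonlinear limit passage; Rellich compactness in $\mathbf{L}^2(\Omega)$ combined with the uniform $L^\infty$ bound for the controls is what renders the argument tight.
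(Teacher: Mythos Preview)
Your proof is correct and follows essentially the same route as the paper: the direct method with weak compactness of controls, the uniform energy bound \eqref{eq:well_posedness_brinkman}, Rellich compactness for the velocities, and the same add--subtract splitting of $(u_n\mathbf{y}_n,\mathbf{v})$ to pass to the limit in the bilinear term. The only cosmetic differences are that the paper extracts $u_k\rightharpoonup\bar u$ weakly in $L^2(\Omega)$ (rather than weak-$\ast$ in $L^\infty(\Omega)$) and uses the compact embedding $\mathbf{H}_0^1(\Omega)\hookrightarrow\mathbf{L}^4(\Omega)$ together with $\|u_k\|_{L^2(\Omega)}\leq\mathcal{M}$ for the cross term, whereas you use $\mathbf{L}^2$-compactness paired with the $L^\infty$ bound $\|u_n\|_{L^\infty(\Omega)}\leq\mathsf b$; both variants are equally valid and yield the same conclusion.
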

\begin{proof}
The proof follows from the direct method of calculus of variations. Let $\{(\mathbf{y}_{k},\mathsf{p}_k,u_{k})\}_{k\in\mathbb{N}}$ be a minimizing sequence, i.e., for $k\in \mathbb{N}$, $(\mathbf{y}_{k},\mathsf{p}_k) \in \mathbf{H}_0^1(\Omega)\times L_0^2(\Omega)$ solves  \eqref{eq:brinkamn_problem_state} where $u$ is replaced by $u_{k}$ and $\{ (\mathbf{y}_{k}, \mathsf{p}_k,u_k) \}_{k \in \mathbb{N}}$ is such that $J(\mathbf{y}_{k},u_{k}) \rightarrow \mathfrak{i}:=\inf\{ J(\mathbf{y}, u):\EO{(\mathbf{y}, \mathsf{p}) \in \mathbf{H}_0^1(\Omega)\times L_0^2(\Omega)~\mathrm{solves}~\eqref{eq:brinkamn_problem_state}}\}$ as $k\uparrow \infty$. Since $\mathbb{U}_{ad}$ is weakly sequentially compact in $L^2(\Omega)$, there exists a nonrelabeled subsequence $\{u_{k}\}_{k\in\mathbb{N}} \subset \mathbb{U}_{ad}$ such that $u_{k}\rightharpoonup \bar{u}$ in $L^2(\Omega)$ as $ k \uparrow \infty$ and $\bar{u} \in \mathbb{U}_{ad}$. On the other hand, since $u_k\in\mathbb{U}_{ad}$ for every $k\in\mathbb{N}$, the bound \eqref{eq:well_posedness_brinkman} shows that $\{(\mathbf{y}_k,\mathsf{p}_k)\}_{k\in\mathbb{N}}$ is uniformly bounded in $\mathbf{H}_0^1(\Omega)\times L_0^2(\Omega)$. We can therefore conclude the existence of a nonrelabeled subsequence $\{(\mathbf{y}_k,\mathsf{p}_k)\}_{k\in\mathbb{N}}$ such that $(\mathbf{y}_k,\mathsf{p}_k)\rightharpoonup (\bar{\mathbf{y}},\bar{\mathsf{p}})$ in $\mathbf{H}_0^1(\Omega)\times L_0^2(\Omega)$ as $k\uparrow\infty$; $(\bar{\mathbf{y}},\bar{\mathsf{p}})$ is the natural candidate for an optimal state. In the following, we prove that $(\bar{\mathbf{y}},\bar{\mathsf{p}})$ solves problem \eqref{eq:brinkamn_problem_state}, where $u$ is replaced by $\bar{u}$, and that $(\bar{\mathbf{y}},\bar{\mathsf{p}},\bar{u})$ is optimal.

With the weak convergence $(\mathbf{y}_k,\mathsf{p}_k)\rightharpoonup (\bar{\mathbf{y}},\bar{\mathsf{p}})$ in $\mathbf{H}_0^1(\Omega)\times L_0^2(\Omega)$ as $k\uparrow\infty$ at hand, we immediately obtain that for every $\mathbf{v}\in\mathbf{H}_0^1(\Omega)$ and $\mathsf{q}\in L_0^2(\Omega)$,
\begin{align*}
(\nabla (\bar{\mathbf{y}}-\mathbf{y}_k),\nabla \mathbf{v})_{\mathbf{L}^2(\Omega)} \to 0,~ (\bar{\mathsf{p}}-\mathsf{p}_k,\text{div }\mathbf{v})_{L^2(\Omega)}\to 0,~ (\mathsf{q},\text{div}(\bar{\mathbf{y}}-\mathbf{y}_k))_{L^2(\Omega)}\to 0
\end{align*}
as $k\uparrow \infty$. To analyze the convergence of the bilinear term, we use $u_k \rightharpoonup\bar{u}$ in $L^2 (\Omega)$ as $k\uparrow\infty$, the fact that $\bar{\mathbf{y}}\cdot\mathbf{v}\in L^2(\Omega)$, $\mathbf{y}_k\rightharpoonup \bar{\mathbf{y}}$ in $\mathbf{H}_0^1(\Omega)$ as $k\uparrow\infty$, and the compact embedding $\mathbf{H}_0^1(\Omega) \hookrightarrow\mathbf{L}^4(\Omega)$ \cite[Theorem 6.3, Part I]{MR2424078} to conclude that
\begin{multline*}
|(\bar{u}\bar{\mathbf{y}},\mathbf{v})_{\mathbf{L}^2(\Omega)} - (u_k\mathbf{y}_k,\mathbf{v})_{\mathbf{L}^2(\Omega)}|
\leq 
|\left((\bar{u} - u_k)\bar{\mathbf{y}},\mathbf{v}\right)_{\mathbf{L}^2(\Omega)}|
\\
+ 
\| u_k \|_{L^2(\Omega)}
\| \bar{\mathbf{y}} - \mathbf{y}_k \|_{\mathbf{L}^4(\Omega)}
\| \mathbf{v} \|_{\mathbf{L}^4(\Omega)}
\to 0, 
\qquad k\uparrow\infty,
\end{multline*}
where we have also used that $\| u_k \|_{L^2(\Omega)} \leq \mathcal{M}$ for every $k \in \mathbb{N}$. We have thus proved that $(\bar{\mathbf{y}},\bar{\mathsf{p}})$ solves problem \eqref{eq:brinkamn_problem_state}, where $u$ is replaced by $\bar{u}$. Finally, the optimality of $(\bar{\mathbf{y}},\bar{\mathsf{p}},\bar{u})$ follows from the strong convergence $\mathbf{y}_k\to\bar{\mathbf{y}}$ in $\mathbf{L}^2(\Omega)$ as $k\uparrow\infty$ and the fact that the square of $\|\cdot\|_{L^2(\Omega)}$ es weakly lower semicontinuous in $L^2(\Omega)$.
\end{proof}

\subsection{\EO{First-order optimality conditions}}\label{section:firstconditions}
In this section, we obtain first-order optimality conditions for the optimal control problem \eqref{eq:min_functional}--\eqref{eq:brinkamn_problem_state}.
In the absence of convexity, we proceed as usual in the framework of \emph{local solutions} in $L^2(\Omega)$ \cite[page 207]{MR2583281}: \EO{A control $\bar{u}\in\mathbb{U}_{ad}$ is said to be \emph{locally optimal} in $L^2(\Omega)$ for \eqref{eq:min_functional}--\eqref{eq:brinkamn_problem_state} if there exists $\delta>0$ such that $J(\bar{\mathbf{y}},\bar{u})\leq J(\mathbf{y},u)$ for all $u\in \mathbb{U}_{ad}$ such that $\|u-\bar{u}\|_{L^2(\Omega)}\leq \delta$. Here, $\bar{\mathbf{y}}$ and $\mathbf{y}$ denote the velocity fields associated with $\bar{u}$ and $u$, respectively.} \EO{A control $\bar{u}\in\mathbb{U}_{ad}$ is called a \emph{strict local solution} in $L^2(\Omega)$ for \eqref{eq:min_functional}--\eqref{eq:brinkamn_problem_state} if there exists $\delta>0$ such that $J(\bar{\mathbf{y}},\bar{u}) < J(\mathbf{y},u)$ for all $u\in \mathbb{U}_{ad} \setminus \{ \bar{u} \}$ such that $\|u-\bar{u}\|_{L^2(\Omega)}\leq \delta$.}

From \EO{the results of Theorem \ref{theorem:differentiablity_stokes_brinkman}, it follows that the control-to-state map $\mathcal{S}:\mathcal{A} \subset L^2(\Omega) \to \mathbf{H}_0^1(\Omega)\times L_0^2(\Omega)$, which assigns to $u \in \mathcal{A}$ the unique pair $(\mathbf{y},\mathsf{p}) \in \mathbf{H}_0^1(\Omega) \times L_0^2(\Omega)$ that solves \eqref{eq:brinkamn_problem_state}, is of class $C^2$. Since it will be useful, we introduce the map}
\begin{equation}\label{eq:mapG}
\mathcal{G}: \EO{\mathcal{A}}\to \mathbf{H}_0^1(\Omega) :
\quad u\to \mathbf{y},
\end{equation}
where $\mathbf{y}$ corresponds to the velocity component of $(\mathbf{y},\mathsf{p})=\mathcal{S}(u)$. With these ingredients, we introduce the reduced cost functional $j:\EO{\mathcal{A}}\to\mathbb{R}_0^+$ as $j(u) := J(\GC{\mathcal{G}(u)},u)$.

To obtain first-order optimality conditions, we present the following basic result: If $\bar{u} \in \mathbb{U}_{ad}$ is locally optimal for problem \eqref{eq:min_functional}--\eqref{eq:brinkamn_problem_state}, then $\bar{u}$ satisfies \cite[Lemma 4.18]{MR2583281}
\begin{equation}\label{eq:ineq_variational}
j'(\bar{u})(u-\bar{u})\geq 0\quad \forall u\in \mathbb{U}_{ad},
\end{equation}
where $j'(\bar{u})$ denotes the Gate\^aux derivative of $j$ at $\bar{u}$. To explore the inequality \eqref{eq:ineq_variational}, we introduce $(\mathbf{z},\mathsf{r})\in\mathbf{H}_0^1(\Omega)\times L_0^2(\Omega)$ as the solution to the \emph{adjoint equations}
\begin{equation}\label{eq:brinkamn_problem_adjoint}
\begin{array}{rl}
(\nabla \mathbf{v}, \nabla \mathbf{z})_{\mathbf{L}^2(\Omega)}
+
\EO{( u, \mathbf{z} \cdot \mathbf{v})_{\mathbf{L}^2(\Omega)}}
+
(\mathsf{r},\text{div }\mathbf{v})_{L^2(\Omega)} = &\hspace{-2mm} (\mathbf{y}-\mathbf{y}_{\Omega},\mathbf{v})_{\mathbf{L}^2(\Omega)},
\\
(\mathsf{s},\text{div }\mathbf{z})_{L^2(\Omega)}  = & \hspace{-2mm} 0
\end{array}
\end{equation}
for all $(\mathbf{v},\mathsf{s}) \in \mathbf{H}_0^1(\Omega) \times L_0^2(\Omega)$. Here, $(\mathbf{y},\mathsf{p})=\mathcal{S}(u)$ is the unique solution to \eqref{eq:brinkamn_problem_state}.

\begin{theorem}[well-posedness]
\EO{The adjoint equations are well-posed.}
\label{thm:adj_equations_well-posed}
\begin{proof}
 \EO{Let $u \in \mathcal{A}$. We define
 $
  a: \mathbf{H}_0^1(\Omega) \times \mathbf{H}_0^1(\Omega) \rightarrow \mathbb{R}
 $
by
$
 a(\mathbf{z},\mathbf{v}) := (\nabla \mathbf{v},\nabla \mathbf{z})_{\mathbf{L}^2(\Omega)}
+
( u,\mathbf{z} \cdot \mathbf{v})_{L^2(\Omega)}.
$
It is clear that $a$ is a bilinear and bounded form in $\mathbf{H}_0^1(\Omega) \times \mathbf{H}_0^1(\Omega)$. On the other hand, since $u \in \mathcal{A}$, there exists $\bar{u} \in \mathcal{A}_0$ such that $u \in B_{\epsilon_{\bar u}}(\bar u)$.
With $\bar{u}$ at hand, we rewrite the term $a(\mathbf{z},\mathbf{v})$ as follows:
\[
 a(\mathbf{z},\mathbf{v}) := (\nabla \mathbf{v},\nabla \mathbf{z})_{\mathbf{L}^2(\Omega)}
+
( \bar{u},\mathbf{z} \cdot \mathbf{v})_{L^2(\Omega)}
+
( (u - \bar{u}),\mathbf{z} \cdot \mathbf{v})_{L^2(\Omega)}.
\]
If we set $\mathbf{z} = \mathbf{v}$ and use that $\epsilon_{\bar u} < C_{4 \hookrightarrow 2}^{-2}$, we arrive at the coercivity of $a$ in $\mathbf{H}_0^1(\Omega) \times \mathbf{H}_0^1(\Omega)$: $a(\mathbf{v}, \mathbf{v}) \geq (1 - \epsilon_{\bar{u}} C_{4 \hookrightarrow 2}^{2}) \| \nabla \mathbf{v}\|_{\mathbf{L}^2(\Omega)}$. As a final ingredient, we note that $\mathbf{y} - \mathbf{y}_{\Omega} \in \mathbf{L}^2(\Omega)$. The result follows from the standard inf-sup theory for saddle point problems.}
\end{proof}
\end{theorem}

\begin{remark}[regularity properties of the adjoint pair $(\mathbf{z},\mathsf{r})$]
\EO{Let $u \in \mathcal{A}$. Since $\mathbf{y} - \mathbf{y}_{\Omega} \in \mathbf{L}^2(\Omega)$ and $\Omega$ is Lipschitz, an application of Theorem \ref{theorem_regularity_Linfty} shows the existence of $\kappa> 4$ when $d=2$ and $\kappa >3$ when $d=3$ such that $(\mathbf{z},\mathsf{r})$ belongs to $\mathbf{H}_0^{1}(\Omega) \cap \mathbf{W}^{1,\kappa}(\Omega) \times L_0^2(\Omega) \cap L^{\kappa}(\Omega)$. Assuming that $\Omega$ is a convex polytope, it can be concluded from the results of Theorem \ref{eq:theorem_regularity} that $(\mathbf{z},\mathsf{r})$ belongs to $\mathbf{H}^2(\Omega)\cap\mathbf{H}_0^1(\Omega) \times H^1(\Omega)\cap L_0^2(\Omega)$.}
\end{remark}

We are now in a position to establish necessary first-order optimality conditions as in the scalar case described in \cite[identity (2.6)]{MR2536007}.

\begin{theorem}[first-order optimality conditions]
Every locally optimal control $\bar{u}\in \mathbb{U}_{ad}$ for the optimal control problem \eqref{eq:min_functional}--\eqref{eq:brinkamn_problem_state} satisfies the variational inequality
\begin{eqnarray}\label{eq:variational_inequality}
(\alpha \bar{u}-\bar{\mathbf{y}}\cdot \bar{\mathbf{z}},u-\bar{u})_{L^2(\Omega)}\geq 0\quad \forall u\in \mathbb{U}_{ad}.
\end{eqnarray}
Here, $(\bar{\mathbf{z}},\bar{\mathsf{r}})$ denotes the solution of the adjoint equations \eqref{eq:brinkamn_problem_adjoint}, where $u$ and $\mathbf{y}$ are replaced by $\bar{u}$ and $\bar{\mathbf{y}}=\mathcal{G}\GC{(\bar{u})}$, respectively.
\end{theorem}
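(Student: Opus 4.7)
The plan is to start from the first-order condition \eqref{eq:ineq_variational} on the reduced functional and unfold it using the chain rule together with the adjoint equations \eqref{eq:brinkamn_problem_adjoint}. Since $j(u) = J(\mathcal{G}u, u) = \tfrac{1}{2}\|\mathcal{G}u - \mathbf{y}_\Omega\|_{\mathbf{L}^2(\Omega)}^2 + \tfrac{\alpha}{2}\|u\|_{L^2(\Omega)}^2$, and $\mathcal{G}$ is the velocity component of $\mathcal{S}$, which is $\mathcal{C}^2$ by Theorem \ref{diff_S}, the chain rule yields
\[
j'(\bar u)(u - \bar u) = (\bar{\mathbf{y}} - \mathbf{y}_\Omega, \boldsymbol{\varphi})_{\mathbf{L}^2(\Omega)} + \alpha (\bar u, u - \bar u)_{L^2(\Omega)},
\]
where $(\boldsymbol{\varphi}, \zeta) = \mathcal{S}'(\bar u)(u - \bar u)$ solves the sensitivity system \eqref{eq:problem_S_first} with $u$, $v$, $\mathbf{y}$ replaced by $\bar u$, $u - \bar u$, $\bar{\mathbf{y}}$.

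The key step is then to rewrite the first term on the right-hand side as an $L^2(\Omega)$-pairing against $u - \bar u$, which I would do by the standard adjoint trick. Concretely, I would test the adjoint system \eqref{eq:brinkamn_problem_adjoint} (evaluated at $\bar u$, $\bar{\mathbf{y}}$, producing $(\bar{\mathbf{z}}, \bar{\mathsf{r}})$) with $\mathbf{v} = \boldsymbol{\varphi} \in \mathbf{H}_0^1(\Omega)$, and symmetrically test the sensitivity system \eqref{eq:problem_S_first} with $\mathbf{v} = \bar{\mathbf{z}} \in \mathbf{H}_0^1(\Omega)$. The divergence-free constraints built into both problems make the pressure terms $(\bar{\mathsf{r}}, \operatorname{div} \boldsymbol{\varphi})$ and $(\zeta, \operatorname{div} \bar{\mathbf{z}})$ vanish. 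Subtracting the two identities, the symmetric terms $(\nabla \boldsymbol{\varphi}, \nabla \bar{\mathbf{z}})$ and $(\bar u\, \boldsymbol{\varphi}, \bar{\mathbf{z}})$ cancel, leaving the compact identity
\[
(\bar{\mathbf{y}} - \mathbf{y}_\Omega, \boldsymbol{\varphi})_{\mathbf{L}^2(\Omega)} = -((u - \bar u)\,\bar{\mathbf{y}}, \bar{\mathbf{z}})_{\mathbf{L}^2(\Omega)} = -(\bar{\mathbf{y}} \cdot \bar{\mathbf{z}}, u - \bar u)_{L^2(\Omega)}.
\]

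Combining this with the expression above for $j'(\bar u)(u - \bar u)$ yields
\[
j'(\bar u)(u - \bar u) = (\alpha \bar u - \bar{\mathbf{y}} \cdot \bar{\mathbf{z}}, u - \bar u)_{L^2(\Omega)},
\]
and plugging this into \eqref{eq:ineq_variational} delivers \eqref{eq:variational_inequality}. No real obstacle is expected; the only subtleties are verifying that $\boldsymbol{\varphi}, \bar{\mathbf{z}} \in \mathbf{H}_0^1(\Omega)$ are admissible test functions in each system and that the products $\bar u\,\boldsymbol{\varphi} \cdot \bar{\mathbf{z}}$, $(u - \bar u)\, \bar{\mathbf{y}} \cdot \bar{\mathbf{z}}$ are integrable, both of which follow at once from $u, \bar u \in L^\infty(\Omega)$, the $\mathbf{H}_0^1(\Omega) \hookrightarrow \mathbf{L}^4(\Omega)$ embedding, and the $\mathbf{L}^\infty$-regularity of $\bar{\mathbf{y}}$ provided by Theorem \ref{theorem_regularity_Linfty}.
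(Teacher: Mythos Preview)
Your proposal is correct and follows essentially the same argument as the paper: expand $j'(\bar u)(u-\bar u)$ via the chain rule, test the sensitivity system \eqref{eq:problem_S_first} with $\bar{\mathbf{z}}$ and the adjoint system \eqref{eq:brinkamn_problem_adjoint} with $\boldsymbol{\varphi}$, use the divergence-free constraints to kill the pressure terms, and subtract to obtain $(\bar{\mathbf{y}}-\mathbf{y}_\Omega,\boldsymbol{\varphi})_{\mathbf{L}^2(\Omega)} = -((u-\bar u)\bar{\mathbf{y}},\bar{\mathbf{z}})_{\mathbf{L}^2(\Omega)}$. One minor remark: the appeal to Theorem~\ref{theorem_regularity_Linfty} is unnecessary (and would require $\mathbf{f}\in\mathbf{L}^2(\Omega)$, which is not assumed here); the integrability you need already follows from $u,\bar u\in L^\infty(\Omega)$ together with $\mathbf{H}_0^1(\Omega)\hookrightarrow\mathbf{L}^4(\Omega)$.
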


\begin{proof}
In a first step, we rewrite the variational inequality \eqref{eq:ineq_variational} as follows:
\begin{eqnarray}\label{eq:variat_ineq_01}
(\mathcal{G}'(\bar{u})(u-\bar{u}),\bar{\mathbf{y}}-\mathbf{y}_{\Omega})_{\mathbf{L}^2(\Omega)}
+
(\alpha \bar{u},u-\bar{u})_{L^2(\Omega)}\geq 0
\quad
\forall u\in \mathbb{U}_{ad},
\end{eqnarray}
where $\bar{\mathbf{y}}=\mathcal{G}\GC{(\bar{u})}$. Define $(\boldsymbol{\varphi},\zeta):=\mathcal{S}'(\bar{u})(u-\bar{u})$ and note that $(\boldsymbol{\varphi},\zeta)$ solves problem \eqref{eq:problem_S_first} with $u$ and $v$ replaced by $\bar{u}$ and $u-\bar{u}$, respectively. We now set $\mathbf{v}=\bar{\mathbf{z}}$ in the first equation of this system to obtain
$
(\nabla \boldsymbol{\varphi},\nabla \bar{\mathbf{z}})_{\mathbf{L}^2(\Omega)}
+
( \bar{u}\boldsymbol{\varphi}, \bar{\mathbf{z}})_{\mathbf{L}^2(\Omega)}
-
(\zeta,\mathrm{div~} \bar{\mathbf{z}})_{L^2(\Omega)}
=  
-((u-\bar{u})\bar{\mathbf{y}},\bar{\mathbf{z}})_{\mathbf{L}^2(\Omega)}.
$
Similarly, we set $\mathbf{v} = \boldsymbol{\varphi}$ as a test function in the the first equation of \eqref{eq:brinkamn_problem_adjoint} to obtain
$
(\nabla \boldsymbol{\varphi},\nabla \bar{\mathbf{z}})_{\mathbf{L}^2(\Omega)}
+
( \bar{u}\bar{\mathbf{z}}, \boldsymbol{\varphi})_{\mathbf{L}^2(\Omega)}
+
(\mathsf{r},\text{div }\boldsymbol{\varphi})_{L^2(\Omega)} = (\bar{\mathbf{y}}-\mathbf{y}_{\Omega},\boldsymbol{\varphi})_{\mathbf{L}^2(\Omega)}.
$
Since $(\zeta,\mathrm{div~} \bar{\mathbf{z}})_{L^2(\Omega)} = 0$ and $(\mathsf{r},\text{div }\boldsymbol{\varphi})_{L^2(\Omega)} = 0$ for $\zeta \in L_0^2(\Omega)$ and $\mathsf{r} \in L_0^2(\Omega)$, respectively, we can conclude that $(\bar{\mathbf{y}}-\mathbf{y}_{\Omega},\boldsymbol{\varphi})_{\mathbf{L}^2(\Omega)} = -((u-\bar{u})\bar{\mathbf{y}},\bar{\mathbf{z}})_{\mathbf{L}^2(\Omega)}$. The desired variational inequality \eqref{eq:variational_inequality} thus follows from \eqref{eq:variat_ineq_01}. Note that $\boldsymbol{\varphi} = \mathcal{G}'(\bar u)(u - \bar u)$.
\end{proof}

The following projection formula will be important for the derivation of regularity estimates: If $\bar{u} \in \mathbb{U}_{ad}$ is a locally optimal control for  \eqref{eq:min_functional}--\eqref{eq:brinkamn_problem_state}, then \cite[section 4.6.1]{MR2583281}
\begin{equation}\label{eq:representation}
\bar{u}(x):=\Pi_{[\mathsf{a},\mathsf{b}]}(\alpha^{-1}\bar{\mathbf{y}}\cdot\bar{\mathbf{z}})\text{ a.e. } x \in \Omega,
\end{equation}
where $\Pi_{[\mathsf{a},\mathsf{{b}]}}:L^1(\Omega)\to\mathbb{U}_{ad}$ is defined by $\Pi_{[\mathsf{a},\mathsf{b}]}(v):=\min\{\mathsf{b},\max\{v,\mathsf{a}\}\}$ a.e.~in $\Omega$.

The Sobolev regularity of an optimal control variable is as follows.

\begin{theorem}[regularity of an optimal control]
\label{thm:regularity_optimal_control}
Let $\bar{u}$ be a locally optimal control for problem \eqref{eq:min_functional}--\eqref{eq:brinkamn_problem_state}.
\EO{Then, $\bar{u}\in H^1(\Omega)$ and}
\begin{align}\label{eq:control_regul_estimate}
\|\bar{u}\|_{H^1(\Omega)}
\lesssim
\|\mathbf{f}\|_{\mathbf{L}^2(\Omega)}
\left[
\|\mathbf{f}\|_{\mathbf{L}^2(\Omega)}+\|\mathbf{y}_{\Omega}\|_{\mathbf{L}^2(\Omega)}
\right].
\end{align}
\end{theorem}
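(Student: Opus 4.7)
The plan is to use the projection formula \eqref{eq:representation} to transfer regularity from the state and adjoint variables to $\bar{u}$. Concretely, recall that $\Pi_{[\mathsf{a},\mathsf{b}]}$ is globally Lipschitz continuous on $\mathbb{R}$, so by the Stampacchia chain rule for Lipschitz superposition operators, if we can show $\alpha^{-1}\bar{\mathbf{y}}\cdot\bar{\mathbf{z}} \in H^1(\Omega)$, then $\bar{u} = \Pi_{[\mathsf{a},\mathsf{b}]}(\alpha^{-1}\bar{\mathbf{y}}\cdot\bar{\mathbf{z}})$ belongs to $H^1(\Omega)$ with
\[
|\nabla \bar{u}| \leq \alpha^{-1}|\nabla(\bar{\mathbf{y}}\cdot\bar{\mathbf{z}})|\quad\text{a.e.~in }\Omega.
\]
The $L^2$-part of the $H^1$-norm is harmless because $\bar{u}\in \mathbb{U}_{ad}$ is pointwise bounded by $\max\{|\mathsf{a}|,|\mathsf{b}|\}$, so the whole problem reduces to controlling $\nabla(\bar{\mathbf{y}}\cdot\bar{\mathbf{z}})$ in $\mathbf{L}^2(\Omega)$.

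Next, I would apply Theorem \ref{theorem_regularity_Linfty} twice. First, to the state equations \eqref{eq:brinkamn_problem_state} with $u=\bar u \in\mathbb{U}_{ad}$ and right-hand side $\mathbf{f}\in \mathbf{L}^2(\Omega)$: this yields $\bar{\mathbf{y}}\in\mathbf{W}^{1,\kappa}(\Omega)\cap\mathbf{C}(\bar\Omega)$ together with
\[
\|\nabla\bar{\mathbf{y}}\|_{\mathbf{L}^{\kappa}(\Omega)}+\|\bar{\mathbf{y}}\|_{\mathbf{L}^{\infty}(\Omega)} \lesssim \|\mathbf{f}\|_{\mathbf{L}^2(\Omega)}.
\]
Second, applied to the adjoint system \eqref{eq:brinkamn_problem_adjoint} whose datum is $\bar{\mathbf{y}}-\mathbf{y}_\Omega\in\mathbf{L}^2(\Omega)$; the same theorem (its proof adapts verbatim to the adjoint form since the bilinear term is still $\bar u\mathbf{z}$ with $\bar u\in\mathbb{U}_{ad}$) yields $\bar{\mathbf{z}}\in\mathbf{W}^{1,\kappa}(\Omega)\cap\mathbf{C}(\bar\Omega)$ with
\[
\|\nabla\bar{\mathbf{z}}\|_{\mathbf{L}^{\kappa}(\Omega)}+\|\bar{\mathbf{z}}\|_{\mathbf{L}^{\infty}(\Omega)} \lesssim \|\bar{\mathbf{y}}-\mathbf{y}_\Omega\|_{\mathbf{L}^2(\Omega)} \lesssim \|\mathbf{f}\|_{\mathbf{L}^2(\Omega)}+\|\mathbf{y}_\Omega\|_{\mathbf{L}^2(\Omega)},
\]
where in the last step I use the basic energy bound \eqref{eq:well_posedness_brinkman} for the state to control $\|\bar{\mathbf{y}}\|_{\mathbf{L}^2(\Omega)}$ in terms of $\|\mathbf{f}\|_{\mathbf{L}^2(\Omega)}$.

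With these at hand, the product rule gives $\nabla(\bar{\mathbf{y}}\cdot\bar{\mathbf{z}}) = (\nabla\bar{\mathbf{y}})^{\!\top}\bar{\mathbf{z}}+(\nabla\bar{\mathbf{z}})^{\!\top}\bar{\mathbf{y}}$ a.e.; since each summand is the product of an $\mathbf{L}^{\kappa}(\Omega)$-function (with $\kappa>2$) and an $\mathbf{L}^{\infty}(\Omega)$-function, H\"older's inequality delivers
\[
\|\nabla(\bar{\mathbf{y}}\cdot\bar{\mathbf{z}})\|_{\mathbf{L}^2(\Omega)}
\lesssim
\|\nabla\bar{\mathbf{y}}\|_{\mathbf{L}^{\kappa}(\Omega)}\|\bar{\mathbf{z}}\|_{\mathbf{L}^{\infty}(\Omega)}
+
\|\nabla\bar{\mathbf{z}}\|_{\mathbf{L}^{\kappa}(\Omega)}\|\bar{\mathbf{y}}\|_{\mathbf{L}^{\infty}(\Omega)}.
\]
Inserting the two bounds from the previous step yields the quantitative estimate \eqref{eq:control_regul_estimate}, and combining with the trivial $L^2$-bound on $\bar u$ closes the argument.

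The main subtlety is really locating the right regularity theorem for a Stokes--Brinkman system on a merely Lipschitz domain: one cannot use the $\mathbf{H}^2$-regularity of Theorem \ref{eq:theorem_regularity} (which needs convexity), and the naive energy bound \eqref{eq:well_posedness_brinkman} is insufficient since it only gives $\nabla\bar{\mathbf{y}},\nabla\bar{\mathbf{z}}\in\mathbf{L}^2$, which does not suffice to make the product $\bar{\mathbf{y}}\cdot\bar{\mathbf{z}}$ land in $H^1(\Omega)$ via H\"older. The $\mathbf{W}^{1,\kappa}$ plus $\mathbf{L}^\infty$ regularity of Theorem \ref{theorem_regularity_Linfty} is precisely what makes the pointwise product lie in $H^1(\Omega)$; once this is in place, everything else is a standard Stampacchia-type superposition argument combined with a careful bookkeeping of the constants, the latter being uniform in $\bar u$ because the bound in Theorem \ref{theorem_regularity_Linfty} does not depend on $\mathfrak{u}\in\mathbb{U}_{ad}$.
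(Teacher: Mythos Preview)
Your proposal is correct and follows essentially the same route as the paper: apply Theorem~\ref{theorem_regularity_Linfty} to both the state and adjoint equations to obtain $\bar{\mathbf{y}},\bar{\mathbf{z}}\in\mathbf{H}_0^1(\Omega)\cap\mathbf{L}^{\infty}(\Omega)$, use the product rule to show $\nabla(\bar{\mathbf{y}}\cdot\bar{\mathbf{z}})\in\mathbf{L}^2(\Omega)$, and conclude via the projection formula \eqref{eq:representation} and the Lipschitz continuity of $\Pi_{[\mathsf{a},\mathsf{b}]}$. The only cosmetic difference is that the paper uses the $\mathbf{H}_0^1\cap\mathbf{L}^{\infty}$ membership directly (which already suffices for the product estimate), while you invoke the slightly stronger $\mathbf{W}^{1,\kappa}$ bound from the same theorem.
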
 
\begin{proof}
Let $(\bar{\mathbf{y}},\bar{\mathsf{p}}) = \mathcal{S}\GC{(\bar{u})}$, and let $(\bar{\mathbf{z}},\bar{\mathsf{r}}) \in \mathbf{H}_0^1(\Omega)\times L_0^2(\Omega)$ be the solution of  \eqref{eq:brinkamn_problem_adjoint}, where $u$ and $\mathbf{y}$ are replaced by $\bar{u}$ and $\bar{\mathbf{y}}$, respectively. Since $\mathbf{f},\mathbf{y}_{\Omega}\in\mathbf{L}^2(\Omega)$, \EO{we can immediately deduce from the results of Theorem \ref{theorem_regularity_Linfty} that $\bar{\mathbf{y}},\bar{\mathbf{z}}\in\mathbf{H}_0^1(\Omega)\cap \mathbf{C}(\bar \Omega)$}. A simple calculation therefore shows that $\nabla (\bar{\mathbf{y}}\cdot\bar{\mathbf{z}}) \in \mathbf{L}^2(\Omega)$ Consequently, the desired regularity property and the bound \eqref{eq:control_regul_estimate} for $\bar{u}$ follow from the projection formula \eqref{eq:representation}.

It \EO{is important to note that, as shown in the proof of Theorem \ref{theorem_regularity_Linfty}, the control of the norms $\|\bar{\mathbf{y}}\|_{\mathbf{C}(\bar \Omega)}$ and $\|\bar{\mathbf{z}}\|_{\mathbf{C}(\bar \Omega)}$ involve hidden constants that depend on $\|\bar{u}\|_{L^2(\Omega)}$. Given the structure of the admissible control set $\mathbb{U}_{ad}$, we can control $\|\bar{u}\|_{L^2(\Omega)}$ with a constant that depends only on $\Omega$, $\mathsf{a}$, and $\mathsf{b}$.}
\end{proof}

\subsection{Second-order optimality conditions}\label{section:secondconditions}
In this section, we derive necessary and sufficient second-order optimality conditions. To this end, we begin our analysis with the following instrumental result.

\begin{theorem}[$j$ is of class $C^2$ and $j''$ is Lipschitz]
\label{eq:theorem_properties_j}
The reduced cost $j: \GC{\mathcal{A}}\to\mathbb{R}_0^+$ is of class $C^2$. Moreover, for each $u\in\GC{\mathcal{A}}$ and $v\in \GC{L^{2}(\Omega)}$, it holds that
\begin{align}
\label{eq:identity_j}
j''(u)v^2=\alpha\|v\|_{L^2(\Omega)}^2-2(v\boldsymbol{\varphi},\mathbf{z})_{\mathbf{L}^2(\Omega)}+\|\boldsymbol{\varphi}\|_{\mathbf{L}^2(\Omega)}^2.
\end{align}
Here, $(\mathbf{z},\mathsf{r})\in\mathbf{H}_0^1(\Omega)\times L_0^2(\Omega)$ denotes the solution to \eqref{eq:brinkamn_problem_adjoint} and $(\boldsymbol{\varphi},\zeta)=\mathcal{S}'(u)v$. Finally, for every $u_1,u_2\in\GC{\mathcal{A}}$ and $v\in \GC{L^{2}(\Omega)}$, we have the following bound:
\begin{eqnarray}
\label{eq:lemma_j}
|j''(u_1)v^2-j''(u_2)v^2|\leq \mathfrak{D}\|u_1-u_2\|_{L^2(\Omega)}\|v\|_{L^2(\Omega)}^2,
\end{eqnarray}
where $\mathfrak{D}>0$ is a constant that depends on \EO{$\|\mathbf{f}\|_{\mathbf{L}^2(\Omega)}$} and $\|\mathbf{y}_\Omega\|_{\mathbf{L}^2(\Omega)}$.
\end{theorem}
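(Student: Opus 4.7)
The plan is to establish the $C^2$ property of $j$ by composition, derive the Hessian formula by a two-step adjoint argument, and then obtain the Lipschitz bound by a careful difference estimate that relies on the uniform $\mathbf{L}^\infty$-regularity provided by Theorem \ref{theorem_regularity_Linfty}.

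First, since $\mathcal{S}:\mathcal{U}\to\mathbf{H}^1_0(\Omega)\times L^2_0(\Omega)$ is of class $\mathcal{C}^2$ with respect to the $L^{\infty}(\Omega)$-topology by Theorem \ref{diff_S} and the extraction of the velocity component is a continuous linear map, the reduced map $\mathcal{G}:\mathcal{U}\to\mathbf{H}^1_0(\Omega)$ is also $\mathcal{C}^2$. Composing with the smooth quadratic $\mathbf{y}\mapsto\tfrac12\|\mathbf{y}-\mathbf{y}_{\Omega}\|_{\mathbf{L}^2(\Omega)}^2$ and adding the Tikhonov term yields $j\in C^2(\mathbb{U}_{ad})$. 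The chain rule gives
\begin{equation*}
j''(u)v^2 = \alpha\|v\|_{L^2(\Omega)}^2 + \|\boldsymbol{\varphi}\|_{\mathbf{L}^2(\Omega)}^2 + (\mathbf{y}-\mathbf{y}_{\Omega},\boldsymbol{\psi})_{\mathbf{L}^2(\Omega)},
\end{equation*}
where $(\boldsymbol{\varphi},\zeta)=\mathcal{S}'(u)v$ and $(\boldsymbol{\psi},\xi)=\mathcal{S}''(u)(v,v)$.

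Second, I would process the last term by a dual-problem trick. Using $\boldsymbol{\psi}$ as test function in the first equation of \eqref{eq:brinkamn_problem_adjoint}, and recalling that $\boldsymbol{\psi}$ is solenoidal so that $(\mathsf{r},\mathrm{div}\,\boldsymbol{\psi})_{L^2(\Omega)}=0$, I obtain
\begin{equation*}
(\mathbf{y}-\mathbf{y}_{\Omega},\boldsymbol{\psi})_{\mathbf{L}^2(\Omega)} = (\nabla\boldsymbol{\psi},\nabla\mathbf{z})_{\mathbf{L}^2(\Omega)} + (u\mathbf{z},\boldsymbol{\psi})_{\mathbf{L}^2(\Omega)}.
\end{equation*}
Next, choosing $\mathbf{v}=\mathbf{z}$ in \eqref{eq:problem_S_second} (with $v_1=v_2=v$) and using that $\mathbf{z}$ is solenoidal, I find
\begin{equation*}
(\nabla\boldsymbol{\psi},\nabla\mathbf{z})_{\mathbf{L}^2(\Omega)} + (u\boldsymbol{\psi},\mathbf{z})_{\mathbf{L}^2(\Omega)} = -2(v\boldsymbol{\varphi},\mathbf{z})_{\mathbf{L}^2(\Omega)}.
\end{equation*}
Combining these two identities yields $(\mathbf{y}-\mathbf{y}_{\Omega},\boldsymbol{\psi})_{\mathbf{L}^2(\Omega)}=-2(v\boldsymbol{\varphi},\mathbf{z})_{\mathbf{L}^2(\Omega)}$, which substituted in the chain rule formula delivers \eqref{eq:identity_j}.

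For the Lipschitz bound \eqref{eq:lemma_j}, subscript by the control: $\mathbf{y}_i=\mathcal{G}u_i$, $\mathbf{z}_i$ the associated adjoint state, and $\boldsymbol{\varphi}_i=\mathcal{G}'(u_i)v$ for $i\in\{1,2\}$. Writing
\begin{align*}
j''(u_1)v^2-j''(u_2)v^2 &= -2(v\boldsymbol{\varphi}_1,\mathbf{z}_1-\mathbf{z}_2)_{\mathbf{L}^2(\Omega)} - 2(v(\boldsymbol{\varphi}_1-\boldsymbol{\varphi}_2),\mathbf{z}_2)_{\mathbf{L}^2(\Omega)} \\
&\quad + (\boldsymbol{\varphi}_1-\boldsymbol{\varphi}_2,\boldsymbol{\varphi}_1+\boldsymbol{\varphi}_2)_{\mathbf{L}^2(\Omega)},
\end{align*}
I would bound each term using the following Lipschitz-in-control estimates, each proved by subtracting the defining equations, applying \eqref{eq:well_posedness_brinkman}, and invoking the uniform $\mathbf{L}^\infty$ bounds on $\mathbf{y}_i$, $\mathbf{z}_i$ from Theorem \ref{theorem_regularity_Linfty}:
\begin{equation*}
\|\mathbf{y}_1-\mathbf{y}_2\|_{\mathbf{L}^\infty(\Omega)}+\|\mathbf{z}_1-\mathbf{z}_2\|_{\mathbf{L}^\infty(\Omega)}\lesssim\|u_1-u_2\|_{L^2(\Omega)},
\end{equation*}
together with $\|\boldsymbol{\varphi}_i\|_{\mathbf{L}^\infty(\Omega)}\lesssim \|v\|_{L^2(\Omega)}$ and a bound of the form $\|\boldsymbol{\varphi}_1-\boldsymbol{\varphi}_2\|_{\mathbf{L}^2(\Omega)}\lesssim \|u_1-u_2\|_{L^2(\Omega)}\|v\|_{L^2(\Omega)}$ obtained by testing the difference of the two instances of \eqref{eq:problem_S_first} with $\boldsymbol{\varphi}_1-\boldsymbol{\varphi}_2$. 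The hidden constants ultimately depend on $\|\mathbf{f}\|_{\mathbf{H}^{-1}(\Omega)}$ and $\|\mathbf{y}_{\Omega}\|_{\mathbf{L}^2(\Omega)}$ via \eqref{eq:estimate_extra}.

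The main obstacle is the last step: one has a factor $\|v\|_{L^2(\Omega)}^2$ in the target bound, but only $L^2$-regularity on $v$, so all losses of a derivative or of an $L^p$ power on $v$ must be absorbed into the state and adjoint, which forces the use of the $\mathbf{L}^\infty$-regularity in Theorem \ref{theorem_regularity_Linfty} for $\mathbf{y}_i, \mathbf{z}_i$ and of Sobolev embedding to give $\boldsymbol{\varphi}_i\in\mathbf{L}^\infty(\Omega)$ with a constant controlled by $\|v\|_{L^2(\Omega)}$. Once these bounds are collected, combining them with the decomposition above produces \eqref{eq:lemma_j} with a constant $\mathfrak{D}$ of the stated form.
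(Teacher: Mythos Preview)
Your derivation of the identity \eqref{eq:identity_j} is correct and matches the paper's argument exactly: chain rule, then test the adjoint equation with $\boldsymbol{\psi}$ and the second-derivative equation with $\mathbf{z}$ to eliminate $(\mathbf{y}-\mathbf{y}_\Omega,\boldsymbol{\psi})$.

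There is, however, a genuine gap in your treatment of the Lipschitz bound. You invoke Theorem \ref{theorem_regularity_Linfty} to obtain $\mathbf{L}^\infty$ bounds on $\mathbf{y}_i$, $\mathbf{z}_i$, $\boldsymbol{\varphi}_i$, and on the differences $\mathbf{y}_1-\mathbf{y}_2$, $\mathbf{z}_1-\mathbf{z}_2$. But Theorem \ref{theorem_regularity_Linfty} requires the forcing in $\mathbf{L}^2(\Omega)$, and its estimate \eqref{eq:estimate_extra} produces a constant depending on $\|\boldsymbol{\mathfrak{f}}\|_{\mathbf{L}^2(\Omega)}$. In the setting of this theorem (see \S\ref{section:weak_formulation}) the standing assumption is only $\mathbf{f}\in\mathbf{H}^{-1}(\Omega)$, and the statement asserts that $\mathfrak{D}$ depends on $\|\mathbf{f}\|_{\mathbf{H}^{-1}(\Omega)}$. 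Your route therefore needs a stronger hypothesis than is available and, even granting $\mathbf{f}\in\mathbf{L}^2(\Omega)$, would deliver the wrong dependence of the constant.

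The paper avoids $\mathbf{L}^\infty$ regularity altogether. For each trilinear term of the form $(v\,\mathbf{a},\mathbf{b})_{\mathbf{L}^2(\Omega)}$ it uses the H\"older split $L^2\times L^4\times L^4$ together with the embedding $\mathbf{H}^1_0(\Omega)\hookrightarrow\mathbf{L}^4(\Omega)$, so that only $\|\nabla\mathbf{y}_i\|_{\mathbf{L}^2}$, $\|\nabla\mathbf{z}_i\|_{\mathbf{L}^2}$, $\|\nabla\boldsymbol{\varphi}_i\|_{\mathbf{L}^2}$, and the corresponding differences enter. These in turn are controlled by $\|\mathbf{f}\|_{\mathbf{H}^{-1}(\Omega)}$ and $\|\mathbf{y}_\Omega\|_{\mathbf{L}^2(\Omega)}$ via \eqref{eq:well_posedness_brinkman} alone. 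Replace your $\mathbf{L}^\infty$ estimates with this $L^2\times L^4\times L^4$ strategy and the argument goes through under the stated hypotheses.
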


\begin{proof}
The fact that $j$ belongs to the class $C^2$ is a consequence of the differentiability properties of the control-to-state map $\mathcal{S}$ from \GC{Theorem \ref{theorem:differentiablity_stokes_brinkman}}. We now prove \eqref{eq:identity_j} and \eqref{eq:lemma_j} in three steps.

\emph{Step 1.} \emph{The identity \eqref{eq:identity_j}:} For every $u\in\GC{\mathcal{A}}$ and $v \in \GC{L^{2}(\Omega)}$, we have
\begin{align}\label{eq:second_derivative_aux}
j''(u)v^2 =\alpha\|v\|_{L^2(\Omega)}^2+(\mathbf{y}-\mathbf{y}_{\Omega},\boldsymbol{\psi})_{\mathbf{L}^2(\Omega)}+\|\boldsymbol{\varphi}\|_{\mathbf{L}^2(\Omega)}^2;
\end{align}
\EO{compare with \cite[page 151]{MR4858133}. In \eqref{eq:second_derivative_aux},} $(\boldsymbol{\varphi},\zeta) = \mathcal{S}'(u)v$, i.e., $(\boldsymbol{\varphi},\zeta)$ corresponds to the solution to \eqref{eq:problem_S_first} and
$(\boldsymbol{\psi},\xi)$ is defined as the solution to \eqref{eq:problem_S_second} with $v_1 = v_2 = v$ and $\boldsymbol{\varphi}_{v} = \boldsymbol{\varphi}$. Note that $\boldsymbol{\varphi}=\mathcal{G}'(u)v$ and $\boldsymbol{\psi}=\mathcal{G}''(u)v^2$. We now set $(\mathbf{v},\mathsf{s})=(\boldsymbol{\psi},0)$ in \eqref{eq:brinkamn_problem_adjoint} and $(\mathbf{v},\mathsf{s}) = (\mathbf{z},0)$ in \eqref{eq:problem_S_second} to obtain the identity $(\mathbf{y}-\mathbf{y}_{\Omega},\boldsymbol{\psi})_{\mathbf{L}^2(\Omega)}=-2(v\boldsymbol{\varphi},\mathbf{z})_{\mathbf{L}^2(\Omega)}$. Replacing this relation into \eqref{eq:second_derivative_aux} gives the desired identity \eqref{eq:identity_j}.

\emph{Step 2:} \emph{The bound \eqref{eq:lemma_j}:} Let $u_1,u_2\in \GC{\mathcal{A}}$ and $v\in \GC{L^{2}(\Omega)}$. Define $(\boldsymbol{\varphi}_i,\zeta_i):=\mathcal{S}'(u_i)v$ with $i \in \{ 1,2 \}$. We note that $(\boldsymbol{\varphi}_i,\zeta_i)$ is the solution to \eqref{eq:problem_S_first}, where $u$ and $\mathbf{y}$ are replaced by $u_i$ and $\mathbf{y}_i$, respectively. A simple application of the identity \eqref{eq:identity_j} leads to
\begin{multline}
\label{eq:aux_identity_propj}
j''(u_1)v^2-j''(u_2)v^2=2(v(\boldsymbol{\varphi}_2-\boldsymbol{\varphi}_1),\mathbf{z}_2)_{\mathbf{L}^2(\Omega)}+2(v\boldsymbol{\varphi}_1,\mathbf{z}_2-\mathbf{z}_1)_{\mathbf{L}^2(\Omega)}\\
(\boldsymbol{\varphi}_1-\boldsymbol{\varphi}_2,\boldsymbol{\varphi}_1+\boldsymbol{\varphi}_2)_{\mathbf{L}^2(\Omega)}=:\text{I}+\text{II}+\text{III}.
\end{multline}
Here, $(\mathbf{z}_i,\mathsf{r}_i) \in\mathbf{H}_0^1(\Omega) \times L_0^2(\Omega)$ corresponds to the solution of the adjoint problem \eqref{eq:brinkamn_problem_adjoint}, where $\mathbf{y}$ and $u$ are replaced by $\mathbf{y}_i$ and $u_i$, respectively. In the following, we analyze each of the terms \text{I}, \text{II}, and \text{III} involved in the identity \eqref{eq:aux_identity_propj}.

We first bound \text{I}. Given the embedding $\mathbf{H}_0^1(\Omega) \hookrightarrow \mathbf{L}^{r}(\Omega)$ with $r<\infty$ when $d=2$ and $r\leq6$ when $d=3$, and the fact that \EO{$\mathbf{f} \in \mathbf{L}^{2}(\Omega)$} and $\mathbf{y}_{\Omega}\in\mathbf{L}^2(\Omega)$, we obtain
\begin{align}\label{eq:estimates_extra}
\|\mathbf{y}_i\|_{\mathbf{L}^{r}(\Omega)}\lesssim \EO{\|\mathbf{f}\|_{\mathbf{L}^{2}(\Omega)}},
\quad 
\|\mathbf{z}_i\|_{\mathbf{L}^{r}(\Omega)}\lesssim \EO{\|\mathbf{f}\|_{\mathbf{L}^{2}(\Omega)}} + \|\mathbf{y}_{\Omega}\|_{\mathbf{L}^{2}(\Omega)},
\quad
i\in\{1,2\}.
\end{align}
We thus bound $\text{I}$ as follows: $\text{I} \leq 2 \|v\|_{L^2(\Omega)}\|\boldsymbol{\varphi}_2-\boldsymbol{\varphi}_1\|_{\mathbf{L}^4(\Omega)}\|\mathbf{z}_2\|_{\mathbf{L}^4(\Omega)}$ $\lesssim$ $\|v\|_{L^2(\Omega)}\|\nabla(\boldsymbol{\varphi}_2-\boldsymbol{\varphi}_1)\|_{\mathbf{L}^2(\Omega)}$, where the hidden constant depends on \EO{$\|\mathbf{f}\|_{\mathbf{L}^{2}(\Omega)}$} and $\|\mathbf{y}_\Omega\|_{\mathbf{L}^2(\Omega)}$. To control $\|\nabla(\boldsymbol{\varphi}_2-\boldsymbol{\varphi}_1)\|_{\mathbf{L}^2(\Omega)}$, we invoke the problem that solves $(\boldsymbol{\varphi}_2-\boldsymbol{\varphi}_1,\zeta_2-\zeta_1)$:
\begin{multline*}
(\nabla (\boldsymbol{\varphi}_2-\boldsymbol{\varphi}_1),\nabla \mathbf{v})_{\mathbf{L}^2(\Omega)}+( u_2(\boldsymbol{\varphi}_2-\boldsymbol{\varphi}_1), \mathbf{v})_{\mathbf{L}^2(\Omega)}-(\zeta_2-\zeta_1,\text{div }\mathbf{v})_{L^2(\Omega)} \\= (\boldsymbol{\varphi}_1(u_1-u_2)-v(\mathbf{y}_2-\mathbf{y}_1),\mathbf{v})_{\mathbf{L}^2(\Omega)},\qquad
(\mathsf{s},\text{div }(\boldsymbol{\varphi}_2-\boldsymbol{\varphi}_1))_{L^2(\Omega)}  =  0,
\end{multline*}
for all $(\mathbf{v},\mathsf{s}) \in \mathbf{H}_0^1(\Omega) \times L_0^2(\Omega)$. If we set $\mathbf{v}=\boldsymbol{\varphi}_2-\boldsymbol{\varphi}_1$ and $\mathsf{s}=0$ \EO{and proceed as in the proof of Theorem} \ref{thm:adj_equations_well-posed}, we obtain
\begin{multline}\label{eq:estimate_aux1_theorem_second}
\|\nabla (\boldsymbol{\varphi}_2-\boldsymbol{\varphi}_1)\|_{\mathbf{L}^2(\Omega)}\lesssim \|\nabla\boldsymbol{\varphi}_1\|_{\mathbf{L}^{2}(\Omega)}\|u_1-u_2\|_{L^2(\Omega)}\\+\|v\|_{L^2(\Omega)}\|\nabla(\mathbf{y}_2-\mathbf{y}_1)\|_{\mathbf{L}^{2}(\Omega)}.
\end{multline}
A bound for $\|\nabla\boldsymbol{\varphi}_1\|_{\mathbf{L}^{2}(\Omega)}$ can be derived using stability estimates for the problems that $(\boldsymbol{\varphi}_1,\zeta_1)$ and $(\mathbf{y}_1,\mathsf{p}_1)$ solve. In fact,
\begin{align}\label{eq:estimate_aux2_theorem_second}
\|\nabla\boldsymbol{\varphi}_1\|_{\mathbf{L}^{2}(\Omega)}
\lesssim 
\|v \mathbf{y}_1\|_{\mathbf{H}^{-1}(\Omega)}
\lesssim \|v\|_{L^2(\Omega)} \|\mathbf{y}_1\|_{\mathbf{L}^{4}(\Omega)}
\lesssim \|v\|_{L^2(\Omega)}\EO{\|\mathbf{f}\|_{\mathbf{L}^{2}(\Omega)}}.
\end{align}
On the other hand, we note that $(\mathbf{y}_2-\mathbf{y}_1,\mathsf{p}_2-\mathsf{p}_1)\in\mathbf{H}_0^1(\Omega)\times L_0^2(\Omega)$ solves
\begin{multline*}
(\nabla (\mathbf{y}_2-\mathbf{y}_1),\nabla \mathbf{v})_{\mathbf{L}^2(\Omega)}+( u_2(\mathbf{y}_2-\mathbf{y}_1), \mathbf{v})_{\mathbf{L}^2(\Omega)}-(\mathsf{p}_2-\mathsf{p}_1,\text{div }\mathbf{v})_{L^2(\Omega)} \\
= (\mathbf{y}_1(u_1-u_2),\mathbf{v})_{\mathbf{L}^2(\Omega)},
\quad
(\mathsf{s},\text{div }(\mathbf{y}_2-\mathbf{y}_1))_{L^2(\Omega)}  =  0
\quad
\forall (\mathbf{v},\mathsf{s}) \in \mathbf{H}_0^1(\Omega) \times L_0^2(\Omega).
\end{multline*}
If we set $(\mathbf{v},\mathsf{s}) = (\mathbf{y}_2-\mathbf{y}_1,0)$ \EO{and proceed as in the proof of Theorem \ref{thm:adj_equations_well-posed}, we obtain}
\begin{multline}\label{eq:estimate_aux3_theorem_second}
\|\nabla(\mathbf{y}_2-\mathbf{y}_1)\|_{\mathbf{L}^{2}(\Omega)}
\lesssim 
\|\nabla\mathbf{y}_1\|_{\mathbf{L}^{2}(\Omega)} \|u_1-u_2\|_{L^2(\Omega)}
\lesssim 
\EO{\|\mathbf{f}\|_{\mathbf{L}^{2}(\Omega)}}
\|u_1-u_2\|_{L^2(\Omega)}.
\end{multline} 
A collection of the estimates \eqref{eq:estimate_aux1_theorem_second}, \eqref{eq:estimate_aux2_theorem_second}, and \eqref{eq:estimate_aux3_theorem_second} leads to the conclusion
\begin{align}\label{eq:I_estimates}
\text{I}\lesssim\|v\|^2_{L^2(\Omega)}\|u_1-u_2\|_{L^2(\Omega)},
\end{align}
with a hidden constant that depends on \EO{$\|\mathbf{f}\|_{\mathbf{L}^{2}(\Omega)}$} and $\|\mathbf{y}_\Omega\|_{\mathbf{L}^2(\Omega)}$.

In the following, we control $\text{II}$ in \eqref{eq:aux_identity_propj}. For this, we use H\"older's inequality and \eqref{eq:estimate_aux2_theorem_second} and obtain $\text{II}\lesssim \|v\|_{L^2(\Omega)}\|\boldsymbol{\varphi}_1\|_{\mathbf{L}^4(\Omega)}\|\mathbf{z}_2-\mathbf{z}_1\|_{\mathbf{L}^{4}(\Omega)}\lesssim \|v\|_{L^2(\Omega)}^2 \|\nabla(\mathbf{z}_2-\mathbf{z}_1)\|_{\mathbf{L}^{2}(\Omega)}$. To control $\|\nabla(\mathbf{z}_2-\mathbf{z}_1)\|_{\mathbf{L}^{2}(\Omega)}$, we first note that  $(\mathbf{z}_2-\mathbf{z}_1,\mathsf{r}_2-\mathsf{r}_1)$ solves
\begin{multline*}
(\nabla \mathbf{v},\nabla (\mathbf{z}_2-\mathbf{z}_1))_{\mathbf{L}^2(\Omega)}+( u_2(\mathbf{z}_2-\mathbf{z}_1), \mathbf{v})_{\mathbf{L}^2(\Omega)}+(\mathsf{r}_2-\mathsf{r}_1,\text{div }\mathbf{v})_{L^2(\Omega)} \\= ((\mathbf{y}_2-\mathbf{y}_1)+\mathbf{z}_1(u_1-u_2),\mathbf{v})_{\mathbf{L}^2(\Omega)},\qquad
(\mathsf{s},\text{div }(\mathbf{z}_2-\mathbf{z}_1))_{L^2(\Omega)}  =  0,
\end{multline*}
for all $(\mathbf{v},\mathsf{s}) \in \mathbf{H}_0^1(\Omega) \times L_0^2(\Omega)$. If we set $\mathbf{v}=\mathbf{z}_2-\mathbf{z}_1$ and $\mathsf{s}=0$ \EO{and proceed as in the proof of Theorem} \ref{thm:adj_equations_well-posed}, we obtain
\begin{align}
\|\nabla(\mathbf{z}_2-\mathbf{z}_1)\|_{\mathbf{L}^{2}(\Omega)}\lesssim \|\nabla(\mathbf{y}_2-\mathbf{y}_1)\|_{\mathbf{L}^{2}(\Omega)}+\|\nabla\mathbf{z}_1\|_{\mathbf{L}^{2}(\Omega)}\|u_2-u_1\|_{L^2(\Omega)}.
\end{align}
The bound $\|\nabla(\mathbf{y}_2-\mathbf{y}_1)\|_{\mathbf{L}^{2}(\Omega)}\lesssim\|u_2-u_1\|_{L^2(\Omega)}$, which follows from \eqref{eq:estimate_aux3_theorem_second}, and the stability estimate $\|\nabla\mathbf{z}_1\|_{\mathbf{L}^{2}(\Omega)} \lesssim \EO{\|\mathbf{f}\|_{\mathbf{L}^{2}(\Omega)}} + \|\mathbf{y}_{\Omega}\|_{\mathbf{L}^{2}(\Omega)}$ allow us to obtain
\begin{align}\label{eq:II_estimates}
\text{II}\lesssim \|v\|_{L^2(\Omega)}^2 \|\nabla(\mathbf{z}_2-\mathbf{z}_1)\|_{\mathbf{L}^{2}(\Omega)}\lesssim \|v\|_{L^2(\Omega)}^2 \|u_2-u_1\|_{L^2(\Omega)}.
\end{align}

Finally, we control $\text{III}$. To accomplish this task, we use basic inequalities, \eqref{eq:estimate_aux2_theorem_second}, an adaptation of \eqref{eq:estimate_aux2_theorem_second} for $\boldsymbol{\varphi}_2$, and $\|\boldsymbol{\varphi}_2-\boldsymbol{\varphi}_1\|_{\mathbf{L}^2(\Omega)}\lesssim\|v\|_{L^2(\Omega)}\|u_1-u_2\|_{L^2(\Omega)}$:
\begin{align}\label{eq:III_estimates}
\text{III}\lesssim \|v\|_{L^2(\Omega)}\|\boldsymbol{\varphi}_2-\boldsymbol{\varphi}_1\|_{\mathbf{L}^2(\Omega)}\lesssim\|v\|_{L^2(\Omega)}^2 \|u_2-u_1\|_{L^2(\Omega)}.
\end{align}

\emph{Step 3:} Given \eqref{eq:aux_identity_propj}, we derive the desired estimate \eqref{eq:lemma_j} by collecting the bounds derived in \eqref{eq:I_estimates}, \eqref{eq:II_estimates}, and \eqref{eq:III_estimates}. This concludes the proof.
\end{proof}

\subsubsection{Necessary second-order optimality conditions}

We \EO{adapt} the arguments from the proof of \cite[Theorem 23]{MR3586845} and develop necessary second-order optimality conditions. For this purpose, we introduce some concepts: Let \EO{$(\bar{u},(\bar{\mathbf{y}},\bar{\mathsf{p}}),(\bar{\mathbf{z}},\bar{\mathsf{r}}))\in\mathbb{U}_{ad} \times (\mathbf{H}_0^1(\Omega)\times L_0^2(\Omega)) \times (\mathbf{H}_0^1(\Omega) \times L_0^2(\Omega))$} satisfy the first-order optimality conditions \eqref{eq:brinkamn_problem_state}, \eqref{eq:brinkamn_problem_adjoint}, and \eqref{eq:variational_inequality}. We define $\bar{\mathfrak{d}}:=\alpha\bar{u}-\bar{\mathbf{y}}\cdot\bar{\mathbf{z}}$ and the cone of critical directions
\begin{align}\label{eq:cone}
C_{\bar{u}}:=\{v\in L^{2}(\Omega) \text{ satisfying } \eqref{eq:cone_condition} \text{ and } v(x)=0 \text{ if } \bar{\mathfrak{d}}(x)\neq 0 \text{ a.e.~in } \Omega \},
\end{align}
where condition \eqref{eq:cone_condition} reads as follows:
\begin{align}\label{eq:cone_condition}
v(x)\geq 0 \text{ a.e. } x\in\Omega	\text{ if } \bar{u}(x)=\mathsf{a},\quad v(x)\leq 0 \text{ a.e. } x\in\Omega	\text{ if } \bar{u}(x)= \mathsf{b}.
\end{align}

We are now in a position to present necessary second-order optimality conditions.

\begin{theorem}[necessary second-order optimality conditions]\label{theorem_neccesary_conditions} If $\bar{u}\in \mathbb{U}_{ad}$ denotes a locally optimal control for \eqref{eq:min_functional}--\eqref{eq:brinkamn_problem_state}, then $j''(\bar{u})v^2\geq 0$ for all $v\in C_{\bar{u}}$.
\end{theorem}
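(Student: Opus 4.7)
My plan is to adapt the classical Casas--Tr\"oltzsch argument for necessary second-order conditions: approximate the critical direction $v$ by bounded directions $v_k \in C_{\bar u} \cap L^\infty(\Omega)$ that produce admissible perturbations $\bar u + t_k v_k \in \mathbb U_{ad}$ for small $t_k > 0$, exploit local optimality through a second-order Taylor expansion of the reduced cost, and pass to the limit via the Lipschitz estimate \eqref{eq:lemma_j} together with the continuity of $v \mapsto j''(\bar u)v^2$ as a quadratic form on $L^2(\Omega)$.

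Given $v \in C_{\bar u}$, I would define $v_k := v \, \chi_{A_k}$ with
\[
A_k := \{|v| \leq k\} \cap \bigl(\{\bar u = \mathsf{a}\} \cup \{\bar u = \mathsf{b}\} \cup \{\mathsf{a} + 1/k \leq \bar u \leq \mathsf{b} - 1/k\}\bigr).
\]
Since $A_k \uparrow \Omega$ up to null sets and $|v_k| \leq |v| \in L^2(\Omega)$, dominated convergence yields $v_k \to v$ in $L^2(\Omega)$. Moreover, $v_k$ inherits the sign conditions \eqref{eq:cone_condition} and the strict-activity vanishing $v_k = 0$ on $\{\bar{\mathfrak d} \neq 0\}$, so that $v_k \in C_{\bar u} \cap L^\infty(\Omega)$. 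Setting $t_k := 1/k^2$ (shrinking further if necessary to ensure $t_k \|v_k\|_{L^2(\Omega)} \leq \delta$, with $\delta$ the radius of local optimality), a case analysis over the three pieces constituting $A_k$ shows that $\bar u + t_k v_k \in \mathbb U_{ad}$ for all sufficiently large $k$: on $\{\bar u = \mathsf{a}\}$ one uses $v_k \geq 0$ and $t_k \|v_k\|_{L^\infty} \leq 1/k \leq \mathsf b - \mathsf a$; on $\{\bar u = \mathsf{b}\}$, analogously; and on $\{\mathsf a + 1/k \leq \bar u \leq \mathsf b - 1/k\}$ one has $|t_k v_k| \leq 1/k$. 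Using the $C^2$ regularity of $j$ from Theorem \ref{eq:theorem_properties_j}, local optimality then yields, for some $\theta_k \in (0,1)$,
\[
0 \;\leq\; j(\bar u + t_k v_k) - j(\bar u) \;=\; t_k\, j'(\bar u)\,v_k + \tfrac{t_k^2}{2}\, j''(\bar u + \theta_k t_k v_k)\,v_k^2.
\]
Because $j'(\bar u)v_k = (\bar{\mathfrak d}, v_k)_{L^2(\Omega)}$ and $v_k \equiv 0$ on $\{\bar{\mathfrak d} \neq 0\}$, the first-order term vanishes identically, and dividing by $t_k^2/2$ leaves $j''(\bar u + \theta_k t_k v_k) v_k^2 \geq 0$.

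To conclude, I invoke \eqref{eq:lemma_j} to estimate
\[
\bigl|j''(\bar u + \theta_k t_k v_k)\,v_k^2 - j''(\bar u)\,v_k^2 \bigr| \;\leq\; \mathfrak{D}\, t_k\, \|v_k\|_{L^2(\Omega)}^3 \;\longrightarrow\; 0,
\]
so that $j''(\bar u)\,v_k^2 \geq -o(1)$ as $k \to \infty$. Finally, the identity \eqref{eq:identity_j}, together with the continuous extension of $\mathcal S'(\bar u)$ from $L^2(\Omega)$ into $\mathbf H_0^1(\Omega) \times L_0^2(\Omega)$ (obtained because $v\bar{\mathbf y} \in \mathbf L^{6/5}(\Omega) \hookrightarrow \mathbf H^{-1}(\Omega)$ for $v \in L^2(\Omega)$, by H\"older and the Sobolev embedding $\mathbf H_0^1(\Omega) \hookrightarrow \mathbf L^6(\Omega)$), shows that $v \mapsto j''(\bar u) v^2$ extends to a continuous quadratic form on $L^2(\Omega)$. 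Letting $k \to \infty$ with $v_k \to v$ in $L^2(\Omega)$ delivers $j''(\bar u)\,v^2 \geq 0$, as desired. The main technical obstacle is the construction of $A_k$: one must guarantee admissibility of $\bar u + t_k v_k$ precisely on the regions where $\bar u$ approaches the bounds $\mathsf a$, $\mathsf b$ without attaining them, and the definition of $A_k$ is tuned to circumvent this difficulty without losing $L^2$-convergence of $v_k$ to $v$.
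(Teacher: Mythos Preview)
Your proof is correct and follows essentially the same Casas--Tr\"oltzsch strategy as the paper: truncate $v$ to a bounded direction $v_k \in C_{\bar u}\cap L^\infty(\Omega)$ that vanishes on the thin strips $\{\mathsf a<\bar u<\mathsf a+1/k\}\cup\{\mathsf b-1/k<\bar u<\mathsf b\}$, use admissibility and a second-order Taylor expansion together with $j'(\bar u)v_k=0$ and the Lipschitz bound \eqref{eq:lemma_j}, and pass to the limit via the $L^2$-continuity of $v\mapsto j''(\bar u)v^2$ encoded in \eqref{eq:identity_j}. The only cosmetic differences are that the paper writes the truncation with $\Pi_{[-k,k]}$ instead of the indicator $\chi_{\{|v|\le k\}}$ and decouples the two limits (first $\rho\downarrow 0$ to obtain $j''(\bar u)w_k^2\ge 0$, then $k\to\infty$), whereas you run a diagonal sequence $t_k=1/k^2$; both variants lead to the same conclusion.
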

\begin{proof}
The proof follows from an adaptation of the arguments from \cite[Theorem 23]{MR3586845} and an application of Theorem \ref{eq:theorem_properties_j}. In a first step, we define for $w\in C_{\bar{u}}$, for a.e.~$x\in\Omega$, and for $k\in\mathbb{N}$, the function 
\begin{eqnarray*}
w_k(x):=\left\{
\begin{array}{cl}
0 & \text{if } x: \mathsf{a}<\bar{u}(x)<\mathsf{a}+k^{-1},\quad \mathsf{b}-k^{-1}<\bar{u}(x)<\mathsf{b},\\
\Pi_{[-k,k]}(w(x)) & \text{otherwise}.
\end{array}
\right.
\end{eqnarray*}
Starting from the fact that $w\in C_{\bar{u}}$, we obtain that \EO{$w_k\in C_{\bar{u}}$ for every} $k \in \mathbb{N}$. We also note that $w_k(x)\to w(x)$ as $k\uparrow\infty$ for a.e.~$x\in\Omega$ and that $|w_k(x)|\leq |w(x)|$ for a.e.~$x\in\Omega$. As a consequence of Lebesgue's dominated convergence theorem, we have that $w_k\to w$ in $L^2(\Omega)$ as $k\uparrow\infty$. We now define $\rho^{\star}=\min\{k^{-2},(b-a)k^{-1}\}$ and note that for $\rho \in (0, \rho^{\star}]$ we have $\bar{u}+\rho w_k \in \mathbb{U}_{ad}$. Thus, since $\bar{u}+\rho w_k$ is admissible, we use the fact that $\bar{u}$ is a local minimizer to obtain the inequality $j(\bar{u})\leq j(\bar{u}+\rho w_k)$ for $\rho$ sufficiently small. We now apply Taylor's theorem for $j$ at $\bar{u}$, and the relation $j'(\bar{u})w_k=0$, which follows from the fact that $w_k\in C_{\bar{u}}$, to obtain
\begin{align*}
0\leq j(\bar{u}+\rho w_k)-j(\bar{u})=\rho j'(\bar{u})w_k+\frac{\rho^2}{2}j''(\bar{u}+\rho \theta_k w_k)w_k^2 =\frac{\rho^2}{2}j''(\bar{u}+\rho\theta_k w_k)w_k^2,
\end{align*}
where $\theta_k\in(0,1)$. Based on the estimate \eqref{eq:lemma_j}, we obtain the following result as $\rho\downarrow 0$:
\begin{align*}
|j''(\bar{u}+\rho\theta_k w_k)w_k^2-j''(\bar{u})w_k^2|\lesssim\rho \|w_k\|_{L^2(\Omega)}^3\to 0,
\qquad \rho\downarrow 0.
\end{align*}
This implies that $j''(\bar{u})w_k^2\geq 0$ for every $k\in\mathbb{N}$. We now use the convergence $w_k\to w$ in $L^2(\Omega)$ as $k\uparrow\infty$ and the identity \eqref{eq:identity_j} to obtain that $j''(\bar{u})w^2 \geq 0$ for all $w \in C_{\bar{u}}$.
\end{proof}

\subsubsection{Sufficient second-order optimality conditions}

We now establish sufficient second-order optimality conditions \cite[Theorem 2.3]{MR2902693}, \cite[Theorem 23]{MR3586845}.

\begin{theorem}[sufficient second-order optimality conditions]\label{theorem_second_order_optimal}
\EO{Let $(\bar{u},(\bar{\mathbf{y}},\bar{\mathsf{p}}),(\bar{\mathbf{z}},\bar{\mathsf{r}}))$ $\in$ $\mathbb{U}_{ad}\times (\mathbf{H}_0^1(\Omega)\times L_0^2(\Omega)) \times (\mathbf{H}_0^1(\Omega) \times L_0^2(\Omega))$} satisfy the first-order optimality conditions \eqref{eq:brinkamn_problem_state}, \eqref{eq:brinkamn_problem_adjoint}, and \eqref{eq:variational_inequality}. If $j''(\bar{u})v^2> 0$ for all $v\in C_{\bar{u}}\setminus\{0\}$, then there exist $\delta>0$ and $\sigma>0$ such that
\begin{eqnarray}\label{eq:second_order_condition}
j(u)\geq j(\bar{u})+\frac{\sigma}{2}\|u-\bar{u}\|_{L^2(\Omega)}^2\quad\forall u\in\mathbb{U}_{ad}\cap B_{\delta}(\bar{u}),
\end{eqnarray}
where $B_{\delta}(\bar{u})$ denotes the closed ball in $L^2(\Omega)$ of radius $\delta$ and center $\bar{u}$.
\end{theorem}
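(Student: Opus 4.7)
The plan is to argue by contradiction following the well-established strategy for proving second-order sufficient conditions in the framework of optimal control with $L^{2}$-regularization (cf.\ \cite[Theorem 23]{MR3586845} and \cite[Theorem 2.3]{MR2902693}), the key ingredients being the identity and Lipschitz estimates for $j''$ from Theorem~\ref{eq:theorem_properties_j}. Specifically, I would suppose the quadratic growth \eqref{eq:second_order_condition} fails. Then, for each $n\in\mathbb{N}$, I can find $u_n\in\mathbb{U}_{ad}$ with $\rho_n:=\|u_n-\bar{u}\|_{L^2(\Omega)}\leq 1/n$ (so in particular $u_n\to\bar{u}$ in $L^2(\Omega)$) such that
\begin{equation*}
j(u_n)<j(\bar{u})+\tfrac{1}{2n}\rho_n^{2}.
\end{equation*}
Set $v_n:=(u_n-\bar{u})/\rho_n$, so $\|v_n\|_{L^2(\Omega)}=1$. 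By Banach--Alaoglu I extract a (non-relabeled) subsequence with $v_n\rightharpoonup v$ in $L^2(\Omega)$.

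The first main step is to verify that $v\in C_{\bar{u}}$. The sign conditions \eqref{eq:cone_condition} pass to the limit because they hold pointwise a.e.\ for each $v_n$ (since $\mathsf{a}\leq u_n\leq \mathsf{b}$) and the sets $\{v\geq0\}$, $\{v\leq0\}$ are closed and convex in $L^{2}$, hence weakly closed. For the condition $v(x)=0$ on $\{\bar{\mathfrak{d}}\neq 0\}$, I would combine \eqref{eq:variational_inequality} with the contradiction assumption: Taylor's expansion gives $j(u_n)-j(\bar{u})=j'(\bar{u})(u_n-\bar{u})+\tfrac12 j''(u_n^{\theta})(u_n-\bar{u})^{2}$ for some $u_n^{\theta}$ on the segment between $\bar{u}$ and $u_n$, where $j'(\bar{u})(u_n-\bar{u})=(\bar{\mathfrak{d}},u_n-\bar{u})_{L^2(\Omega)}\geq 0$. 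Using the pointwise sign analysis coming from the projection formula \eqref{eq:representation}, which yields $\bar{\mathfrak{d}}(x)v_n(x)\geq 0$ a.e., and dividing by $\rho_n$, the inequality above forces $(\bar{\mathfrak{d}},v_n)_{L^2(\Omega)}\to 0$; passing to the weak limit and exploiting that $\bar{\mathfrak{d}}\cdot v\geq 0$ a.e.\ then gives $\bar{\mathfrak{d}}\cdot v=0$ a.e., as required.

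Next I would divide the Taylor identity by $\rho_n^{2}$, use $j'(\bar{u})(u_n-\bar{u})\geq 0$, and exploit the Lipschitz bound \eqref{eq:lemma_j} with $\|u_n^{\theta}-\bar{u}\|_{L^2(\Omega)}\to 0$ to deduce
\begin{equation*}
\limsup_{n\to\infty}\,j''(\bar{u})v_n^{2}\leq 0.
\end{equation*}
Using the identity \eqref{eq:identity_j}, I would write $j''(\bar{u})v_n^{2}=\alpha\|v_n\|_{L^2(\Omega)}^{2}-2(v_n\boldsymbol{\varphi}_n,\bar{\mathbf{z}})_{\mathbf{L}^2(\Omega)}+\|\boldsymbol{\varphi}_n\|_{\mathbf{L}^2(\Omega)}^{2}$, with $(\boldsymbol{\varphi}_n,\zeta_n)=\mathcal{S}'(\bar{u})v_n$. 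Since $\bar{\mathbf{y}}\in\mathbf{L}^{\infty}(\Omega)$ by Theorem~\ref{theorem_regularity_Linfty}, the source $-v_n\bar{\mathbf{y}}$ of \eqref{eq:problem_S_first} lives in $\mathbf{L}^{2}(\Omega)$ and depends linearly-continuously on $v_n$; hence $\boldsymbol{\varphi}_n\rightharpoonup\boldsymbol{\varphi}$ in $\mathbf{H}_0^{1}(\Omega)$ where $(\boldsymbol{\varphi},\zeta)=\mathcal{S}'(\bar{u})v$, and by the compact embedding $\mathbf{H}_0^{1}(\Omega)\hookrightarrow\mathbf{L}^{4}(\Omega)$, strongly in $\mathbf{L}^{2}(\Omega)$ and $\mathbf{L}^{4}(\Omega)$. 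Combined with $\bar{\mathbf{z}}\in\mathbf{L}^{\infty}(\Omega)$ and $v_n\rightharpoonup v$ in $L^{2}(\Omega)$, this yields the convergences $(v_n\boldsymbol{\varphi}_n,\bar{\mathbf{z}})_{\mathbf{L}^2(\Omega)}\to(v\boldsymbol{\varphi},\bar{\mathbf{z}})_{\mathbf{L}^2(\Omega)}$ and $\|\boldsymbol{\varphi}_n\|_{\mathbf{L}^2(\Omega)}^{2}\to\|\boldsymbol{\varphi}\|_{\mathbf{L}^2(\Omega)}^{2}$, so that
\begin{equation*}
\lim_{n\to\infty}j''(\bar{u})v_n^{2}=\alpha-2(v\boldsymbol{\varphi},\bar{\mathbf{z}})_{\mathbf{L}^2(\Omega)}+\|\boldsymbol{\varphi}\|_{\mathbf{L}^2(\Omega)}^{2}=j''(\bar{u})v^{2}+\alpha(1-\|v\|_{L^2(\Omega)}^{2}).
\end{equation*}

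Finally I would split cases. If $v\neq 0$, then $v\in C_{\bar{u}}\setminus\{0\}$ so $j''(\bar{u})v^{2}>0$ by hypothesis, and combined with $\|v\|_{L^2(\Omega)}^{2}\leq 1$ (weak lower semicontinuity) the displayed limit is strictly positive, contradicting $\limsup j''(\bar{u})v_n^{2}\leq 0$. If $v=0$, then $\boldsymbol{\varphi}=0$ (uniqueness for \eqref{eq:problem_S_first}) and the displayed limit equals $\alpha>0$, again a contradiction. The main obstacle is the verification that $v\in C_{\bar{u}}$, more precisely the step that uses the first-order condition together with the contradiction inequality to obtain $v(x)=0$ on $\{\bar{\mathfrak{d}}\neq 0\}$; the remaining limit-passage is then mechanical, thanks to the compactness of $\mathcal{S}'(\bar{u})$ in $\mathbf{L}^{2}(\Omega)$ and the Lipschitz bound \eqref{eq:lemma_j}.
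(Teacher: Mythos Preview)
Your proof is correct and follows the same contradiction strategy as the paper: negate quadratic growth, normalize to $\|v_n\|_{L^2(\Omega)}=1$, extract a weak limit $v$, show $v\in C_{\bar u}$, and derive a contradiction from $\alpha>0$ via the identity \eqref{eq:identity_j}. The one genuine difference is that you invoke the Lipschitz estimate \eqref{eq:lemma_j} to replace $j''(u_n^{\theta})v_n^{2}$ by $j''(\bar u)v_n^{2}$ \emph{before} passing to the limit, so the linearization $(\boldsymbol\varphi_n,\zeta_n)=\mathcal{S}'(\bar u)v_n$ is taken at the fixed base point $\bar u$. The paper instead works directly with $j''(\hat u_k)w_k^{2}$ and must track how the auxiliary variables $(\hat{\boldsymbol\varphi}_k,\hat{\mathbf z}_k)$ depend on the moving base point $\hat u_k$; this costs an extra page of stability arguments. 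Your route is shorter precisely because \eqref{eq:lemma_j} has already absorbed that dependence.

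One caveat: you appeal to Theorem~\ref{theorem_regularity_Linfty} for $\bar{\mathbf y},\bar{\mathbf z}\in\mathbf L^{\infty}(\Omega)$, which requires $\mathbf f,\mathbf y_{\Omega}\in\mathbf L^{2}(\Omega)$, whereas the standing hypothesis in \S\ref{section:weak_formulation} is only $\mathbf f\in\mathbf H^{-1}(\Omega)$. This is inessential and easily removed: the embedding $\mathbf H_0^{1}(\Omega)\hookrightarrow\mathbf L^{4}(\Omega)$ already gives $v_n\bar{\mathbf y}\in\mathbf H^{-1}(\Omega)$ with norm bounded by $\|v_n\|_{L^2(\Omega)}\|\bar{\mathbf y}\|_{\mathbf L^4(\Omega)}$ (cf.\ \eqref{eq:estimate_aux2_theorem_second}), so $\{\boldsymbol\varphi_n\}$ is bounded in $\mathbf H_0^{1}(\Omega)$ and hence converges strongly in $\mathbf L^{4}(\Omega)$; and $\boldsymbol\varphi\cdot\bar{\mathbf z}\in L^{2}(\Omega)$ suffices to pass the weak limit in $(v_n\boldsymbol\varphi_n,\bar{\mathbf z})_{\mathbf L^2(\Omega)}$, exactly as the paper does.
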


\begin{proof}
We proceed by contradiction and assume that for every $k\in\mathbb{N}$ we can find an element $u_k\in\mathbb{U}_{ad}$ such that
\begin{align}\label{eq:estimate_contradiction}
\|\bar{u}-u_k\|_{L^2(\Omega)}<k^{-1},
\quad
j(u_k)<j(\bar{u})+(2k)^{-1}\|\bar{u}-u_k\|_{L^2(\Omega)}^2.
\end{align}
For $k\in\mathbb{N}$, we define $\rho_k := \|u_k - \bar{u}\|_{L^2(\Omega)}$ and $w_k:=(u_k-\bar{u})/\rho_k$. It is clear that, for every $k\in\mathbb{N}$, $\|w_k\|_{L^2(\Omega)}=1$. We can therefore deduce the existence of a nonrelabeled subsequence $\{w_k\}_{k\geq 1}\subset L^2(\Omega)$ such that $w_k \rightharpoonup w$ in $L^2(\Omega)$ as $k \uparrow\infty$. In the following, we prove that the limit point $w$ belongs to the cone $C_{\bar{u}}$ and that $w\equiv 0$.

\emph{Step 1:} $w\in C_{\bar{u}}$. We first note that the set of elements satisfying \eqref{eq:cone_condition} is closed and convex in $L^2(\Omega)$. Consequently, this set is weakly sequentially closed in $L^2(\Omega)$, and the weak limit $w$ satisfies \eqref{eq:cone_condition}. Thus, to prove that $w\in C_{\bar{u}}$, we need to verify the remaining condition in \eqref{eq:cone}. To do so, we apply the mean value theorem and the estimate of the right-hand side in \eqref{eq:estimate_contradiction} and obtain
\begin{align}\label{eq:estimate_jprime}
j'(\tilde{u}_k)w_k=\rho_k^{-1}(j(u_k)-j(\bar{u}))< (2k)^{-1}\rho_k\to 0,\quad k\uparrow \infty,
\end{align}
where $\tilde{u}_k=\bar{u}+\theta_k (u_k-\bar{u})$ and $\theta_k\in (0,1)$. We now define $(\tilde{\mathbf{y}}_k,\tilde{\mathsf{p}}_k):=\mathcal{S}\GC{(\tilde{u}_k)}$ and $(\tilde{\mathbf{z}}_k,\tilde{\mathsf{r}}_k)\in\mathbf{H}_0^1(\Omega)\times L_0^2(\Omega)$ as the solution of problem \eqref{eq:brinkamn_problem_adjoint}, where $\mathbf{y}$ and $u$ are replaced by $\tilde{\mathbf{y}}_k$ and $\tilde{u}_k$, respectively. A basic stability bound for the problem that $(\bar{\mathbf{y}}-\tilde{\mathbf{y}}_k,\bar{\mathsf{p}}-\tilde{\mathsf{p}}_k)$ solves combined with the fact that $\tilde{u}_k \to \bar{u}$ in $L^2(\Omega)$ as $k\uparrow \infty$ results in $\tilde{\mathbf{y}}_k \to \bar{\mathbf{y}}$ in $\mathbf{H}_0^1(\Omega)$ as $k\uparrow \infty$. Using similar arguments as above, we can conclude that $\tilde{\mathbf{z}}_k\to \bar{\mathbf{z}}$ in $\mathbf{H}_0^1(\Omega)$ as $k\uparrow \infty$. With these convergence results at hand, we obtain  $\alpha\tilde{u}_k-\tilde{\mathbf{y}}_k\cdot\tilde{\mathbf{z}}_k=:\tilde{\mathfrak{d}}_k\to\bar{\mathfrak{d}} = \alpha\bar{u}-\bar{\mathbf{y}}\cdot\bar{\mathbf{z}}$ in $L^2(\Omega)$ as $k\uparrow \infty$. In fact, we have
$
\|\tilde{\mathfrak{d}}_k-\bar{\mathfrak{d}}\|_{L^2(\Omega)}
\lesssim
\alpha \|\tilde{u}_k-\bar{u}\|_{L^2(\Omega)}
+
\|\nabla\tilde{\mathbf{y}}_k\|_{\mathbf{L}^2(\Omega)}\|\nabla( \tilde{\mathbf{z}}_k-\bar{\mathbf{z}})\|_{\mathbf{L}^2(\Omega)}
+ 
\|\nabla\bar{\mathbf{z}}\|_{\mathbf{L}^2(\Omega)}\|\nabla( \tilde{\mathbf{y}}_k-\bar{\mathbf{y}})\|_{\mathbf{L}^2(\Omega)} \rightarrow 0$ as $k \uparrow \infty$, where we have used $\|\nabla\bar{\mathbf{z}}\|_{\mathbf{L}^2(\Omega)}\lesssim \EO{\|\mathbf{f}\|_{\mathbf{L}^{2}(\Omega)}} + \|\mathbf{y}_\Omega\|_{\mathbf{L}^2(\Omega)}$ and $\|\nabla\tilde{\mathbf{y}}_k\|_{\mathbf{L}^2(\Omega)}\lesssim \EO{\|\mathbf{f}\|_{\mathbf{L}^{2}(\Omega)}}$, which is uniform in $k$. We now use $w_k \rightharpoonup w$ in $L^2(\Omega)$ as $k \uparrow\infty$ and \eqref{eq:estimate_jprime} to arrive at $j'(\bar{u})w = ( \bar{\mathfrak{d}}, w )_{L^2(\Omega)} = \lim_{k\uparrow \infty} ( \tilde{\mathfrak{d}}_k,w_k)_{L^2(\Omega)} =\lim_{k\uparrow \infty}j'(\tilde{u}_k)w_k\leq 0$. On the other hand, the first-order condition \eqref{eq:variational_inequality} shows that $\int_{\Omega}\bar{\mathfrak{d}}(x)w(x)\mathrm{d}x \geq 0$. As a result, we deduce that $\int_{\Omega}|\bar{\mathfrak{d}}(x)w(x)|\mathrm{d}x=\int_{\Omega}\bar{\mathfrak{d}}(x)w(x)\mathrm{d}x=0$. We can thus conclude that $\bar{\mathfrak{d}} \neq 0$ implies that $w=0$ a.e.~in $\Omega$. Therefore, $w\in C_{\bar{u}}$.

\emph{Step 2:} $w\equiv 0$. Using Taylor's theorem, $j'(\bar{u})(u_k-\bar{u})\geq 0$, and the right-hand side estimate in \eqref{eq:estimate_contradiction}, we obtain $j''(\hat{u}_k)w_k^2 < k^{-1}\to 0$ as $k \uparrow \infty$, where $\hat{u}_k:= \bar{u}+\hat{\theta}_k (u_k-\bar{u})$ and $\theta_k \in (0,1).$ We now prove the following result: $j''(\bar{u})w^2\leq \liminf_{k\uparrow \infty} j''(\hat{u}_k)w_k^2$. To do this, we refer to \eqref{eq:identity_j} and write
\begin{equation*}
j''(\hat{u}_k)w_k^2=\alpha\|w_k\|_{L^2(\Omega)}^2-2(w_k\hat{\boldsymbol{\varphi}}_k,\hat{\mathbf{z}}_k)_{\mathbf{L}^2(\Omega)}+\|\hat{\boldsymbol{\varphi}}_k\|_{\mathbf{L}^2(\Omega)}^2.
\end{equation*}
Here, $(\hat{\mathbf{y}}_k,\hat{\mathsf{p}}_k):=\mathcal{S}\GC{(\hat{u}_k)}$, $(\hat{\mathbf{z}}_k,\hat{\mathsf{r}}_k)$ denotes the solution to problem \eqref{eq:brinkamn_problem_adjoint}, where $u$ and $\mathbf{y}$ are replaced by $\hat{u}_k$ and $\hat{\mathbf{y}}_k$, respectively, and $(\hat{\boldsymbol{\varphi}}_k,\hat{\zeta}_k)$ solves \eqref{eq:problem_S_first}, where $u$, $v$ and $\mathbf{y}$ are replaced by $\hat{u}_k$, $w_k$ and $\hat{\mathbf{y}}_k$, respectively. We now note that
\begin{multline*}
\|\nabla(\bar{\mathbf{y}}-\hat{\mathbf{y}}_k)\|_{\mathbf{L}^2(\Omega)}+\|\nabla(\bar{\mathbf{z}}-\hat{\mathbf{z}}_k)\|_{\mathbf{L}^2(\Omega)}
\\
\lesssim
\left( \EO{\|\mathbf{f}\|_{\mathbf{L}^{2}(\Omega)}}
+
\|\mathbf{y}_{\Omega}\|_{\mathbf{L}^2(\Omega)}
\right)
\|\bar{u}-\hat{u}_k\|_{L^2(\Omega)}\to 0,
\quad k\uparrow\infty.
\end{multline*}

We now examine the convergence properties of the sequence $\{ ( \hat{\boldsymbol{\varphi}}_k,\hat{\zeta}_k) \}_{k \geq 1}$. For this purpose, we first write the problem that solves $(\boldsymbol{\varphi}-\hat{\boldsymbol{\varphi}}_k, \zeta-\hat{\zeta}_k)$:
\begin{multline*}
(\nabla (\boldsymbol{\varphi}-\hat{\boldsymbol{\varphi}}_k),\nabla \mathbf{v})_{\mathbf{L}^2(\Omega)}+( \bar{u}(\boldsymbol{\varphi}-\hat{\boldsymbol{\varphi}}_k), \mathbf{v})_{\mathbf{L}^2(\Omega)}-(\zeta-\hat{\zeta}_k,\text{div }\mathbf{v})_{L^2(\Omega)} \\=- ((w-w_k)\bar{\mathbf{y}},\mathbf{v})_{\mathbf{L}^2(\Omega)}+(w_k(\hat{\mathbf{y}}_k-\bar{\mathbf{y}}),\mathbf{v})_{\mathbf{L}^2(\Omega)}+((\hat{u}_k-\bar{u})\hat{\boldsymbol{\varphi}}_k,\mathbf{v})_{\mathbf{L}^2(\Omega)}
=:\text{I}_k+\text{II}_k+\text{III}_k,
\end{multline*}
and $(\mathsf{s},\text{div }(\boldsymbol{\varphi} - \hat{\boldsymbol{\varphi}}_k))_{L^2(\Omega)}  =  0$ for all $\mathbf{v}\in \mathbf{H}_0^1(\Omega)$ and $\mathsf{s}\in L_0^2(\Omega)$, respectively. Since $\bar{\mathbf{y}}\cdot\mathbf{v}\in L^{2}(\Omega)$, the weak convergence $w_k \rightharpoonup w$ in $L^2(\Omega)$ as $k \uparrow\infty$ allows us to obtain that $|\text{I}_k|\to 0$. We control the term $\text{II}_k$ using the Sobolev embedding $\mathbf{H}_0^1(\Omega) \hookrightarrow \mathbf{L}^{4}(\Omega)$:
\begin{equation*}
|\text{II}_k|
\lesssim
\|w_k\|_{L^2(\Omega)}\|\nabla(\bar{\mathbf{y}}-\hat{\mathbf{y}}_k)\|_{\mathbf{L}^{2}(\Omega)}\|\nabla\mathbf{v}\|_{\mathbf{L}^{2}(\Omega)}
\to 0,
\quad k\uparrow\infty.
\end{equation*}
The control of the term $\text{III}_k$ follows similar arguments. In fact, we have
\begin{align*}
|\text{III}_k|\leq
\| \hat{u}_k-\bar{u}\|_{L^{2}(\Omega)}
\|\nabla\hat{\boldsymbol{\varphi}}_k\|_{\mathbf{L}^{2}(\Omega)} \|\nabla\mathbf{v}\|_{\mathbf{L}^{2}(\Omega)}
\lesssim
\| \hat{u}_k-\bar{u}\|_{L^{2}(\Omega)} \|\nabla\mathbf{v}\|_{\mathbf{L}^{2}(\Omega)} \rightarrow 0
\end{align*}
as $k \uparrow \infty$, where we used $\hat{u}_k\to\bar{u}$ in $L^2(\Omega)$ as $k\uparrow\infty$ and $\|\nabla\hat{\boldsymbol{\varphi}}_k\|_{\mathbf{L}^2(\Omega)}\lesssim\|\EO{\mathbf{f}\|_{\mathbf{L}^{2}(\Omega)}}$, which is uniform in $k$. We can therefore conclude that $\hat{\boldsymbol{\varphi}}_k\rightharpoonup\boldsymbol{\varphi}$ in $\mathbf{H}_0^1(\Omega)$ as $k \uparrow \infty$ and that $\hat{\boldsymbol{\varphi}}_k \rightarrow \boldsymbol{\varphi}$ in $\mathbf{L}^r(\Omega)$ for every $r < \infty$ when $d = 2$ and for every $r < 6$ when $d = 3$. In particular, $\|\hat{\boldsymbol{\varphi}}_k\|_{\mathbf{L}^2(\Omega)} \to \|\boldsymbol{\varphi}\|_{\mathbf{L}^2(\Omega)}$ as $k\uparrow\infty$.

The convergence of $\{ (w_k\hat{\boldsymbol{\varphi}}_k,\hat{\mathbf{z}}_k)_{\mathbf{L}^2(\Omega)} \}_{k \geq 1}$ to $(w\boldsymbol{\varphi},\bar{\mathbf{z}})_{\mathbf{L}^2(\Omega)}$ in $\mathbb{R}$ is as follows:
\begin{multline*}
\left| (w\boldsymbol{\varphi},\bar{\mathbf{z}})_{\mathbf{L}^2(\Omega)}-(w_k\hat{\boldsymbol{\varphi}}_k,\hat{\mathbf{z}}_k)_{\mathbf{L}^2(\Omega)} \right|
\leq |((w-w_k)\boldsymbol{\varphi},\bar{\mathbf{z}})_{\mathbf{L}^2(\Omega)}|
\\
+\left| (w_k(\boldsymbol{\varphi}-\hat{\boldsymbol{\varphi}}_k),\bar{\mathbf{z}})_{\mathbf{L}^2(\Omega)} \right|
+
\left| (w_k\hat{\boldsymbol{\varphi}}_k,\bar{\mathbf{z}}-\hat{\mathbf{z}}_k)_{\mathbf{L}^2(\Omega)} \right|
\to 0,
\quad k\uparrow\infty,
\end{multline*}
where we used the weak convergence $w_k \rightharpoonup w$ in $L^2(\Omega)$ as $k\uparrow\infty$, the properties $\boldsymbol{\varphi}\cdot \bar{\mathbf{z}}\in L^2(\Omega)$ and $\|w_k\|_{L^2(\Omega)}=1$, the convergence result $\|\boldsymbol{\varphi}-\hat{\boldsymbol{\varphi}}_k \|_{\mathbf{L}^4(\Omega)}\to 0$ as $k\uparrow\infty$, the bound $\|\nabla \bar{\mathbf{z}}\|_{\mathbf{L}^2(\Omega)}\lesssim \EO{\|\mathbf{f}\|_{\mathbf{L}^{2}(\Omega)}}+\|\mathbf{y}_{\Omega}\|_{\mathbf{L}^2(\Omega)}$, and $\|\nabla(\bar{\mathbf{z}}-\hat{\mathbf{z}}_k)\|_{\mathbf{L}^{2}(\Omega)}\to 0$ as $k\uparrow\infty$.

A collection of the two previous convergence results, namely $(w_k\hat{\boldsymbol{\varphi}}_k,\hat{\mathbf{z}}_k)_{\mathbf{L}^2(\Omega)} \to (w\boldsymbol{\varphi},\bar{\mathbf{z}})_{\mathbf{L}^2(\Omega)}$ in $\mathbb{R}$ and $\|\hat{\boldsymbol{\varphi}}_k\|_{\mathbf{L}^2(\Omega)} \to \|\boldsymbol{\varphi}\|_{\mathbf{L}^2(\Omega)}$ in $\mathbb{R}$ as $k\uparrow\infty$, in conjunction with the fact that the square of $\| \cdot \|_{L^2(\Omega)}$ is weakly lower semicontinuous in $L^2(\Omega)$, show that $j''(\bar{u})w^2 \leq \liminf_{k\uparrow \infty} j''(\hat{u}_k)w_k^2$. Since $\liminf_{k\uparrow \infty} j''(\hat{u}_k)w_k^2 \leq 0$, we conclude that $j''(\bar{u})w^2 \leq 0$. Finally, we use that $w \in C_{\bar u}$ and that $\bar u$ satisfies the second-order condition $j''(\bar u) v^2 > 0$ for all $v \in C_{\bar u} \setminus \{ 0 \}$ to conclude that $w \equiv 0$.

\emph{Step 3:} \emph{The contradiction}. Since $w \equiv 0$, it is immediate that $\hat{\boldsymbol{\varphi}}_k \rightharpoonup 0$ in $\mathbf{H}_0^1(\Omega)$ as $k\uparrow\infty$. We therefore use the limit $\liminf_{k\uparrow \infty} j''(\hat{u}_k)w_k^2 \leq 0$ and the identities $\alpha = \alpha \|w_k\|_{L^2(\Omega)}^2 = j''(\hat{u}_k)w_k^2 + 2(w_k\hat{\boldsymbol{\varphi}}_k,\hat{\mathbf{z}}_k)_{\mathbf{L}^2(\Omega)}-\|\hat{\boldsymbol{\varphi}}_k\|_{\mathbf{L}^{2}(\Omega)}^2$ to conclude that $\alpha\leq 0$. This is a contradiction and concludes the proof.
\end{proof}

We conclude with an equivalent result that is important for deriving error bounds for the numerical methods that we will propose in our paper. To present such a result, we introduce the set 
\begin{equation}
\label{eq:Cutau}
 \EO{C_{\bar{u}}^{\tau}:=\{v \in L^2(\Omega) \text{ satisfying } \eqref{eq:cone_condition} \text{ and } v(x)=0 \text{ if } |\bar{\mathfrak{d}}(x)| > \tau \textrm{~a.e.~in~} \Omega \}.}
\end{equation}

The \EO{aforementioned result is as follows.}

\begin{theorem}[equivalent optimality conditions]\label{theorem_equivalent_condition}
\EO{Let $(\bar{u},(\bar{\mathbf{y}},\bar{\mathsf{p}}),(\bar{\mathbf{z}},\bar{\mathsf{r}}))$ $\in$ $\mathbb{U}_{ad}\times (\mathbf{H}_0^1(\Omega)\times L_0^2(\Omega)) \times (\mathbf{H}_0^1(\Omega) \times L_0^2(\Omega))$} satisfy the first-order optimality conditions \eqref{eq:brinkamn_problem_state}, \eqref{eq:brinkamn_problem_adjoint}, and \eqref{eq:variational_inequality}. Then, the following statements are equivalent:
\begin{align}\label{equivalence_condition}
j''(\bar{u})v^2 >0\quad \forall v \in  C_{\bar{u}}\setminus\{0\} \iff \exists \mu,\tau>0: j''(\bar{u})v^2  \geq \mu\|v\|_{L^2{(\Omega)}}^2 \quad \forall v\in C_{\bar{u}}^{\tau}.
\end{align}
\end{theorem}
\begin{proof}
	The result follows from the arguments developed in the proofs of \cite[Theorem 25]{MR3586845} and Theorem \ref{theorem_second_order_optimal}. For the sake of brevity, we omit the details.
\end{proof}

\section{Finite element approximation of the state equations}\label{fem}
In this section, we present a standard finite element method for approximating the solution of the Stokes--Brinkman equations \eqref{eq:brinkman_problem}. Before describing the numerical method, we introduce some terminology and basic ingredients \cite{MR2373954,CiarletBook,Guermond-Ern}.

We denote by $\mathscr{T} = \{ K\}$ a conforming partition or mesh of $\bar{\Omega}$ into closed simplices $K$ with size $h_K = \text{diam}(K)$. Define $h:=\max \{ h_K: K \in \mathscr{T} \}$. We denote by $\mathbb{T}$ a collection of conforming meshes $\mathscr{T}$, which are refinements of an initial mesh $\mathscr{T}_0$. We assume that the collection $\mathbb{T}$ satisfies the so-called shape regularity condition \cite{MR2373954,CiarletBook,Guermond-Ern}. We define $\mathscr{S}$ as the set of interelement boundaries $\gamma$ of $\T$. For $K \in \T$, we define $\mathscr{S}_K$ as the subset of $\mathscr{S}$ that contains the sides of $K$. We denote by $\mathcal{N}_{\gamma}$, for $\gamma \in \mathscr{S}$, the subset of $\T$ that contains the two elements that have $\gamma$ as a side. Finally, we define the following \emph{stars} or \emph{patches} associated with an element $K \in \T$:
\begin{equation}
\label{eq:patch}
\mathcal{N}_K=  \{K' \in \T: \mathscr{S}_K \cap \mathscr{S}_{K'} \neq \emptyset \},
\qquad 
\mathcal{N}_K^*= \{K' \in \T: K \cap {K'} \neq \emptyset \}.
\end{equation}
In an abuse of notation, we use $\mathcal{N}_K$ and $\mathcal{N}_K^*$ in the following to denote either the sets themselves or the union of their elements.

To perform an a posteriori error analysis, we introduce the following notation. If $K^{+}, K^{-} \in \T$ are such that $K^{+} \neq K^{-}$ and $\partial K^{+} \cap \partial K^{-} = \gamma$, we define the \emph{jump} of the discrete tensor-valued function $\mathbf{w}_{h}$ on $\gamma$ by $\llbracket \mathbf{w}_{h} \cdot \mathbf{n} \rrbracket:= \mathbf{w}_{h} \cdot \mathbf{n}^{+} |^{}_{K^{+}} + \mathbf{w}_{h} \cdot \mathbf{n}^{-}  |^{}_{K^{-}}$, where $\mathbf{n}^{+}$ and $\mathbf{n}^{-}$ denote the unit normals on $\gamma$ pointing towards $K^{+}$ and $K^{-}$, respectively.

\subsection{Finite element spaces}
Let $\mathscr{T}$ be a mesh in $\mathbb{T}$. To approximate the velocity field $\boldsymbol{\mathsf{y}}$ and the pressure $\mathfrak{p}$ of the fluid, we consider a pair $(\mathbf{X}_h,M_h)$ of finite element spaces satisfying a uniform discrete inf-sup condition: There exists a constant $\tilde{\beta}>0$ independent of $h$ such that
\begin{equation}\label{eq:discrete_infsup}
\inf _{\mathsf{q}_h \in M_h} \sup _{\mathbf{v}_h \in \mathbf{X}_h} \frac{ \int_{\Omega} \mathsf{q}_h\operatorname{div} \mathbf{v}_h \mathrm{d}x }{\left\|\nabla\mathbf{v}_h\right\|_{\mathbf{L}^2(\Omega)}\left\|\mathsf{q}_h\right\|_{L^2(\Omega)}} \geqslant \tilde{\beta}>0.
\end{equation}
We will look in particular at the following pairs, which are significant in the literature:

\begin{enumerate}
\item[(a)] The \emph{mini element} \cite[Section 4.2.4]{Guermond-Ern}: In this case,
\begin{eqnarray}\label{ME:vel_space}
\mathbf{X}_h&=&\{\mathbf{v}_{h}\in\mathbf{C}(\bar{\Omega}): \mathbf{v}_{h}|_{K}\in[\mathbb{W}(K)]^{d} \,\,\forall K\in\T \} \cap \mathbf{H}_0^1(\Omega),
\\ 
\label{ME:press_space}
M_h&=&\{\mathsf{q}_{h}\in L_0^2(\Omega)\cap C(\bar{\Omega}): \mathsf{q}_{h}|_{K}\in\mathbb{P}_1(K) \,\,\forall K\in\T \},
\end{eqnarray}
where $\mathbb{W}(K):=\mathbb{P}_1(K)\oplus \mathbb{B}(K)$, and $\mathbb{B}(K)$ denotes the space spanned by a local bubble function.

\item[(b)] The lowest order \emph{Taylor--Hood element} \cite[Section 4.2.5]{Guermond-Ern}: In this scenario,
\begin{eqnarray}\label{TH:vel_space}
\mathbf{X}_h
&=&
\{\mathbf{v}_{h}\in\mathbf{C}(\bar{\Omega}): \mathbf{v}_{h}|_{K}\in[\mathbb{P}_2(K)]^{d} \,\,\forall K\in\T \} \cap \mathbf{H}_0^1(\Omega),\\ \label{TH:press_space}
M_h
&=&
\{\mathsf{q}_{h}\in L_0^2(\Omega)\cap C(\bar{\Omega}): \mathsf{q}_{h}|_{K}\in\mathbb{P}_1(K) \,\,\forall K\in\T \}.
\end{eqnarray}
\end{enumerate}

A proof of the inf-sup condition \eqref{eq:discrete_infsup} for the mini element can be found in \cite[Lemma 4.20]{Guermond-Ern}. Provided that the mesh $\T_h$ contains at least three triangles in two dimensions and that each tetrahedron has at least one internal vertex in three dimensions, a proof of \eqref{eq:discrete_infsup} for the Taylor--Hood element can be found in \cite[Theorem 8.8.1]{MR3097958} and \cite[Theorem 8.8.2]{MR3097958}, respectively.

\subsection{The finite element method}
\EO{Given $\boldsymbol{\mathfrak{f}} \in \mathbf{L}^{2}(\Omega)$ and $\mathfrak{u} \in \mathcal{A}_0$,} we introduce the following approximation of problem \eqref{eq:brinkman_problem}: Find $(\boldsymbol{\mathsf{y}}_{h},\mathfrak{p}_{h})\in \mathbf{X}_h\times M_h$ such that
\begin{equation}\label{eq:modeldiscrete}
\begin{array}{rcl}
(\nabla \boldsymbol{\mathsf{y}}_{h},\nabla \mathbf{v}_{h})_{\mathbf{L}^2(\Omega)}\!+\!( \mathfrak{u}\boldsymbol{\mathsf{y}}_{h}, \mathbf{v}_{h})_{\mathbf{L}^2(\Omega)}\!-\!(\mathfrak{p}_{h},\text{div }\mathbf{v}_{h})_{L^2(\Omega)}& = & \EO{( \boldsymbol{\mathfrak{f}},\mathbf{v}_{h} )_{\mathbf{L}^2(\Omega)}},\\
(\mathsf{q}_{h},\text{div }\boldsymbol{\mathsf{y}}_{h})_{L^2(\Omega)} & = & 0
\end{array}
\end{equation}
for all $\mathbf{v}_{h}\in \mathbf{X}_h$ and $\mathsf{q}_{h}\in M_h$, respectively. The existence and uniqueness of $(\boldsymbol{\mathsf{y}}_{h},\mathfrak{p}_{h})$ $\in \mathbf{X}_h\times M_h$ is standard. In addition, we have $\|\nabla \boldsymbol{\mathsf{y}}_{h}\|_{\mathbf{L}^2(\Omega)} + \|\mathfrak{p}_{h}\|_{L^2(\Omega)}\lesssim \EO{\| \boldsymbol{\mathfrak{f}}\|_{\mathbf{L}^{2}(\Omega)}}$.

In the following, we present a priori error bounds for the approximation \eqref{eq:modeldiscrete}.

\begin{theorem}[error estimates]\label{eq:theorem_error_estimates}
Let $\Omega\subset \mathbb{R}^d$ be a convex polytope, let $\boldsymbol{\mathfrak{f}}\in\mathbf{L}^2(\Omega)$, and let \EO{$\mathfrak{u} \in \mathcal{A}_0$}. Let $(\boldsymbol{\mathsf{y}},\mathfrak{p})\in\mathbf{H}^2(\Omega)\cap\mathbf{H}_0^1(\Omega) \times H^1(\Omega)\cap L_0^2(\Omega)$ be the solution to the problem \eqref{eq:brinkman_problem}, and let $(\boldsymbol{\mathsf{y}}_{h},\mathfrak{p}_{h})\in\mathbf{X}_h\times  M_h$ be its finite element approximation obtained as the solution to \eqref{eq:modeldiscrete}. Then, we have the following error estimate:
\begin{align}\label{eq:error_estimate_Fem}
\|\nabla(\boldsymbol{\mathsf{y}}-\boldsymbol{\mathsf{y}}_{h})\|_{\mathbf{L}^2(\Omega)}+\|\mathfrak{p}-\mathfrak{p}_{h}\|_{L^2(\Omega)}
\lesssim 
h
\|\boldsymbol{\mathfrak{f}}\|_{\mathbf{L}^2(\Omega)}.
\end{align}
The hidden constant is independent of $(\boldsymbol{\mathsf{y}},\mathfrak{p})$, $(\boldsymbol{\mathsf{y}}_{h},\mathfrak{p}_{h})$, and $h$, \EO{but depends on $\| \mathfrak{u} \|_{L^2(\Omega)}$.}
\end{theorem}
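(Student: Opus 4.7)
The plan is to combine the standard Babu\v{s}ka--Brezzi theory for Galerkin approximations of saddle-point problems with the elliptic regularity already established in Theorem \ref{eq:theorem_regularity} and classical nodal interpolation estimates. First I observe that the bilinear form $a(\mathbf{w},\mathbf{v}) := (\nabla \mathbf{w},\nabla \mathbf{v})_{\mathbf{L}^2(\Omega)} + (\mathfrak{u}\mathbf{w},\mathbf{v})_{\mathbf{L}^2(\Omega)}$ is continuous on $\mathbf{H}_0^1(\Omega)\times \mathbf{H}_0^1(\Omega)$ with constant $1 + \mathsf{b}\,C_P^2$ and coercive on the same space with constant at least $1$, where $C_P$ denotes the Poincar\'e constant. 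These bounds are uniform in $\mathfrak{u}\in \mathbb{U}_{ad}$ because $\mathsf{a}\leq \mathfrak{u}\leq \mathsf{b}$ a.e.\ in $\Omega$. Combining these properties with the uniform discrete inf-sup condition \eqref{eq:discrete_infsup} and applying the standard theory for saddle-point problems (see, e.g., \cite[Theorem 4.2.3]{Guermond-Ern}) yields the quasi-best approximation estimate
\begin{equation*}
\|\nabla(\boldsymbol{\mathsf{y}} - \boldsymbol{\mathsf{y}}_h)\|_{\mathbf{L}^2(\Omega)} + \|\mathfrak{p} - \mathfrak{p}_h\|_{L^2(\Omega)} \lesssim \inf_{\mathbf{v}_h\in\mathbf{X}_h}\|\nabla(\boldsymbol{\mathsf{y}} - \mathbf{v}_h)\|_{\mathbf{L}^2(\Omega)} + \inf_{\mathsf{q}_h\in M_h}\|\mathfrak{p} - \mathsf{q}_h\|_{L^2(\Omega)},
\end{equation*}
with a hidden constant that depends on $\mathsf{b}$, $C_P$, and $\tilde\beta$ only.

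Next I invoke Theorem \ref{eq:theorem_regularity}, which, since $\Omega$ is a convex polytope and $\boldsymbol{\mathfrak{f}}\in \mathbf{L}^2(\Omega)$, guarantees that $(\boldsymbol{\mathsf{y}},\mathfrak{p})\in (\mathbf{H}^2(\Omega)\cap \mathbf{H}_0^1(\Omega))\times (H^1(\Omega)\cap L_0^2(\Omega))$ together with the bound $\|\boldsymbol{\mathsf{y}}\|_{\mathbf{H}^2(\Omega)} + \|\mathfrak{p}\|_{H^1(\Omega)} \lesssim \|\boldsymbol{\mathfrak{f}}\|_{\mathbf{L}^2(\Omega)}$, with hidden constant independent of $\mathfrak{u}$. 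I then bound the two infima by picking concrete quasi-interpolants: for the velocity I take $\mathbf{v}_h$ to be the Scott--Zhang interpolant of $\boldsymbol{\mathsf{y}}$ into $\mathbf{X}_h$, and for the pressure I take $\mathsf{q}_h$ to be the $L^2$-projection of $\mathfrak{p}$ onto $M_h$ (which preserves the zero-mean constraint). Since both pairs in \eqref{ME:vel_space}--\eqref{TH:press_space} contain the continuous piecewise linear space, standard interpolation theory yields
\begin{equation*}
\|\nabla(\boldsymbol{\mathsf{y}} - \mathbf{v}_h)\|_{\mathbf{L}^2(\Omega)} \lesssim h\,\|\boldsymbol{\mathsf{y}}\|_{\mathbf{H}^2(\Omega)}, \qquad \|\mathfrak{p} - \mathsf{q}_h\|_{L^2(\Omega)} \lesssim h\,\|\mathfrak{p}\|_{H^1(\Omega)}.
\end{equation*}

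Inserting these two estimates into the quasi-best approximation bound and invoking the regularity estimate delivers the desired $\mathcal{O}(h)$ rate in terms of $\|\boldsymbol{\mathfrak{f}}\|_{\mathbf{L}^2(\Omega)}$. I do not foresee a genuine obstacle here: the only point requiring care is that every constant remains bounded as $\mathfrak{u}$ varies over $\mathbb{U}_{ad}$, and this follows at once from the $L^\infty$ box constraint $\mathsf{a}\leq \mathfrak{u}\leq \mathsf{b}$. In short, once Theorem \ref{eq:theorem_regularity} is available this is a textbook application of the Babu\v{s}ka--Brezzi--Céa machinery for mixed methods.
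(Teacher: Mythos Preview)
Your proposal is correct and follows essentially the same route as the paper's proof: quasi-best approximation via the abstract saddle-point theory (the paper cites \cite[Lemma 2.44]{Guermond-Ern}), then the regularity of Theorem \ref{eq:theorem_regularity}, then standard interpolation estimates. You supply more detail than the paper does (explicit continuity/coercivity constants, specific choice of interpolants, uniformity in $\mathfrak{u}$), but the argument is the same.
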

\begin{proof}
\EO{A quasi-best approximation result follows directly from \cite[Lemma 2.44]{Guermond-Ern}, with a hidden constant that depends on $ \mathfrak{u} $ as $ 1 + \| \mathfrak{u} \|_{L^2(\Omega)}$.} With this result, the bound \eqref{eq:error_estimate_Fem} follows from the regularity results of Theorem \ref{eq:theorem_regularity} and standard interpolation error estimates. \EO{The hidden constant in \eqref{eq:error_estimate_Fem} depends on $\mathfrak{u}$ as $(1 + \| \mathfrak{u} \|_{L^2(\Omega)})^2$. For brevity, we omit the details of the proof.}
\end{proof}

\begin{remark}[Taylor--Hood approximation]
If the solution $(\mathbf{y},\mathsf{p})$ of problem \eqref{eq:brinkman_problem} belongs to $\mathbf{H}^3(\Omega) \cap\mathbf{H}_0^1(\Omega)\times H^2(\Omega)\cap L_0^2(\Omega)$, then we have the following error bound:
\begin{align*}
\|\nabla(\boldsymbol{\mathsf{y}}-\boldsymbol{\mathsf{y}}_{h})\|_{\mathbf{L}^2(\Omega)}+\|\mathfrak{p}-\mathfrak{p}_{h}\|_{L^2(\Omega)}
\lesssim 
h^2 \left( \|\boldsymbol{\mathsf{y}}\|_{\mathbf{H}^3(\Omega)} + \|\mathfrak{p}\|_{H^2(\Omega)} \right).
\end{align*}
\end{remark}

In \EO{the following, we analyze the differentiability properties of the discrete map $\mathfrak{u} \mapsto (\boldsymbol{\mathsf{y}}_h,\mathfrak{p}_h)$, where $(\boldsymbol{\mathsf{y}}_h,\mathfrak{p}_h)$ corresponds to the solution of the discrete problem \eqref{eq:modeldiscrete}.}

\EO{\begin{theorem}[differentiability properties of $\mathfrak{u} \mapsto (\boldsymbol{\mathsf{y}}_h,\mathfrak{p}_h)$]
\label{theorem_diff_discrete}
For every $h>0$, there exists an open set $\mathcal{A}_h$ in $L^2(\Omega)$ such that $\mathcal{A}_0\subset \mathcal{A}_h\subset \mathcal{A}$, and for all $\mathfrak{u}\in \mathcal{A}_h$ the problem \eqref{eq:modeldiscrete} has a unique solution $(\boldsymbol{\mathsf{y}}_h,\mathfrak{p}_h)\in\mathbf{X}_h\times M_h$. Moreover, the discrete control-to-state map $\mathcal{S}_h:\mathcal{A}_h \rightarrow \mathbf{X}_h\times M_h$ defined by $S_h(\mathfrak{u})=(\boldsymbol{\mathsf{y}}_h,\mathfrak{p}_h)$, is of class $C^2$. Finally, for every $u\in\mathcal{A}_h$ and for every $v\in L^2(\Omega)$, $\mathcal{S}_h'(u)v=(\boldsymbol{\varphi}_h,\zeta_h)$ is the unique solution to the problem
\begin{equation}\label{eq:problem_S_first_discrete}
\begin{array}{rl}
(\nabla \boldsymbol{\varphi}_h,\nabla \mathbf{v}_h)_{\mathbf{L}^2(\Omega)}+( u\boldsymbol{\varphi}_h, \mathbf{v}_h)_{\mathbf{L}^2(\Omega)}-(\zeta_h,\mathrm{div~} \mathbf{v}_h)_{L^2(\Omega)}
=
&
\hspace{-2mm}
-(v\mathbf{y}_h,\mathbf{v}_h)_{\mathbf{L}^2(\Omega)},
\\
(\mathsf{s}_h,\mathrm{div~} \boldsymbol{\varphi}_h)_{L^2(\Omega)}
=
&
\hspace{-2mm}
0
\end{array}
\end{equation}
for all $(\mathbf{v}_h,\mathsf{s}_h) \in \mathbf{X}_h\times M_h$.
\end{theorem}
}

\begin{proof}
The \EO{proof follows from an adaptation of the arguments presented in the proofs of Theorem \ref{theorem:differentiablity_stokes_brinkman} and \cite[Theorem 4.1]{MR4956345}.}

Let \EO{us introduce the operators 
$\mathbb{A}_h: \mathbf{X}_h \rightarrow \mathbf{X}_h'$, 
$\mathbb{B}_h:\mathbf{X}_h \rightarrow M_h'$, 
$\mathbb{C}_h:L^2(\Omega) \times \mathbf{X}_h \rightarrow \mathbf{X}_h'$, 
$\mathbb{F}_h \in \mathbf{X}_h'$, and
$\mathcal{F}_{h}: \mathbf{X}_h\times M_h\times L^2(\Omega) \rightarrow \mathbf{X}_h'\times M_h'$
by
\begin{align*}
& \langle \mathbb{A}_h \boldsymbol{\mathsf{y}}_{h},\mathbf{v}_h\rangle := (\nabla \boldsymbol{\mathsf{y}}_{h},\nabla \mathbf{v}_{h})_{\mathbf{L}^2(\Omega)}
\, \forall \mathbf{v}_h \in \mathbf{X}_h,
\\
& \langle \mathbb{B}_h \mathbf{v}_h,\mathsf{q}_{h}\rangle :=-(\mathsf{q}_h,\mathrm{div~}\mathbf{v}_h)_{L^2(\Omega)}
\, \forall \mathsf{q}_{h} \in M_h,
\\
& \langle\mathbb{C}_h (\mathfrak{u},\boldsymbol{\mathsf{y}}_{h}),\mathbf{v}_h\rangle:= (\mathfrak{u},\boldsymbol{\mathsf{y}}_{h} \cdot \mathbf{v}_h)_{L^2(\Omega)}
\, \forall \mathbf{v}_h \in \mathbf{X}_h,
\quad
\langle \mathbb{F}_h, \mathbf{v}_h \rangle := (\boldsymbol{\mathfrak{f}},\mathbf{v}_{h})_{\mathbf{L}^2(\Omega)} \, \forall \mathbf{v}_h \in \mathbf{X}_h,
\\
& \mathcal{F}_{h}((\boldsymbol{\mathsf{y}}_{h},\GC{\mathfrak{p}_h}),\mathfrak{u}) := [\mathbb{A}_h\boldsymbol{\mathsf{y}}_{h} + \mathbb{C}_h(\mathfrak{u},\boldsymbol{\mathsf{y}}_{h}) + \mathbb{B}_h^T \GC{\mathfrak{p}_h} -\mathbb{F}_h, \mathbb{B}_h \boldsymbol{\mathsf{y}}_{h}]^{\intercal}.
\end{align*}
It follows directly from the definition that $\mathcal{F}_h$ is well-defined and that $\mathcal{F}_h$ is of class $C^2$. We also note that}
\begin{align*}
\dfrac{\partial\mathcal{F}_h}{\partial (\boldsymbol{\mathsf{y}}_{h},\GC{\mathfrak{p}_h})}((\boldsymbol{\mathsf{y}}_{h},\GC{\mathfrak{p}_h}),\mathfrak{u}):
 \mathbf{X}_h\times M_h \rightarrow \mathcal{L}(\mathbf{X}_h\times M_h,\mathbf{X}_h'\times M_h')
 \\
 (\mathbf{z}_h,\mathsf{r}_h) \mapsto [\mathbb{A}_h\mathbf{z}_{h} + \mathbb{C}_h(\mathfrak{u},\mathbf{z}_{h}) + \mathbb{B}_h^T\mathsf{r}_h, \mathbb{B}_h\mathbf{z}_{h}]^{\intercal}.
\end{align*}
%

We \EO{now let $\bar{\mathfrak{u}}\in\mathcal{A}_0$ and define $(\bar{\boldsymbol{\mathsf{y}}}_{h},\bar{\GC{\mathfrak{p}}}_h) := (\boldsymbol{\mathsf{y}}_{h}(\bar{\mathfrak{u}}),\GC{\mathfrak{p}}_h(\bar{\mathfrak{u}}))$, i.e., $(\bar{\boldsymbol{\mathsf{y}}}_{h},\bar{\GC{\mathfrak{p}}}_h)$ is the solution to \eqref{eq:modeldiscrete}, where $\mathfrak{u}$ is replaced $\bar{\mathfrak{u}}$. We note that $\mathcal{F}_h((\bar{\boldsymbol{\mathsf{y}}}_{h},\bar{\GC{\mathfrak{p}}}_h),\bar{\mathfrak{u}})=\mathbf{0}$. We also note that $\partial\mathcal{F}_h / \partial (\boldsymbol{\mathsf{y}}_{h},\GC{\mathfrak{p}}_h) (( \bar{\boldsymbol{\mathsf{y}}}_{h}, \bar{\GC{\mathfrak{p}}}_h), \bar{\mathfrak{u}})$ is an isomorphism. In fact, for every $\mathbb{G}_h \in \mathbf{X}_h'$ and for every $\mathbb{H}_h \in M_h'$, the discrete problem 
\begin{align*}
\mathbb{A}_h\mathbf{z}_{h} + \mathbb{C}_h(\bar{\mathfrak{u}},\mathbf{z}_{h}) + \mathbb{B}_h^T\mathsf{r}_h = \mathbb{G}_h,\\
\mathbb{B}_h\mathbf{z}_{h}= \mathbb{H}_h,
\end{align*}
has a unique solution $(\mathbf{z}_{h},\mathsf{r}_h)\in\mathbf{X}_h\times M_h$; see \cite[Section 4.4]{Guermond-Ern} and \cite[Theorem 3.2.1]{MR3097958}. Here, we have used that $\bar{\mathfrak{u}} \in \mathcal{A}_0$. 
We are thus in a position to apply the implicit function theorem to deduce the existence of open neighborhoods 
$B_{\epsilon_{\bar{\mathfrak{u}}},h}(\bar{\mathfrak{u}}) \subset L^2(\Omega)$, $B_{\epsilon_{\bar{\boldsymbol{\mathsf{y}}}},h}(\bar{\boldsymbol{\mathsf{y}}}_{h})\subset \mathbf{X}_h$, and $B_{\epsilon_{\bar{\mathfrak{p}}},h}(\bar{\mathfrak{p}}_h) \subset M_h$ 
such that for every $\mathfrak{u} \in B_{\epsilon_{\bar{\mathfrak{u}}},h}(\bar{\mathfrak{u}})$ the equation $\mathcal{F}_h((\boldsymbol{\mathsf{y}}_{h},\mathfrak{p}_h),\mathfrak{u})=\mathbf{0}$ has a unique solution $(\boldsymbol{\mathsf{y}}_{h},\mathfrak{p}_h)\in B_{\epsilon_{\bar{\boldsymbol{\mathsf{y}}}},h}(\bar{\boldsymbol{\mathsf{y}}}_{h}) \times B_{\epsilon_{\bar{\mathfrak{p}}},h}(\bar{\mathfrak{p}}_h)$. 
We now proceed as in the proof of Theorem \ref{theorem:differentiablity_stokes_brinkman} to deduce that uniqueness of $(\boldsymbol{\mathsf{y}}_{h},\mathfrak{p}_h)$ in $\mathbf{X}_h \times M_h$ can be guaranteed if $\epsilon_{\bar{\mathfrak{u}},h} < C_{4 \hookrightarrow 2}^{-2}$. With this result at hand, we can thus define
\begin{equation}
\label{eq:mathcalS_h}
\mathcal{S}_h: \mathcal{A}_h \rightarrow \mathbf{X}_h \times M_h,
\qquad
\mathcal{A}_h:= \bigcup \{ B_{\epsilon_{\bar{\mathfrak{u}}},h} (\bar{\mathfrak{u}}): \bar{\mathfrak{u}} \in \mathcal{A}_0 \}\bigcap \mathcal{A},
\qquad
\epsilon_{\bar{\mathfrak{u}},h} < C_{4 \hookrightarrow 2}^{-2}.
\end{equation}
For $u \in \mathcal{A}_h$, the equation $\mathcal{F}_h((\boldsymbol{\mathsf{y}}_{h},\mathfrak{p}_h),\mathfrak{u}) = \mathbf{0}$ has a unique solution $(\boldsymbol{\mathsf{y}}_{h},\mathfrak{p}_h) = \mathcal{S}_h(u)$ in $\mathbf{X}_h\times M_h$, and the map $\mathcal{S}_h$ defined in \eqref{eq:mathcalS_h} is of class $C^2$. In addition, for $v\in L^2(\Omega)$, we have that $\mathcal{S}_h'(u)v=(\boldsymbol{\varphi}_h,\zeta_h)$ corresponds to the unique solution of problem \eqref{eq:problem_S_first_discrete}. This completes the proof.}
\end{proof}

\subsection{A posteriori error estimates}

Following \cite{MR1445736,MR1259620}, we present an a posteriori error estimator for the approximation \eqref{eq:modeldiscrete} of problem \eqref{eq:brinkman_problem}: For $K\in \T$ and an internal side $\gamma\in\mathscr{S}$, we define the \emph{element residual} $\mathcal{R}_K$ and the \emph{interelement residual} $\mathcal{J}_{\gamma}$ as
\begin{align}\label{eq:residuals}
\mathcal{R}_K:= (\boldsymbol{\mathfrak{f}}+\Delta \boldsymbol{\mathsf{y}}_{h}\!-\mathfrak{u}\boldsymbol{\mathsf{y}}_{h}\!-\!\nabla\mathfrak{p}_{h})|_K,
\qquad 
\mathcal{J}_{\gamma}:= \llbracket(\nabla \boldsymbol{\mathsf{y}}_{h}-\mathfrak{p}_{h}\mathbf{I})\cdot \mathbf{n}\rrbracket,
\end{align}
respectively. Here, $(\boldsymbol{\mathsf{y}}_{h},\mathfrak{p}_{h})$ denotes the solution to the discrete problem \eqref{eq:modeldiscrete} and $\mathbf{I}$ denotes the identity matrix in $\mathbb{R}^{d\times d}$. We thus define the \emph{local error indicators} $\mathcal{E}_{K}$ and the corresponding a posteriori error estimator $\mathcal{E}_{h}$ as follows:
\begin{align}\label{eq:indicator_estimator_e}
\mathcal{E}_{K}^2:=h_K^2  \|\mathcal{R}_K\|_{\mathbf{L}^2(K)}^2
+h_K  \|\mathcal{J}_{\gamma} \|_{\mathbf{L}^2(\partial K\setminus\partial \Omega)}^2+\|\text{div }\boldsymbol{\mathsf{y}}_{h}\|_{L^2(K)}^2,
\quad 
\mathcal{E}_{h}^2:= \sum_{K\in\T}\mathcal{E}_{K}^2.
\end{align}

We present the following global reliability result.

\begin{theorem}[global reliability]
\label{eq:reliability} 
\EO{Let $\boldsymbol{\mathfrak{f}}\in\mathbf{L}^2(\Omega)$, and let $\mathfrak{u} \in \mathcal{A}_0$.} Let $(\boldsymbol{\mathsf{y}},\mathfrak{p})\in\mathbf{H}_0^1(\Omega)\times L_0^2(\Omega)$ be the solution to problem \eqref{eq:brinkman_problem}, and let $(\boldsymbol{\mathsf{y}}_{h},\mathfrak{p}_{h})\in \mathbf{X}_h\times M_h$ be its finite element approximation obtained as the solution to \eqref{eq:modeldiscrete}. Then, we have
\begin{eqnarray}\label{eq:bound_realiability}
\|\nabla(\boldsymbol{\mathsf{y}}-\boldsymbol{\mathsf{y}}_{h})\|_{\mathbf{L}^2(\Omega)}^2+\|\mathfrak{p}-\mathfrak{p}_{h}\|_{L^2(\Omega)}^2\lesssim \mathcal{E}_{h}^2,
\end{eqnarray}
The hidden constant is independent of $(\boldsymbol{\mathsf{y}},\mathfrak{p})$ and $(\boldsymbol{\mathsf{y}}_{h},\mathfrak{p}_{h})$, the size of the elements in the mesh $\T$, and $\# \T$, \EO{but depends on $\| \mathfrak{u} \|_{L^2(\Omega)}$.}
\end{theorem}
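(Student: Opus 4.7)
The plan is to combine the inf-sup stability of the continuous Stokes--Brinkman saddle-point system with a residual representation obtained via element-wise integration by parts, and then exploit Galerkin orthogonality through a Cl\'ement-type quasi-interpolant. Set $\mathbf{e} := \boldsymbol{\mathsf{y}} - \boldsymbol{\mathsf{y}}_{h}$ and $\epsilon := \mathfrak{p} - \mathfrak{p}_{h}$. First I would observe that, since $(\mathbf{w},\mathbf{v}) \mapsto (\nabla \mathbf{w},\nabla \mathbf{v})_{\mathbf{L}^2(\Omega)} + (\mathfrak{u}\mathbf{w},\mathbf{v})_{\mathbf{L}^2(\Omega)}$ is coercive on $\mathbf{H}_0^1(\Omega)$ with constants depending only on $\mathsf{a}$ and $\Omega$, and \eqref{eq:infsup} holds, the full Stokes--Brinkman bilinear form
\begin{equation*}
\mathcal{A}((\mathbf{w},\mathsf{r}),(\mathbf{v},\mathsf{q})) := (\nabla \mathbf{w},\nabla \mathbf{v})_{\mathbf{L}^2(\Omega)} + (\mathfrak{u}\mathbf{w},\mathbf{v})_{\mathbf{L}^2(\Omega)} - (\mathsf{r},\mathrm{div}\,\mathbf{v})_{L^2(\Omega)} - (\mathsf{q},\mathrm{div}\,\mathbf{w})_{L^2(\Omega)}
\end{equation*}
satisfies a global inf-sup condition on $\mathbf{H}_0^1(\Omega)\times L_0^2(\Omega)$. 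Consequently,
\begin{equation*}
\|\nabla \mathbf{e}\|_{\mathbf{L}^2(\Omega)} + \|\epsilon\|_{L^2(\Omega)} \lesssim \sup_{(\mathbf{v},\mathsf{q})} \frac{\mathcal{A}((\mathbf{e},\epsilon),(\mathbf{v},\mathsf{q}))}{\|\nabla \mathbf{v}\|_{\mathbf{L}^2(\Omega)} + \|\mathsf{q}\|_{L^2(\Omega)}}.
\end{equation*}

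Second, I would invoke the continuous equations \eqref{eq:brinkman_problem} to rewrite $\mathcal{A}((\mathbf{e},\epsilon),(\mathbf{v},\mathsf{q}))$ as the sum of a \emph{momentum residual} $\mathcal{R}(\mathbf{v}) := \langle \boldsymbol{\mathfrak{f}},\mathbf{v}\rangle - (\nabla \boldsymbol{\mathsf{y}}_h,\nabla \mathbf{v})_{\mathbf{L}^2(\Omega)} - (\mathfrak{u}\boldsymbol{\mathsf{y}}_h,\mathbf{v})_{\mathbf{L}^2(\Omega)} + (\mathfrak{p}_h,\mathrm{div}\,\mathbf{v})_{L^2(\Omega)}$ and an \emph{incompressibility residual} $-(\mathsf{q},\mathrm{div}\,\boldsymbol{\mathsf{y}}_h)_{L^2(\Omega)}$. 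The latter is immediately controlled by the Cauchy--Schwarz inequality and contributes the $\|\mathrm{div}\,\boldsymbol{\mathsf{y}}_h\|_{L^2(K)}^2$ term in $\mathcal{E}_K^2$. For the momentum residual, I would use the discrete equations \eqref{eq:modeldiscrete} to obtain Galerkin orthogonality $\mathcal{R}(\mathbf{v}_h) = 0$ for every $\mathbf{v}_h \in \mathbf{X}_h$, so that $\mathcal{R}(\mathbf{v}) = \mathcal{R}(\mathbf{v} - I_h \mathbf{v})$, where $I_h: \mathbf{H}_0^1(\Omega) \to \mathbf{X}_h$ is a Cl\'ement-type quasi-interpolant with the standard estimates $\|\mathbf{v} - I_h \mathbf{v}\|_{\mathbf{L}^2(K)} \lesssim h_K \|\nabla \mathbf{v}\|_{\mathbf{L}^2(\mathcal{N}_K^*)}$ and $\|\mathbf{v} - I_h \mathbf{v}\|_{\mathbf{L}^2(\gamma)} \lesssim h_K^{1/2} \|\nabla \mathbf{v}\|_{\mathbf{L}^2(\mathcal{N}_K^*)}$.

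Third, I would integrate by parts element-wise and collect the interior-face contributions to arrive at
\begin{equation*}
\mathcal{R}(\mathbf{v} - I_h \mathbf{v}) = \sum_{K\in\T} (\mathcal{R}_K, \mathbf{v} - I_h \mathbf{v})_{\mathbf{L}^2(K)} - \tfrac{1}{2}\sum_{K\in\T} (\mathcal{J}_\gamma, \mathbf{v} - I_h \mathbf{v})_{\mathbf{L}^2(\partial K\setminus\partial\Omega)},
\end{equation*}
with $\mathcal{R}_K$ and $\mathcal{J}_\gamma$ as in \eqref{eq:residuals}; the boundary faces drop out because $\mathbf{v} \in \mathbf{H}_0^1(\Omega)$. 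Applying Cauchy--Schwarz, the Cl\'ement bounds, and the finite overlap of $\{\mathcal{N}_K^*\}_{K\in\T}$ (which follows from shape regularity), I would conclude $|\mathcal{R}(\mathbf{v} - I_h \mathbf{v})| \lesssim \mathcal{E}_h \,\|\nabla \mathbf{v}\|_{\mathbf{L}^2(\Omega)}$, which combined with Step~1 and the trivial bound on the incompressibility residual yields \eqref{eq:bound_realiability}. The main technical obstacle is the careful justification of the global inf-sup constant for $\mathcal{A}$ with $\mathfrak{u} \in \mathbb{U}_{ad}$: since $0 < \mathsf{a} \leq \mathfrak{u} \leq \mathsf{b}$ a.e.\ in $\Omega$, the constant depends only on $\mathsf{a}$, $\mathsf{b}$, $\beta$, and $\Omega$, so reliability is uniform in the admissible coefficient. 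The element-wise integration by parts and the bookkeeping of patch overlaps in the Cl\'ement step, while standard, are the only other delicate ingredients.
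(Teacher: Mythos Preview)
Your argument is correct and is precisely the standard residual-based reliability proof of Verf\"urth that the paper invokes by citation; the inf-sup stability of the full saddle-point form, Galerkin orthogonality combined with a Cl\'ement-type interpolant, and elementwise integration by parts are exactly the ingredients implicit in the references \cite{MR1259620,MR1445736}. The only small point to make explicit is that the quasi-interpolant must be chosen to preserve the homogeneous boundary condition (e.g.\ Scott--Zhang) so that $\mathbf{v}-I_h\mathbf{v}\in\mathbf{H}_0^1(\Omega)$ and the boundary-face terms indeed vanish.
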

\begin{proof}
The proof follows the same arguments as in \cite[Section 5]{MR1259620} and \cite[Section 3]{MR1445736}. \EO{The hidden constant depends on $\| \mathfrak{u} \|_{L^2(\Omega)}$ as $(1 + \| \mathfrak{u} \|_{L^2(\Omega)})^4$.} For the sake of brevity, we omit the details.
\end{proof}

\section{Finite element discretizations for the control problem}
We propose two discretization methods to approximate solutions of the control problem \eqref{eq:min_functional}--\eqref{eq:brinkamn_problem_state}: a semidiscrete method in which the control set is not discretized, and a fully discrete one in which the admissible control set is discretized with piecewise constant functions.

\subsection{The fully discrete scheme}
\label{fully_discrete_framework}
To present the fully discrete scheme, we introduce
$
\mathbb{U}_h:=\{u_{h}\in \GC{L^{2}(\Omega)}:u_{h}|_{K}\in\mathbb{P}_0(K)~\forall K\in\T\}
$
and $\mathbb{U}_{ad,h}:= \mathbb{U}_h\cap \mathbb{U}_{ad}$. The scheme is as follows: Find $\min J(\mathbf{y}_{h},u_{h})$ subject to the \emph{discrete state equations}
\begin{equation}\label{eq:brinkamn_problem_state_fd}
\begin{array}{rl}
(\nabla \mathbf{y}_{h},\nabla \mathbf{v}_{h})_{\mathbf{L}^2(\Omega)}
+
( u_{h}\mathbf{y}_{h}, \mathbf{v}_{h})_{\mathbf{L}^2(\Omega)}
-
(\mathsf{p}_{h},\text{div }\mathbf{v}_{h})_{L^2(\Omega)}
=& \EO{( \mathbf{f},\mathbf{v}_{h} )_{\mathbf{L}^2(\Omega)}},\\
(\mathsf{q}_{h},\text{div }\mathbf{y}_{h})_{L^2(\Omega)} =&  0,
\end{array}
\end{equation}
for all $(\mathbf{v}_{h},\mathsf{q}_{h}) \in \mathbf{X}_h \times M_h$, and the \emph{discrete control constraints} $u_{h}\in\mathbb{U}_{ad,h}$.

The scheme admits at least one solution. To present first-order optimality conditions, we introduce the discrete map \EO{$\mathcal{G}_h: \mathcal{A}_h \ni u \mapsto \mathbf{y}_h \in \mathbf{X}_h$, where $\mathbf{y}_h$ corresponds to the velocity component of the pair $(\mathbf{y}_h,\mathsf{p}_h) = \mathcal{S}_h(u)$ that solves problem \eqref{eq:modeldiscrete}. We recall that $\mathcal{S}_h$ is defined in \eqref{eq:mathcalS_h}.} We also introduce 
\[
\EO{j_h: \mathcal{A}_h \rightarrow \mathbb{R},}
\qquad
j_h(u):=J(\mathcal{G}_h\GC{(u)},u).
\]
\EO{We note that the results of Theorem \ref{theorem_diff_discrete} guarantee that $j_h$ is of class $C^2$. In addition, for every $u \in \mathcal{A}_h$ and every $v \in L^2(\Omega)$, we have}
\[
 \EO{j_h'(u) v = (\alpha u - \mathbf{y}_{h} \cdot \mathbf{z}_{h}, v)_{L^2(\Omega)},}
\]
\EO{where $(\mathbf{y}_h,\mathsf{p}_h) = \mathcal{S}_h(u)$ and $(\mathbf{z}_{h},\mathsf{r}_{h}) \in \mathbf{X}_h \times M_h$ corresponds to the solution of the \emph{discrete adjoint equations}: Find $(\mathbf{z}_{h},\mathsf{r}_{h})\in \mathbf{X}_h\times M_h$ such that
\begin{multline}
 \label{eq:brinkman_problem_adjoint_fully}
(\nabla \mathbf{v}_{h},\nabla \mathbf{z}_{h})_{\mathbf{L}^2(\Omega)}
+
(u, \mathbf{z}_{h} \cdot \mathbf{v}_{h})_{\mathbf{L}^2(\Omega)}
+
(\mathsf{r}_{h},\text{div }\mathbf{v}_{h})_{L^2(\Omega)}
= (\mathbf{y}_{h}
-
\mathbf{y}_{\Omega},\mathbf{v}_{h})_{\mathbf{L}^2(\Omega)},
\\
(\mathsf{s}_{h},\text{div }\mathbf{z}_{h})_{L^2(\Omega)} =  0 \qquad \forall (\mathbf{v}_{h},\mathsf{s}_{h})\in \mathbf{X}_h\times M_h.
\end{multline}
After introducing these ingredients, we can derive first-order optimality conditions for the fully discrete scheme: If $\bar{u}_{h}$ is locally optimal for the fully discrete scheme, then}
\begin{eqnarray}\label{eq:variational_inequality_discrete}
j'_h(\bar{u}_h)(u_h-\bar{u}_h)=(\alpha \bar{u}_{h}-\bar{\mathbf{y}}_{h}\cdot \bar{\mathbf{z}}_{h},u_{h}-\bar{u}_{h})_{L^2(\Omega)}\geq 0\quad\forall u_{h}\in \mathbb{U}_{ad,h}.
\end{eqnarray}
Here, \EO{$\bar{\mathbf{y}}_h = \mathcal{G}_h (\bar{u}_{h})$ and $(\bar{\mathbf{z}}_{h},\bar{\mathsf{r}}_{h}) \in \mathbf{X}_h \times M_h$ corresponds to the solution of \eqref{eq:brinkman_problem_adjoint_fully}, where $u$ is replaced by $\bar{u}_h$.}

\subsection{The semidiscrete scheme}
\label{semi_discrete_framework}
\label{sectionsemi-discrete}
\EO{In this section, we propose a semidiscrete scheme based on the so-called variational discretization approach \cite{MR2122182}.} In this scheme, only the state space $\mathbf{H}_0^1(\Omega) \times L_0^2(\Omega)$ is discretized; $\mathbb{U}_{ad}$ is not discretized. The semidiscrete scheme reads as follows: Find $\min J(\mathbf{y}_{h},\mathsf{u})$ subject to the \emph{discrete state equations}
\begin{multline}\label{eq:brinkamn_problem_state_sd}
(\nabla \mathbf{y}_{h},\nabla \mathbf{v}_{h})_{\mathbf{L}^2(\Omega)}
+
( \mathsf{u}\mathbf{y}_{h}, \mathbf{v}_{h})_{\mathbf{L}^2(\Omega)}
-
(\mathsf{p}_{h},\text{div }\mathbf{v}_{h})_{L^2(\Omega)} =  \EO{( \mathbf{f},\mathbf{v}_{h} )_{\mathbf{L}^2(\Omega)}},
\\
(\mathsf{q}_{h},\text{div }\mathbf{y}_{h})_{L^2(\Omega)} =  0 \qquad \forall (\mathbf{v}_{h},\mathsf{q}_{h}) \in \mathbf{X}_h \times M_h,
\end{multline}
and the \emph{control constraints} $\mathsf{u}\in\mathbb{U}_{ad}$. The existence of an optimal solution follows from standard arguments. Moreover, if $\bar{\mathsf{u}}\in\mathbb{U}_{ad}$ denotes a locally optimal control, then
\begin{eqnarray}\label{eq:variational_inequality_semi-discrete}
j'_h(\bar{\mathsf{u}})(u-\bar{\mathsf{u}})=(\alpha \bar{\mathsf{u}}-\bar{\mathbf{y}}_{h}\cdot \bar{\mathbf{z}}_{h},u-\bar{\mathsf{u}})_{L^2(\Omega)}\geq 0\quad\forall u\in \mathbb{U}_{ad}.
\end{eqnarray}
Here, $\bar{\mathbf{y}}_h = \mathcal{G}_h \GC{(\bar{\mathsf{u}})}$ and $(\bar{\mathbf{z}}_{h},\bar{\mathsf{r}}_{h})\in \mathbf{X}_h\times M_h $ is the solution to the following problem:
\begin{multline}\label{eq:brinkman_problem_adjoint_semi}
(\nabla \mathbf{v}_{h},\nabla \bar{\mathbf{z}}_{h})_{\mathbf{L}^2(\Omega)}
+
(\bar{\mathsf{u}}, \bar{\mathbf{z}}_{h} \cdot \mathbf{v}_{h})_{\mathbf{L}^2(\Omega)}
+
(\bar{\mathsf{r}}_{h},\text{div }\mathbf{v}_{h})_{L^2(\Omega)} =  (\bar{\mathbf{y}}_{h}-\mathbf{y}_{\Omega},\mathbf{v}_{h})_{\mathbf{L}^2(\Omega)},
\\
(\mathsf{s}_{h},\text{div }\bar{\mathbf{z}}_{h})_{L^2(\Omega)}  =  0 \qquad \forall (\mathbf{v}_{h},\mathsf{s}_{h})\in \mathbf{X}_h\times M_h.
\end{multline}

We \EO{note that in view of the variational inequality \eqref{eq:variational_inequality_semi-discrete}, the following projection formula holds: $\bar{\mathsf{u}}(x) = \Pi_{[\mathsf{a},\mathsf{b}]}(\alpha^{-1} \bar{\mathbf{y}}_{h} (x) \cdot \bar{\mathbf{z}}_{h}(x))$. The semidiscrete scheme thus induces a discretization of an optimal control $\bar{\mathsf{u}}$ by projecting $\alpha^{-1} \bar{\mathbf{y}}_{h}\cdot \bar{\mathbf{z}}_{h}$ under $\Pi_{[\mathsf{a},\mathsf{b}]}$.}

\subsection{Convergence of discretizations}

In \EO{this section, we analyze convergence properties of
the fully discrete and semidiscrete methods.} To simplify the presentation, we define
\begin{equation}
\label{eq:energy_norm}
 \| (\mathbf{v},\mathsf{q}) \|_{\Omega} := \|\nabla \mathbf{v} \|_{\mathbf{L}^2(\Omega)}+\|\mathsf{q}\|_{L^2(\Omega)},
 \quad
 (\mathbf{v},\mathsf{q}) \in \mathbf{H}_0^1(\Omega) \times L^2(\Omega).
\end{equation}

\subsubsection{Discretization of the state equations}
We have the following result.

\begin{proposition}[convergence and error bound]\label{theorem_convergence_01}
Let $\Omega \subset \mathbb{R}^d$ be a Lipschitz polytope, and \EO{let $\mathbf{f} \in \mathbf{L}^{2}(\Omega)$}. Let $(\mathbf{y},\mathsf{p})$ and $(\mathbf{y}_{h},\mathsf{p}_{h})$ be the unique solutions of problems \eqref{eq:brinkamn_problem_state} and \eqref{eq:brinkamn_problem_state_fd}, respectively. \EO{If the sequence $\{ u_{h} \}_{h>0}$ is such that
$u_h \in \mathbb{U}_{ad,h}$ and $u_{h} \rightharpoonup u$ in $L^2(\Omega)$ as $h \rightarrow 0$, with $u \in \mathbb{U}_{ad}$, then we have that}
\begin{align}\label{eq:convergence_prop}
u_{h}\rightharpoonup u
\textrm{ in }
L^2(\Omega) 
\implies 
\EO{(\mathbf{y}_{h},\mathsf{p}_h) 
\to 
(\mathbf{y},\mathsf{p})
\textrm{ in }
\mathbf{H}_0^1(\Omega) \times L^2(\Omega)} \textrm{ as } h \rightarrow 0.
\end{align}
If, in addition, $\Omega$ is convex, then we have the error bound
\begin{align}\label{eq:errorbound}
\|\nabla(\mathbf{y}-\mathbf{y}_{h})\|_{\mathbf{L}^2(\Omega)}+\|\mathsf{p}-\mathsf{p}_{h}\|_{L^2(\Omega)}\lesssim h \|\mathbf{f}\|_{\mathbf{L}^2(\Omega)}
+
\EO{\|\mathbf{f}\|_{\mathbf{L}^2(\Omega)}} \|u-u_{h}\|_{L^2(\Omega)}.
\end{align}
\end{proposition}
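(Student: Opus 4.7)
\textbf{Plan for Proposition \ref{theorem_convergence_01}.} The plan is to treat the two statements separately: the weak--strong convergence claim first, and then the error bound by introducing an auxiliary discrete state with the continuous coefficient $u$, in the spirit of a Strang-type perturbation argument.

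For the convergence claim, I would proceed as follows. Since $u_h\in\mathbb{U}_{ad}$ (in particular bounded in $L^\infty(\Omega)$ and bounded below by $\mathsf{a}>0$), the uniform well-posedness bound $\|(\mathbf{y}_h,\mathsf{p}_h)\|_\Omega\lesssim \|\mathbf{f}\|_{\mathbf{H}^{-1}(\Omega)}$ (analogous to \eqref{eq:well_posedness_brinkman}, which also holds at the discrete level thanks to \eqref{eq:discrete_infsup}) yields, up to subsequences, $\mathbf{y}_h\rightharpoonup \tilde{\mathbf{y}}$ in $\mathbf{H}_0^1(\Omega)$, $\mathbf{y}_h\to \tilde{\mathbf{y}}$ in $\mathbf{L}^4(\Omega)$ by the compact embedding $\mathbf{H}_0^1(\Omega)\hookrightarrow\mathbf{L}^4(\Omega)$, and $\mathsf{p}_h\rightharpoonup \tilde{\mathsf{p}}$ in $L^2(\Omega)$. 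To identify $(\tilde{\mathbf{y}},\tilde{\mathsf{p}})$ with $(\mathbf{y},\mathsf{p})$, I pick arbitrary $(\mathbf{v},\mathsf{q})\in\mathbf{H}_0^1(\Omega)\times L_0^2(\Omega)$ and let $(\mathbf{v}_h,\mathsf{q}_h)\in\mathbf{X}_h\times M_h$ be quasi-interpolants converging strongly to $(\mathbf{v},\mathsf{q})$. All linear terms pass to the limit immediately; the decisive bilinear term $(u_h\mathbf{y}_h,\mathbf{v}_h)_{\mathbf{L}^2(\Omega)}$ is handled by writing it as $(u_h,\mathbf{y}_h\cdot\mathbf{v}_h)_{L^2(\Omega)}$ and combining the weak convergence $u_h\rightharpoonup u$ in $L^2(\Omega)$ with the strong convergence $\mathbf{y}_h\cdot\mathbf{v}_h\to \tilde{\mathbf{y}}\cdot\mathbf{v}$ in $L^2(\Omega)$ (a consequence of the $\mathbf{L}^4$ compactness). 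Uniqueness of $(\mathbf{y},\mathsf{p})$ then promotes subsequential convergence to convergence of the full sequence. To upgrade $\mathbf{y}_h\rightharpoonup \mathbf{y}$ to strong convergence, I would use the energy identity obtained by testing \eqref{eq:brinkamn_problem_state_fd} with $\mathbf{v}_h=\mathbf{y}_h$: $\|\nabla \mathbf{y}_h\|_{\mathbf{L}^2(\Omega)}^2 + (u_h\mathbf{y}_h,\mathbf{y}_h)_{\mathbf{L}^2(\Omega)} = \langle \mathbf{f},\mathbf{y}_h\rangle$, which combined with the already established convergences gives $\|\nabla \mathbf{y}_h\|_{\mathbf{L}^2(\Omega)}\to\|\nabla \mathbf{y}\|_{\mathbf{L}^2(\Omega)}$, hence strong convergence in $\mathbf{H}_0^1(\Omega)$.

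For the a priori error bound \eqref{eq:errorbound}, I would introduce the auxiliary pair $(\tilde{\mathbf{y}}_h,\tilde{\mathsf{p}}_h)\in\mathbf{X}_h\times M_h$ defined as the finite element solution of \eqref{eq:modeldiscrete} with the \emph{continuous} coefficient $\mathfrak{u}=u$ and datum $\boldsymbol{\mathfrak{f}}=\mathbf{f}$. By Theorem \ref{eq:theorem_error_estimates} (which requires $\Omega$ convex and $\mathbf{f}\in\mathbf{L}^2(\Omega)$),
\begin{equation*}
\|(\mathbf{y}-\tilde{\mathbf{y}}_h,\mathsf{p}-\tilde{\mathsf{p}}_h)\|_\Omega \lesssim h\|\mathbf{f}\|_{\mathbf{L}^2(\Omega)}.
\end{equation*}
The remaining piece $(\tilde{\mathbf{y}}_h-\mathbf{y}_h,\tilde{\mathsf{p}}_h-\mathsf{p}_h)$ satisfies a discrete Stokes--Brinkman problem with coefficient $u_h$ and right-hand side $-(u-u_h)\tilde{\mathbf{y}}_h$, obtained by subtracting the two discrete equations and rewriting $u\tilde{\mathbf{y}}_h-u_h\mathbf{y}_h = u_h(\tilde{\mathbf{y}}_h-\mathbf{y}_h)+(u-u_h)\tilde{\mathbf{y}}_h$. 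Uniform discrete stability yields
\begin{equation*}
\|(\tilde{\mathbf{y}}_h-\mathbf{y}_h,\tilde{\mathsf{p}}_h-\mathsf{p}_h)\|_\Omega \lesssim \|(u-u_h)\tilde{\mathbf{y}}_h\|_{\mathbf{H}^{-1}(\Omega)},
\end{equation*}
and the right-hand side is controlled by H\"older's inequality together with $\mathbf{H}_0^1(\Omega)\hookrightarrow\mathbf{L}^4(\Omega)$: for any $\mathbf{v}\in\mathbf{H}_0^1(\Omega)$,
\begin{equation*}
|((u-u_h)\tilde{\mathbf{y}}_h,\mathbf{v})_{\mathbf{L}^2(\Omega)}| \leq \|u-u_h\|_{L^2(\Omega)}\|\tilde{\mathbf{y}}_h\|_{\mathbf{L}^4(\Omega)}\|\mathbf{v}\|_{\mathbf{L}^4(\Omega)} \lesssim \|\mathbf{f}\|_{\mathbf{H}^{-1}(\Omega)}\|u-u_h\|_{L^2(\Omega)}\|\nabla\mathbf{v}\|_{\mathbf{L}^2(\Omega)},
\end{equation*}
where I used the uniform bound $\|\nabla\tilde{\mathbf{y}}_h\|_{\mathbf{L}^2(\Omega)}\lesssim\|\mathbf{f}\|_{\mathbf{H}^{-1}(\Omega)}$. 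Combining the two estimates by the triangle inequality yields \eqref{eq:errorbound}.

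The main obstacle I anticipate is the passage to the limit in the bilinear term $(u_h\mathbf{y}_h,\mathbf{v}_h)_{\mathbf{L}^2(\Omega)}$, where both factors $u_h$ and $\mathbf{y}_h$ converge only weakly in $L^2(\Omega)$ and $\mathbf{H}_0^1(\Omega)$, respectively, and the discrete test $\mathbf{v}_h$ only approximates $\mathbf{v}$ strongly in $\mathbf{H}_0^1(\Omega)$. The key observation that unblocks this is that $\mathbf{y}_h\cdot\mathbf{v}_h\to \tilde{\mathbf{y}}\cdot\mathbf{v}$ in $L^2(\Omega)$, which rests on the compactness $\mathbf{H}_0^1(\Omega)\Subset\mathbf{L}^4(\Omega)$ valid for $d\in\{2,3\}$; the same compactness is what controls the perturbation term $\|(u-u_h)\tilde{\mathbf{y}}_h\|_{\mathbf{H}^{-1}(\Omega)}$ in the error analysis. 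All other steps are standard Galerkin/Strang-type manipulations.
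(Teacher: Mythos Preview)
Your proposal is correct. Both parts work as written, and the key compactness ingredient you identify ($\mathbf{H}_0^1(\Omega)\Subset\mathbf{L}^4(\Omega)$ for $d\in\{2,3\}$) is indeed what makes the bilinear term tractable.

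Your route differs from the paper's in two places. For the convergence claim \eqref{eq:convergence_prop}, the paper does not argue by compactness and limit identification; instead it introduces the discrete auxiliary $(\mathbf{y}_h(u),\mathsf{p}_h(u))\in\mathbf{X}_h\times M_h$ solving \eqref{eq:brinkamn_problem_state_fd} with the \emph{continuous} coefficient $u$, and splits $\mathbf{y}-\mathbf{y}_h=(\mathbf{y}-\mathbf{y}_h(u))+(\mathbf{y}_h(u)-\mathbf{y}_h)$. The first piece vanishes by quasi-best approximation plus density; the second satisfies $\|\nabla(\mathbf{y}_h(u)-\mathbf{y}_h)\|_{\mathbf{L}^2(\Omega)}\lesssim\|(u_h-u)\mathbf{y}_h(u)\|_{\mathbf{H}^{-1}(\Omega)}\to 0$, where the limit uses that $(u_h-u)\mathbf{y}_h(u)$ is bounded in $\mathbf{L}^{4/3}(\Omega)$ and the compact embedding $\mathbf{L}^{4/3}(\Omega)\hookrightarrow\mathbf{H}^{-1}(\Omega)$. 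Your compactness/energy argument is equally valid and perhaps more self-contained; the paper's splitting avoids the energy-norm step and yields strong $\mathbf{H}_0^1$ convergence of the velocity directly from a stability bound. For the error bound \eqref{eq:errorbound}, the paper uses the \emph{continuous} auxiliary $(\mathbf{y}(u_h),\mathsf{p}(u_h))$ with the discrete coefficient $u_h$, i.e., the mirror image of your $(\tilde{\mathbf{y}}_h,\tilde{\mathsf{p}}_h)$; the resulting Strang-type splitting is otherwise identical to yours. Your choice has the minor aesthetic advantage of reusing the same auxiliary object in both parts.
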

\begin{proof}

We divide the proof into two steps.

\emph{Step 1:} \emph{The property \eqref{eq:convergence_prop}.} Let $(\mathbf{y}_{h}(u),\mathsf{p}_h(u))\in \mathbf{X}_h \times M_h$ be the solution to \eqref{eq:brinkamn_problem_state_fd}, where $u_{h}$ is replaced by $u$. \EO{We recall that $(\mathbf{y}_{h},\mathsf{p}_h) = (\mathbf{y}_{h}(u_h),\mathsf{p}_h(u_h))\in \mathbf{X}_h \times M_h$ is the solution to \eqref{eq:brinkamn_problem_state_fd}.} A quasi-best approximation property, which follows from \cite[Lemma 2.44]{Guermond-Ern}, combined with a density argument, as the one developed in \cite[Corollary 1.109]{Guermond-Ern}, show that
$
\|(\mathbf{y}-\mathbf{y}_{h}(u),\mathsf{p}-\mathsf{p}_h(u))\|_{\Omega}
\rightarrow 0
$
as 
$h \rightarrow 0$. To control $\|(\mathbf{y}_{h}(u)-\mathbf{y}_{h},\mathsf{p}_h(u)-\mathsf{p}_h)\|_{\Omega}$, we note that $(\mathbf{y}_{h}(u)-\mathbf{y}_{h},\mathsf{p}_{h}(u)-\mathsf{p}_{h}) \in \mathbf{X}_h \times M_h$ solves
\begin{multline}
\label{eq:Stokes_convergence}
(\nabla (\mathbf{y}_h(u)-\mathbf{y}_h),\nabla \mathbf{v}_h)_{\mathbf{L}^2(\Omega)}
+
( u_h(\mathbf{y}_h(u)-\mathbf{y}_h), \mathbf{v}_h)_{\mathbf{L}^2(\Omega)}
\\
-
(\mathsf{p}_h(u)-\mathsf{p}_h,\text{div }\mathbf{v}_h)_{L^2(\Omega)} =((u_{h}-u)\mathbf{y}_h(u),\mathbf{v}_h)_{\mathbf{L}^2(\Omega)},
\quad
(\mathsf{q}_h,\text{div}(\mathbf{y}_h(u)-\mathbf{y}_h))_{L^2(\Omega)}  =  0
\end{multline}
for all $(\mathbf{v}_{h},\mathsf{q}_{h}) \in \mathbf{X}_h  \times M_h$. \EO{If we set $\mathbf{v}_{h}=\mathbf{y}_h(u)-\mathbf{y}_h$ and $\mathsf{q}_{h}=0$, we obtain 
\[
 \int_{\Omega} | \nabla(\mathbf{y}_{h}(u)-\mathbf{y}_{h}) |^2\mathrm{d}x 
 + 
 \int_{\Omega} u_h (\mathbf{y}_{h}(u)-\mathbf{y}_{h})^2\mathrm{d}x = \int_{\Omega} (u_h - u)\mathbf{y}_{h}(u) (\mathbf{y}_{h}(u)-\mathbf{y}_{h}) \mathrm{d}x.
\]
Since $\{ u_h \}_{h >0} \subset \mathbb{U}_{ad,h} \subset \mathbb{U}_{ad} \subset \mathcal{A}_0$, we can thus derive that $\|\nabla(\mathbf{y}_{h}(u)-\mathbf{y}_{h})\|_{\mathbf{L}^2(\Omega)} \leq\| (u_{h}-u)\mathbf{y}_h(u) \|_{\mathbf{H}^{-1}(\Omega)}$. We now use the convergence property $ \| \nabla (\mathbf{y}-\mathbf{y}_{h}(u))\|_{\mathbf{L}^2(\Omega)} \rightarrow 0 $ as $h \rightarrow 0$
and $\mathbf{H}_0^1(\Omega) \hookrightarrow \mathbf{L}^4(\Omega)$ to conclude that $\{ \mathbf{y}_h (u) \cdot \mathbf{v} \}_{h>0}$ converges in $L^2(\Omega)$ as $h \rightarrow 0$, for a given $\mathbf{v} \in \mathbf{H}_0^1(\Omega)$. From this and $u_{h}\rightharpoonup u$ in $L^2(\Omega)$, it follows that}
\begin{equation}
\|\nabla(\mathbf{y}_{h}(u)-\mathbf{y}_{h})\|_{\mathbf{L}^2(\Omega)} \lesssim \| (u_{h}-u)\mathbf{y}_h(u) \|_{\mathbf{H}^{-1}(\Omega)} \rightarrow 0,
\quad h \rightarrow 0.
\label{eq:convergence_aux}
\end{equation}
\EO{A simple application of the triangle inequality thus shows that $\mathbf{y}_{h} \rightarrow \mathbf{y}$ in $\mathbf{H}_0^1(\Omega)$ as $h \rightarrow 0$. We now prove that $\mathsf{p}_h(u) - \mathsf{p}_h \rightarrow 0$ in $L^2(\Omega)$ as $h \rightarrow 0$. To do this, we use the dicrete inf-sup condition \eqref{eq:discrete_infsup}, the fact that $(\mathbf{y}_{h}(u)-\mathbf{y}_{h},\mathsf{p}_{h}(u)-\mathsf{p}_{h}) \in \mathbf{X}_h \times M_h$ solves problem \eqref{eq:Stokes_convergence}, and the convergence result \eqref{eq:convergence_aux}. The desired convergence result $\mathsf{p}_{h} \rightarrow  \mathsf{p}$ in $L^2(\Omega)$ as $h \rightarrow 0$ thus follows from the triangle inequality.}

\emph{Step 2:} \emph{The bound \eqref{eq:errorbound}}. Let $(\mathbf{y}(u_{h}),\mathsf{p}(u_{h}))\in\mathbf{H}_0^1(\Omega)\times L_0^2(\Omega)$ be the solution \EO{to} \eqref{eq:brinkamn_problem_state}, where $u$ is replaced by $u_{h}$. The control of $\| (\mathbf{y}(u_{h})-\mathbf{y}_{h}, \mathsf{p}(u_h)-\mathsf{p}_h) \|_{\Omega}$ follows directly from Theorem \ref{eq:theorem_error_estimates}:
$
\| (\mathbf{y}(u_{h})-\mathbf{y}_{h}, \mathsf{p}(u_h)-\mathsf{p}_h ) \|_{\Omega} \lesssim h\|\mathbf{f}\|_{\mathbf{L}^2(\Omega)}.
$
\EO{We note that here we have used the fact that the sequence $\{ u_h \}_{h>0}$ is such that $u_h \in \mathbb{U}_{ad,h} \subset \mathbb{U}_{ad} \subset \mathcal{A}_0$ and is uniformly bounded in $L^2(\Omega)$ by a constant that depends only on $\Omega$ and the control bounds.} In the following, we bound $\|\nabla(\mathbf{y}-\mathbf{y}(u_{h}))\|_{\mathbf{L}^2(\Omega)}$. To this end, we note that $(\mathbf{y}-\mathbf{y}(u_h),\mathsf{p}-\mathsf{p}(u_h))$
solves
\begin{multline*}
(\nabla (\mathbf{y}-\mathbf{y}(u_{h})),\nabla \mathbf{v})_{\mathbf{L}^2(\Omega)}+( u(\mathbf{y}-\mathbf{y}(u_{h})), \mathbf{v})_{\mathbf{L}^2(\Omega)}-(\mathsf{p}-\mathsf{p}(u_{h}),\text{div }\mathbf{v})_{L^2(\Omega)} \\=((u_{h}-u)\mathbf{y}(u_{h}),\mathbf{v})_{\mathbf{L}^2(\Omega)},
\qquad 
(\mathsf{q},\text{div }(\mathbf{y}-\mathbf{y}(u_{h})))_{L^2(\Omega)}  =  0
\end{multline*}
for all $(\mathbf{v},\mathsf{q}) \in \mathbf{H}_0^1(\Omega) \times L_0^2(\Omega)$. If we set $\mathbf{v} = \mathbf{y}-\mathbf{y}(u_{h})$ and $\mathsf{q}=0$, \EO{and use that $u \in \mathbb{U}_{ad} \subset \mathcal{A}_0$}, we obtain $\|\nabla(\mathbf{y}-\mathbf{y}(u_{h}))\|_{\mathbf{L}^2(\Omega)}\lesssim \|(u-u_{h})\mathbf{y}(u_{h})\|_{\mathbf{H}^{-1}(\Omega)} \lesssim \| \nabla \mathbf{y}(u_{h}) \|_{\mathbf{L}^2(\Omega)} \|u-u_{h}\|_{L^{2}(\Omega)}$, and as a result
\begin{align}\label{eq:error_aux_est02}
\|\nabla(\mathbf{y}-\mathbf{y}_{h})\|_{\mathbf{L}^2(\Omega)}\lesssim h \|\mathbf{f}\|_{\mathbf{L}^2(\Omega)} + \|\mathbf{f}\|_{\mathbf{L}^2(\Omega)} \|u-u_{h}\|_{L^2(\Omega)}.
\end{align}
To bound $\|\mathsf{p}-\mathsf{p}(u_h)\|_{L^2(\Omega)}$, we use the inf--sup condition \eqref{eq:infsup} and the problem that $(\mathbf{y}-\mathbf{y}(u_h),\mathsf{p}-\mathsf{p}(u_h))\in\mathbf{H}_0^1(\Omega)\times L_0^2(\Omega)$ solves. In fact, we have
\begin{multline*}
\label{eq:error_aux_est03}
\|\mathsf{p}-\mathsf{p}(u_h)\|_{L^2(\Omega)}
\lesssim 
\EO{(1 + \| u \|_{L^2(\Omega)})} \|\nabla(\mathbf{y}-\mathbf{y}(u_h))\|_{\mathbf{L}^2(\Omega)}
\\
+ \EO{\|(u-u_{h})\mathbf{y}(u_{h})\|_{\mathbf{H}^{-1}(\Omega)}}
\lesssim
\|\mathbf{f}\|_{\mathbf{L}^2(\Omega)} \|u-u_{h}\|_{L^2(\Omega)},
\end{multline*}
where we have used the bound $\|\nabla(\mathbf{y}-\mathbf{y}(u_{h}))\|_{\mathbf{L}^2(\Omega)} \lesssim \|\mathbf{f}\|_{\mathbf{L}^2(\Omega)} \|u-u_{h}\|_{L^{2}(\Omega)}$ \EO{and the fact that $\| u \|_{L^2(\Omega)}$ can be controlled in terms of $\Omega$ and the control bounds.}
\end{proof}

\subsubsection{\EO{Discretization of the adjoint equations}}
Before deriving error bounds, we introduce auxiliary variables that are \EO{particularly important} for our analysis. Let $(\boldsymbol{\mathfrak{z}},\mathfrak{r})\in \mathbf{H}_0^1(\Omega)\times L_0^2(\Omega)$ be the solution of
\begin{multline}\label{eq:aux_problem_adj_1}
(\nabla \mathbf{v},\nabla \boldsymbol{\mathfrak{z}})_{\mathbf{L}^2(\Omega)}+( u_{h}\boldsymbol{\mathfrak{z}}, \mathbf{v})_{\mathbf{L}^2(\Omega)}+(\mathfrak{r},\text{div }\mathbf{v})_{L^2(\Omega)} =  	(\mathbf{y}_{h}-\mathbf{y}_{\Omega},\mathbf{v})_{\mathbf{L}^2(\Omega)},
\\
(\mathsf{q},\text{div }\boldsymbol{\mathfrak{z}})_{L^2(\Omega)}  =  0 \qquad \forall(\mathbf{v},\mathsf{q})\in \mathbf{H}_0^1(\Omega)\times L_0^2(\Omega).
\end{multline}
We also define $(\mathbf{z}_{h},\mathsf{r}_{h})\in \mathbf{X}_h\times M_h$ as the solution to
\begin{multline}\label{eq:aux_problem_adj_2}
(\nabla \mathbf{v}_{h},\nabla \mathbf{z}_{h})_{\mathbf{L}^2(\Omega)}+( u_{h}\mathbf{z}_{h}, \mathbf{v}_{h})_{\mathbf{L}^2(\Omega)}+(\mathsf{r}_{h},\text{div }\mathbf{v}_{h})_{L^2(\Omega)} \\=  	(\mathbf{y}_{h}-\mathbf{y}_{\Omega},\mathbf{v}_{h})_{\mathbf{L}^2(\Omega)},
\quad
(\mathsf{q}_{h},\text{div }\mathbf{z}_h)_{L^2(\Omega)}  =  0 \qquad \forall (\mathbf{v}_{h},\mathsf{q}_{h})\in \mathbf{X}_h\times M_h.
\end{multline}

\begin{proposition}[error bound]
\label{eq:theorem_adjoint_aux_estimates}
Let $\Omega \subset \mathbb{R}^d$ be a convex polytope. Let $\mathbf{f},\mathbf{y}_{\Omega}\in \mathbf{L}^2(\Omega)$, and let $(\mathbf{z},\mathsf{r})$ and $(\mathbf{z}_{h},\mathsf{r}_{h})$ be the solutions of \eqref{eq:brinkamn_problem_adjoint} and \eqref{eq:aux_problem_adj_2}, respectively. Then,
\begin{multline}\label{eq:errorbound_adj}
\|\nabla(\mathbf{z}-\mathbf{z}_{h})\|_{\mathbf{L}^2(\Omega)}+\|\mathsf{r}-\mathsf{r}_{h}\|_{L^2(\Omega)}
\lesssim 
h [ \|\mathbf{f}\|_{\mathbf{L}^2(\Omega)}+\|\mathbf{y}_{\Omega}\|_{\mathbf{L}^2(\Omega)}] 
\\
+ \|u-u_{h}\|_{L^2(\Omega)}[ \|\mathbf{f}\|_{\mathbf{L}^{2}(\Omega)} +\|\mathbf{y}_{\Omega}\|_{\mathbf{L}^2(\Omega)}].
\end{multline}
\end{proposition}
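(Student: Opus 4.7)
The plan is to use the auxiliary pair $(\boldsymbol{\mathfrak{z}},\mathfrak{r})$ defined by \eqref{eq:aux_problem_adj_1} as a bridge between the continuous adjoint $(\mathbf{z},\mathsf{r})$ and the discrete adjoint $(\mathbf{z}_h,\mathsf{r}_h)$, and apply the triangle inequality
\begin{equation*}
\|(\mathbf{z}-\mathbf{z}_h,\mathsf{r}-\mathsf{r}_h)\|_\Omega
\leq
\|(\mathbf{z}-\boldsymbol{\mathfrak{z}},\mathsf{r}-\mathfrak{r})\|_\Omega
+
\|(\boldsymbol{\mathfrak{z}}-\mathbf{z}_h,\mathfrak{r}-\mathsf{r}_h)\|_\Omega.
\end{equation*}
The first term captures the effect of replacing the coefficient $u$ and the source $\mathbf{y}$ by their discrete counterparts $u_h$ and $\mathbf{y}_h$ at the continuous level, while the second is a pure Galerkin approximation error for the auxiliary adjoint problem.

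For the second term, $(\boldsymbol{\mathfrak{z}},\mathfrak{r})$ and $(\mathbf{z}_h,\mathsf{r}_h)$ solve, respectively, the continuous and discrete versions of one and the same saddle-point problem with coefficient $u_h\in\mathbb{U}_{ad}$ and right-hand side $\mathbf{y}_h-\mathbf{y}_\Omega\in\mathbf{L}^2(\Omega)$. Up to a harmless sign change in the pressure variable, this problem has the structure of \eqref{eq:brinkman_problem}--\eqref{eq:modeldiscrete}, so Theorem \ref{eq:theorem_error_estimates} yields $\|(\boldsymbol{\mathfrak{z}}-\mathbf{z}_h,\mathfrak{r}-\mathsf{r}_h)\|_\Omega \lesssim h\|\mathbf{y}_h-\mathbf{y}_\Omega\|_{\mathbf{L}^2(\Omega)}$. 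The discrete counterpart of \eqref{eq:well_posedness_brinkman} combined with Poincar\'e's inequality gives $\|\mathbf{y}_h\|_{\mathbf{L}^2(\Omega)}\lesssim\|\mathbf{f}\|_{\mathbf{H}^{-1}(\Omega)}\lesssim\|\mathbf{f}\|_{\mathbf{L}^2(\Omega)}$, so this contribution is dominated by $h[\|\mathbf{f}\|_{\mathbf{L}^2(\Omega)}+\|\mathbf{y}_\Omega\|_{\mathbf{L}^2(\Omega)}]$.

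For the first term, subtracting \eqref{eq:aux_problem_adj_1} from \eqref{eq:brinkamn_problem_adjoint} and reorganizing the reaction term as $u\mathbf{z}-u_h\boldsymbol{\mathfrak{z}}=u_h(\mathbf{z}-\boldsymbol{\mathfrak{z}})+(u-u_h)\mathbf{z}$ shows that $(\mathbf{z}-\boldsymbol{\mathfrak{z}},\mathsf{r}-\mathfrak{r})$ solves the adjoint Stokes--Brinkman system with coefficient $u_h$ and datum $(\mathbf{y}-\mathbf{y}_h)+(u_h-u)\mathbf{z}$ in $\mathbf{H}^{-1}(\Omega)$. Invoking the inf-sup theory used to derive \eqref{eq:well_posedness_brinkman} yields
\begin{equation*}
\|(\mathbf{z}-\boldsymbol{\mathfrak{z}},\mathsf{r}-\mathfrak{r})\|_\Omega
\lesssim
\|\mathbf{y}-\mathbf{y}_h\|_{\mathbf{H}^{-1}(\Omega)}
+
\|(u-u_h)\mathbf{z}\|_{\mathbf{H}^{-1}(\Omega)}.
\end{equation*}
The first right-hand side term is bounded by \eqref{eq:errorbound}. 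For the second, a duality argument combined with H\"older's inequality and the embedding $\mathbf{H}_0^1(\Omega)\hookrightarrow\mathbf{L}^4(\Omega)$ gives $\|(u-u_h)\mathbf{z}\|_{\mathbf{H}^{-1}(\Omega)}\lesssim\|u-u_h\|_{L^2(\Omega)}\|\mathbf{z}\|_{\mathbf{L}^4(\Omega)}$, while the continuous stability bound \eqref{eq:well_posedness_brinkman} for \eqref{eq:brinkamn_problem_adjoint}, together with the Sobolev embedding, provides $\|\mathbf{z}\|_{\mathbf{L}^4(\Omega)}\lesssim\|\mathbf{f}\|_{\mathbf{L}^2(\Omega)}+\|\mathbf{y}_\Omega\|_{\mathbf{L}^2(\Omega)}$. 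Collecting these estimates yields \eqref{eq:errorbound_adj}. No step is genuinely difficult; the only subtle point is to recognize that, because the bilinear coupling $(u-u_h)\mathbf{z}$ enters as an $\mathbf{H}^{-1}$-datum rather than a coefficient, its contribution can be absorbed directly at order $\|u-u_h\|_{L^2(\Omega)}$ without spawning additional factors of $h$, which is exactly what is needed to match the right-hand side of \eqref{eq:errorbound_adj}.
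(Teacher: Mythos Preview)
Your proof is correct and follows essentially the same strategy as the paper: split via the auxiliary pair $(\boldsymbol{\mathfrak{z}},\mathfrak{r})$, apply Theorem~\ref{eq:theorem_error_estimates} to the Galerkin part, and bound the continuous perturbation part by a stability estimate together with \eqref{eq:errorbound}. The only cosmetic difference is that the paper rearranges the reaction term as $u(\mathbf{z}-\boldsymbol{\mathfrak{z}})+(u-u_h)\boldsymbol{\mathfrak{z}}$ (keeping $u$ as the coefficient and using $\|\nabla\boldsymbol{\mathfrak{z}}\|_{\mathbf{L}^2(\Omega)}$), whereas you write it as $u_h(\mathbf{z}-\boldsymbol{\mathfrak{z}})+(u-u_h)\mathbf{z}$ and use $\|\mathbf{z}\|_{\mathbf{L}^4(\Omega)}$; both choices are equally valid.
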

\begin{proof}
We begin the proof by controlling the error $\| (\boldsymbol{\mathfrak{z}} - \mathbf{z}_{h}, \mathfrak{r}-\mathsf{r}_{h}) \|_{\Omega}$. To do this, we use the error bound \eqref{eq:error_estimate_Fem} and also that $\| \nabla \mathbf{y}_{h}\|_{\mathbf{L}^2(\Omega)} \lesssim \| \mathbf{f} \|_{\GC{\mathbf{L}^{2}(\Omega)}}$.
In fact, we have
\begin{align}\label{eq:bound_aux_conv1}
\|\nabla(\boldsymbol{\mathfrak{z}}-\mathbf{z}_{h})\|_{\mathbf{L}^2(\Omega)}+\|\mathfrak{r}-\mathsf{r}_{h}\|_{L^2(\Omega)}
\lesssim 
h( \| \mathbf{f} \|_{\GC{\mathbf{L}^{2}(\Omega)}} + \|\mathbf{y}_{\Omega}\|_{\mathbf{L}^2(\Omega)}).
\end{align}
Let us now note that $(\mathbf{z}- \boldsymbol{\mathfrak{z}},\mathsf{r}-\mathfrak{r})\in\mathbf{H}_0^1(\Omega)\times L_0^2(\Omega)$ solves the following problem:
\begin{multline*}
(\nabla (\mathbf{z}- \boldsymbol{\mathfrak{z}}),\nabla \mathbf{v})_{\mathbf{L}^2(\Omega)}+( u(\mathbf{z}- \boldsymbol{\mathfrak{z}}), \mathbf{v})_{\mathbf{L}^2(\Omega)}+(\mathsf{r}-\mathfrak{r},\text{div }\mathbf{v})_{L^2(\Omega)} \\=((u_{h}-u)\boldsymbol{\mathfrak{z}},\mathbf{v})_{\mathbf{L}^2(\Omega)}+(\mathbf{y}-\mathbf{y}_{h},\mathbf{v})_{\mathbf{L}^2(\Omega)},\quad (\mathsf{q},\text{div }(\mathbf{z}- \boldsymbol{\mathfrak{z}}))_{L^2(\Omega)}  =  0,
\end{multline*}
for all $(\mathbf{v},\mathsf{q}) \in \mathbf{H}_0^1(\Omega) \times L_0^2(\Omega)$. If we set $\mathbf{v} = \mathbf{z}- \boldsymbol{\mathfrak{z}}$ and $\mathsf{q}=0$, 
\EO{and use that $u \in \mathcal{A}_0$ and the error bound \eqref{eq:errorbound},} we obtain
\begin{equation}
\label{eq:bound_aux_conv2}
\begin{aligned}
\|\nabla(\mathbf{z}- \boldsymbol{\mathfrak{z}})\|_{\mathbf{L}^2(\Omega)}
& \lesssim  \|u-u_{h}\|_{L^2(\Omega)}\|\nabla\boldsymbol{\mathfrak{z}}\|_{\mathbf{L}^{2}(\Omega)}+\|\nabla (\mathbf{y}-\mathbf{y}_{h})\|_{\mathbf{L}^2(\Omega)}\\
& \lesssim  \|u-u_{h}\|_{L^2(\Omega)}[\GC{\|\mathbf{f}\|_{\mathbf{L}^{2}(\Omega)} +\|\mathbf{y}_{\Omega}\|_{\mathbf{L}^2(\Omega)}}] +h\|\mathbf{f}\|_{\mathbf{L}^2(\Omega)}.
 \end{aligned}
\end{equation}
The bound for $\|\mathsf{r}-\mathfrak{r}\|_{L^2(\Omega)}$ follows from the inf-sup condition \eqref{eq:infsup}, \eqref{eq:errorbound}, and \eqref{eq:bound_aux_conv2}:
\begin{equation}\label{eq:bound_aux_conv3}
\|\mathsf{r}-\mathfrak{r}\|_{L^2(\Omega)}\lesssim 
\|u-u_{h}\|_{L^2(\Omega)}[ \GC{ \|\mathbf{f}\|_{\mathbf{L}^{2}(\Omega)} +\|\mathbf{y}_{\Omega}\|_{\mathbf{L}^2(\Omega)}}] +h\|\mathbf{f}\|_{\mathbf{L}^2(\Omega)}.\end{equation}
\EO{Here, we have used that $\| u \|_{L^2(\Omega)}$ can be controlled in terms of $\Omega$ and the control bounds.} The bound \eqref{eq:errorbound_adj} follows from \eqref{eq:bound_aux_conv1}, \eqref{eq:bound_aux_conv2}, and \eqref{eq:bound_aux_conv3}. This concludes the proof.
\end{proof}

\subsubsection{Convergence results for the fully discrete scheme}

In this section, we derive two convergence results for the fully discrete scheme. The first result shows that a sequence of discrete global solutions contains a subsequence that converges to a global solution of the continuous problem \eqref{eq:min_functional}--\eqref{eq:brinkamn_problem_state} as $h \rightarrow 0$. The second result establishes that strict local solutions of the optimal control problem \eqref{eq:min_functional}--\eqref{eq:brinkamn_problem_state} can be approximated by local solutions of the fully discrete optimal control problems.

\begin{theorem}[convergence of global solutions]
\label{theorem_convergence_global_fully}
Let $\mathbf{f}$ and $\mathbf{y}_{\Omega}$ be in $\mathbf{L}^2(\Omega)$. Let $h>0$, and let $\bar{u}_{h}\in \mathbb{U}_{ad,h}$ be a global solution of the fully discrete scheme. \GC{Then, there exist nonrelabeled subsequences of $\{\bar{u}_{h}\}_{h>0}$ such that $\bar{u}_{h}\rightharpoonup \bar{u}$ in $L^{2}(\Omega)$ as $h \rightarrow 0$, where $\bar{u}$ is a global solution of \eqref{eq:min_functional}--\eqref{eq:brinkamn_problem_state}. Moreover, we have}
\begin{align}\label{eq:convergence_globalsolutions_fully}
\lim_{h \rightarrow 0} \|\bar{u} - \bar{u}_h\|_{L^2(\Omega)}=0,
\qquad \lim_{h \rightarrow 0} j_h(\bar{u}_{h})=j(\bar{u}).
\end{align}
\end{theorem}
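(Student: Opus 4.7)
The plan is to follow the classical direct-method/$\Gamma$-convergence strategy for discretizations of optimal control problems, combining compactness of $\mathbb{U}_{ad}$ in the weak-$\ast$ topology of $L^\infty(\Omega)$ with the strong continuity property \eqref{eq:convergence_prop} proved in Proposition \ref{theorem_convergence_01}. First, since $\bar u_h(x)\in[\mathsf{a},\mathsf{b}]$ for a.e.\ $x\in\Omega$ and all $h>0$, the sequence $\{\bar u_h\}_{h>0}$ is bounded in $L^\infty(\Omega)$. By the Banach--Alaoglu theorem there exists a nonrelabeled subsequence and an element $\bar u\in L^\infty(\Omega)$ with $\bar u_h\mathrel{\ensurestackMath{\stackon[1pt]{\rightharpoonup}{\scriptstyle\ast}}}\bar u$ in $L^\infty(\Omega)$; since $\mathbb{U}_{ad}$ is convex and strongly closed in $L^2(\Omega)$, it is weakly sequentially closed, so $\bar u\in\mathbb{U}_{ad}$. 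Weak-$\ast$ convergence in $L^\infty(\Omega)$ implies weak convergence in $L^2(\Omega)$, so Proposition \ref{theorem_convergence_01} yields $\bar{\mathbf{y}}_h\to\bar{\mathbf{y}}$ in $\mathbf{H}_0^1(\Omega)$, where $(\bar{\mathbf{y}},\bar{\mathsf{p}})=\mathcal{S}\bar u$.

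Next I would prove global optimality of $\bar u$. Pick an arbitrary $u\in\mathbb{U}_{ad}$ and define the elementwise $L^2$-projection $u_h\in\mathbb{U}_h$ by $u_h|_K:=|K|^{-1}\int_K u\,dx$. Since the affine bounds $\mathsf{a}\le u\le\mathsf{b}$ are preserved by averaging, $u_h\in\mathbb{U}_{ad,h}$; moreover $u_h\to u$ in $L^2(\Omega)$ by a standard density argument. Let $(\mathbf{y}_h(u_h),\mathsf{p}_h(u_h))$ solve \eqref{eq:brinkamn_problem_state_fd} with control $u_h$; by \eqref{eq:convergence_prop} the corresponding velocities converge strongly in $\mathbf{H}_0^1(\Omega)$ to $\mathcal{G}u$. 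Using that $\bar u_h$ is a global minimizer of the fully discrete problem, $j_h(\bar u_h)\le j_h(u_h)$ for every $h>0$. Since $\bar{\mathbf{y}}_h\to\bar{\mathbf{y}}$ strongly in $\mathbf{L}^2(\Omega)$ and $\bar u_h\rightharpoonup\bar u$ weakly in $L^2(\Omega)$, the weak lower semicontinuity of $\|\cdot\|_{L^2(\Omega)}^2$ yields $j(\bar u)\le\liminf_{h\to 0}j_h(\bar u_h)$; conversely, strong convergence $u_h\to u$ and $\mathcal{G}_h u_h\to \mathcal{G}u$ give $\lim_{h\to 0}j_h(u_h)=j(u)$. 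Combining these,
\begin{equation*}
j(\bar u)\le\liminf_{h\to 0}j_h(\bar u_h)\le\limsup_{h\to 0}j_h(\bar u_h)\le\lim_{h\to 0}j_h(u_h)=j(u),
\end{equation*}
for every $u\in\mathbb{U}_{ad}$, which proves that $\bar u$ is a global solution.

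Specializing the previous chain to $u=\bar u$ forces $\lim_{h\to 0}j_h(\bar u_h)=j(\bar u)$, which is the second assertion in \eqref{eq:convergence_globalsolutions_fully}. Because $\bar{\mathbf{y}}_h\to\bar{\mathbf{y}}$ strongly in $\mathbf{L}^2(\Omega)$, the tracking term converges, so from the definition \eqref{eq:functional} of $J$ I deduce $\|\bar u_h\|_{L^2(\Omega)}^2\to\|\bar u\|_{L^2(\Omega)}^2$. A standard Radon--Riesz argument in the Hilbert space $L^2(\Omega)$ (weak convergence plus convergence of norms implies strong convergence) then produces $\bar u_h\to\bar u$ in $L^2(\Omega)$.

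The only delicate point I foresee is justifying the passage to the limit in $j_h(u_h)$ along the comparison sequence: I must guarantee both that the piecewise-constant interpolant $u_h$ lies in $\mathbb{U}_{ad,h}$ (which follows from the averaging construction) and that the corresponding discrete state converges strongly to $\mathcal{G}u$. The second part is exactly the content of \eqref{eq:convergence_prop}, which requires only weak convergence of the controls; since I have strong $L^2$ convergence here, it applies a fortiori. All remaining ingredients—compactness of $\mathbb{U}_{ad}$, weak lower semicontinuity of the $L^2$-norm, and Radon--Riesz—are routine.
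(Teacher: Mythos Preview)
Your proof is correct and follows essentially the same strategy as the paper: weak-$\ast$ compactness of $\mathbb{U}_{ad}$, strong convergence of the discrete states via Proposition~\ref{theorem_convergence_01}, a comparison sequence $u_h=\mathsf{P}_h u\in\mathbb{U}_{ad,h}$, and the Radon--Riesz argument for the final strong convergence. The only notable difference is that the paper compares against a single continuous global minimizer $\mathfrak{u}$ and invokes Theorem~\ref{thm:regularity_optimal_control} (the $H^1$ regularity of $\mathfrak{u}$) to obtain $\mathsf{P}_h\mathfrak{u}\to\mathfrak{u}$ in $L^2(\Omega)$, whereas your density argument for the piecewise-constant projection works for any $u\in L^2(\Omega)$ and thus avoids that regularity input; this makes your version slightly more self-contained.
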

\begin{proof}
	We begin the proof by noting that, since $\{\bar{u}_{h}\}_{h>0}\subset \mathbb{U}_{ad,h}$ is uniformly bounded in \GC{$L^{2}(\Omega)$}, we can extract a nonrelabeled subsequence such that \GC{$\bar{u}_{h}\rightharpoonup \bar{u}$ in $L^{2}(\Omega)$} as $h\rightarrow 0$. We must now prove that $\bar{u}$ is optimal for \eqref{eq:min_functional}--\eqref{eq:brinkamn_problem_state} and that the limits in \eqref{eq:convergence_globalsolutions_fully} hold. We divide the proof into two steps.
	
	\emph{Step 1:} \emph{$\bar{u}\in \mathbb{U}_{ad}$ is a global solution of problem \eqref{eq:min_functional}--\eqref{eq:brinkamn_problem_state}}. Let $\mathfrak{u}\in \mathbb{U}_{ad}$ be a global solution of \eqref{eq:min_functional}--\eqref{eq:brinkamn_problem_state}. Let $\mathsf{P}_h: L^2(\Omega)\to \mathbb{U}_h$ be the orthogonal projection operator and define $\mathfrak{u}_{h}:=\mathsf{P}_h(\mathfrak{u})$. Since $\mathfrak{u}\in \mathbb{U}_{ad}$, it is clear that $\mathfrak{u}_{h}\in \mathbb{U}_{ad,h}$. On the other hand, the results of Theorem \ref{thm:regularity_optimal_control} guarantee that $\mathfrak{u}\in H^1(\Omega)$. Consequently, 
	$\|\mathfrak{u}-\mathfrak{u}_{h}\|_{L^2(\Omega)} \rightarrow 0$ as $h \rightarrow 0$. We now use the global optimality of $\mathfrak{u}$, Proposition \ref{theorem_convergence_01}, the global optimality of $\bar{u}_{h}$, and the strong convergence $\mathfrak{u}_{h}\to\mathfrak{u}$ in $L^2(\Omega)$ as $h \rightarrow 0$ to obtain $j(\mathfrak{u})\leq j(\bar{u})\leq \liminf_{h\to 0} j_h (\bar{u}_{h}) \leq \limsup_{h\to 0}j_h (\bar{u}_{h})\leq \limsup_{h \to 0}j_h (\mathfrak{u}_{h})=j(\mathfrak{u})$. As a result, $\bar{u}$ is a global solution of \eqref{eq:min_functional}--\eqref{eq:brinkamn_problem_state} and $j_h (\bar{u}_{h}) \rightarrow j(\bar{u})$ as $h \rightarrow 0$.

\emph{Step 2:} \emph{$\| \bar{u} - \bar{u}_h \|_{L^2(\Omega)}\to 0$ as $h \rightarrow 0$.} In view of the results of Proposition \ref{theorem_convergence_01}, we have that $\bar{\mathbf{y}}_h\to \bar{\mathbf{y}}$ in $\mathbf{H}_0^1(\Omega)$ as $h \rightarrow 0$. This, the fact that $j_h(\bar{u}_{h})\to j(\bar{u})$ as $h\rightarrow 0$, and the weak convergence $\bar{u}_{h}\rightharpoonup \bar{u}$ in $L^2(\Omega)$ as $h\rightarrow 0$ allow us to conclude that $\{\bar{u}_h\}_{h>0}$ converges to $\bar{u}$ in $L^2(\Omega)$ as $h\rightarrow 0$. This completes the proof.
\end{proof}

\begin{theorem}[convergence of local solutions]\label{theorem_convergence_local_fully}
Let $\mathbf{f}$ and $\mathbf{y}_{\Omega}$ be in $\mathbf{L}^2(\Omega)$. Let $\bar{u} \in \mathbb{U}_{ad}$ be a strict local minimum of \eqref{eq:min_functional}--\eqref{eq:brinkamn_problem_state}. Then, there exists a sequence $\{\bar{u}_{h}\}_{0 < h \leq h_{\Box}}$ of local minima of the fully discrete scheme such that \eqref{eq:convergence_globalsolutions_fully} holds.
\end{theorem}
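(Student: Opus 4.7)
The plan is to localize around the strict local minimum $\bar{u}$ and apply an argument analogous to the previous theorem on the restricted problem. Since $\bar{u}$ is a strict local minimum in $L^2(\Omega)$, there exists $\varepsilon>0$ such that $j(\bar{u})<j(u)$ for all $u\in\mathbb{U}_{ad}\cap\bar{B}_\varepsilon(\bar{u})$ with $u\neq\bar{u}$, where $\bar{B}_\varepsilon(\bar{u})$ denotes the closed ball in $L^2(\Omega)$ of radius $\varepsilon$ centered at $\bar{u}$. I would then introduce, for each $h>0$, the auxiliary discrete problem of minimizing $j_h(u_h)$ over the admissible set $\mathbb{U}_{ad,h}\cap\bar{B}_\varepsilon(\bar{u})$. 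This set is nonempty for $h$ small enough, since the $L^2$-orthogonal projection $\mathsf{P}_h\bar{u}$ belongs to $\mathbb{U}_{ad,h}$ and, by the $H^1$-regularity of $\bar{u}$ granted by Theorem \ref{thm:regularity_optimal_control}, we have $\|\bar{u}-\mathsf{P}_h\bar{u}\|_{L^2(\Omega)}\lesssim h\to 0$. Existence of a global minimizer $\bar{u}_h$ of the localized problem then follows from the direct method as in Theorem \ref{theorem_existence_optimalcontrol}, using that $\mathbb{U}_{ad,h}\cap\bar{B}_\varepsilon(\bar{u})$ is weakly sequentially compact in $L^2(\Omega)$ and that $j_h$ is weakly lower semicontinuous on it in view of Proposition \ref{theorem_convergence_01}.

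Next, I would repeat the argument of Theorem \ref{theorem_convergence_global_fully} applied to the localized problem. A nonrelabeled subsequence of $\{\bar{u}_h\}$ converges weakly-$\ast$ in $L^\infty(\Omega)$ (and hence weakly in $L^2(\Omega)$) to some $\tilde{u}\in\mathbb{U}_{ad}\cap\bar{B}_\varepsilon(\bar{u})$, where I use that $\bar{B}_\varepsilon(\bar{u})$ is convex and closed, hence weakly closed in $L^2(\Omega)$. Exactly as in Step 1 of the proof of Theorem \ref{theorem_convergence_global_fully}, using $\mathfrak{u}:=\bar{u}$ as the comparison element and $\mathfrak{u}_h:=\mathsf{P}_h\bar{u}\in\mathbb{U}_{ad,h}\cap\bar{B}_\varepsilon(\bar{u})$ (for $h$ small), one obtains $j(\bar{u})\leq j(\tilde{u})\leq \liminf_{h\to 0}j_h(\bar{u}_h)\leq\limsup_{h\to 0}j_h(\bar{u}_h)\leq \lim_{h\to 0}j_h(\mathsf{P}_h\bar{u})=j(\bar{u})$. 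By the strict local optimality of $\bar{u}$, this forces $\tilde{u}=\bar{u}$ and $j_h(\bar{u}_h)\to j(\bar{u})$. As in Step 2 of the same proof, the convergence of the cost functionals combined with $\bar{\mathbf{y}}_h\to\bar{\mathbf{y}}$ in $\mathbf{H}_0^1(\Omega)$ (Proposition \ref{theorem_convergence_01}) and the weak $L^2$-convergence of $\bar{u}_h$ upgrades this to strong convergence $\|\bar{u}-\bar{u}_h\|_{L^2(\Omega)}\to 0$.

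Finally, the main---though mild---obstacle is to remove the localization, namely to show that each $\bar{u}_h$ is in fact a local minimum of the unrestricted discrete problem. Since $\|\bar{u}_h-\bar{u}\|_{L^2(\Omega)}\to 0$, there exists $h_\Box>0$ such that for every $0<h\leq h_\Box$ the element $\bar{u}_h$ lies in the open ball $B_{\varepsilon/2}(\bar{u})$. Consequently, there is a neighborhood of $\bar{u}_h$ in $L^2(\Omega)$ contained in $\bar{B}_\varepsilon(\bar{u})$, on which $\bar{u}_h$ is optimal by construction; hence $\bar{u}_h$ is a local minimum of the fully discrete control problem. The identity $\lim_{h\to 0}j_h(\bar{u}_h)=j(\bar{u})$ was established above, so \eqref{eq:convergence_globalsolutions_fully} holds and the proof is complete.
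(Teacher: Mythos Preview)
Your proposal is correct and follows essentially the same approach as the paper: localize around $\bar u$ via a closed $L^2$-ball, solve the restricted discrete problem (using $\mathsf{P}_h\bar u$ to guarantee feasibility), invoke the argument of Theorem~\ref{theorem_convergence_global_fully} to obtain strong $L^2$-convergence of $\bar u_h$ to $\bar u$ by uniqueness of the localized continuous minimizer, and then observe that the ball constraint is inactive for small $h$ so that $\bar u_h$ is a local minimum of the unrestricted discrete scheme. One small remark: you pass from subsequential convergence to convergence of the full sequence when asserting $\|\bar u_h-\bar u\|_{L^2(\Omega)}\to 0$; this is justified by the standard subsequence argument since every weak-$\ast$ cluster point must equal $\bar u$, which the paper also invokes implicitly.
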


\begin{proof}
Since $\bar{u}$ is a strict local minimum of \eqref{eq:min_functional}--\eqref{eq:brinkamn_problem_state}, there exists $\epsilon>0$, so that the problem: Find
$
\min \{j(u) \,| \, u \in \mathbb{U}_{ad}: \|\bar{u}-u\|_{L^2(\Omega)}\leq\epsilon\},
$
admits $\bar{u}$ as the unique solution. We now introduce the discrete problem for $h>0$: Find
$
\min \{j_h(u_{h}) \,| \, u_{h}\in \mathbb{U}_{ad,h} : \|\bar{u}-u_{h}\|_{L^2(\Omega)}\leq\epsilon\}.
$
In the following, we show that this problem admits at least one solution. To this end, we must prove that the set in which the minimum is sought is nonempty. Since $\mathsf{P}_h(\bar{u}) \in \mathbb{U}_{ad,h}$ and $\|\bar{u}-\mathsf{P}_h(\bar{u})\|_{L^2(\Omega)} \to 0$ as $h \rightarrow 0$, there exists $h_{\epsilon}>0$ so that for every $h \leq h_{\epsilon}$ it holds that $\|\bar{u}-\mathsf{P}_h(\bar{u})\|_{L^2(\Omega)}\leq\epsilon$. Consequently, the set $\{ u_{h}\in \mathbb{U}_{ad,h}: \|\bar{u}-u_{h}\|_{L^2(\Omega)}\leq\epsilon\}$ is nonempty for every $h \leq h_{\epsilon}$.

Let $h \leq h_{\epsilon}$, and let $\bar{u}_{h}$ be a global solution to the discrete problem mentioned above. Applying the same arguments as the ones developed in the proof of Theorem \ref{theorem_convergence_global_fully} we obtain the existence of a nonrelabeled subsequence of $\{\bar{u}_{h}\}_{0 < h \leq h_{\epsilon}}$ such that it converges strongly in $L^2(\Omega)$ to a solution of the problem: Find
$
\min \{j(u) \,| \, u \in \mathbb{U}_{ad}: \|\bar{u}-u\|_{L^2(\Omega)}\leq\epsilon\}.
$ 
Since this problem admits $\bar{u}$ as the unique solution, we have that $\bar{u}_{h} \to \bar{u}$ in $L^2(\Omega)$ as $h \rightarrow 0$ for the whole sequence. This guarantees that the constraint $\|\bar{u}-\bar{u}_{h}\|_{L^2(\Omega)}\leq \epsilon$ is not active for $h$ sufficiently small and thus that $\bar{u}_{h}$ solves the fully discrete scheme and \eqref{eq:convergence_globalsolutions_fully} holds. This concludes the proof.
\end{proof}

%
%
%

\subsubsection{Convergence results for the semidiscrete scheme}

As \EO{with the fully discrete scheme, we have the following convergence results. The proofs of these results follow from minor adjustments to the proofs of the Theorems \ref{theorem_convergence_global_fully} and \ref{theorem_convergence_local_fully}.}

\begin{theorem}[\EO{convergence of global solutions}]
\GC{Let $\mathbf{f}$ and $\mathbf{y}_{\Omega}$ be in $\mathbf{L}^2(\Omega)$. Let $h>0$, and let $\bar{\mathsf{u}}_{h}\in \mathbb{U}_{ad}$ be a global solution of the semidiscrete scheme. Then, there exist nonrelabeled subsequences of $\{\bar{\mathsf{u}}_{h}\}_{h>0}$ such that $\bar{\mathsf{u}}_{h}\rightharpoonup \bar{u}$ in $L^{2}(\Omega)$ as $h \rightarrow 0$, where $\bar{u}$ corresponds to a global solution of the optimal control problem \eqref{eq:min_functional}--\eqref{eq:brinkamn_problem_state}. Moreover, we have $\|\bar{u}-\bar{\mathsf{u}}_{h}\|_{L^2(\Omega)} \rightarrow 0$ and $j_h(\bar{\mathsf{u}}_{h}) \rightarrow j(\bar{u})$ as $h \rightarrow 0$.}
\end{theorem}

\begin{theorem}[convergence of local solutions]
\GC{Let $\mathbf{f}$ and $\mathbf{y}_{\Omega}$ be in $\mathbf{L}^2(\Omega)$. Let $\bar{u}\in\mathbb{U}_{ad}$ be a strict local minimum of \eqref{eq:min_functional}--\eqref{eq:brinkamn_problem_state}. Then, there exists a sequence $\{\bar{\mathsf{u}}_{h}\}_{0 < h \leq h_{\circ}}$ of local minima of the semidiscrete scheme such that $\|\bar{u}-\bar{\mathsf{u}}_{h}\|_{L^2(\Omega)} \rightarrow 0$ and $j_h(\bar{\mathsf{u}}_{h}) \rightarrow j(\bar{u})$ as $h \rightarrow 0$.}
\end{theorem}

\section{A priori error bounds}
The main goal of this section is to derive error bounds for the fully discrete and semidiscrete methods presented in \S \ref{fully_discrete_framework} and \S \ref{semi_discrete_framework}, respectively.

\subsection{The fully discrete scheme}
Let $\{\bar{u}_{h}\}_{h>0}$ be a sequence of local minima of the fully discrete scheme such that $\bar{u}_{h} \to \bar{u}$ in $L^2(\Omega)$ as $h\rightarrow 0$, where $\bar{u}$ corresponds to a local solution of \eqref{eq:min_functional}--\eqref{eq:brinkamn_problem_state}. \GC{In this section, we derive the following error bound:}
\begin{equation}\label{eq:error_estimate_apriori_fully}
\GC{\|\bar{u}-\bar{u}_{h}\|_{L^2(\Omega)}\lesssim h \qquad\forall h\in(0, h_{\dagger}).}
\end{equation}

We begin our analysis with the following instrumental result.

\begin{lemma}[auxiliary error estimate]\label{lemma_auxiliary_error_fd}
\EO{Let $\Omega \subset \mathbb{R}^d$ be a convex polytope, and let $\boldsymbol{\mathbf{f}}$ and $\mathbf{y}_{\Omega}$ be in $\mathbf{L}^2(\Omega)$.} Let us assume that $\bar{u}\in \mathbb{U}_{ad}$ satisfies the second-order optimality conditions \eqref{equivalence_condition}. If \eqref{eq:error_estimate_apriori_fully} is false, then there exists $h_{\star}>0$ such that
\begin{align}\label{eq:auxiliary_error_estimate_fd}
\mathfrak{C}\|\bar{u}-\bar{u}_{h}\|_{L^2(\Omega)}^2\leq [j'(\bar{u}_{h})-j'(\bar{u})](\bar{u}_{h}-\bar{u})
 \quad
\forall h<h_{\star},
\quad
\mathfrak{C}:=2^{-1}\min\{\mu,\alpha\}.
\end{align}
Here, $\alpha$ is the control cost, and $\mu$ denotes the constant appearing in \eqref{equivalence_condition}.
\end{lemma}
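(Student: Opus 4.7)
My plan is to reduce the claimed inequality to a quadratic lower bound on $j''(\bar u)\,v_h^2$, via the mean value theorem and the Lipschitz estimate \eqref{eq:lemma_j}. Setting $v_h := \bar u_h - \bar u$, there exists $\theta_h \in (0,1)$ such that $[j'(\bar u_h) - j'(\bar u)]v_h = j''(\hat u_h)v_h^2$ with $\hat u_h := \bar u + \theta_h v_h$. Since $\bar u_h \to \bar u$ in $L^2(\Omega)$, we have $\|\hat u_h - \bar u\|_{L^2(\Omega)} \leq \|v_h\|_{L^2(\Omega)} \to 0$; inequality \eqref{eq:lemma_j} then yields $j''(\hat u_h)v_h^2 \geq j''(\bar u)v_h^2 - \mathfrak{D}\|v_h\|_{L^2(\Omega)}^{3}$. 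Hence, once $h_\star$ is chosen so that $\mathfrak{D}\|v_h\|_{L^2(\Omega)} \leq \mathfrak{C}$ for $h < h_\star$, it suffices to prove the quadratic lower bound $j''(\bar u)v_h^2 \geq \min\{\mu,\alpha\}\|v_h\|_{L^2(\Omega)}^2$ along the same range of indices.

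I would prove this bound by a second, nested contradiction. Passing to the subsequence provided by the negation of \eqref{eq:error_estimate_apriori_fully}, which enforces $h/\|v_h\|_{L^2(\Omega)} \to 0$, suppose that $j''(\bar u)v_h^2 < \min\{\mu,\alpha\}\|v_h\|_{L^2(\Omega)}^2$ holds along a further subsequence. Normalize by setting $w_h := v_h/\|v_h\|_{L^2(\Omega)}$, so that $\|w_h\|_{L^2(\Omega)} = 1$ and $j''(\bar u)w_h^2 < \min\{\mu,\alpha\}$; by reflexivity, extract $w_h \rightharpoonup w$ in $L^2(\Omega)$. The proof now splits into two tasks: identify $w$ as an element of $C_{\bar u}^\tau$, and then contradict the strict inequality by weak lower semicontinuity.

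The main obstacle is identifying $w \in C_{\bar u}^\tau$, specifically the support condition $w(x) = 0$ on $\{|\bar{\mathfrak{d}}(x)| > \tau\}$; the sign conditions \eqref{eq:cone_condition} are inherited by each $v_h$ and pass to the weak limit because the corresponding feasible set is convex and closed in $L^2(\Omega)$. From the projection formula \eqref{eq:representation}, one checks that $\bar{\mathfrak{d}}(x)\,v_h(x) \geq 0$ pointwise a.e., so $\int_\Omega \bar{\mathfrak{d}}\,v_h = \int_\Omega |\bar{\mathfrak{d}}||v_h|$. To bound the left-hand side from above, I would test the discrete variational inequality \eqref{eq:variational_inequality_discrete} with the $L^2$-projection $\mathsf{P}_h \bar u \in \mathbb{U}_{ad,h}$, use $\bar u \in H^1(\Omega)$ from Theorem \ref{thm:regularity_optimal_control} to obtain $\|\bar u - \mathsf{P}_h \bar u\|_{L^2(\Omega)} \lesssim h$, and combine this with Propositions \ref{theorem_convergence_01} and \ref{eq:theorem_adjoint_aux_estimates} applied to $\bar{\mathfrak{d}} - \bar{\mathfrak{d}}_h = \alpha(\bar u - \bar u_h) + \bar{\mathbf{y}}_h\cdot\bar{\mathbf{z}}_h - \bar{\mathbf{y}}\cdot\bar{\mathbf{z}}$ to deduce $\|\bar{\mathfrak{d}} - \bar{\mathfrak{d}}_h\|_{L^2(\Omega)} \lesssim h + \|v_h\|_{L^2(\Omega)}$. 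This yields $\int_\Omega |\bar{\mathfrak{d}}||v_h| \lesssim h + h\|v_h\|_{L^2(\Omega)} + \|v_h\|_{L^2(\Omega)}^2$; dividing by $\|v_h\|_{L^2(\Omega)}$ and using $h/\|v_h\|_{L^2(\Omega)} \to 0$ together with $\|v_h\|_{L^2(\Omega)} \to 0$ gives $\int_\Omega |\bar{\mathfrak{d}}||w_h| \to 0$. Since $\bar{\mathfrak{d}} \in L^\infty(\Omega)$ by Theorem \ref{theorem_regularity_Linfty}, the weak $L^2$-limit $\bar{\mathfrak{d}} w$ of $\bar{\mathfrak{d}} w_h$ must coincide with the strong $L^1$-limit $0$, so $\bar{\mathfrak{d}} w = 0$ a.e.; in particular $w \in C_{\bar u} \subset C_{\bar u}^\tau$.

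To close the contradiction, set $\varphi_h := \mathcal{G}'(\bar u) w_h$. The stability of \eqref{eq:problem_S_first}, together with $\bar{\mathbf{y}} \in \mathbf{L}^\infty(\Omega)$, shows that $\mathcal{G}'(\bar u)$ maps $L^2(\Omega)$ boundedly into $\mathbf{H}_0^1(\Omega)$; Rellich's theorem and linearity then give $\varphi_h \to \varphi := \mathcal{G}'(\bar u) w$ strongly in $\mathbf{L}^2(\Omega)$. Passing to the limit in \eqref{eq:identity_j} with $w_h \rightharpoonup w$ in $L^2(\Omega)$ and $\varphi_h \to \varphi$ in $\mathbf{L}^2(\Omega)$ yields $\lim_h j''(\bar u)w_h^2 = \alpha - 2(w\varphi,\bar{\mathbf{z}})_{\mathbf{L}^2(\Omega)} + \|\varphi\|_{\mathbf{L}^2(\Omega)}^2 = j''(\bar u)w^2 + \alpha(1 - \|w\|_{L^2(\Omega)}^2)$. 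Since $w \in C_{\bar u}^\tau$, the condition \eqref{equivalence_condition} gives $j''(\bar u)w^2 \geq \mu\|w\|_{L^2(\Omega)}^2$, and the weak lower semicontinuity bound $\|w\|_{L^2(\Omega)} \leq 1$ leads to $\lim_h j''(\bar u)w_h^2 \geq \alpha + (\mu-\alpha)\|w\|_{L^2(\Omega)}^2 \geq \min\{\mu,\alpha\}$, contradicting the supposition. This establishes the quadratic lower bound and, via the reduction of the first paragraph, completes the proof of \eqref{eq:auxiliary_error_estimate_fd}.
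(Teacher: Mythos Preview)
Your argument follows the paper's proof closely: the same contradiction setup exploiting $h/\|\bar u_h-\bar u\|_{L^2(\Omega)}\to 0$, the same normalization $w_h$, and essentially the same verification that the weak limit $w$ lies in $C_{\bar u}$ via the discrete variational inequality tested with $\mathsf P_h\bar u$. The one genuine difference is that you invoke the Lipschitz estimate \eqref{eq:lemma_j} up front to replace $j''(\hat u_h)w_h^2$ by $j''(\bar u)w_h^2$, so the limit is computed at the fixed point $\bar u$; the paper instead works directly with $j''(\hat u_h)w_h^2$ and must track the convergence of $\mathbf y(\hat u_h)$, $\mathbf z(\hat u_h)$, and $\boldsymbol\varphi(w_h)$ at the moving intermediate point. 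Your route is a legitimate and slightly more economical use of the ingredients already available.

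One small logical slip: your nested contradiction does not close as written. The supposition $j''(\bar u)w_h^2<\min\{\mu,\alpha\}$ along a subsequence is compatible with $\lim_h j''(\bar u)w_h^2=\min\{\mu,\alpha\}$, so the non-strict conclusion $\lim_h j''(\bar u)w_h^2\ge\min\{\mu,\alpha\}$ does not yield a contradiction. The fix is immediate: drop the inner contradiction and observe that your limit computation, applied to any weakly convergent subsequence, already gives $\liminf_h j''(\bar u)w_h^2\ge\min\{\mu,\alpha\}=2\mathfrak C$ along the full sequence coming from the negation of \eqref{eq:error_estimate_apriori_fully}. Combined with $\mathfrak D\|v_h\|_{L^2(\Omega)}\to 0$ from \eqref{eq:lemma_j}, this yields $j''(\hat u_h)w_h^2\ge\mathfrak C$ for all sufficiently small $h$, which is exactly \eqref{eq:auxiliary_error_estimate_fd}.
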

\begin{proof}
\EO{We proceed by contradiction as in} \cite[Section 7]{MR2272157}: Since \eqref{eq:error_estimate_apriori_fully} is false, there exists a subsequence $\{h_k\}_{k\in\mathbb{N}}\subset \mathbb{R}^{+}$ such that
\begin{align}\label{eq:auxuliary_error_estimate_fd}
\lim_{h_{k} \rightarrow 0} \|\bar{u}-\bar{u}_{h_k}\|_{L^2(\Omega)} = 0,
\qquad \lim_{h_{k} \rightarrow 0} h_k^{-1} \|\bar{u}-\bar{u}_{h_k}\|_{L^2(\Omega)}=+\infty.
\end{align}
In the following, we omit the subindex $k$ to simplify the notation and denote $\bar{u}_{h_k}=\bar{u}_{h}$. We note that $h \rightarrow 0$ as $k \uparrow \infty$. Define $w_h:=(\bar{u}_h-\bar{u})/\|\bar{u}_h-\bar{u}\|_{L^2(\Omega)}$ for each $h>0$ and note that $\{ w_h \}_{h>0}$ is uniformly bounded in $L^2(\Omega)$. Therefore, there exists a nonrelabeled subsequence such that $w_h \rightharpoonup w$ in $L^2(\Omega)$ as $h\rightarrow 0$. The rest of the proof is divided into two steps.

\emph{Step 1:} \emph{$w\in C_{\bar{u}}$}. Let us first recall that $C_{\bar{u}}$ is defined in \eqref{eq:cone}. Since $\bar{u}_{h} \in \mathbb{U}_{ad,h}$ for any $h>0$, we have that $w_h$ satisfies the sign conditions in \eqref{eq:cone_condition}. Consequently, the weak limit $w$ also satisfies \eqref{eq:cone_condition}. In what follows, we prove that $\bar{\mathfrak{d}}(x) \neq 0$ implies that $w(x)=0$ for a.e.~$x\in\Omega$. To this end, we introduce the variable $\bar{\mathfrak{d}}_h:=\alpha \bar{u}_h-\bar{\mathbf{y}}_h\cdot\bar{\mathbf{z}}_h$ and recall that $\bar{\mathfrak{d}}=\alpha \bar{u}-\bar{\mathbf{y}}\cdot\bar{\mathbf{z}}$. We now control $\|\bar{\mathfrak{d}} - \bar{\mathfrak{d}}_h\|_{L^2(\Omega)}$ in view of H\"older's inequality, the bounds \eqref{eq:errorbound_adj} and \eqref{eq:errorbound}, and the property $\|\bar{u}-\bar{u}_h\|_{L^2(\Omega)}\to 0$ as \EO{$h\rightarrow 0$:
\begin{multline*}
\|\bar{\mathfrak{d}}-\bar{\mathfrak{d}}_h\|_{L^2(\Omega)}\lesssim \|\bar{u}-\bar{u}_h\|_{L^2(\Omega)}+\|\nabla\bar{\mathbf{y}}_h\|_{\mathbf{L}^2(\Omega)}\|\nabla( \bar{\mathbf{z}}-\bar{\mathbf{z}}_h)\|_{\mathbf{L}^2(\Omega)}
+ \|\nabla\bar{\mathbf{z}}\|_{\mathbf{L}^2(\Omega)}
\\
\cdot \|\nabla( \bar{\mathbf{y}}-\bar{\mathbf{y}}_h)\|_{\mathbf{L}^2(\Omega)}\lesssim 
(1 + \|\mathbf{f}\|_{\mathbf{L}^2(\Omega)}[\|\mathbf{f}\|_{\mathbf{L}^2(\Omega)} + \|\mathbf{y}_{\Omega} \|_{\mathbf{L}^2(\Omega)}])
\|\bar{u}-\bar{u}_h\|_{L^2(\Omega)} 
\\
+ h \|\mathbf{f}\|_{\mathbf{L}^2(\Omega)}[\|\mathbf{f}\|_{\mathbf{L}^2(\Omega)} + \| \mathbf{y}_{\Omega} \|_{\mathbf{L}^2(\Omega)}]
\to 0
\end{multline*}
as $h \rightarrow 0$.} We have thus obtained that $\bar{\mathfrak{d}}_h \rightarrow \bar{\mathfrak{d}}$ in $L^2(\Omega)$ as $h \rightarrow 0$. Since $w_h \rightharpoonup w$ in $L^2(\Omega)$ as $h\rightarrow 0$, we have
\begin{eqnarray*}
\int_{\Omega}\bar{\mathfrak{d}}(x)w(x)\mathrm{d}x=
\lim_{h\rightarrow 0} \frac{1}{\|\bar{u}_h-\bar{u}\|_{L^2(\Omega)}}
\left[\int_{\Omega}\bar{\mathfrak{d}}_h(\mathsf{P}_h(\bar{u})-\bar{u})\mathrm{d}x+\int_{\Omega}\bar{\mathfrak{d}}_h(\bar{u}_h-\mathsf{P}_h(\bar{u}))\mathrm{d}x\right],
\end{eqnarray*}
where $\mathsf{P}_h: L^2(\Omega) \rightarrow \mathbb{U}_h$ denotes the $L^2$-orthogonal projection operator. Since $\mathsf{P}_h(\bar{u})\in \mathbb{U}_{ad,h}$, the discrete variational inequality \eqref{eq:variational_inequality_discrete} yields $(\bar{\mathfrak{d}}_h,\bar{u}_h-\mathsf{P}_h(\bar{u}))_{L^2(\Omega)}\leq 0$. Let us now note that $\|\bar{\mathfrak{d}}_h\|_{L^2(\Omega)}\leq\|\bar{\mathfrak{d}}_h-\bar{\mathfrak{d}}\|_{L^2(\Omega)}+\|\bar{\mathfrak{d}}\|_{L^2(\Omega)}\lesssim 1$ for every $h \leq \mathfrak{h}$, which implies that $\{\|\bar{\mathfrak{d}}_h\|_{L^2(\Omega)}\}_{0 < h \leq \mathfrak{h}}$ is uniformly bounded in $\mathbb{R}$. Thus, 
\begin{eqnarray*}
\int_{\Omega}\bar{\mathfrak{d}}(x)w(x)\mathrm{d}x
\leq
\lim_{h \rightarrow 0}
\frac{1}{\|\bar{u}_h-\bar{u}\|_{L^2(\Omega)}}
 \int_{\Omega}\bar{\mathfrak{d}}_h(\mathsf{P}_h(\bar{u})-\bar{u})\mathrm{d}x
\lesssim \lim_{h \rightarrow 0}\dfrac{\|\mathsf{P}_h(\bar{u})-\bar{u}\|_{L^2(\Omega)}}{\|\bar{u}_h-\bar{u}\|_{L^2(\Omega)}}=0.
\end{eqnarray*}
To obtain the last equality, we used the regularity property $\bar{u} \in H^1(\Omega)$ (cf.~Theorem \ref{thm:regularity_optimal_control}), the error bound $\| \bar{u} - \mathsf{P}_h(\bar{u}) \|_{L^2(\Omega)} \lesssim h | \bar{u} |_{H^1(\Omega)}$, and the right-hand side limit in \eqref{eq:auxuliary_error_estimate_fd}. On the other hand, since $w$ satisfies the sign conditions in \eqref{eq:cone_condition} and $\bar u$ satisfies the \EO{projection} formula \eqref{eq:representation}, we obtain that $\bar{\mathfrak{d}}(x)w(x)\geq 0$ for a.e.~$x\in\Omega$. Therefore, $\int_{\Omega}|\bar{\mathfrak{d}}(x)w(x)|\mathrm{d}x = \int_{\Omega}\bar{\mathfrak{d}}(x)w(x)\mathrm{d}x \leq 0$. Consequently, if $\bar{\mathfrak{d}}(x)\neq 0$, then $w(x)=0$ for a.e.~$x\in\Omega$. This proves that $w\in C_{\bar{u}}$.

\emph{Step 2:} \emph{The error bound \eqref{eq:auxiliary_error_estimate_fd}.} We apply the mean value theorem to obtain 
\begin{align}\label{eq:mean_value_theorem}
[j'(\bar{u}_h)-j'(\bar{u})](\bar{u}_h-\bar{u})=j''(\hat{u}_h)(\bar{u}_h-\bar{u})^2,\quad \hat{u}_h:=\bar{u}+\theta_h (\bar{u}_h - \bar{u}),
\end{align}
where $\theta_h \in (0,1)$. Let $(\mathbf{y}(\hat{u}_h),\mathsf{p}(\hat{u}_h))\in \mathbf{H}_0^1(\Omega)\times L_0^2(\Omega)$ be the solution to  \eqref{eq:brinkamn_problem_state}, where $u$ is replaced by $\hat{u}_h$, and let $(\mathbf{z}(\hat{u}_h),\mathsf{r}(\hat{u}_h))\in \mathbf{H}_0^1(\Omega)\times L_0^2(\Omega)$ be the solution to \eqref{eq:brinkamn_problem_adjoint}, where $\mathbf{y}$ and $u$ are replaced by $\mathbf{y}(\hat{u}_h)$ and $\hat{u}_h$, respectively.  We now note that $(\bar{\mathbf{y}}-\mathbf{y}(\hat{u}_h),\bar{\mathsf{p}}-\mathsf{p}(\hat{u}_h))\in\mathbf{H}_0^1(\Omega)\times L_0^2(\Omega)$ solves the following problem:
 \begin{multline*}
(\nabla (\bar{\mathbf{y}}-\mathbf{y}(\hat{u}_{h})),\nabla \mathbf{v})_{\mathbf{L}^2(\Omega)}+( \bar{u}(\bar{\mathbf{y}}-\mathbf{y}(\hat{u}_{h})), \mathbf{v})_{\mathbf{L}^2(\Omega)}-(\bar{\mathsf{p}}-\mathsf{p}(\hat{u}_{h}),\text{div }\mathbf{v})_{L^2(\Omega)} \\=((\hat{u}_{h}-\bar{u})\mathbf{y}(\hat{u}_{h}),\mathbf{v})_{\mathbf{L}^2(\Omega)},\quad (\mathsf{q},\text{div }(\bar{\mathbf{y}}-\mathbf{y}(\hat{u}_{h})))_{L^2(\Omega)}  =  0,
\end{multline*}
for all $(\mathbf{v},\mathsf{q}) \in \mathbf{H}_0^1(\Omega) \times L_0^2(\Omega)$. If we set $\mathbf{v} = \bar{\mathbf{y}}-\mathbf{y}(\hat{u}_{h})$ and $\mathsf{q}=0$, \EO{and use that $\bar{u} \in \mathbb{U}_{ad} \subset \mathcal{A}_0$,} we obtain the bound $\|\nabla(\bar{\mathbf{y}}-\mathbf{y}(\hat{u}_{h}))\|_{\mathbf{L}^2(\Omega)}\lesssim \EO{\| \mathbf{f}\|_{\mathbf{L}^2(\Omega)}} \|\bar{u}-\hat{u}_{h}\|_{L^2(\Omega)}$. Since $\bar{u}_h \to \bar{u}$ in $L^2(\Omega)$ as $h \rightarrow 0$, we can conclude that $\mathbf{y}(\hat{u}_h)\to \bar{\mathbf{y}}$ in $\mathbf{H}_0^1(\Omega)$ and that $\mathsf{p}(\hat{u}_h)\to\bar{\mathsf{p}}$ in $L_0^2(\Omega)$ as $h \rightarrow 0$; the latter follows from the inf-sup condition \eqref{eq:infsup}. Similarly, we can conclude that $\mathbf{z}(\hat{u}_h)\to \bar{\mathbf{z}}$ in $\mathbf{H}_0^1(\Omega)$ and that $\mathsf{r}(\hat{u}_h) \to \bar{\mathsf{r}}$ in $L_0^2(\Omega)$ as $h \rightarrow 0$. Let us now define $(\boldsymbol{\varphi}(w_h),\zeta(w_h))\in \mathbf{H}_0^1(\Omega)\times L_0^2(\Omega)$ as the solution to \eqref{eq:problem_S_first}, where $u$, $\mathbf{y}$, and $v$ are replaced by $\hat{u}_h$, $\mathbf{y}(\hat{u}_{h})$, and $w_h$, respectively. If we apply the arguments from step 2 of the proof of Theorem \ref{theorem_second_order_optimal}, we obtain that $w_h\rightharpoonup w$ in $L^2(\Omega)$ as $h \rightarrow 0$ implies that $\boldsymbol{\varphi}(w_h)\rightharpoonup \boldsymbol{\varphi}$ in $\mathbf{H}_0^1(\Omega)$. 
We thus use the identity \eqref{eq:identity_j}, the definition of $w_h$, and the second-order equivalent conditions in \eqref{equivalence_condition} to arrive at
\begin{multline}
\label{eq:second_derivatives_to_take_the_limit}
\lim_{h\to 0}j''(\hat{u}_h)w_h^2
=
\lim_{h\to 0}
\left[
\alpha\|w_h\|_{L^2(\Omega)}^2-2(w_h\boldsymbol{\varphi}(w_h),\mathbf{z}(\hat{u}_h))_{\mathbf{L}^2(\Omega)}+\|\boldsymbol{\varphi}(w_h)\|_{\mathbf{L}^2(\Omega)}^2
\right]
\\
=\alpha-2(w\boldsymbol{\varphi},\bar{\mathbf{z}})_{\mathbf{L}^2(\Omega)}+\|\boldsymbol{\varphi}\|_{\mathbf{L}^2(\Omega)}^2=\alpha+j''(\bar{u})w^2-\alpha \|w\|_{L^2(\Omega)}^2\geq \alpha + (\mu-\alpha) \|w\|_{L^2(\Omega)}^2.
\end{multline}
We note that we have also used \GC{H\"{o}lder's inequality, \EO{the weak convergence $w_{h} \rightharpoonup w$ in $L^{2}(\Omega)$ as $h \rightarrow 0$}, the fact that by definition $\|w_h\|_{L^2(\Omega)} = 1$ for $h>0$, the Sobolev embedding $\mathbf{H}_0^1(\Omega)\hookrightarrow\mathbf{L}^4(\Omega)$, which is compact, and the fact that} $\boldsymbol{\varphi}(w_h)\rightarrow \boldsymbol{\varphi}$ in $\mathbf{L}^4(\Omega)$ and $\mathbf{z}(\hat{u}_h)\to \bar{\mathbf{z}}$ in $\mathbf{H}_0^1(\Omega)$ as $h \rightarrow 0$. \EO{In particular, the convergence of the sequence $\{ (w_h\boldsymbol{\varphi}(w_h),\mathbf{z}(\hat{u}_h))_{\mathbf{L}^2(\Omega)} \}_h$ to $(w\boldsymbol{\varphi},\bar{\mathbf{z}})_{\mathbf{L}^2(\Omega)}$ can be deduced as follows. First, we write
\begin{multline*}
 (w\boldsymbol{\varphi},\bar{\mathbf{z}})_{\mathbf{L}^2(\Omega)}
 =
 \int_{\Omega} (w - w_h) \boldsymbol{\varphi} \cdot \bar{\mathbf{z}} \mathrm{d}x + \int_{\Omega} w_h (\boldsymbol{\varphi}- \boldsymbol{\varphi}(w_h)) \cdot \bar{\mathbf{z}} \mathrm{d}x
 \\
 + \int_{\Omega} w_h \boldsymbol{\varphi}(w_h) \cdot (\bar{\mathbf{z}} - \mathbf{z}(\hat{u}_h)) \mathrm{d}x
 + (w_h\boldsymbol{\varphi}(w_h),\mathbf{z}(\hat{u}_h))_{\mathbf{L}^2(\Omega)} =: \mathfrak{I}_h + \mathfrak{J}_h + \mathfrak{K}_h + \mathfrak{L}_h.
\end{multline*}
Second, $\mathfrak{I}_h \rightarrow 0$ as $h \rightarrow 0$, $\mathfrak{J}_h \leq C_{4 \hookrightarrow 2} \|  \boldsymbol{\varphi}- \boldsymbol{\varphi}(w_h) \|_{\mathbf{L}^4(\Omega)} \| \nabla \bar{\mathbf{z}}\|_{\mathbf{L}^2(\Omega)} \rightarrow 0$ as $h \rightarrow 0$, and $\mathfrak{K}_h \leq C_{4 \hookrightarrow 2}^2  \| \nabla \boldsymbol{\varphi}(w_h) \|_{\mathbf{L}^2(\Omega)} \| \nabla (\bar{\mathbf{z}} - \mathbf{z}(\hat{u}_h))\|_{\mathbf{L}^2(\Omega)} \rightarrow 0$ as $h \rightarrow 0$. With \eqref{eq:second_derivatives_to_take_the_limit} at hand, we use that} $\|w\|_{L^2(\Omega)}\leq 1$ to obtain $\lim_{h\rightarrow 0}j''(\hat{u}_h)w_h^2\geq \min\{\mu,\alpha\}>0$. We can thus deduce the existence of $h_{\star}>0$ such that $j''(\hat{u}_h)w_h^2\geq \min\{\mu,\alpha\}/2$ for every $h<h_{\star}$. The bound \eqref{eq:auxiliary_error_estimate_fd} therefore follows from the definition of $w_h$ and the identity \eqref{eq:mean_value_theorem}.
\end{proof}

We are now ready to prove the error bound \eqref{eq:error_estimate_apriori_fully}.

\begin{theorem}[a priori error bound]
\label{theorem_estimates_control}
\EO{Let $\Omega \subset \mathbb{R}^d$ be a convex polytope, and let $\boldsymbol{\mathbf{f}}$ and $\mathbf{y}_{\Omega}$ be in $\mathbf{L}^2(\Omega)$.} Let us assume that $\bar{u}\in \mathbb{U}_{ad}$ satisfies the second-order optimality conditions \eqref{equivalence_condition}. Then, there exists $h_{\dagger}>0$ such that
\begin{align}\label{eq:error_estimate_fd}
\|\bar{u}-\bar{u}_{h}\|_{L^2(\Omega)}
\lesssim 
h\|\mathbf{f}\|_{\mathbf{L}^2(\Omega)}
\left[ 
\|\mathbf{f}\|_{\mathbf{L}^2(\Omega)}+\|\mathbf{y}_{\Omega}\|_{\mathbf{L}^2(\Omega)}
\right]
\quad\forall h\in(0, h_{\dagger}).
\end{align}
\end{theorem}
\begin{proof}
We proceed by contradiction: assume that the error bound \eqref{eq:error_estimate_fd} is false. Therefore, we can use the result of Lemma \ref{lemma_auxiliary_error_fd} to obtain that \eqref{eq:auxiliary_error_estimate_fd} holds for every $h<h_{\star}$. On the other hand, if we set $u=\bar{u}_h$ in \eqref{eq:ineq_variational} and $u_h=\mathsf{P}_h(\bar{u})$ in \eqref{eq:variational_inequality_discrete} we can conclude that $-j'(\bar{u})(\bar{u}_h-\bar{u})\leq 0$ and $j'_h(\bar{u}_h)(\mathsf{P}_h(\bar{u})-\bar{u}_h)\geq 0$, respectively. Thus,
\begin{multline}\label{eq:estimates_aux_error_fullydiscrete}
\|\bar{u}-\bar{u}_h\|_{L^2(\Omega)}^2\lesssim j'(\bar{u}_h)(\bar{u}_h-\bar{u})+j'_h (\bar{u}_h)(\mathsf{P}_h(\bar{u})-\bar{u}_h)\\
=j'(\bar{u}_h)(\mathsf{P}_h(\bar{u})-\bar{u})+\left[j'(\bar{u}_h)-j'_h (\bar{u}_h)\right](\bar{u}_h-\mathsf{P}_h(\bar{u}))=:\text{I}_h + \text{II}_h.
\end{multline}

To control the term $\text{I}_h$, we introduce $(\hat{\mathbf{y}},\hat{\mathsf{p}})\in \mathbf{H}_0^1(\Omega)\times L_0^2(\Omega)$ as the solution to
\begin{align}
\label{eq:aux_problem_adj_1_error}
(\nabla\hat{\mathbf{y}},\nabla \mathbf{v})_{\mathbf{L}^2(\Omega)}+( \bar{u}_{h}\hat{\mathbf{y}}, \mathbf{v})_{\mathbf{L}^2(\Omega)}-(\hat{\mathsf{p}},\text{div }\mathbf{v})_{L^2(\Omega)} =  	(\mathbf{f},\mathbf{v})_{\mathbf{L}^2(\Omega)}
\end{align}
and $(\mathsf{q},\text{div }\hat{\mathbf{y}})_{L^2(\Omega)} = 0$ for all $(\mathbf{v},\mathsf{q}) \in \mathbf{H}_0^1(\Omega) \times L_0^2(\Omega)$. We also introduce the pair $(\hat{\mathbf{z}},\hat{\mathsf{r}})\in \mathbf{H}_0^1(\Omega)\times L_0^2(\Omega)$ as the solution to
\begin{align}
\label{eq:aux_problem_adj_2_error}
(\nabla\hat{\mathbf{z}},\nabla \mathbf{v})_{\mathbf{L}^2(\Omega)}+( \bar{u}_{h}\hat{\mathbf{z}}, \mathbf{v})_{\mathbf{L}^2(\Omega)}+(\hat{\mathsf{r}},\text{div }\mathbf{v})_{L^2(\Omega)} = 	(\hat{\mathbf{y}}-\mathbf{y}_{\Omega},\mathbf{v})_{\mathbf{L}^2(\Omega)}
\end{align}
and $(\mathsf{q},\text{div }\hat{\mathbf{z}})_{L^2(\Omega)}  = 0$ for all $(\mathbf{v},\mathsf{q}) \in \mathbf{H}_0^1(\Omega) \times L_0^2(\Omega)$. We thus control $\text{I}_h$ as follows:
\begin{eqnarray*}
\text{I}_h 
&=&(\alpha \bar{u}_h-\hat{\mathbf{y}}\cdot\hat{\mathbf{z}},\mathsf{P}_h(\bar{u})-\bar{u})_{L^2(\Omega)}
=(\mathsf{P}_h(\hat{\mathbf{y}}\cdot\hat{\mathbf{z}})-\hat{\mathbf{y}}\cdot\hat{\mathbf{z}},\mathsf{P}_h(\bar{u})-\bar{u})_{L^2(\Omega)}
\\
&\leq & \|\mathsf{P}_h(\hat{\mathbf{y}}\cdot\hat{\mathbf{z}})-\hat{\mathbf{y}}\cdot\hat{\mathbf{z}}\|_{L^2(\Omega)} \|\mathsf{P}_h(\bar{u})-\bar{u}\|_{L^2(\Omega)},
\end{eqnarray*}
where we used that $(\alpha \bar{u}_h,\mathsf{P}_h(\bar{u})-\bar{u})_{L^2(\Omega)} = 0 = (\mathsf{P}_h(\hat{\mathbf{y}}\cdot\hat{\mathbf{z}}),\mathsf{P}_h(\bar{u})-\bar{u})_{L^2(\Omega)}$. The regularity results of Theorem \ref{thm:regularity_optimal_control}, combined with a basic error bound for $\mathsf{P}_h$, show that $\|\mathsf{P}_h(\bar{u})-\bar{u}\|_{L^2(\Omega)} \lesssim \GC{h}\|\mathbf{f}\|_{\mathbf{L}^2(\Omega)}[\|\mathbf{f}\|_{\mathbf{L}^2(\Omega)}+\|\mathbf{y}_{\Omega}\|_{\mathbf{L}^2(\Omega)}]$. To bound $\|\mathsf{P}_h(\hat{\mathbf{y}}\cdot\hat{\mathbf{z}})-\hat{\mathbf{y}}\cdot\hat{\mathbf{z}}\|_{L^2(\Omega)}$, we first note that since $\mathbf{f}, \hat{\mathbf{y}}-\mathbf{y}_{\Omega} \in \mathbf{L}^2(\Omega)$, we have that $\hat{\mathbf{y}}, \hat{\mathbf{z}} \in \mathbf{C}(\bar \Omega)$, (cf.~Theorem \ref{theorem_regularity_Linfty}). Thus
$
\|\nabla(\hat{\mathbf{y}}\cdot \hat{\mathbf{z}})\|_{\mathbf{L}^2(\Omega)} 
\leq 
\|\hat{\mathbf{z}} \|_{\mathbf{C}(\bar \Omega)} \| \nabla \hat{\mathbf{y}}\|_{\mathbf{L}^2(\Omega)}
+
\|\hat{\mathbf{y}} \|_{\mathbf{C}(\bar \Omega)} \| \nabla \hat{\mathbf{z}}\|_{\mathbf{L}^2(\Omega)}
\lesssim
\|\mathbf{f}\|_{\mathbf{L}^2(\Omega)}
[
\|\mathbf{f}\|_{\mathbf{L}^2(\Omega)}+\|\mathbf{y}_{\Omega}\|_{\mathbf{L}^2(\Omega)}
].
$
\EO{To control $\|\hat{\mathbf{y}} \|_{\mathbf{C}(\bar \Omega)}$ and $\|\hat{\mathbf{z}} \|_{\mathbf{C}(\bar \Omega)}$ we have used that $\bar{u}_h$ belongs to $\mathbb{U}_{ad,h}$ and thus that $\| \bar{u}_h \|_{L^2(\Omega)}$ can be controlled uniformly in terms of $\Omega$ and the control bounds.} A basic error estimate for $\mathsf{P}_h$ thus shows that
$
\|\mathsf{P}_h(\hat{\mathbf{y}}\cdot\hat{\mathbf{z}})-\hat{\mathbf{y}}\cdot\hat{\mathbf{z}}\|_{L^2(\Omega)}
\lesssim h \|\mathbf{f}\|_{\mathbf{L}^2(\Omega)}[\|\mathbf{f}\|_{\mathbf{L}^2(\Omega)}+\|\mathbf{y}_{\Omega}\|_{\mathbf{L}^2(\Omega)}].
$
As a result, we have obtained the following bound:
\[
\text{I}_h \lesssim h^2\|\mathbf{f}\|_{\mathbf{L}^2(\Omega)}^2
\left[ 
\|\mathbf{f}\|_{\mathbf{L}^2(\Omega)}+\|\mathbf{y}_{\Omega}\|_{\mathbf{L}^2(\Omega)}
\right]^2.
\]

We now bound $\text{II}_h$. To this end, we first note \EO{that
$
\text{II}_h = (\bar{\mathbf{y}}_h\cdot\bar{\mathbf{z}}_h - \hat{\mathbf{y}}\cdot\hat{\mathbf{z}},\bar{u}_h-\mathsf{P}_h(\bar{u}))_{L^2(\Omega)}=(\bar{\mathbf{y}}_h\cdot\bar{\mathbf{z}}_h - \hat{\mathbf{y}}\cdot\hat{\mathbf{z}},\mathsf{P}_h(\bar{u}_h-\bar{u}))_{L^2(\Omega)}.$
If} we add and subtract $(\bar{\mathbf{y}}_h\cdot\hat{\mathbf{z}},\mathsf{P}_h(\bar{u}_h-\bar{u}))_{L^2(\Omega)}$ and use a suitable Sobolev embedding and Young's inequality, we obtain
\begin{multline*}
\text{II}_h \lesssim \left[ 
\|\nabla\hat{\mathbf{z}}\|_{\mathbf{L}^{2}(\Omega)}\|\nabla(\hat{\mathbf{y}}-\bar{\mathbf{y}}_h)\|_{\mathbf{L}^{2}(\Omega)} 
+ 
\|\nabla\bar{\mathbf{y}}_h\|_{\mathbf{L}^{2}(\Omega)}\|\nabla(\hat{\mathbf{z}}-\bar{\mathbf{z}}_h)\|_{\mathbf{L}^{2}(\Omega)} \right]
\|\bar{u}-\bar{u}_h\|_{L^2(\Omega)}
\\
\leq C_1 \|\nabla\hat{\mathbf{z}}\|_{\mathbf{L}^{2}(\Omega)}^2\|\nabla(\hat{\mathbf{y}}-\bar{\mathbf{y}}_h)\|_{\mathbf{L}^2(\Omega)}^2
+
C_2 \|\nabla\bar{\mathbf{y}}_h\|_{\mathbf{L}^{2}(\Omega)}^2 \|\nabla(\hat{\mathbf{z}}-\bar{\mathbf{z}}_h)\|_{\mathbf{L}^2(\Omega)}^2+\tfrac{1}{2}\|\bar{u}-\bar{u}_h\|_{L^2(\Omega)}^2,
\end{multline*}
upon using that $\mathsf{P}_h$ is stable in $L^2(\Omega)$. Note that $\|\nabla\hat{\mathbf{z}}\|_{\mathbf{L}^{2}(\Omega)} \lesssim \| \mathbf{f} \|_{\mathbf{L}^2(\Omega)}+\|\mathbf{y}_{\Omega}\|_{\mathbf{L}^2(\Omega)}$ and that $\|\nabla\bar{\mathbf{y}}_h\|_{\mathbf{L}^{2}(\Omega)} \lesssim \| \mathbf{f} \|_{\mathbf{L}^2(\Omega)}$ for every $h>0$. On the other hand, an immediate application of the estimate \eqref{eq:error_estimate_Fem} shows that $\|\nabla(\hat{\mathbf{y}}-\bar{\mathbf{y}}_h)\|_{\mathbf{L}^2(\Omega)}\lesssim h \|\mathbf{f}\|_{\mathbf{L}^2(\Omega)}$. \GC{Note that the hidden constant depends on $\| \bar{u}_h \|_{L^2(\Omega)}$ and that $\| \bar{u}_h \|_{L^2(\Omega)}$ can be uniformly controlled in terms of $\Omega$ and the control bounds.} To control $\|\nabla(\hat{\mathbf{z}}-\bar{\mathbf{z}}_h)\|_{\mathbf{L}^2(\Omega)}$, we introduce the problem: Find $(\tilde{\mathbf{z}},\tilde{\mathsf{r}})\in \mathbf{H}_0^1(\Omega)\times L_0^2(\Omega)$ such that
\begin{align}
\label{eq:aux_problem_adj_3_error}
(\nabla\tilde{\mathbf{z}},\nabla \mathbf{v})_{\mathbf{L}^2(\Omega)}+( \bar{u}_{h}\tilde{\mathbf{z}}, \mathbf{v})_{\mathbf{L}^2(\Omega)}+(\tilde{\mathsf{r}},\text{div }\mathbf{v})_{L^2(\Omega)} = 	(\bar{\mathbf{y}}_h-\mathbf{y}_{\Omega},\mathbf{v})_{\mathbf{L}^2(\Omega)}
\end{align}
and $(\mathsf{q},\text{div }\tilde{\mathbf{z}})_{L^2(\Omega)} =  0$ for all $(\mathbf{v},\mathsf{q}) \in \mathbf{H}_0^1(\Omega) \times L_0^2(\Omega)$. A simple application of the triangular inequality yields $\|\nabla(\hat{\mathbf{z}}-\bar{\mathbf{z}}_h)\|_{\mathbf{L}^2(\Omega)}\leq \|\nabla(\hat{\mathbf{z}}-\tilde{\mathbf{z}})\|_{\mathbf{L}^2(\Omega)}+\|\nabla(\tilde{\mathbf{z}}-\bar{\mathbf{z}}_h)\|_{\mathbf{L}^2(\Omega)}$. The control of $\|\nabla(\hat{\mathbf{z}}-\tilde{\mathbf{z}})\|_{\mathbf{L}^2(\Omega)}$ follows from a stability bound for the problem that $(\hat{\mathbf{z}}-\tilde{\mathbf{z}},\hat{\mathsf{r}}-\tilde{\mathsf{r}})$ solves. \EO{Note that $\bar{u}_h \in \mathcal{A}_0$.} The control of $\|\nabla(\tilde{\mathbf{z}}-\bar{\mathbf{z}}_h)\|_{\mathbf{L}^2(\Omega)}$ follows from \eqref{eq:error_estimate_Fem}.
\GC{We again note $\| \bar{u}_h \|_{L^2(\Omega)}$ can be controlled uniformly in terms of $\Omega$ and the control bounds.} As a result, we obtain the following bound:
$
\|\nabla(\hat{\mathbf{z}}-\bar{\mathbf{z}}_h)\|_{\mathbf{L}^2(\Omega)}
\lesssim \|\hat{\mathbf{y}}-\bar{\mathbf{y}}_h\|_{\mathbf{L}^2(\Omega)}+ h(\|\mathbf{f}\|_{\mathbf{L}^2(\Omega)}+\|\mathbf{y}_{\Omega}\|_{\mathbf{L}^2(\Omega)}).$
With all these ingredients, we can bound the term $\text{II}_h$ as follows:
\begin{align*}
\text{II}_h \lesssim h^2 \|\mathbf{f}\|_{\mathbf{L}^2(\Omega)}^2
\left[\|\mathbf{f}\|_{\mathbf{L}^2(\Omega)}+\|\mathbf{y}_{\Omega}\|_{\mathbf{L}^2(\Omega)}\right]^2
+\tfrac{1}{2}\|\bar{u}-\bar{u}_h\|_{L^2(\Omega)}^2.
\end{align*}

The error bound \eqref{eq:error_estimate_fd} thus follows from substituting the estimates obtained for the terms $\text{I}_h$ and $\text{II}_h$ into \eqref{eq:estimates_aux_error_fullydiscrete}. This, which is a contradiction, concludes the proof.
\end{proof}

Before \GC{presenting a priori error estimates for the finite element approximation of the state and adjoint variables, we introduce the following quantities:}
\begin{equation}\label{eq:constants_estimates_control_apriori}
\begin{aligned}
\mathfrak{M}(\mathbf{f},\mathbf{y}_{\Omega})
&
:=\|\mathbf{f}\|_{\mathbf{L}^2(\Omega)}( 1 + \|\mathbf{f}\|_{\mathbf{L}^2(\Omega)}[\|\mathbf{f}\|_{\mathbf{L}^2(\Omega)}+\|\mathbf{y}_{\Omega}\|_{\mathbf{L}^2(\Omega)}]),\\
\mathfrak{N}(\mathbf{f},\mathbf{y}_{\Omega})
&
:=( \|\mathbf{f}\|_{\mathbf{L}^2(\Omega)}+\|\mathbf{y}_{\Omega}\|_{\mathbf{L}^2(\Omega)})( 1 + \|\mathbf{f}\|_{\mathbf{L}^2(\Omega)}[\|\mathbf{f}\|_{\mathbf{L}^2(\Omega)}+\|\mathbf{y}_{\Omega}\|_{\mathbf{L}^2(\Omega)}]).
\end{aligned}
\end{equation}

We \EO{have the following error bounds.}

\begin{corollary}[a priori error bounds]
Let the assumptions of Theorem \ref{theorem_estimates_control} hold. Then, there exists $h_{\dagger}>0$ such that
\GC{\begin{align*}\label{eq:estimates_error_fully_allvariables}
\|\nabla(\bar{\mathbf{y}}-\bar{\mathbf{y}}_{h})\|_{\mathbf{L}^2(\Omega)}+\|\bar{\mathsf{p}}-\bar{\mathsf{p}}_{h}\|_{L^2(\Omega)}
\lesssim
h \, \mathfrak{M}(\mathbf{f},\mathbf{y}_{\Omega}),
\\
\|\nabla(\bar{\mathbf{z}}-\bar{\mathbf{z}}_{h})\|_{\mathbf{L}^2(\Omega)}+\|\bar{\mathsf{r}}-\bar{\mathsf{r}}_{h}\|_{L^2(\Omega)}
\lesssim 
h \, \mathfrak{N}(\mathbf{f},\mathbf{y}_{\Omega}),
\end{align*}}
for all $h<h_{\dagger}$. In both bounds the hidden constants do not depend on $\mathbf{f}$, $\mathbf{y}_{\Omega}$, and $h$.
\end{corollary}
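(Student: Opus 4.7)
The plan is to deduce both error bounds as direct consequences of Theorem \ref{theorem_estimates_control} combined with the auxiliary error estimates for the state and adjoint discretizations already established in Proposition \ref{theorem_convergence_01} and Proposition \ref{eq:theorem_adjoint_aux_estimates}, respectively. No new machinery is required; the only thing to check is that the hypotheses of those propositions are met for the specific discrete triple $(\bar{u}_h,\bar{\mathbf{y}}_h,\bar{\mathbf{z}}_h)$ produced by the fully discrete scheme, and that the resulting bounds for $\|\bar{u}-\bar{u}_h\|_{L^2(\Omega)}$ can be absorbed in the right-hand sides.

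For the state estimate, I would apply the bound \eqref{eq:errorbound} of Proposition \ref{theorem_convergence_01} to the optimal pair $(\bar{u},\bar{\mathbf{y}},\bar{\mathsf{p}})$ and its fully discrete counterpart $(\bar{u}_h,\bar{\mathbf{y}}_h,\bar{\mathsf{p}}_h)$, which solve \eqref{eq:brinkamn_problem_state} and \eqref{eq:brinkamn_problem_state_fd}, respectively. This yields
\[
\|\nabla(\bar{\mathbf{y}}-\bar{\mathbf{y}}_{h})\|_{\mathbf{L}^2(\Omega)}+\|\bar{\mathsf{p}}-\bar{\mathsf{p}}_{h}\|_{L^2(\Omega)}\lesssim h\|\mathbf{f}\|_{\mathbf{L}^2(\Omega)}+\|\bar{u}-\bar{u}_{h}\|_{L^2(\Omega)}.
\]
Substituting the control estimate \eqref{eq:error_estimate_fd} from Theorem \ref{theorem_estimates_control} for every $h<h_\dagger$ immediately gives the first claimed bound, after trivially collecting the factor $\|\mathbf{f}\|_{\mathbf{L}^2(\Omega)}(1+\|\mathbf{f}\|_{\mathbf{L}^2(\Omega)}+\|\mathbf{y}_\Omega\|_{\mathbf{L}^2(\Omega)})$.

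For the adjoint estimate, the key observation is that the discrete adjoint equations \eqref{eq:brinkman_problem_adjoint_fully} defining $(\bar{\mathbf{z}}_h,\bar{\mathsf{r}}_h)$ coincide with the auxiliary discrete problem \eqref{eq:aux_problem_adj_2}, with $(u_h,\mathbf{y}_h)$ specialized to $(\bar{u}_h,\bar{\mathbf{y}}_h)$. Hence Proposition \ref{eq:theorem_adjoint_aux_estimates} applies and gives
\[
\|\nabla(\bar{\mathbf{z}}-\bar{\mathbf{z}}_{h})\|_{\mathbf{L}^2(\Omega)}+\|\bar{\mathsf{r}}-\bar{\mathsf{r}}_{h}\|_{L^2(\Omega)}\lesssim h\bigl[\|\mathbf{f}\|_{\mathbf{L}^2(\Omega)}+\|\mathbf{y}_{\Omega}\|_{\mathbf{L}^2(\Omega)}\bigr]+\|\bar{u}-\bar{u}_{h}\|_{L^2(\Omega)}.
\]
A second application of \eqref{eq:error_estimate_fd} yields the second claimed bound after grouping the factor $(\|\mathbf{f}\|_{\mathbf{L}^2(\Omega)}+1)(\|\mathbf{f}\|_{\mathbf{L}^2(\Omega)}+\|\mathbf{y}_\Omega\|_{\mathbf{L}^2(\Omega)})$.

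There is no real obstacle here: the work was done in Theorem \ref{theorem_estimates_control}, whose proof already produces the $\mathcal{O}(h)$ control error. The only point worth being careful about is that the constants hidden in \eqref{eq:errorbound} and \eqref{eq:errorbound_adj} are independent of $\mathbf{f}$, $\mathbf{y}_\Omega$, and $h$, so the grouping of polynomial factors in $\|\mathbf{f}\|_{\mathbf{L}^2(\Omega)}$ and $\|\mathbf{y}_\Omega\|_{\mathbf{L}^2(\Omega)}$ on the right-hand sides is legitimate; the details being routine, we omit them.
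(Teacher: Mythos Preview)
Your proposal is correct and follows essentially the same approach as the paper: the paper's proof simply states that the result follows directly from a combination of the bounds in Propositions~\ref{theorem_convergence_01} and~\ref{eq:theorem_adjoint_aux_estimates} with Theorem~\ref{theorem_estimates_control}, which is precisely what you have spelled out. Your additional observation that the discrete adjoint equations \eqref{eq:brinkman_problem_adjoint_fully} specialize the auxiliary problem \eqref{eq:aux_problem_adj_2} is the correct way to justify invoking Proposition~\ref{eq:theorem_adjoint_aux_estimates}.
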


\begin{proof}
The proof follows directly from a combination of the bounds given in Propositions \ref{theorem_convergence_01} and \ref{eq:theorem_adjoint_aux_estimates} and Theorem \ref{theorem_estimates_control}. This concludes the proof.
\end{proof}

\subsection{The semidiscrete scheme}
Let $\{\bar{\mathsf{u}}_{h}\}_{h>0}$ be a sequence of local minima of the semidiscrete scheme such that $\bar{\mathsf{u}}_{h}\to\bar{u}$ in $L^2(\Omega)$ as $h \rightarrow 0$, where $\bar{u}$ denotes a local solution of \eqref{eq:min_functional}--\eqref{eq:brinkamn_problem_state}. \EO{In this section, we derive the following error bound:} 
\begin{equation}\label{eq:error_estimate_sd_aux}
\EO{\|\bar{u}-\bar{\mathsf{u}}_{h}\|_{L^2(\Omega)}\lesssim h^2
\quad\forall h\in(0, h_{\dagger}).}
\end{equation}

As in the fully discrete scheme, the following instrumental result is important.

\begin{lemma}[auxiliary error estimate]
\label{lemma_auxiliary_error_sd}
\EO{Let $\Omega \subset \mathbb{R}^d$ be a convex polytope, and let $\boldsymbol{\mathbf{f}}$ and $\mathbf{y}_{\Omega}$ be in $\mathbf{L}^2(\Omega)$}. Let us assume that $\bar{u}\in \mathbb{U}_{ad}$ satisfies the second-order optimality conditions \eqref{equivalence_condition}. Then, there exists $h_{\star}>0$ such that
\begin{align}\label{eq:auxiliary_error_estimate_sd}
\mathfrak{C}\|\bar{u}-\bar{\mathsf{u}}_{h}\|_{L^2(\Omega)}^2\leq [j'(\bar{\mathsf{u}}_{h})-j'(\bar{u})](\bar{\mathsf{u}}_{h}-\bar{u})
\quad
\forall h<h_{\star},
\quad
\mathfrak{C}:=2^{-1}\min\{\mu,\alpha\}.
\end{align}
Here, $\alpha$ is the control cost, and $\mu$ denotes the constant appearing in \eqref{equivalence_condition}.
\end{lemma}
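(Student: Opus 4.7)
The plan is to mirror the contradiction argument of Lemma \ref{lemma_auxiliary_error_fd}, exploiting the crucial simplification that in the semidiscrete scheme the admissible set $\mathbb{U}_{ad}$ is not discretized, so $\bar{u}$ and $\bar{\mathsf{u}}_h$ are both admissible in the continuous and semidiscrete variational inequalities. Assuming \eqref{eq:auxiliary_error_estimate_sd} is false, I would extract a subsequence (nonrelabeled, denoted $\{\bar{\mathsf{u}}_h\}$) such that $\bar{\mathsf{u}}_h \neq \bar{u}$ and the reverse strict inequality holds. Setting $w_h := (\bar{\mathsf{u}}_h - \bar{u})/\|\bar{\mathsf{u}}_h - \bar{u}\|_{L^2(\Omega)}$, which satisfies $\|w_h\|_{L^2(\Omega)} = 1$, a further subsequence satisfies $w_h \rightharpoonup w$ in $L^2(\Omega)$ as $h \downarrow 0$.

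The first step is to show that $w \in C_{\bar{u}}$. The sign conditions \eqref{eq:cone_condition} are inherited by $w_h$ (since $\bar{\mathsf{u}}_h, \bar{u} \in \mathbb{U}_{ad}$) and pass to the weak limit $w$ by weak closedness. To verify the vanishing condition $w(x) = 0$ whenever $\bar{\mathfrak{d}}(x) \neq 0$, I would set $u = \bar{\mathsf{u}}_h$ in \eqref{eq:variational_inequality} and $u = \bar{u}$ in \eqref{eq:variational_inequality_semi-discrete}, divide by $\|\bar{\mathsf{u}}_h - \bar{u}\|_{L^2(\Omega)}$, and pass to the limit. This yields $(\bar{\mathfrak{d}}, w)_{L^2(\Omega)} \geq 0$ from the first inequality and, using the strong convergence $\bar{\mathfrak{d}}_h := \alpha \bar{\mathsf{u}}_h - \bar{\mathbf{y}}_h \cdot \bar{\mathbf{z}}_h \to \bar{\mathfrak{d}}$ in $L^2(\Omega)$ together with $w_h \rightharpoonup w$, also $(\bar{\mathfrak{d}}, w)_{L^2(\Omega)} \leq 0$. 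The strong convergence of $\bar{\mathfrak{d}}_h$ follows from the semidiscrete analogue of Proposition \ref{eq:theorem_adjoint_aux_estimates} (whose proof carries over verbatim) combined with $\bar{\mathsf{u}}_h \to \bar{u}$ in $L^2(\Omega)$. Hence $(\bar{\mathfrak{d}}, w)_{L^2(\Omega)} = 0$, and since the projection formula \eqref{eq:representation} combined with the sign properties of $w$ implies $\bar{\mathfrak{d}}(x) w(x) \geq 0$ a.e., we conclude $\bar{\mathfrak{d}} w = 0$ a.e., so $w \in C_{\bar{u}}$.

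The second step is to derive a contradiction via second-order conditions. By the mean value theorem, $[j'(\bar{\mathsf{u}}_h) - j'(\bar{u})](\bar{\mathsf{u}}_h - \bar{u}) = j''(\hat{u}_h)(\bar{\mathsf{u}}_h - \bar{u})^2$ with $\hat{u}_h = \bar{u} + \theta_h (\bar{\mathsf{u}}_h - \bar{u})$, $\theta_h \in (0,1)$; dividing by $\|\bar{\mathsf{u}}_h - \bar{u}\|_{L^2(\Omega)}^2$ the standing assumption yields $j''(\hat{u}_h) w_h^2 < \mathfrak{C}$. Using the identity \eqref{eq:identity_j} and repeating the convergence arguments of Step 2 in the proof of Theorem \ref{theorem_second_order_optimal} (in particular $\boldsymbol{\varphi}(w_h) \rightharpoonup \boldsymbol{\varphi} := \mathcal{G}'(\bar{u}) w$ in $\mathbf{H}_0^1(\Omega)$ with strong convergence in $\mathbf{L}^4(\Omega)$, and $\mathbf{z}(\hat{u}_h) \to \bar{\mathbf{z}}$ in $\mathbf{H}_0^1(\Omega)$), I would obtain
\begin{align*}
\liminf_{h \downarrow 0} j''(\hat{u}_h) w_h^2 \geq \alpha + j''(\bar{u}) w^2 - \alpha \|w\|_{L^2(\Omega)}^2 \geq \alpha + (\mu - \alpha)\|w\|_{L^2(\Omega)}^2 \geq \min\{\mu, \alpha\},
\end{align*}
where the middle inequality uses $w \in C_{\bar{u}} \subset C_{\bar{u}}^{\tau}$ together with \eqref{equivalence_condition}, and the last uses $\|w\|_{L^2(\Omega)} \leq 1$ by weak lower semicontinuity of the $L^2$-norm. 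This contradicts $j''(\hat{u}_h) w_h^2 < \mathfrak{C} = \tfrac{1}{2}\min\{\mu, \alpha\}$ and completes the proof.

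The main technical obstacle will be justifying the strong $L^2$-convergence $\bar{\mathfrak{d}}_h \to \bar{\mathfrak{d}}$, for which Proposition \ref{eq:theorem_adjoint_aux_estimates} must first be transferred to the semidiscrete setting; this transfer is routine since only the state and adjoint equations are discretized and the proof is unaffected by whether the control is discrete or not. Apart from this, the argument is a cleaner version of the proof of Lemma \ref{lemma_auxiliary_error_fd}, stripped of the projection $\mathsf{P}_h$ because $\bar{u}$ itself is admissible in the semidiscrete variational inequality.
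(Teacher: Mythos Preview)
Your proposal is correct and follows essentially the same approach as the paper: define $w_h$, show the weak limit $w \in C_{\bar{u}}$ by exploiting that $\bar{u}$ is admissible in the semidiscrete variational inequality \eqref{eq:variational_inequality_semi-discrete} (the key simplification over Lemma \ref{lemma_auxiliary_error_fd}), and then invoke the Step~2 computation from the proof of Lemma \ref{lemma_auxiliary_error_fd}. Your explicit contradiction framing and the additional use of \eqref{eq:variational_inequality} to obtain $(\bar{\mathfrak{d}},w)_{L^2(\Omega)}\geq 0$ are minor stylistic differences from the paper's terser presentation, which uses only \eqref{eq:variational_inequality_semi-discrete} together with the pointwise sign $\bar{\mathfrak{d}}\,w\geq 0$.
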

\begin{proof}
Define $w_h:=(\bar{\mathsf{u}}_h-\bar{u})/\|\bar{\mathsf{u}}_h-\bar{u}\|_{L^2(\Omega)}$ \EO{for each $h>0$}. We note that $w_h \rightharpoonup w$ in $L^2(\Omega)$ up to a subsequence, if necessary, as $h \to 0$. The arguments developed in the proof of Lemma \ref{lemma_auxiliary_error_fd} show that $w$ satisfies the sign conditions in \eqref{eq:cone_condition} and that $\bar{\mathfrak{d}}_h:= \alpha \bar{\mathsf{u}}_h - \bar{\mathbf{y}}_h \cdot \bar{\mathbf{z}}_h \to \alpha \bar{u} - \bar{\mathbf{y}} \cdot \bar{\mathbf{z}} = \bar{\mathfrak{d}}$ in $L^2(\Omega)$ as $h \to 0$. We now set $\mathsf{u}_h = \bar{u}$ in \eqref{eq:variational_inequality_semi-discrete} to obtain
\[
 \int_{\Omega} \bar{\mathfrak{d}} w \mathrm{d}x = \lim_{h \to 0} \int_{\Omega} \bar{\mathfrak{d}}_h w_h \mathrm{d}x = \lim_{h \to 0} \|\bar{\mathsf{u}}_h-\bar{u}\|^{-1}_{L^2(\Omega)} \int_{\Omega} (\alpha \bar{\mathsf{u}}_h - \bar{\mathbf{y}}_h \cdot \bar{\mathbf{z}}_h)(\bar{\mathsf{u}}_h-\bar{u})  \mathrm{d}x \leq 0.
\]
As in the proof of Lemma \ref{lemma_auxiliary_error_fd}, this shows that if $\bar{\mathfrak{d}}(x)\neq 0$, then $w(x)=0$ for a.e.~$x\in\Omega$. This proves that $w\in C_{\bar{u}}$. The rest of the proof follows the arguments from the proof of Lemma \ref{lemma_auxiliary_error_fd}. For brevity, we omit the details.
\end{proof}

We now derive the error bound \EO{\eqref{eq:error_estimate_sd_aux}}. To this end, we present the following result which guarantees that discrete solutions to problem \eqref{eq:brinkamn_problem_state_sd} are uniformly bounded in $\mathbf{W}^{1,\kappa}(\Omega) \times L^{\kappa}(\Omega)$ for some $\kappa > 4$ when $d=2$ and $\kappa >3$ when $d=3$. Note that as a consequence of a standard Sobolev embedding, we have that such discrete velocities are uniformly bounded with respect to $h$ in $\mathbf{L}^{\infty}(\Omega)$: $\|\bar{\mathbf{y}}_h \|_{\mathbf{L}^{\infty}(\Omega)} \lesssim \| \mathbf{f} \|_{\mathbf{L}^2(\Omega)}$.

\begin{proposition}[discrete stability]
\label{pro:discrete_stability}
Let $\Omega$ be a convex polytope, let $\boldsymbol{\mathbf{f}} \in \mathbf{L}^2(\Omega)$, and let \EO{$\{ \T_h \}_{h>0}$ be a family of quasi-uniform meshes}. Then, there exists $\kappa > 4$ if $d=2$ and $\kappa >3$ if $d=3$ such that
 \begin{equation}
  \| \nabla \bar{\mathbf{y}}_h \|_{\mathbf{L}^{\kappa}(\Omega)} 
  +
  \| \bar{\mathsf{p}}_h \|_{L^\kappa(\Omega)}
  \lesssim \| \mathbf{f} \|_{\mathbf{L}^2(\Omega)}.
 \end{equation}
\end{proposition}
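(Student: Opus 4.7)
The plan is to mimic the strategy used for the continuous $\mathbf{L}^{\infty}$-regularity result of Theorem \ref{theorem_regularity_Linfty}: rewrite the discrete momentum equation as a pure discrete Stokes problem whose right-hand side lies in $\mathbf{L}^{2}(\Omega)$, and then invoke a discrete $\mathbf{W}^{1,\kappa}$-stability estimate for the Stokes system on convex polytopes with quasi-uniform meshes. Concretely, the pair $(\bar{\mathbf{y}}_h,\bar{\mathsf{p}}_h)\in \mathbf{X}_h\times M_h$ satisfies, for every $(\mathbf{v}_h,\mathsf{q}_h)\in\mathbf{X}_h\times M_h$,
\begin{equation*}
(\nabla \bar{\mathbf{y}}_h,\nabla \mathbf{v}_h)_{\mathbf{L}^2(\Omega)}-(\bar{\mathsf{p}}_h,\mathrm{div}\,\mathbf{v}_h)_{L^2(\Omega)}=(\mathbf{f}-\bar{u}_h\bar{\mathbf{y}}_h,\mathbf{v}_h)_{\mathbf{L}^2(\Omega)},\qquad(\mathsf{q}_h,\mathrm{div}\,\bar{\mathbf{y}}_h)_{L^2(\Omega)}=0,
\end{equation*}
so $(\bar{\mathbf{y}}_h,\bar{\mathsf{p}}_h)$ is the mini/Taylor--Hood Galerkin approximation to the Stokes problem with forcing $\mathbf{g}_h:=\mathbf{f}-\bar{u}_h\bar{\mathbf{y}}_h\in\mathbf{L}^2(\Omega)$.

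The first step is to bound $\mathbf{g}_h$ in $\mathbf{L}^{2}(\Omega)$. Since $\bar{u}_h\in\mathbb{U}_{ad}$ (or $\mathbb{U}_{ad,h}$), it satisfies $\|\bar{u}_h\|_{L^{\infty}(\Omega)}\leq\mathsf{b}$. Combined with the discrete energy bound $\|\nabla\bar{\mathbf{y}}_h\|_{\mathbf{L}^2(\Omega)}\lesssim\|\mathbf{f}\|_{\mathbf{H}^{-1}(\Omega)}\lesssim\|\mathbf{f}\|_{\mathbf{L}^2(\Omega)}$ (established after \eqref{eq:modeldiscrete}) and Poincaré's inequality, this yields $\|\mathbf{g}_h\|_{\mathbf{L}^2(\Omega)}\leq\|\mathbf{f}\|_{\mathbf{L}^2(\Omega)}+\mathsf{b}\|\bar{\mathbf{y}}_h\|_{\mathbf{L}^2(\Omega)}\lesssim\|\mathbf{f}\|_{\mathbf{L}^2(\Omega)}$, with a hidden constant that is independent of $h$.

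The second step is to apply a discrete Stokes $\mathbf{W}^{1,\kappa}\times L^{\kappa}$-stability estimate: on a convex polytope $\Omega$ with a quasi-uniform family of meshes, the Galerkin approximation $(\mathbf{w}_h,\pi_h)\in\mathbf{X}_h\times M_h$ to the Stokes problem with datum $\mathbf{g}\in\mathbf{L}^2(\Omega)$ satisfies $\|\nabla\mathbf{w}_h\|_{\mathbf{L}^{\kappa}(\Omega)}+\|\pi_h\|_{L^{\kappa}(\Omega)}\lesssim\|\mathbf{g}\|_{\mathbf{L}^2(\Omega)}$ for some $\kappa>4$ when $d=2$ and $\kappa>3$ when $d=3$. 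This is the discrete analogue of \eqref{eq:estimate_extra} for the continuous Stokes problem; it can be obtained by writing $(\mathbf{w}_h,\pi_h)$ as the Stokes projection of the continuous solution $(\mathbf{w},\pi)$, using the continuous $\mathbf{W}^{1,\kappa}$-regularity, and invoking $\mathbf{W}^{1,\kappa}$-stability of the Stokes projection for the mini and Taylor--Hood pairs on quasi-uniform meshes (as developed, e.g., in the works of Girault--Nochetto--Scott and Guzmán--Leykekhman). Feeding $\mathbf{g}=\mathbf{g}_h$ and the bound from the first step into this estimate produces the desired inequality.

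The main obstacle is the second step: establishing (or locating in the literature) the uniform discrete $\mathbf{W}^{1,\kappa}$-stability of the Stokes projection with $\kappa$ strictly larger than $3$ or $4$ on a convex polytope. The corresponding result is available but rests on subtle ingredients — weighted $L^{2}$-estimates, discrete Sobolev inequalities, and sharp properties of the Stokes Green's matrix — and this is where the quasi-uniformity hypothesis on $\T_h$ enters in an essential way; without it the inverse inequalities used to transfer continuous $\mathbf{W}^{1,\kappa}$-regularity to the discrete setting would break down. Once the stability result is granted, the proof is a routine composition of the two steps above.
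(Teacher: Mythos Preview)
Your proposal is correct and follows essentially the same route as the paper. The paper's proof is a one-liner citing the $\mathbf{W}^{1,\kappa}$-stability results of Guzm\'an--Leykekhman (for $d=3$) and Girault--Nochetto--Scott (for $d=2$) for the Stokes projection on convex polytopes with quasi-uniform meshes, combined with the continuous regularity of Theorem~\ref{theorem_regularity_Linfty}; your explicit rewriting of the discrete Brinkman problem as a discrete Stokes problem with $\mathbf{L}^2$-forcing $\mathbf{g}_h=\mathbf{f}-\bar{\mathsf{u}}\bar{\mathbf{y}}_h$ and the subsequent two-step estimate is exactly how those citations are unpacked.
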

\begin{proof}
 The proof follows directly from \cite[Corollaries 4 and 5]{MR3422453} for $d=3$ and \cite[Theorems 8.2 and 8.4]{MR2121575} for $d=2$, \EO{combined with arguments from the proof of \cite[Lemma 11]{MR3422453},} and the regularity results of Theorem \ref{theorem_regularity_Linfty}. \EO{We note that, since $\bar{\mathsf{u}}_h \in \mathbb{U}_{ad}$, it can be uniformly controlled in terms of $\Omega$ and the control bounds.} 
\end{proof}

\begin{theorem}[a priori error bound]
\label{theorem_estimates_control_semi}
\EO{Let $\Omega \subset \mathbb{R}^d$ be a convex polytope,} let $\boldsymbol{\mathbf{f}}$ and $\mathbf{y}_{\Omega}$ be in $\mathbf{L}^2(\Omega)$, and let \EO{$\{ \T_h \}_{h>0}$ be a family of quasi-uniform meshes.} Let us assume that $\bar{u} \in \mathbb{U}_{ad}$ satisfies the second-order optimality conditions \eqref{equivalence_condition}. Then, there exists $h_{\dagger}>0$ such that
\begin{align}\label{eq:error_estimate_sd}
\|\bar{u}-\bar{\mathsf{u}}_{h}\|_{L^2(\Omega)}\lesssim h^2\|\mathbf{f}\|_{\mathbf{L}^2(\Omega)}\left[\|\mathbf{f}\|_{\mathbf{L}^2(\Omega)}+\|\mathbf{y}_{\Omega}\|_{\mathbf{L}^2(\Omega)}\right]\quad\forall h\in(0, h_{\dagger}).
\end{align}
\end{theorem}
\begin{proof}
Set $u =\bar{\mathsf{u}}_h$ in \eqref{eq:ineq_variational} and $u=\bar{u}$ in \eqref{eq:variational_inequality_semi-discrete} to arrive at $-j'(\bar{u})(\bar{\mathsf{u}}_h-\bar{u})\leq 0$ and $j'_h(\bar{\mathsf{u}}_h)(\bar{u}-\bar{\mathsf{u}}_h)\geq 0$, respectively. With these estimates in hand, we use the instrumental error bound \eqref{eq:auxiliary_error_estimate_sd} to obtain
\begin{multline*}\label{eq:estimates_aux_error_semi-discrete}
\|\bar{u}-\bar{\mathsf{u}}_h\|_{L^2(\Omega)}^2
\lesssim [j'(\bar{\mathsf{u}}_h)-j_h'(\bar{\mathsf{u}}_h)](\bar{\mathsf{u}}_h-\bar{u})
=(\bar{\mathbf{y}}_h \cdot \bar{\mathbf{z}}_h-\hat{\mathbf{y}}\cdot\hat{\mathbf{z}},\bar{\mathsf{u}}_h-\bar{u})_{L^2(\Omega)}
\\
=(\hat{\mathbf{z}}\cdot(\bar{\mathbf{y}}_h-\hat{\mathbf{y}}),\bar{\mathsf{u}}_h-\bar{u})_{L^2(\Omega)}
+(\bar{\mathbf{y}}_h\cdot (\bar{\mathbf{z}}_h-\hat{\mathbf{z}}),\bar{\mathsf{u}}_h-\bar{u})_{L^2(\Omega)}
=:\text{I}_h +\text{II}_h.
\end{multline*}
Here, $(\hat{\mathbf{y}},\hat{\mathsf{p}})\in \mathbf{H}_0^1(\Omega)\times L_0^2(\Omega)$ and $(\hat{\mathbf{z}},\hat{\mathsf{r}})\in \mathbf{H}_0^1(\Omega)\times L_0^2(\Omega)$ correspond to the solutions of problems \eqref{eq:aux_problem_adj_1_error} and \eqref{eq:aux_problem_adj_2_error}, respectively, where $\bar{u}_h$ is replaced by $\bar{\mathsf{u}}_h$.

We now control the term $\text{I}_h$. To do this, we proceed as follows:
\begin{multline}\label{eq:estimates_I1_sd}
\text{I}_h \leq \| \hat{\mathbf{z}} \|_{\mathbf{L}^{\infty}(\Omega)}\|\bar{\mathbf{y}}_h-\hat{\mathbf{y}}\|_{\mathbf{L}^2(\Omega)}\|\bar{\mathsf{u}}_h-\bar{u}\|_{L^2(\Omega)}
\\
\lesssim h^2\|\mathbf{f}\|_{\mathbf{L}^2(\Omega)}\left[\|\mathbf{f}\|_{\mathbf{L}^2(\Omega)}+\|\mathbf{y}_{\Omega}\|_{\mathbf{L}^2(\Omega)}\right]\|\bar{\mathsf{u}}_h-\bar{u}\|_{L^2(\Omega)}.
\end{multline}
\EO{Here, we have used that $(\bar{\mathbf{y}}_h,\bar{\mathsf{p}}_h)$ corresponds to the finite element approximation of $(\hat{\mathbf{y}},\hat{\mathsf{p}})$, so that a direct application of the second estimate in \cite[Proposition 4.18]{Guermond-Ern} (\cite[Theorem 4.21]{Guermond-Ern} in the case of the mini element or \cite[Theorem 4.26]{Guermond-Ern} in the case of the Taylor--Hood element) shows that
\begin{equation}
 \|\bar{\mathbf{y}}_h-\hat{\mathbf{y}}\|_{\mathbf{L}^2(\Omega)} \lesssim h^2 ( \| \hat{\mathbf{y}} \|_{\mathbf{H}^{2}(\Omega)} + \| \hat{\mathsf{p}} \|_{H^1(\Omega)}) \lesssim h^2 \| \mathbf{f} \|_{\mathbf{L}^{2}(\Omega)}.
 \label{eq:L2_error_bound_haty}
\end{equation}
To obtain the last bound, we used the regularity result from Theorem \ref{eq:theorem_regularity} combined with the fact that $\| \bar{\mathsf{u}}_h \|_{L^2(\Omega)}$ can be controlled uniformly in terms of $\Omega$ and the control bounds.
We also note that Theorem \ref{theorem_regularity_Linfty} guarantees $\| \hat{\mathbf{z}} \|_{\mathbf{L}^{\infty}(\Omega)} \lesssim \| \mathbf{f} \|_{\mathbf{L}^{2}(\Omega)} + \| \mathbf{y}_{\Omega} \|_{\mathbf{L}^{2}(\Omega)}$.}

We now control $\text{II}_h$. To this end, we introduce $(\tilde{\mathbf{z}},\tilde{\mathsf{r}})\in \mathbf{H}_0^1(\Omega)\times L_0^2(\Omega)$ as the solution to \eqref{eq:aux_problem_adj_3_error}, where $\bar{u}_h$ is replaced by $\mathsf{\bar{u}}_h$. With these ingredients, we obtain
\[
 \text{II}_h \leq \|\bar{\mathbf{y}}_h\|_{\mathbf{L}^{\infty}(\Omega)}\left(\| \hat{\mathbf{z}}-\tilde{\mathbf{z}} \|_{\mathbf{L}^2(\Omega)}+\|\tilde{\mathbf{z}}-\bar{\mathbf{z}}_h\|_{\mathbf{L}^2(\Omega)}\right)\|\bar{\mathsf{u}}_h-\bar{u}\|_{L^2(\Omega)}.
\]
%
\GC{To bound $\| \hat{\mathbf{z}}-\tilde{\mathbf{z}} \|_{\mathbf{L}^2(\Omega)}$, we first note that $(\hat{\mathbf{z}}-\tilde{\mathbf{z}},\hat{\mathsf{r}}-\tilde{\mathsf{r}}) \in \mathbf{H}_0^1(\Omega)\times L_0^2(\Omega)$ solves: 
\begin{multline*}
(\nabla (\hat{\mathbf{z}}- \tilde{\mathbf{z}}),\nabla \mathbf{v})_{\mathbf{L}^2(\Omega)}+( \bar{\mathsf{u}}_h(\hat{\mathbf{z}}- \tilde{\mathbf{z}}), \mathbf{v})_{\mathbf{L}^2(\Omega)}+(\hat{\mathsf{r}}-\tilde{\mathsf{r}},\text{div }\mathbf{v})_{L^2(\Omega)} \\=(\hat{\mathbf{y}}-\bar{\mathbf{y}}_{h},\mathbf{v})_{\mathbf{L}^2(\Omega)},\quad (\mathsf{q},\text{div }(\hat{\mathbf{z}}- \tilde{\mathbf{z}}))_{L^2(\Omega)}  =  0\quad\forall (\mathbf{v},\mathsf{q})\in\mathbf{H}_0^1(\Omega)\times L_0^2(\Omega).
\end{multline*}
Setting $(\mathbf{v},\mathsf{q}) = (\hat{\mathbf{z}}- \tilde{\mathbf{z}},0)$ yields $\|\nabla(\hat{\mathbf{z}}-\tilde{\mathbf{z}})\|_{\mathbf{L}^2(\Omega)}\lesssim \|\hat{\mathbf{y}}-\bar{\mathbf{y}}_h \|_{\mathbf{L}^2(\Omega)}\lesssim h^2 \|\mathbf{f}\|_{\mathbf{L}^2(\Omega)}$, where the last estimate is obtained using \eqref{eq:L2_error_bound_haty}.} An estimate for the term $\| \tilde{\mathbf{z}}-\bar{\mathbf{z}}_h \|_{\mathbf{L}^2(\Omega)}$ follows from \cite[Proposition 4.18]{Guermond-Ern} (see also the derivation of the bound \eqref{eq:L2_error_bound_haty}): $\| \tilde{\mathbf{z}}-\bar{\mathbf{z}}_h \|_{\mathbf{L}^2(\Omega)}\lesssim h^2(\|\mathbf{f}\|_{\mathbf{L}^2(\Omega)}+\|\mathbf{y}_{\Omega}\|_{\mathbf{L}^2(\Omega)})$. As a final ingredient, we note that the results of Proposition \ref{pro:discrete_stability} guarantee that
$\| \bar{\mathbf{y}}_h\|_{\mathbf{L}^{\infty}(\Omega)} \lesssim \| \mathbf{f} \|_{\mathbf{L}^{2}(\Omega)}$. We can thus obtain that
\begin{align}\label{eq:estimates_I2_sd}
\text{II}_h \lesssim h^2 \| \mathbf{f} \|_{\mathbf{L}^{2}(\Omega)} [\|\mathbf{f}\|_{\mathbf{L}^2(\Omega)}+\|\mathbf{y}_{\Omega}\|_{\mathbf{L}^2(\Omega)}]\|\bar{\mathsf{u}}_h-\bar{u}\|_{L^2(\Omega)}.
\end{align}

The desired estimate \eqref{eq:error_estimate_sd} results from combining \eqref{eq:estimates_I1_sd} and \eqref{eq:estimates_I2_sd}. 
\end{proof}

We conclude this section with the following error estimates.
\begin{corollary}
Let the assumptions of Theorem \ref{theorem_estimates_control_semi} hold. Then, there exists $h_{\dagger}>0$ such that 
\GC{
\begin{align*}\label{eq:estimates_error_semi_allvariables}
\|\nabla(\bar{\mathbf{y}}-\bar{\mathbf{y}}_{h})\|_{\mathbf{L}^2(\Omega)}
+
\|\bar{\mathsf{p}}-\bar{\mathsf{p}}_{h}\|_{L^2(\Omega)}\lesssim h \, \mathfrak{M}(\mathbf{f},\mathbf{y}_{\Omega}),
\\
\|\nabla(\bar{\mathbf{z}}-\bar{\mathbf{z}}_{h})\|_{\mathbf{L}^2(\Omega)}+\|\bar{\mathsf{r}}-\bar{\mathsf{r}}_{h}\|_{L^2(\Omega)}
\lesssim
h \, \mathfrak{N}(\mathbf{f},\mathbf{y}_{\Omega}).
\end{align*}}
for all $h<h_{\dagger}$. In both bounds the hidden constants do not depend on $\mathbf{f}$, $\mathbf{y}_{\Omega}$, and $h$. \EO{The terms $\mathfrak{M}$ and $\mathfrak{N}$ are defined in \eqref{eq:constants_estimates_control_apriori}.} 
\end{corollary}

\begin{proof}
The proof follows directly from a combination of the bounds given in Propositions \ref{theorem_convergence_01} and \ref{eq:theorem_adjoint_aux_estimates} and Theorem \ref{theorem_estimates_control_semi}. This concludes the proof.
\end{proof}

\section{A posteriori error bounds}
In this section, we design a posteriori error estimators for the fully discrete and semidiscrete methods presented in \S\ref{fully_discrete_framework} and \S\ref{semi_discrete_framework}, respectively. \EO{We obtain global reliability bounds and explore efficiency estimates.}


\subsection{An auxiliary result}
To present a posteriori error estimators,
we introduce the auxiliary variable $\breve{u}$ as follows:
\begin{equation}\label{eq:representation_aux}
\breve{u}:= \Pi_{[\mathsf{a},\mathsf{b}]}(\alpha^{-1}\bar{\mathbf{y}}_{h}\cdot\bar{\mathbf{z}}_{h}).
\end{equation}
Here, $(\bar{\mathbf{y}}_{h},\bar{\mathsf{p}}_{h})$ denotes the solution to \eqref{eq:brinkamn_problem_state_fd}, where \GC{$u_h$} is replaced by \GC{$\bar{u}_h$}, and $(\bar{\mathbf{z}}_{h},\bar{\mathsf{r}}_{h})$ denotes the solution to \eqref{eq:brinkman_problem_adjoint_fully}, where \EO{$u$ is replaced by $\bar{u}_h$ and $\mathbf{y}_{h}$ is replaced by $\bar{\mathbf{y}}_{h}$. Since $\breve{u}$ is defined in \eqref{eq:representation_aux} using the projection operator $\Pi_{[\mathsf{a},\mathsf{b}]}$, we can use the same arguments as those given in the proofs of \cite[Lemma 2.26 and Theorems 2.27 and 2.28]{MR2583281} to obtain that $\breve{u}$ satisfies the following variational inequality:} 
\begin{equation}\label{eq:variational_inequality_discrete_aux}
(\alpha \breve{u}-\bar{\mathbf{y}}_{h}\cdot \bar{\mathbf{z}}_{h},u-\breve{u})_{L^2(\Omega)}\geq 0\quad\forall u\in \mathbb{U}_{ad}.
\end{equation}
%

The following bound is crucial for our a posteriori error analysis and extends the results in \cite{MR4450052}. \EO{Specifically, we will prove that the devised a posteriori error estimators are globally reliable using the key estimate \eqref{eq:bound_aux_j_final}. We emphasize that, to derive estimate \eqref{eq:bound_aux_j_final}, we \emph{assume} that both inequalities in \eqref{eq:bound_aux} hold. These assumptions---the inequalities in \eqref{eq:bound_aux}---are not needed for investigating local efficiency estimates.}

\begin{theorem}[auxiliary bound] 
Let $\bar{u}$ be a local solution of \eqref{eq:min_functional}--\eqref{eq:brinkamn_problem_state} such that \eqref{equivalence_condition} holds for $\tau > 0$, and let 
$(\bar{\mathbf{y}},\bar{\mathsf{p}})$ and $(\bar{\mathbf{z}},\bar{\mathsf{r}})$ be the corresponding state and adjoint state variables. Let $\bar{u}_{h}$ be a local minimum of the fully discrete scheme, and let $(\bar{\mathbf{y}}_h,\bar{\mathsf{p}}_h)$ and $(\bar{\mathbf{z}}_h,\bar{\mathsf{r}}_h)$ be the corresponding discrete state and adjoint state variables. If
\begin{align}\label{eq:bound_aux}
\|\bar{\mathbf{y}}\cdot \bar{\mathbf{z}}-\bar{\mathbf{y}}_h\cdot \bar{\mathbf{z}}_h\|_{L^2(\Omega)}\leq \alpha \mu (2\mathfrak{D})^{-1},
\qquad
\|\bar{\mathbf{y}}\cdot \bar{\mathbf{z}}-\bar{\mathbf{y}}_h\cdot \bar{\mathbf{z}}_h\|_{L^{\infty}(\Omega)} < 2^{-1} \tau,
\end{align}
then we have the following error bound:
\begin{align}\label{eq:bound_aux_j_final}
\mu \|\bar{u}-\breve{u}\|_{L^2(\Omega)}^2 \leq 2(j'(\breve{u})-j'(\bar{u}))(\breve{u}-\bar{u}).
\end{align}
The constants $\mu$ and $\mathfrak{D}$ are given as in \eqref{equivalence_condition} and \eqref{eq:lemma_j}, respectively.
\end{theorem}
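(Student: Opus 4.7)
The plan is to show that the difference $\breve{u}-\bar{u}$ lies in the extended critical cone $C_{\bar{u}}^{\tau}$, then apply the quadratic growth property \eqref{equivalence_condition} on this cone, and finally absorb the remainder term produced by the Lipschitz property \eqref{eq:lemma_j} using the smallness assumption in \eqref{eq:bound_aux}.

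\textbf{Step 1: Membership in the critical cone.} First I would verify that $\breve{u}-\bar{u}$ satisfies the sign conditions \eqref{eq:cone_condition}. This is immediate from $\breve{u}\in\mathbb{U}_{ad}$: if $\bar{u}(x)=\mathsf{a}$, then $\breve{u}(x)\geq\mathsf{a}=\bar{u}(x)$, and analogously when $\bar{u}(x)=\mathsf{b}$. Next I would show that $|\bar{\mathfrak{d}}(x)|>\tau$ implies $(\breve{u}-\bar{u})(x)=0$. The projection formula \eqref{eq:representation} shows that $\bar{\mathfrak{d}}(x)>\tau$ forces $\bar{u}(x)=\mathsf{a}$ and, more precisely, $\alpha\mathsf{a}-\bar{\mathbf{y}}(x)\cdot\bar{\mathbf{z}}(x)>\tau$. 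The second bound in \eqref{eq:bound_aux} then yields $\alpha^{-1}\bar{\mathbf{y}}_h(x)\cdot\bar{\mathbf{z}}_h(x)<\mathsf{a}-\tau/(2\alpha)<\mathsf{a}$, so that $\breve{u}(x)=\mathsf{a}=\bar{u}(x)$ by \eqref{eq:representation_aux}. The case $\bar{\mathfrak{d}}(x)<-\tau$ is symmetric. Consequently $\breve{u}-\bar{u}\in C_{\bar{u}}^{\tau}$.

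\textbf{Step 2: Mean value theorem and splitting.} By the mean value theorem applied to $j'$, there exists $\theta\in(0,1)$ and $\hat{u}:=\bar{u}+\theta(\breve{u}-\bar{u})\in\mathbb{U}_{ad}$ such that
\begin{equation*}
(j'(\breve{u})-j'(\bar{u}))(\breve{u}-\bar{u})=j''(\hat{u})(\breve{u}-\bar{u})^2.
\end{equation*}
I would write $j''(\hat{u})(\breve{u}-\bar{u})^2=j''(\bar{u})(\breve{u}-\bar{u})^2+[j''(\hat{u})-j''(\bar{u})](\breve{u}-\bar{u})^2$, apply \eqref{equivalence_condition} to the first term (using Step 1), and use \eqref{eq:lemma_j} together with $\|\hat{u}-\bar{u}\|_{L^2(\Omega)}\leq\|\breve{u}-\bar{u}\|_{L^2(\Omega)}$ on the second to obtain
\begin{equation*}
(j'(\breve{u})-j'(\bar{u}))(\breve{u}-\bar{u})\geq \mu\|\breve{u}-\bar{u}\|_{L^2(\Omega)}^{2}-\mathfrak{D}\|\breve{u}-\bar{u}\|_{L^2(\Omega)}^{3}.
\end{equation*}

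\textbf{Step 3: Absorbing the cubic remainder.} The non-expansiveness of $\Pi_{[\mathsf{a},\mathsf{b}]}$ applied to \eqref{eq:representation} and \eqref{eq:representation_aux} gives
\begin{equation*}
\|\breve{u}-\bar{u}\|_{L^2(\Omega)}\leq \alpha^{-1}\|\bar{\mathbf{y}}\cdot\bar{\mathbf{z}}-\bar{\mathbf{y}}_h\cdot\bar{\mathbf{z}}_h\|_{L^2(\Omega)}\leq \frac{\mu}{2\mathfrak{D}},
\end{equation*}
where the last inequality uses the first bound in \eqref{eq:bound_aux}. Inserting this into the estimate of Step 2, I find $\mathfrak{D}\|\breve{u}-\bar{u}\|_{L^2(\Omega)}^{3}\leq (\mu/2)\|\breve{u}-\bar{u}\|_{L^2(\Omega)}^{2}$, and rearranging yields \eqref{eq:bound_aux_j_final}.

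The main obstacle is Step 1, specifically the verification that $(\breve{u}-\bar{u})(x)=0$ whenever $|\bar{\mathfrak{d}}(x)|>\tau$: this is where the $L^\infty$ smallness hypothesis in \eqref{eq:bound_aux} is essential, and where the structure of the projection formulas must be used carefully. Steps 2 and 3 are then essentially algebraic, invoking the already-established results of Theorem \ref{eq:theorem_properties_j}.
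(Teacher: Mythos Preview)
Your proof is correct and follows essentially the same approach as the paper's: first establish $\breve{u}-\bar{u}\in C_{\bar{u}}^{\tau}$ via the projection formulas and the $L^\infty$ smallness assumption, then combine the second-order condition \eqref{equivalence_condition} with the mean value theorem and the Lipschitz bound \eqref{eq:lemma_j}, and finally absorb the cubic remainder using the $L^2$ smallness assumption together with the non-expansiveness of $\Pi_{[\mathsf{a},\mathsf{b}]}$. The only cosmetic difference is the order of presentation (the paper first derives the bound assuming cone membership and then verifies it), and your verification that $\breve{u}(x)=\mathsf{a}$ when $\bar{\mathfrak{d}}(x)>\tau$ works directly with $\bar{\mathbf{y}}_h\cdot\bar{\mathbf{z}}_h$ rather than with the combined difference $(\alpha\bar{u}-\bar{\mathbf{y}}\cdot\bar{\mathbf{z}})-(\alpha\breve{u}-\bar{\mathbf{y}}_h\cdot\bar{\mathbf{z}}_h)$ as the paper does, but the two computations are equivalent.
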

\begin{proof} We divide the proof into two steps.

\emph{Step 1.} $\breve{u}-\bar{u} \in C_{\bar u}^{\tau}$. \EO{As a first step we show that $\breve{u}-\bar{u} \in C_{\bar u}^{\tau}$ for some $\tau > 0$.}
Since by definition $\breve{u} \in \mathbb{U}_{ad}$, we can conclude that $\breve{u}-\bar{u}$ satisfies the conditions in \eqref{eq:cone_condition}. To prove the remaining condition in \eqref{eq:Cutau}, we proceed as follows: Let $x\in \Omega$ be such that $\bar{\mathfrak{d}}(x) = \alpha \bar{u}(x) - \bar{\mathbf{y}}(x) \cdot \bar{\mathbf{z}}(x) > \tau$. The fact that $\tau > 0$ guarantees that $\bar{u}(x) > \alpha^{-1} \bar{\mathbf{y}}(x) \cdot \bar{\mathbf{z}}(x)$. The projection formula \eqref{eq:representation} therefore shows that $\bar{u}(x) = \GC{\mathsf{a}}$. On the other hand, we note that assumption \eqref{eq:bound_aux} shows that 
\[
 \| (\alpha \bar{u} - \bar{\mathbf{y}} \cdot \bar{\mathbf{z}}) - ( \alpha \breve{u}- \bar{\mathbf{y}}_h \cdot \bar{\mathbf{z}}_h )  \|_{L^{\infty}(\Omega)} 
 \leq 2 \| \bar{\mathbf{y}} \cdot \bar{\mathbf{z}} -  \bar{\mathbf{y}}_h \cdot \bar{\mathbf{z}}_h  \|_{L^{\infty}(\Omega)} < \tau.
\]
Since $\alpha \bar{u}(x) - \bar{\mathbf{y}}(x) \cdot \bar{\mathbf{z}}(x) > \tau$, we can conclude that $\alpha \breve{u}(x)- \bar{\mathbf{y}}_h(x) \cdot \bar{\mathbf{z}}_h(x) >0$ and therefore that $\breve{u}(x) > \alpha^{-1} \bar{\mathbf{y}}_h(x) \cdot \bar{\mathbf{z}}_h(x)$. This implies that $\breve{u}(x) = \GC{\mathsf{a}}$. As a result, we have proved that $(\breve{u} - \bar{u})(x) = 0$ if $x \in \Omega$ is such that $\bar{\mathfrak{d}}(x) > \tau$. Similar calculations show that $(\breve{u} - \bar{u})(x) = 0$ if $x \in \Omega$ is such that $\bar{\mathfrak{d}}(x) < -\tau$.

\emph{Step 2.} \emph{The bound \eqref{eq:bound_aux_j_final}}. \EO{Since $\breve{u}-\bar{u} \in C_{\bar u}^{\tau} \subset L^2(\Omega)$, we can set $v = \breve{u}-\bar{u}$ in \eqref{equivalence_condition} to obtain that $j''(\bar{u})(\breve{u}-\bar{u})^2 \geq \mu  \| \breve{u}-\bar{u} \|_{L^2(\Omega)}^2$ and thus that 
\[
\mu \| \breve{u}-\bar{u} \|_{L^2(\Omega)}^2 \leq j''(\bar{u})(\breve{u}-\bar{u})^2 =  (j''(\bar{u})-j''(\xi))(\breve{u}-\bar{u})^2 + (j'(\breve{u})-j'(\bar{u}))(\breve{u}-\bar{u}).
\]
To obtain the equality, we used that $(j'(\breve{u})-j'(\bar{u}))(\breve{u}-\bar{u})=j''(\xi)(\breve{u}-\bar{u})^2$, where $\xi=\bar{u}+\theta (\breve{u}-\bar{u})$ and $\theta \in (0,1)$, which follows from the mean value theorem.}
We now use \eqref{eq:lemma_j}, the projection formulas \eqref{eq:representation} and \eqref{eq:representation_aux}, the Lipschitz property of $\Pi_{[\mathsf{a},\mathsf{b}]}$, and the assumption \eqref{eq:bound_aux} to obtain
\begin{multline}
 (j''(\bar{u})-j''(\xi))(\breve{u}-\bar{u})^2 \leq \mathfrak{D} \| \breve{u} -\bar{u} \|_{L^2(\Omega)} \|\breve{u}-\bar{u}\|_{L^2(\Omega)}^2
 \\
 \leq \mathfrak{D} \alpha^{-1} \|\bar{\mathbf{y}}\cdot \bar{\mathbf{z}} - \bar{\mathbf{y}}_h\cdot \bar{\mathbf{z}}_h  \|_{L^2(\Omega)} \|\breve{u}-\bar{u}\|_{L^2(\Omega)}^2 \leq (\mu/2) \|\breve{u}-\bar{u}\|_{L^2(\Omega)}^2.
\end{multline}
If this bound is substituted into the one derived for $\mu \| \breve{u}-\bar{u} \|_{L^2(\Omega)}^2$, we arrive at \eqref{eq:bound_aux_j_final}. This concludes the proof.
\end{proof}


\subsection{Reliability analysis: the fully discrete scheme} 
\label{Real:fully_discrete} 
In this section, we propose an a posteriori error estimator for the fully discrete scheme and obtain a global reliability estimate. To this end, we introduce the following auxiliary variables. First, we introduce $(\check{\mathbf{y}},\check{\mathsf{p}})\in \mathbf{H}_0^1(\Omega)\times L_0^2(\Omega)$ as the solution of
\begin{equation}\label{eq:brinkamn_problem_state_aux}
(\nabla \check{\mathbf{y}},\nabla \mathbf{v})_{\mathbf{L}^2(\Omega)}+( \bar{u}_{h}\check{\mathbf{y}}, \mathbf{v})_{\mathbf{L}^2(\Omega)}-(\check{\mathsf{p}},\text{div }\mathbf{v})_{L^2(\Omega)} = 	(\mathbf{f},\mathbf{v})_{\mathbf{L}^2(\Omega)}
\end{equation}
and $(\mathsf{q},\text{div }\check{\mathbf{y}})_{L^2(\Omega)}  = 0$ for all $\mathbf{v} \in \mathbf{H}_0^1(\Omega)$ and $\mathsf{q} \in L_0^2(\Omega)$, respectively. We note that \AAF{since $\bar{u}_{h} \in \mathbb{U}_{ad,h} \subset \mathcal{A}_{0}$,} problem \eqref{eq:brinkamn_problem_state_aux} is well-posed. We also note that $(\bar{\mathbf{y}}_{h},\bar{\mathsf{p}}_{h})$, which solves \eqref{eq:brinkamn_problem_state_fd} with $u_{h}$ replaced by $\bar{u}_{h}$, corresponds to the finite element approximation of $(\check{\mathbf{y}},\check{\mathsf{p}})$ in the setting of \S\ref{fem}. Therefore, it is natural to define the following \emph{a posteriori error estimator}:
\begin{equation}\label{eq:estimator_st}
\mathcal{E}_{st,\T}^2:= \sum_{K\in\T}\mathcal{E}_{st,K}^2,
\end{equation}
where $\mathcal{E}_{st,K}^2:=h_K^2  \|\mathcal{R}_{K}^{st}\|_{\mathbf{L}^2(K)}^2
+h_K  \|\mathcal{J}_{\gamma}^{st}\|_{\mathbf{L}^2(\partial K\setminus\partial \Omega)}^2
+\|\text{div }\bar{\mathbf{y}}_{h}\|_{L^2(K)}^2$. Here,
\begin{align}\label{eq:residual_state}
\mathcal{R}_{K}^{st}:=(\mathbf{f}+\Delta \bar{\mathbf{y}}_{h}-\bar{u}_{h}\bar{\mathbf{y}}_{h}-\nabla\bar{\mathsf{p}}_{h})|_K,
\qquad 
\mathcal{J}_{\gamma}^{st}:=\llbracket(\nabla \bar{\mathbf{y}}_{h}-\bar{\mathsf{p}}_{h}\mathbf{I})\cdot \mathbf{n}\rrbracket.
\end{align}
An application of Theorem \ref{eq:reliability}, with $\mathfrak{u}$ and $\boldsymbol{\mathfrak{f}}$ replaced by $\bar{u}_{h}$ and $\mathbf{f}$, shows that
\begin{align}\label{eq:bound_realiability_1}
\|\nabla(\check{\mathbf{y}}-\bar{\mathbf{y}}_{h})\|_{\mathbf{L}^2(\Omega)}^2+\|\check{\mathsf{p}}-\bar{\mathsf{p}}_{h}\|_{L^2(\Omega)}^2\lesssim \mathcal{E}_{st,\T}^2.
\end{align}

We also introduce the pair $(\check{\mathbf{z}},\check{\mathsf{r}})\in\mathbf{H}_0^1(\Omega)\times L_0^2(\Omega)$ as the solution of
\begin{equation}
\label{eq:brinkamn_problem_adjoint_aux}
(\nabla \mathbf{v},\nabla \check{\mathbf{z}})_{\mathbf{L}^2(\Omega)}+( \bar{u}_{h}\check{\mathbf{z}}, \mathbf{v})_{\mathbf{L}^2(\Omega)}+(\check{\mathsf{r}},\text{div }\mathbf{v})_{L^2(\Omega)} = 	(\bar{\mathbf{y}}_{h}-\mathbf{y}_{\Omega},\mathbf{v})_{\mathbf{L}^2(\Omega)}
\end{equation}
and $(\mathsf{q},\text{div }\check{\mathbf{z}})_{L^2(\Omega)}  = 0$ for all $\mathbf{v} \in  \mathbf{H}_0^1(\Omega)$ and $\mathsf{q} \in L_0^2(\Omega)$, respectively. We note that \AAF{since $ \bar{u}_{h}\in\mathcal{A}_{0}$,} \eqref{eq:brinkamn_problem_adjoint_aux} is well-posed. \AAF{We also note that} $(\bar{\mathbf{z}}_{h},\bar{\mathsf{r}}_{h})$ corresponds to the finite element approximation of $(\check{\mathbf{z}},\check{\mathsf{r}})$ in the context of \S \ref{fem}. \AAF{It is therefore natural to} introduce the following \emph{a posteriori error estimator:}
\begin{equation}
\label{eq:estimator_AD}
\mathcal{E}_{adj,\T}^2:= \sum_{K\in\T}\mathcal{E}_{adj,K}^2, 
\end{equation}
where $\mathcal{E}_{adj,K}^2:=h_K^2  \|\mathcal{R}_{K}^{adj}\|_{\mathbf{L}^2(K)}^2+h_K  \|\mathcal{J}_{\gamma}^{adj}\|_{\mathbf{L}^2(\partial K\setminus\partial \Omega)}^2 + \|\text{div }\bar{\mathbf{z}}_{h}\|_{L^2(K)}^2$. Here,
\begin{align}\label{eq:residual_adjoint}
\mathcal{R}_{K}^{adj} := (\bar{\mathbf{y}}_{h}-\mathbf{y}_{\Omega}+\Delta \bar{\mathbf{z}}_{h}\!-\bar{u}_{h}\bar{\mathbf{z}}_{h} + \nabla\bar{\mathsf{r}}_{h})|_K,\qquad 
\mathcal{J}_{\gamma}^{adj} := \llbracket(\nabla \bar{\mathbf{z}}_{h}+\bar{\mathsf{r}}_{h}\mathbf{I})\cdot \mathbf{n}\rrbracket.
\end{align}
An application of Theorem \ref{eq:reliability}, with $\mathfrak{u}$ and $\boldsymbol{\mathfrak{f}}$ replaced by $\bar{u}_{h}$ and $\bar{\mathbf{y}}_{h}-\mathbf{y}_{\Omega}$, yields
\begin{eqnarray}\label{eq:bound_realiability_2}
\|\nabla(\check{\mathbf{z}}-\bar{\mathbf{z}}_{h})\|_{\mathbf{L}^2(\Omega)}^2+\|\check{\mathsf{r}}-\bar{\mathsf{r}}_{h}\|_{L^2(\Omega)}^2\lesssim \mathcal{E}_{adj,\T}^2.
\end{eqnarray}

It \AAF{is important to note that in the a posteriori error bounds \eqref{eq:bound_realiability_1} and \eqref{eq:bound_realiability_2}, the hidden constants depend on $\|\bar{u}_{h}\|_{L^{2}(\Omega)}$, as established in the proof of Theorem \ref{eq:reliability}. Despite this, since $\bar{u}_{h} \in \mathbb{U}_{ad,h}$, these constants can be uniformly controlled using the fact that $\|\bar{u}_{h}\|_{L^{2}(\Omega)} \leq \mathsf{b} |\Omega|^{1/2}$.}

After introducing a posteriori error estimators for the discretization of the state and adjoint equations, we define the following a posteriori local error indicator and error estimator for the discretization of the control variable:
\begin{eqnarray}\label{eq:estimator_control}
\mathcal{E}_{ct,K}:=\|\breve{u}-\bar{u}_{h}\|_{L^2(K)}, \qquad \mathcal{E}_{ct,\T}^2:= \sum_{K\in\T}\mathcal{E}_{ct,K}^2 = \|\breve{u}-\bar{u}_{h}\|_{L^2(\Omega)}^{2}.
\end{eqnarray}

Finally, we introduce an a posteriori error estimator for the fully discrete scheme, which consists of three contributions:
\begin{align}\label{eq:error_contributions}
\mathcal{E}_{ocp,\T}^2:=\mathcal{E}_{st,\T}^2 + \mathcal{E}_{adj,\T}^2 + \mathcal{E}_{ct,\T}^2.
\end{align}

To simplify the presentation and proof of the next result, we define the errors $\mathbf{e}_{\bar{\mathbf{y}}}:=\bar{\mathbf{y}}-\bar{\mathbf{y}}_{h}$, $e_{\bar{\mathsf{p}}}:=\bar{\mathsf{p}}-\bar{\mathsf{p}}_{h}$, $\mathbf{e}_{\bar{\mathbf{z}}}:=\bar{\mathbf{z}}-\bar{\mathbf{z}}_{h}$, $e_{\bar{\mathsf{r}}}:=\bar{\mathsf{r}}-\bar{\mathsf{r}}_{h}$, and $e_{\bar{u}}:=\bar{u}-\bar{u}_{h}$. We also define $(\breve{\mathbf{y}},\breve{\mathsf{p}})\in \mathbf{H}_0^1(\Omega)\times L_0^2(\Omega)$ as the solution of
\begin{equation}\label{eq:brinkamn_problem_state_auxA}
(\nabla \breve{\mathbf{y}},\nabla \mathbf{v})_{\mathbf{L}^2(\Omega)}+( \breve{u}\breve{\mathbf{y}}, \mathbf{v})_{\mathbf{L}^2(\Omega)}-(\breve{\mathsf{p}},\text{div }\mathbf{v})_{L^2(\Omega)} =(\mathbf{f},\mathbf{v})_{\mathbf{L}^2(\Omega)}
\end{equation}
and $(\mathsf{q},\text{div }\breve{\mathbf{y}})_{L^2(\Omega)}  = 0$ for all $\mathbf{v} \in \mathbf{H}_0^1(\Omega)$ and $\mathsf{q}\in L_0^2(\Omega)$, respectively. Finally, we define
$(\breve{\mathbf{z}},\breve{\mathsf{r}})\in\mathbf{H}_0^1(\Omega)\times L_0^2(\Omega)$ as the solution to
\begin{equation}\label{eq:brinkamn_problem_adjoint_auxA}
(\nabla \breve{\mathbf{z}},\nabla \mathbf{v})_{\mathbf{L}^2(\Omega)}+( \breve{u}\breve{\mathbf{z}}, \mathbf{v})_{\mathbf{L}^2(\Omega)}+(\breve{\mathsf{r}},\text{div }\mathbf{v})_{L^2(\Omega)} = 	(\breve{\mathbf{y}}-\mathbf{y}_{\Omega},\mathbf{v})_{\mathbf{L}^2(\Omega)}
\end{equation}
and $(\mathsf{q},\text{div }\breve{\mathbf{z}})_{L^2(\Omega)}  = 0$ for all $\mathbf{v} \in \mathbf{H}_0^1(\Omega)$ and $\mathsf{q} \in L_0^2(\Omega)$, respectively.

\begin{theorem}[global reliability of $\mathcal{E}_{ocp,\T}$]
\label{global_reliability_fully} 
Let $\bar{u}$ be a local solution of \eqref{eq:min_functional}--\eqref{eq:brinkamn_problem_state} such that \eqref{equivalence_condition} holds for $\tau > 0$, and let 
$(\bar{\mathbf{y}},\bar{\mathsf{p}})$ and $(\bar{\mathbf{z}},\bar{\mathsf{r}})$ be the corresponding state and adjoint state variables. Let $\bar{u}_{h}$ be a local minimum of the fully discrete scheme, and let $(\bar{\mathbf{y}}_h,\bar{\mathsf{p}}_h)$ and $(\bar{\mathbf{z}}_h,\bar{\mathsf{r}}_h)$ be the corresponding discrete state and adjoint state variables. If \eqref{eq:bound_aux} holds, then
\begin{align}\label{eq:estimates_global_reliability}
\|\nabla \mathbf{e}_{\bar{\mathbf{y}}} \|_{\mathbf{L}^2(\Omega)}^2
+
\|\nabla \mathbf{e}_{\bar{\mathbf{z}}} \|_{\mathbf{L}^2(\Omega)}^2
+
\| e_{\bar{\mathsf{p}}} \|_{L^2(\Omega)}^2
+
\| e_{\bar{\mathsf{r}}} \|_{L^2(\Omega)}^2 
+
\|e_{\bar{u}}\|_{L^2(\Omega)}^2
\lesssim \mathcal{E}_{ocp,\T}^2,
\end{align}
where the hidden constant is independent of the continuous and discrete optimal variables, the size of the elements in the mesh $\T$, and $\#\T$.
\end{theorem}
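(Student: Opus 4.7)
The plan is to control each of the five error contributions by triangle inequalities that introduce the auxiliary variables $\check{\mathbf{y}}$, $\check{\mathbf{z}}$, $\breve{\mathbf{y}}$, $\breve{\mathbf{z}}$, and $\breve{u}$, thereby reducing the problem to the Brinkman reliability bounds \eqref{eq:bound_realiability_1}--\eqref{eq:bound_realiability_2} plus stability estimates for two residual Brinkman problems. The main work will be the control error $\|e_{\bar{u}}\|_{L^2(\Omega)}$; once it is under control, the bounds on the state and adjoint variables will follow by routine triangle-inequality arguments.

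To bound the control error, first split $\|e_{\bar{u}}\|_{L^2(\Omega)}\leq \|\bar{u}-\breve{u}\|_{L^2(\Omega)}+\mathcal{E}_{ct,\T}$ and attack the first summand. Plugging $u=\breve{u}$ into \eqref{eq:variational_inequality} and $u=\bar{u}$ into \eqref{eq:variational_inequality_discrete_aux}, and combining with the key bound \eqref{eq:bound_aux_j_final}, I obtain
\[
\tfrac{\mu}{2}\|\bar{u}-\breve{u}\|_{L^2(\Omega)}^2 \leq (\bar{\mathbf{y}}_h\cdot\bar{\mathbf{z}}_h-\breve{\mathbf{y}}\cdot\breve{\mathbf{z}},\breve{u}-\bar{u})_{L^2(\Omega)}.
\]
The add-and-subtract decomposition $\bar{\mathbf{y}}_h\cdot\bar{\mathbf{z}}_h-\breve{\mathbf{y}}\cdot\breve{\mathbf{z}}=\bar{\mathbf{y}}_h\cdot(\bar{\mathbf{z}}_h-\breve{\mathbf{z}})+(\bar{\mathbf{y}}_h-\breve{\mathbf{y}})\cdot\breve{\mathbf{z}}$, together with H\"older's inequality, the Sobolev embedding $\mathbf{H}_0^1(\Omega)\hookrightarrow\mathbf{L}^4(\Omega)$, the uniform bounds $\|\nabla\bar{\mathbf{y}}_h\|_{\mathbf{L}^2(\Omega)}\lesssim\|\mathbf{f}\|_{\mathbf{H}^{-1}(\Omega)}$ and $\|\nabla\breve{\mathbf{z}}\|_{\mathbf{L}^2(\Omega)}\lesssim\|\mathbf{f}\|_{\mathbf{H}^{-1}(\Omega)}+\|\mathbf{y}_\Omega\|_{\mathbf{L}^2(\Omega)}$, and Young's inequality to absorb one power of $\|\bar{u}-\breve{u}\|_{L^2(\Omega)}$, leave me with
\[
\|\bar{u}-\breve{u}\|_{L^2(\Omega)}^2\lesssim \|\nabla(\bar{\mathbf{y}}_h-\breve{\mathbf{y}})\|_{\mathbf{L}^2(\Omega)}^2+\|\nabla(\bar{\mathbf{z}}_h-\breve{\mathbf{z}})\|_{\mathbf{L}^2(\Omega)}^2.
\]
Inserting $\check{\mathbf{y}}$ and $\check{\mathbf{z}}$ by the triangle inequality, the FEM parts $\|\nabla(\bar{\mathbf{y}}_h-\check{\mathbf{y}})\|_{\mathbf{L}^2(\Omega)}$ and $\|\nabla(\bar{\mathbf{z}}_h-\check{\mathbf{z}})\|_{\mathbf{L}^2(\Omega)}$ are controlled by $\mathcal{E}_{st,\T}$ and $\mathcal{E}_{adj,\T}$ via \eqref{eq:bound_realiability_1} and \eqref{eq:bound_realiability_2}, while the continuous residuals $\check{\mathbf{y}}-\breve{\mathbf{y}}$ and $\check{\mathbf{z}}-\breve{\mathbf{z}}$ solve Brinkman problems driven by $(\breve{u}-\bar{u}_h)\breve{\mathbf{y}}$ and by $(\breve{u}-\bar{u}_h)\breve{\mathbf{z}}+(\bar{\mathbf{y}}_h-\breve{\mathbf{y}})$, respectively; a standard energy test yields $\|\nabla(\check{\mathbf{y}}-\breve{\mathbf{y}})\|_{\mathbf{L}^2(\Omega)}+\|\nabla(\check{\mathbf{z}}-\breve{\mathbf{z}})\|_{\mathbf{L}^2(\Omega)}\lesssim\mathcal{E}_{ct,\T}+\mathcal{E}_{st,\T}$. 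Collecting everything gives $\|\bar{u}-\breve{u}\|_{L^2(\Omega)}\lesssim\mathcal{E}_{ocp,\T}$ and hence the desired bound for $\|e_{\bar{u}}\|_{L^2(\Omega)}$.

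For the state and adjoint errors, triangle inequalities against $\check{\mathbf{y}},\check{\mathsf{p}}$ and $\check{\mathbf{z}},\check{\mathsf{r}}$ isolate a pure FEM discretization contribution controlled by $\mathcal{E}_{st,\T}$ and $\mathcal{E}_{adj,\T}$ via \eqref{eq:bound_realiability_1}--\eqref{eq:bound_realiability_2}, plus continuous residuals $(\bar{\mathbf{y}}-\check{\mathbf{y}},\bar{\mathsf{p}}-\check{\mathsf{p}})$ and $(\bar{\mathbf{z}}-\check{\mathbf{z}},\bar{\mathsf{r}}-\check{\mathsf{r}})$ that solve Brinkman problems driven by $(\bar{u}_h-\bar{u})\check{\mathbf{y}}$ and by $(\bar{u}_h-\bar{u})\check{\mathbf{z}}+(\bar{\mathbf{y}}-\bar{\mathbf{y}}_h)$, respectively; an energy test together with the inf-sup condition \eqref{eq:infsup} applied to the pressure components yields $\|\nabla(\bar{\mathbf{y}}-\check{\mathbf{y}})\|_{\mathbf{L}^2(\Omega)}+\|\bar{\mathsf{p}}-\check{\mathsf{p}}\|_{L^2(\Omega)}\lesssim\|e_{\bar{u}}\|_{L^2(\Omega)}\lesssim\mathcal{E}_{ocp,\T}$, and analogously for the adjoint pair. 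Summing the five contributions gives \eqref{eq:estimates_global_reliability}. The main obstacle is the control step: rewriting the two variational inequalities into a single quadratic inequality that directly governs $\|\bar{u}-\breve{u}\|_{L^2(\Omega)}^2$, while keeping constants under control through $\mathbf{L}^4$-Sobolev embeddings rather than $\mathbf{L}^{\infty}$-bounds on the discrete velocity, since such $\mathbf{L}^{\infty}$-bounds are not generally available in the fully discrete, non--quasi-uniform regime.
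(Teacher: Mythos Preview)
Your proposal is correct and follows essentially the same route as the paper's proof: the control error is bounded first via the splitting $\|e_{\bar u}\|\le\|\bar u-\breve u\|+\mathcal E_{ct,\T}$, the two variational inequalities combined with \eqref{eq:bound_aux_j_final} yield $\|\bar u-\breve u\|_{L^2(\Omega)}\lesssim\|\nabla(\bar{\mathbf y}_h-\breve{\mathbf y})\|+\|\nabla(\bar{\mathbf z}_h-\breve{\mathbf z})\|$, and these are handled by inserting $\check{\mathbf y},\check{\mathbf z}$, invoking \eqref{eq:bound_realiability_1}--\eqref{eq:bound_realiability_2}, and testing the residual Brinkman problems with their own solutions; the state, adjoint, and pressure errors are then treated by the same triangle-inequality-plus-stability pattern. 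The only cosmetic differences are that the paper divides out one factor of $\|\bar u-\breve u\|$ directly rather than using Young's inequality, and it writes the forcing of the residual problems with $\check{\mathbf y},\check{\mathbf z}$ on the right-hand side (coefficient $\breve u$) instead of $\breve{\mathbf y},\breve{\mathbf z}$ (coefficient $\bar u_h$); both splittings are equivalent and lead to the same bounds.
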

\begin{proof}
We divide the proof into six steps.

\emph{Step 1.} \emph{A bound for $\|e_{\bar{u}}\|_{L^2(\Omega)}$.} We use the variable $\breve{u}$ defined in \eqref{eq:representation_aux}, the definitions in \eqref{eq:estimator_control}, and a simple application of the triangle inequality to obtain
\begin{align}\label{eq:bound_00}
\|e_{\bar{u}}\|_{L^2(\Omega)}\leq \|\bar{u}-\breve{u}\|_{L^2(\Omega)}+\mathcal{E}_{ct,\T}.
\end{align}
It is therefore sufficient to bound $\|\bar{u}-\breve{u}\|_{L^2(\Omega)}$. To do this, we set $u=\breve{u}$ in \eqref{eq:ineq_variational} and $u=\bar{u}$ in \eqref{eq:variational_inequality_discrete_aux} to obtain $j'(\bar{u})(\breve{u}-\bar{u})\geq 0$ and $(\alpha \breve{u}-\bar{\mathbf{y}}_{h}\cdot \bar{\mathbf{z}}_{h},\bar{u}-\breve{u})_{L^2(\Omega)}\geq 0$.
With these estimates in hand, we use \eqref{eq:bound_aux_j_final} to obtain
\begin{multline}\label{eq:bound_01}
\mu\|\bar{u}-\breve{u}\|_{L^2(\Omega)}^2
\leq 
2 (j'(\breve{u})-j'(\bar{u}))(\breve{u}-\bar{u})
\leq 
2 j'(\breve{u})(\breve{u}-\bar{u})\\
= 
2 (\alpha \breve{u}-\breve{\mathbf{y}}\cdot \breve{\mathbf{z}},\breve{u}-\bar{u})_{L^2(\Omega)}
\leq 
2 (\bar{\mathbf{y}}_{h}\cdot \bar{\mathbf{z}}_{h}-\breve{\mathbf{y}}\cdot \breve{\mathbf{z}},\breve{u}-\bar{u})_{L^2(\Omega)},
\end{multline}
where $(\breve{\mathbf{y}},\breve{\mathsf{p}})$ and $(\breve{\mathbf{z}},\breve{\mathsf{r}})$ are the solutions of \eqref{eq:brinkamn_problem_state_auxA} and \eqref{eq:brinkamn_problem_adjoint_auxA}, respectively. \AAF{If we add and subtract the term $(\bar{\mathbf{y}}_{h}\cdot \breve{\mathbf{z}},\breve{u}-\bar{u})_{L^2(\Omega)}$ in \eqref{eq:bound_01} and use the Sobolev embedding $\mathbf{H}_0^1(\Omega) \hookrightarrow \mathbf{L}^4(\Omega)$, we obtain the bound}
$
	\|\bar{u} - \breve{u}\|_{L^2(\Omega)}\lesssim \|\nabla(\bar{\mathbf{z}}_{h} - \breve{\mathbf{z}})\|_{\mathbf{L}^2(\Omega)}\| \nabla\bar{\mathbf{y}}_{h}\|_{\mathbf{L}^2(\Omega)} +  \|\nabla(\bar{\mathbf{y}}_{h} - \breve{\mathbf{y}})\|_{\mathbf{L}^2(\Omega)}\| \nabla\breve{\mathbf{z}}\|_{\mathbf{L}^2(\Omega)}.
$
\AAF{Using the fact that $\bar{u}_{h} \in\mathcal{A}_0$, a basic stability bound for problem \eqref{eq:brinkamn_problem_state_fd} shows that $\| \nabla\bar{\mathbf{y}}_{h}\|_{\mathbf{L}^2(\Omega)}\lesssim \|\mathbf{f}\|_{\mathbf{L}^2(\Omega)}$. Similarly, the fact that $\breve{u} \in\mathcal{A}_0$ allows us to derive stability bounds for the solutions of problems \eqref{eq:brinkamn_problem_adjoint_auxA} and \eqref{eq:brinkamn_problem_state_auxA}. As a result, we can obtain that}
$
\|\nabla \breve{\mathbf{z}}\|_{\mathbf{L}^2(\Omega)}
\lesssim \|\mathbf{f}\|_{\mathbf{L}^2(\Omega)} + \|\mathbf{y}_{\Omega}\|_{\mathbf{L}^2(\Omega)}$.
\AAF{A collection of the derived stability bounds thus shows that} 
\begin{align}\label{eq:bound_03}
	\|\bar{u}-\breve{u}\|_{L^2(\Omega)}\lesssim \|\nabla(\bar{\mathbf{z}}_{h}-\breve{\mathbf{z}})\|_{\mathbf{L}^2(\Omega)}+ \|\nabla(\bar{\mathbf{y}}_{h}-\breve{\mathbf{y}})\|_{\mathbf{L}^2(\Omega)},
\end{align}
\EO{with a hidden constant that depends on $\|\mathbf{f}\|_{\mathbf{L}^2(\Omega)}$ and $\|\mathbf{y}_{\Omega}\|_{\mathbf{L}^2(\Omega)}$.} Let us now bound $\|\nabla(\bar{\mathbf{y}}_{h}-\breve{\mathbf{y}})\|_{\mathbf{L}^2(\Omega)}$. For this purpose, we use the pair $(\check{\mathbf{y}},\check{\mathsf{p}})$ defined as the solution to \eqref{eq:brinkamn_problem_state_aux} and the a posteriori error bound \eqref{eq:bound_realiability_1} to obtain
\begin{align}\label{eq:bound_04}
\|\nabla(\bar{\mathbf{y}}_{h}-\breve{\mathbf{y}})\|_{\mathbf{L}^2(\Omega)}\lesssim \|\nabla(\breve{\mathbf{y}}-\check{\mathbf{y}})\|_{\mathbf{L}^2(\Omega)}+\mathcal{E}_{st,\T}.
\end{align}
We now observe that $(\breve{\mathbf{y}}-\check{\mathbf{y}},\breve{\mathsf{p}}-\check{\mathsf{p}})\in \mathbf{H}_0^1(\Omega)\times L_0^2(\Omega)$ solves the following problem
\begin{multline*}\label{eq:brinkamn_problem_state_aux_error}
(\nabla (\breve{\mathbf{y}} - \check{\mathbf{y}}),\nabla \mathbf{v})_{\mathbf{L}^2(\Omega)}
+
( \breve{u}(\breve{\mathbf{y}}\!-\!\check{\mathbf{y}}), \mathbf{v})_{\mathbf{L}^2(\Omega)} 
-
(\breve{\mathsf{p}}\!-\!\check{\mathsf{p}},\text{div }\mathbf{v})_{L^2(\Omega)} 
\\
=  	(\check{\mathbf{y}}(\bar{u}_h\!-\!\breve{u}),\mathbf{v})_{\mathbf{L}^2(\Omega)},\quad
(\mathsf{q},\text{div }(\breve{\mathbf{y}}-\check{\mathbf{y}}))_{L^2(\Omega)} = 0\quad\forall (\mathbf{v},\mathsf{q})\in  \mathbf{H}_0^1(\Omega)\times L_0^2(\Omega).
\end{multline*}
\AAF{If we set $(\mathbf{v},\mathsf{q}) = (\breve{\mathbf{y}}-\check{\mathbf{y}},\breve{\mathsf{p}} - \check{\mathsf{p}})$ and use the fact that \EO{$\breve{u}\in \mathcal{A}_0$}, we can conclude that $\|\nabla (\breve{\mathbf{y}}-\check{\mathbf{y}})\|_{\mathbf{L}^2(\Omega)} \lesssim 
\|\nabla \check{\mathbf{y}}\|_{\mathbf{L}^2(\Omega)}\|\bar{u}_{h}-\breve{u}\|_{L^2(\Omega)}$. Similarly, we can use that $\bar{u}_{h} \in \mathcal{A}_0$ to obtain $\|\nabla \check{\mathbf{y}}\|_{\mathbf{L}^2(\Omega)}\lesssim\|\mathbf{f}\|_{\mathbf{L}^{2}(\Omega)}$. A combination of these estimates shows that}
$\|\nabla (\breve{\mathbf{y}}-\check{\mathbf{y}})\|_{\mathbf{L}^2(\Omega)} \lesssim 
\|\nabla \check{\mathbf{y}}\|_{\mathbf{L}^2(\Omega)}\|\bar{u}_{h}-\breve{u}\|_{L^2(\Omega)}\lesssim \mathcal{E}_{ct,\T}$, where we have also used \eqref{eq:estimator_control}.
Substituting this bound into \eqref{eq:bound_04} and the one obtained in \eqref{eq:bound_03}, we obtain
\begin{align}\label{eq:bound_05}
\|\bar{u}-\breve{u}\|_{L^2(\Omega)}\lesssim \|\nabla(\bar{\mathbf{z}}_{h}-\breve{\mathbf{z}})\|_{\mathbf{L}^2(\Omega)}
+
\AAF{\mathcal{E}_{st,\T}+\mathcal{E}_{ct,\T}}.
\end{align}

We now bound $\|\nabla(\bar{\mathbf{z}}_{h}-\breve{\mathbf{z}})\|_{\mathbf{L}^2(\Omega)}$. \EO{To do this, we use the pair $(\check{\mathbf{z}},\check{\mathsf{p}})$ that solves problem \eqref{eq:brinkamn_problem_adjoint_aux}, the a posteriori error bound \eqref{eq:bound_realiability_2}, and the triangle inequality to obtain} 
\begin{align}\label{eq:bound_06}
\|\nabla(\bar{\mathbf{z}}_{h}-\breve{\mathbf{z}})\|_{\mathbf{L}^2(\Omega)}\lesssim \mathcal{E}_{adj,\T}+\|\nabla(\breve{\mathbf{z}}-\check{\mathbf{z}})\|_{\mathbf{L}^2(\Omega)}.
\end{align}
To bound $\|\nabla(\breve{\mathbf{z}}-\check{\mathbf{z}})\|_{\mathbf{L}^2(\Omega)}$, we note that $(\breve{\mathbf{z}}-\check{\mathbf{z}},\breve{\mathsf{r}}-\check{\mathsf{r}})\in\mathbf{H}_0^1(\Omega)\times L_0^2(\Omega)$ solves
\begin{multline*}
(\nabla (\breve{\mathbf{z}}-\check{\mathbf{z}}),\nabla \mathbf{v})_{\mathbf{L}^2(\Omega)}+( \breve{u}(\breve{\mathbf{z}}-\check{\mathbf{z}}), \mathbf{v})_{\mathbf{L}^2(\Omega)}+(\breve{\mathsf{r}}-\check{\mathsf{r}},\text{div }\mathbf{v})_{L^2(\Omega)} = ((\bar{u}_{h}-\breve{u})\check{\mathbf{z}},\mathbf{v})_{\mathbf{L}^2(\Omega)}\\+(\breve{\mathbf{y}}-\bar{\mathbf{y}}_{h},\mathbf{v})_{\mathbf{L}^2(\Omega)},\quad
(\mathsf{q},\text{div }(\breve{\mathbf{z}}-\check{\mathbf{z}}))_{L^2(\Omega)}  =  0 \quad\forall (\mathbf{v},\mathsf{q})\in  \mathbf{H}_0^1(\Omega)\times L_0^2(\Omega). 
\end{multline*}
If we set \AAF{$(\mathbf{v},\mathsf{q}) = (\breve{\mathbf{z}}-\check{\mathbf{z}},\breve{\mathsf{r}}-\check{\mathsf{r}})$ and use that $\breve{u} \in\mathcal{A}_0$, we can conclude that 
$
\|\nabla(\breve{\mathbf{z}}-\check{\mathbf{z}})\|_{\mathbf{L}^2(\Omega)}\lesssim \|\nabla\check{\mathbf{z}}\|_{\mathbf{L}^2(\Omega)}\|\bar{u}_{h}-\breve{u}\|_{L^2(\Omega)}+\|\nabla(\breve{\mathbf{y}}-\bar{\mathbf{y}}_{h})\|_{\mathbf{L}^2(\Omega)}.
$
Note that $\|\nabla \check{\mathbf{z}}\|_{\mathbf{L}^2(\Omega)} \lesssim \|\mathbf{f}\|_{\mathbf{L}^2(\Omega)}+\|\mathbf{y}_{\Omega}\|_{\mathbf{L}^2(\Omega)}$ because $\bar{u}_{h} \in \mathcal{A}_0$ and $\|\nabla \bar{\mathbf{y}}_{h}\|_{\mathbf{L}^{2}(\Omega)}\lesssim\|\mathbf{f}\|_{\mathbf{L}^{2}(\Omega)}$.
%
%
On the other hand, based on previous results, we can conclude that
$\|\nabla (\breve{\mathbf{y}}-\bar{\mathbf{y}}_{h})\|_{\mathbf{L}^2(\Omega)}
\leq 
\|\nabla (\breve{\mathbf{y}}-\check{\mathbf{y}})\|_{\mathbf{L}^2(\Omega)}
+
\|\nabla (\check{\mathbf{y}}-\bar{\mathbf{y}}_{h})\|_{\mathbf{L}^2(\Omega)}
\lesssim \mathcal{E}_{st,\T}+\mathcal{E}_{ct,\T}.
$
Thus,} $\|\nabla(\breve{\mathbf{z}}-\check{\mathbf{z}})\|_{\mathbf{L}^2(\Omega)}\lesssim \mathcal{E}_{st,\T}+\mathcal{E}_{ct,\T}$. Substitute this estimate into \eqref{eq:bound_06} and the one obtained in \eqref{eq:bound_05} to obtain $\|\bar{u}-\breve{u}\|_{L^2(\Omega)}\lesssim \mathcal{E}_{st,\T}+\mathcal{E}_{adj,\T}+\mathcal{E}_{ct,\T}$. In view of \eqref{eq:bound_00}, we finally \AAF{conclude that}
\begin{align}\label{eq:bound_10}
\|e_{\bar{u}}\|_{L^2(\Omega)}\lesssim \mathcal{E}_{st,\T}+\mathcal{E}_{adj,\T}+\mathcal{E}_{ct,\T}.
\end{align}

\emph{Step 2.} \emph{A bound for} $\|\nabla \mathbf{e}_{\bar{\mathbf{y}}}\|_{\mathbf{L}^2(\Omega)}$. With the auxiliary pair $(\check{\mathbf{y}},\check{\mathsf{p}})$ and the a posteriori error bound \eqref{eq:bound_realiability_1} in hand, we bound $\|\nabla \mathbf{e}_{\bar{\mathbf{y}}}\|_{\mathbf{L}^2(\Omega)}$ as follows:
\begin{align}\label{eq:bound_11}
\|\nabla \mathbf{e}_{\bar{\mathbf{y}}}\|_{\mathbf{L}^2(\Omega)}\leq \|\nabla(\bar{\mathbf{y}}-\check{\mathbf{y}})\|_{\mathbf{L}^2(\Omega)}+\mathcal{E}_{st,\T}.
\end{align}
To bound $\|\nabla(\bar{\mathbf{y}}-\check{\mathbf{y}})\|_{\mathbf{L}^2(\Omega)}$, we note that $(\bar{\mathbf{y}}-\check{\mathbf{y}},\bar{\mathsf{p}}-\check{\mathsf{p}})\times \mathbf{H}_0^1(\Omega)\times L_0^2(\Omega)$ solves 
\begin{multline}\label{eq:brinkamn_problem_state_aux_errorB}
(\nabla (\bar{\mathbf{y}}-\check{\mathbf{y}}),\nabla \mathbf{v})_{\mathbf{L}^2(\Omega)}
+
( \bar{u}(\bar{\mathbf{y}}-\check{\mathbf{y}}), \mathbf{v})_{\mathbf{L}^2(\Omega)}
-
(\bar{\mathsf{p}}-\check{\mathsf{p}},\text{div }\mathbf{v})_{L^2(\Omega)}
\\
=
((\bar{u}_h-\bar{u})\check{\mathbf{y}},\mathbf{v})_{\mathbf{L}^2(\Omega)},
\quad
(\mathsf{q},\text{div }(\bar{\mathbf{y}}-\check{\mathbf{y}}))_{L^2(\Omega)}  =  0 
\quad
\forall (\mathbf{v},\mathsf{q})\in  \mathbf{H}_0^1(\Omega)\times L_0^2(\Omega). 
\end{multline}
If we set \AAF{$(\mathbf{v},\mathsf{q})=(\bar{\mathbf{y}}-\check{\mathbf{y}},\bar{\mathsf{p}}-\check{\mathsf{p}})$ and use $\|\nabla \check{\mathbf{y}}\|_{\mathbf{L}^{2}(\Omega)}\lesssim\|\mathbf{f}\|_{\mathbf{L}^{2}(\Omega)}$ and} \eqref{eq:bound_10}, we immediately obtain the bound
$
\|\nabla (\bar{\mathbf{y}}-\check{\mathbf{y}})\|_{\mathbf{L}^2(\Omega)}\lesssim 
\|\mathbf{f}\|_{\mathbf{L}^2(\Omega)} \|e_{\bar{u}}\|_{L^2(\Omega)}\lesssim \mathcal{E}_{st,\T}+\mathcal{E}_{adj,\T}+\mathcal{E}_{ct,\T}.
$
Finally, we replace this bound into \eqref{eq:bound_11} to obtain the a posteriori error estimate
\begin{align}\label{eq:bound_12}
\|\nabla \mathbf{e}_{\bar{\mathbf{y}}}\|_{\mathbf{L}^2(\Omega)}\lesssim \mathcal{E}_{st,\T}+\mathcal{E}_{adj,\T}+\mathcal{E}_{ct,\T}.
\end{align}

\emph{Step 3.} \emph{A bound for $\|\nabla \mathbf{e}_{\bar{\mathbf{z}}}\|_{\mathbf{L}^2(\Omega)}$}. Using similar arguments as in Step 2, we obtain
$
\|\nabla \mathbf{e}_{\bar{\mathbf{z}}}\|_{\mathbf{L}^2(\Omega)}\lesssim \mathcal{E}_{st,\T}+\mathcal{E}_{adj,\T}+\mathcal{E}_{ct,\T}.
$
For simplicity, we omit the details.

\emph{Step 4.} \emph{A bound for} $\|e_{\bar{\mathsf{p}}}\|_{L^2(\Omega)}$. A basic application of the bound \eqref{eq:bound_realiability_1} yields
\begin{align}\label{eq:bound_15}
\|e_{\bar{\mathsf{p}}}\|_{L^2(\Omega)}\lesssim \|\bar{\mathsf{p}}-\check{\mathsf{p}}\|_{L^2(\Omega)}+\mathcal{E}_{st,\T}.
\end{align}
We now control $\|\bar{\mathsf{p}}-\check{\mathsf{p}}\|_{L^2(\Omega)}$ using the inf--sup condition \eqref{eq:infsup}, the fact that $(\bar{\mathbf{y}}-\check{\mathbf{y}},\bar{\mathsf{p}}-\check{\mathsf{p}})$ solves problem \eqref{eq:brinkamn_problem_state_aux_errorB}, and the standard embedding $\mathbf{H}_0^1(\Omega) \hookrightarrow \mathbf{L}^4(\Omega)$. In fact,
\begin{multline}\label{eq:infsup_state_aux}
\|\bar{\mathsf{p}}-\check{\mathsf{p}}\|_{L^2(\Omega)}\lesssim \sup_{\mathbf{v}\in \mathbf{H}_0^1(\Omega)}\dfrac{(\bar{\mathsf{p}}-\check{\mathsf{p}},\text{div }\mathbf{v})_{L^2(\Omega)}}{\|\nabla \mathbf{v}\|_{\mathbf{L}^2(\Omega)}}\lesssim \|\nabla (\bar{\mathbf{y}}-\check{\mathbf{y}}) \|_{\mathbf{L}^2(\Omega)}\\
+\|\nabla (\bar{\mathbf{y}}-\check{\mathbf{y}}) \|_{\mathbf{L}^2(\Omega)} \|\bar{u}\|_{L^2(\Omega)}+\|\nabla \check{\mathbf{y}}\|_{\mathbf{L}^2(\Omega)}\|\bar{u}_{h}-\bar{u}\|_{L^2(\Omega)}.
\end{multline}
We \AAF{recall} that $\|\nabla \check{\mathbf{y}}\|_{\mathbf{L}^2(\Omega)}\lesssim \| \mathbf{f}\|_{\mathbf{L}^2(\Omega)}$. \AAF{On the other hand, $\|\bar{u}\|_{L^{2}(\Omega)}\leq \mathsf{b}|\Omega|^{1/2}$ because $\bar{u}\in \mathbb{U}_{ad}$.} We now refer to the bound $\|\nabla (\bar{\mathbf{y}}-\check{\mathbf{y}})\|_{\mathbf{L}^2(\Omega)} \lesssim \mathcal{E}_{ocp,\T}$, which was derived in Step 2, and the error estimate \eqref{eq:bound_10} to obtain $\|\bar{\mathsf{p}}-\check{\mathsf{p}}\|_{L^2(\Omega)}\lesssim \mathcal{E}_{ocp,\T}$. In view of  \eqref{eq:bound_15}, this estimate implies 
\begin{align}\label{eq:bound_16}
\|e_{\bar{\mathsf{p}}}\|_{L^2(\Omega)}\lesssim \mathcal{E}_{st,\T}+\mathcal{E}_{adj,\T}+\mathcal{E}_{ct,\T}.
\end{align}

\emph{Step 5.} \emph{A bound for $\|e_{\bar{\mathsf{r}}}\|_{L^2(\Omega)}$.} Using similar arguments as in Step 4, we obtain.
$
\|e_{\bar{\mathsf{r}}}\|_{L^2(\Omega)}\lesssim \mathcal{E}_{st,\T}+\mathcal{E}_{adj,\T}+\mathcal{E}_{ct,\T}.
$
For simplicity, we omit the details.

\emph{Step 6.} The desired estimate \eqref{eq:estimates_global_reliability}  follows from combining the bounds obtained in \AAF{all the previous steps}. This concludes the proof.
\end{proof}


\subsection{Reliability analysis: the semidiscrete scheme} 
\label{Real:semi_discrete} 
In this section, we propose an a posteriori error estimator for the semidiscrete scheme \eqref{eq:brinkamn_problem_state_sd}--\eqref{eq:brinkman_problem_adjoint_semi} and obtain a global reliability bound. 
As in the previous section, we introduce $(\check{\mathbf{y}},\check{\mathsf{p}})\in \mathbf{H}_0^1(\Omega)\times L_0^2(\Omega)$ as the solution to the following problem:
\begin{equation}
\label{eq:brinkamn_problem_state_aux_semi}
(\nabla \check{\mathbf{y}},\nabla \mathbf{v})_{\mathbf{L}^2(\Omega)}+(\bar{\mathsf{u}}\check{\mathbf{y}}, \mathbf{v})_{\mathbf{L}^2(\Omega)}-(\check{\mathsf{p}},\text{div }\mathbf{v})_{L^2(\Omega)} = 	(\mathbf{f},\mathbf{v})_{\mathbf{L}^2(\Omega)}
\end{equation}
and $(\mathsf{q},\text{div }\check{\mathbf{y}})_{L^2(\Omega)}  = 0$ for all $\mathbf{v} \in \mathbf{H}_0^1(\Omega)$ and $\mathsf{q} \in L_0^2(\Omega)$, respectively.
We define the following error estimator associated with the discretization of the state equations:
\begin{equation*}\label{eq:estimator_st_semi}
\mathfrak{E}_{st,\T}^2:= \sum_{K\in\T}\mathfrak{E}_{st,K}^2, 
\end{equation*}
where $\mathfrak{E}_{st,K}^2:=h_K^2  \|\mathfrak{R}_{K}^{st}\|_{\mathbf{L}^2(K)}^2 + h_K  \|\mathfrak{J}_{\gamma}^{st}\|_{\mathbf{L}^2(\partial K\setminus\partial \Omega)}^2+\|\text{div }\bar{\mathbf{y}}_{h}\|_{L^2(K)}^2$. Here,
\begin{align*}\label{eq:residual_state_semi}
\mathfrak{R}_{K}^{st}\ = (\mathbf{f}+\Delta \bar{\mathbf{y}}_{h} - \bar{\mathsf{u}}\bar{\mathbf{y}}_{h} - \nabla\bar{\mathsf{p}}_{h})|_K,
\qquad 
\mathfrak{J}_{\gamma}^{st} := \llbracket(\nabla \bar{\mathbf{y}}_{h}-\bar{\mathsf{p}}_{h}\mathbf{I})\cdot \mathbf{n}\rrbracket.
\end{align*}
We note that the result of Theorem \ref{eq:reliability} allows us to conclude the following a posteriori error bound:
$\|\nabla(\check{\mathbf{y}}-\bar{\mathbf{y}}_{h})\|_{\mathbf{L}^2(\Omega)} +\|\check{\mathsf{p}}-\bar{\mathsf{p}}_{h}\|_{L^2(\Omega)} \lesssim \mathfrak{E}_{st,\T}$.

We also introduce the pair $(\check{\mathbf{z}},\check{\mathsf{r}})\in\mathbf{H}_0^1(\Omega)\times L_0^2(\Omega)$ as the solution to 
\begin{equation}\label{eq:brinkamn_problem_adjoint_aux_semi}
(\nabla \check{\mathbf{z}},\nabla \mathbf{v})_{\mathbf{L}^2(\Omega)}+( \bar{\mathsf{u}}\check{\mathbf{z}}, \mathbf{v})_{\mathbf{L}^2(\Omega)}+(\check{\mathsf{r}},\text{div }\mathbf{v})_{L^2(\Omega)} = (\bar{\mathbf{y}}_{h}-\mathbf{y}_{\Omega},\mathbf{v})_{\mathbf{L}^2(\Omega)}
\end{equation}
and $(\mathsf{q},\text{div }\check{\mathbf{z}})_{L^2(\Omega)}  =  0$ for all $\mathbf{v} \in \mathbf{H}_0^1(\Omega)$ and $\mathsf{q} \in L_0^2(\Omega)$, respectively.
We define the following error estimator associated with the discretization of the adjoint equations:
\begin{equation*}\label{eq:estimator_AD_semi}
\mathfrak{E}_{adj,\T}^2:= \sum_{K\in\T}\mathfrak{E}_{adj,K}^2, 
\end{equation*}
where $\mathfrak{E}_{adj,K}^2:=h_K^2  \|\mathfrak{R}_{K}^{adj}\|_{\mathbf{L}^2(K)}^2+h_K  \|\mathfrak{J}_{\gamma}^{adj}\|_{\mathbf{L}^2(\partial K\setminus\partial \Omega)}^2+\|\text{div }\bar{\mathbf{z}}_{h}\|_{L^2(K)}^2$. Here,
\begin{align*}\label{eq:residual_adjoint_semi}
\mathfrak{R}_{K}^{adj} := (\bar{\mathbf{y}}_{h}-\mathbf{y}_{\Omega}+\Delta \bar{\mathbf{z}}_{h} -\bar{\mathsf{u}}\bar{\mathbf{z}}_{h} + \nabla\bar{\mathsf{r}}_{h})|_K,
\qquad \mathfrak{J}_{\gamma}^{adj} := \llbracket(\nabla \bar{\mathbf{z}}_{h} +\bar{\mathsf{r}}_{h}\mathbf{I})\cdot \mathbf{n}\rrbracket.
\end{align*}
An application of Theorem \ref{eq:reliability} immediately yields the following a posteriori error estimate: $
\|\nabla(\check{\mathbf{z}}-\bar{\mathbf{z}}_{h})\|_{\mathbf{L}^2(\Omega)} +\|\check{\mathsf{r}}-\bar{\mathsf{r}}_{h}\|_{L^2(\Omega)} \lesssim \mathfrak{E}_{adj,\T}$. 

As \AAF{mentioned in Section \ref{Real:fully_discrete}, in the previously derived a posteriori error bounds, the hidden constants depend on $\|\bar{\mathsf{u}}\|_{L^{2}(\Omega)}$. Since $\bar{\mathsf{u}}\in\mathbb{U}_{ad}$, these constants can be uniformly controlled using the fact that $\|\bar{\mathsf{u}}\|_{L^{2}(\Omega)}\leq \mathsf{b}|\Omega|^{1/2}$.}

To present the following result, we define $\mathsf{e}_{\bar{u}}:=\bar{u}-\bar{\mathsf{u}}$ and $\mathfrak{E}_{ocp,\T}:=\mathfrak{E}_{st,\T}+\mathfrak{E}_{adj,\T}$.

\begin{theorem}[global reliability $\mathfrak{E}_{ocp,\T}$] 
Let $\bar{u}$ be a local solution of \eqref{eq:min_functional}--\eqref{eq:brinkamn_problem_state} such that \eqref{equivalence_condition} holds for $\tau > 0$, and let 
$(\bar{\mathbf{y}},\bar{\mathsf{p}})$ and $(\bar{\mathbf{z}},\bar{\mathsf{r}})$ be the corresponding state and adjoint state variables. Let $\bar{\mathsf{u}}$ be a local minimum of the semidiscrete scheme, and let $(\bar{\mathbf{y}}_h,\bar{\mathsf{p}}_h)$ and $(\bar{\mathbf{z}}_h,\bar{\mathsf{r}}_h)$ be the corresponding discrete state and adjoint state variables. If \eqref{eq:bound_aux} holds, then
%
\begin{align}\label{eq:estimates_global_reliability_semi}
\|\nabla \mathbf{e}_{\bar{\mathbf{y}}} \|_{\mathbf{L}^2(\Omega)}^2 + \|\nabla \mathbf{e}_{\bar{\mathbf{z}}} \|_{\mathbf{L}^2(\Omega)}^2 + \| e_{\bar{\mathsf{p}}} \|_{L^2(\Omega)}^2 + \| e_{\bar{\mathsf{r}}} \|_{L^2(\Omega)}^2 + \|\mathsf{e}_{\bar{u}}\|_{L^2(\Omega)}^2\lesssim \mathfrak{E}_{ocp,\T}^2,
\end{align}
where the hidden constant is independent of the continuous and discrete optimal variables, the size of the elements in the mesh $\T$, and $\#\T$.
\end{theorem}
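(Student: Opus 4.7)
The plan is to mirror the six-step structure of the proof of Theorem \ref{global_reliability_fully} while exploiting a crucial simplification specific to the semidiscrete setting: because the variational inequality \eqref{eq:variational_inequality_semi-discrete} is tested against \emph{every} $u\in\mathbb{U}_{ad}$ (not only discrete functions), the projection characterization gives $\bar{\mathsf{u}}(x)=\Pi_{[\mathsf{a},\mathsf{b}]}(\alpha^{-1}\bar{\mathbf{y}}_h\cdot\bar{\mathbf{z}}_h)$, which coincides with $\breve{u}$ from \eqref{eq:representation_aux}. Consequently $\bar{\mathsf{u}}=\breve{u}$ and no control estimator is needed; this explains why $\mathfrak{E}_{ocp,\T}$ has only two contributions.

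For the control error, I will apply the auxiliary bound \eqref{eq:bound_aux_j_final} with $\breve{u}=\bar{\mathsf{u}}$:
\[
\mu\|\mathsf{e}_{\bar{u}}\|_{L^2(\Omega)}^{2}\leq 2\bigl(j'(\bar{\mathsf{u}})-j'(\bar{u})\bigr)(\bar{\mathsf{u}}-\bar{u}).
\]
Testing \eqref{eq:ineq_variational} against $u=\bar{\mathsf{u}}\in\mathbb{U}_{ad}$ kills the $j'(\bar{u})$ term with a favorable sign, and writing out $j'(\bar{\mathsf{u}})(\bar{\mathsf{u}}-\bar{u})=(\alpha\bar{\mathsf{u}}-\check{\mathbf{y}}\cdot\hat{\mathbf{z}},\bar{\mathsf{u}}-\bar{u})_{L^2(\Omega)}$, where $\check{\mathbf{y}}$ solves \eqref{eq:brinkamn_problem_state_aux_semi} and $\hat{\mathbf{z}}$ is its continuous adjoint, I combine with \eqref{eq:variational_inequality_semi-discrete} tested against $u=\bar{u}$ (which contributes a term of the correct sign) to conclude
\[
\mu\|\mathsf{e}_{\bar{u}}\|_{L^2(\Omega)}^{2}\leq 2(\bar{\mathbf{y}}_h\cdot\bar{\mathbf{z}}_h-\check{\mathbf{y}}\cdot\hat{\mathbf{z}},\bar{\mathsf{u}}-\bar{u})_{L^2(\Omega)}.
\]
Inserting $\bar{\mathbf{y}}_h\cdot\hat{\mathbf{z}}$, using $\mathbf{H}^1_0(\Omega)\hookrightarrow\mathbf{L}^4(\Omega)$ together with the uniform $\mathbf{H}^1_0$-bounds $\|\nabla\bar{\mathbf{y}}_h\|\lesssim\|\mathbf{f}\|_{\mathbf{L}^2(\Omega)}$ and $\|\nabla\hat{\mathbf{z}}\|\lesssim\|\mathbf{f}\|_{\mathbf{L}^2(\Omega)}+\|\mathbf{y}_{\Omega}\|_{\mathbf{L}^2(\Omega)}$, I reduce to controlling $\|\nabla(\check{\mathbf{y}}-\bar{\mathbf{y}}_h)\|_{\mathbf{L}^2(\Omega)}$ and $\|\nabla(\hat{\mathbf{z}}-\bar{\mathbf{z}}_h)\|_{\mathbf{L}^2(\Omega)}$. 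The first is directly $\lesssim\mathfrak{E}_{st,\T}$ by the reliability result cited just above \eqref{eq:brinkamn_problem_adjoint_aux_semi}. For the second I insert $\check{\mathbf{z}}$ as a pivot: the residual problem solved by $\hat{\mathbf{z}}-\check{\mathbf{z}}$ has forcing $\check{\mathbf{y}}-\bar{\mathbf{y}}_h$, so a stability estimate plus the first bound gives $\|\nabla(\hat{\mathbf{z}}-\check{\mathbf{z}})\|\lesssim\mathfrak{E}_{st,\T}$, while the reliability bound for $\|\nabla(\check{\mathbf{z}}-\bar{\mathbf{z}}_h)\|\lesssim\mathfrak{E}_{adj,\T}$ follows from Theorem \ref{eq:reliability}. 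Altogether this yields $\|\mathsf{e}_{\bar{u}}\|_{L^2(\Omega)}\lesssim\mathfrak{E}_{ocp,\T}$.

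With the control estimate secured, the bounds for $\|\nabla\mathbf{e}_{\bar{\mathbf{y}}}\|$, $\|\nabla\mathbf{e}_{\bar{\mathbf{z}}}\|$, $\|e_{\bar{\mathsf{p}}}\|$, and $\|e_{\bar{\mathsf{r}}}\|$ proceed exactly as in Steps~2--5 of the proof of Theorem \ref{global_reliability_fully}. I split through $(\check{\mathbf{y}},\check{\mathsf{p}})$ and $(\check{\mathbf{z}},\check{\mathsf{r}})$; the a posteriori bounds handle the discrete-to-auxiliary pieces, while the continuous-to-auxiliary pieces solve Brinkman-type residual problems forced by $\mathsf{e}_{\bar{u}}$, and a coercivity/inf–sup argument together with the just-established $\|\mathsf{e}_{\bar{u}}\|_{L^2(\Omega)}\lesssim\mathfrak{E}_{ocp,\T}$ closes each estimate. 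Summing yields \eqref{eq:estimates_global_reliability_semi}.

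The main obstacle is the control step: one must justify carefully that the hypothesis \eqref{eq:bound_aux} still holds in the semidiscrete setting (so that the auxiliary bound \eqref{eq:bound_aux_j_final} is available), and one must verify that the interplay between the continuous VI at $\bar{u}$ and the \emph{non-discretized} VI at $\bar{\mathsf{u}}$ produces exactly the $\bar{\mathbf{y}}_h\cdot\bar{\mathbf{z}}_h-\check{\mathbf{y}}\cdot\hat{\mathbf{z}}$ residual rather than the more involved $\mathsf{P}_h(\bar{u})$-type term that arose in the fully discrete proof. Once this reduction is carried out, everything else is routine and parallels the computations already present in the excerpt; I would write it up by referring back to those steps rather than repeating them.
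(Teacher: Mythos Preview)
Your proposal is correct and follows essentially the same approach as the paper. The paper's proof consists of the single observation that, in the semidiscrete setting, the projection formula derived from \eqref{eq:variational_inequality_semi-discrete} forces $\breve{u}=\bar{\mathsf{u}}$, whence $(\breve{\mathbf{y}},\breve{\mathsf{p}})=(\check{\mathbf{y}},\check{\mathsf{p}})$, and then simply refers back to the arguments of Theorem~\ref{global_reliability_fully}; your write-up makes exactly this identification and then spells out the ensuing simplifications (in particular the disappearance of $\mathcal{E}_{ct,\T}$) in somewhat more detail than the paper does. One small remark: your closing paragraph worries about whether \eqref{eq:bound_aux} ``still holds'' in the semidiscrete setting, but this is a standing hypothesis of the theorem, not something to be verified, so that concern can be dropped.
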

\begin{proof}
We note that under the particular conditions entailed by the semidiscrete scheme, the auxiliary variable $\breve{u}$ defined in \eqref{eq:representation_aux} coincides with $\bar{\mathsf{u}}$. \EO{Consequently}, the variable $(\breve{\mathbf{y}},\breve{\mathsf{p}})$ defined as the solution \EO{to} \eqref{eq:brinkamn_problem_state_auxA} coincides with $(\check{\mathbf{y}},\check{\mathsf{p}})$ defined as the solution \EO{to} \eqref{eq:brinkamn_problem_state_aux_semi}. Applying the same arguments as in the proof of Theorem \ref{global_reliability_fully} to this simplified situation, we can derive the bound \eqref{eq:estimates_global_reliability_semi}.  For brevity, we omit the details.
\end{proof}


\subsection{Efficiency analysis: the fully discrete scheme}\label{efficiency_fully}

In this section, we derive local efficiency estimates for the local a posteriori error indicators $\mathcal{E}_{st,K}$ and $\mathcal{E}_{adj,K}$, as well as a global efficiency estimate for the a posteriori error estimator $\mathcal{E}_{ct,\T}$. To do this, we use standard bubble function arguments. 

We start our analysis by introducing the following notation: for an edge, triangle, or tetrahedron $G$, let $\mathcal{V}(G)$ be the set of vertices of $G$. Using this notation, we introduce the standard element and edge bubble functions: For $K\in\mathscr{T}$ and $\gamma\in\mathscr{S}$, 
\begin{equation}\label{def:standard_bubbles}
\varphi^{}_{K}=
(d+1)^{d+1}\prod_{\textsc{v} \in \mathcal{V}(K)} \lambda^{}_{\textsc{v}},
\qquad
\varphi^{}_{\gamma}=
d^d \prod_{\textsc{v} \in \mathcal{V}(\gamma)}\lambda^{}_{\textsc{v}}|^{}_{K'},
\quad 
K' \subset \mathcal{N}_{\gamma}.
\end{equation} 
Here, $\lambda_{\textsc{v}}$ denotes the barycentric coordinate function associated with the vertex $\textsc{v}$, and $\mathcal{N}_{\gamma}$ corresponds to the patch composed of the two elements of $\mathscr{T}$ that share $\gamma$.

The following identities are essential for performing an efficiency analysis. Since the pair $(\bar{\mathbf{y}},\bar{\mathsf{\mathsf{p}}})\in\mathbf{H}_0^1(\Omega)\times L_0^2(\Omega)$ solves \eqref{eq:brinkamn_problem_state}, where $u$ is replaced by $\bar{u}$, an elementwise integration by parts formula yields
\begin{multline}\label{eq:brinkamn_problem01_aux_error}
(\nabla \mathbf{e}_{\bar{\mathbf{y}}},\nabla \mathbf{v})_{\mathbf{L}^2(\Omega)}
+
( e_{\bar{u}}\bar{\mathbf{y}}, \mathbf{v})_{\mathbf{L}^2(\Omega)}
-
(e_{\bar{\mathsf{p}}},\text{div }\mathbf{v})_{L^2(\Omega)} 
+
(\mathsf{q},\text{div }\mathbf{e}_{\bar{\mathbf{y}}})_{L^2(\Omega)}
\\
= 
-(\bar{u}_{h}\mathbf{e}_{\bar{\mathbf{y}}},\mathbf{v})_{\mathbf{L}^2(\Omega)}
+
\sum_{\gamma\in \mathscr{S}} (\llbracket(\nabla \bar{\mathbf{y}}_{h}-\bar{\mathsf{p}}_{h}\mathbf{I})\cdot \mathbf{n}\rrbracket,\mathbf{v})_{\mathbf{L}^2(\gamma)}
-
\sum_{K\in\T}(\mathsf{q},\text{div }\bar{\mathbf{y}}_{h})_{\mathbf{L}^2(K)}\\
+
\sum_{K\in\T}\left[(\mathsf{P}_{K}\mathbf{f}+\Delta \bar{\mathbf{y}}_{h}-\bar{u}_{h}\bar{\mathbf{y}}_{h}-\nabla\bar{\mathsf{p}}_{h},\mathbf{v})_{\mathbf{L}^2(K)}
+
(\mathbf{f}-\mathsf{P}_{K}\mathbf{f},\mathbf{v})_{\mathbf{L}^2(K)}\right],
\end{multline}
for all $\mathbf{v}\in \mathbf{H}_0^1(\Omega)$ and $\mathsf{q}\in L_0^2(\Omega)$. \EO{Here,} $\mathsf{P}_K$ denotes the $L^2$-projection operator onto piecewise constant functions over $K\in\T$. Similarly, since $(\bar{\mathbf{z}},\bar{\mathsf{\mathsf{r}}})\in\mathbf{H}_0^1(\Omega)\times L_0^2(\Omega)$ solves the adjoint problem \eqref{eq:brinkamn_problem_adjoint} with $\mathbf{y}=\bar{\mathbf{y}}$ and $u=\bar{u}$, we have
\begin{multline}\label{eq:brinkamn_problem02_aux_error}
(\nabla \mathbf{e}_{\bar{\mathbf{z}}},\nabla \mathbf{v})_{\mathbf{L}^2(\Omega)}+( e_{\bar{u}}\bar{\mathbf{z}}, \mathbf{v})_{\mathbf{L}^2(\Omega)}+(e_{\bar{\mathsf{r}}},\text{div }\mathbf{v})_{L^2(\Omega)} + (\mathsf{q},\text{div }\mathbf{e}_{\bar{\mathbf{z}}})_{L^2(\Omega)}\\= -(\bar{u}_{h}\mathbf{e}_{\bar{\mathbf{z}}},\mathbf{v})_{\mathbf{L}^2(\Omega)}+ (\mathbf{e}_{\bar{\mathbf{y}}},\mathbf{v})+ \sum_{\gamma\in \mathscr{S}} (\llbracket(\nabla \bar{\mathbf{z}}_{h}+\bar{\mathsf{r}}_{h}\mathbf{I})\cdot \mathbf{n}\rrbracket,\mathbf{v})_{\mathbf{L}^2(\gamma)}-\sum_{K\in\T}(\mathsf{q},\text{div }\bar{\mathbf{z}}_{h})_{\mathbf{L}^2(K)}
\\
+
\sum_{K\in\T}
\left[
(\bar{\mathbf{y}}_{h}-\mathsf{P}_{K}\mathbf{y}_{\Omega}+\Delta \bar{\mathbf{z}}_{h}-\bar{u}_{h}\bar{\mathbf{z}}_{h}+\nabla\bar{\mathsf{r}}_{h},\mathbf{v})_{\mathbf{L}^2(K)}+(\mathsf{P}_{K}\mathbf{y}_{\Omega}-\mathbf{y}_{\Omega},\mathbf{v})_{\mathbf{L}^2(K)}
\right]
\end{multline}
for all $\mathbf{v}\in \mathbf{H}_0^1(\Omega)$ and $\mathsf{q}\in L_0^2(\Omega)$.
As a final element, we introduce, for $\mathbf{v}\in \mathbf{L}^2(\Omega)$ and $\mathcal{M}\subset\T$, the oscillation term
\begin{align*}
\text{osc}_{\mathscr{T}}(\mathbf{v},\mathcal{M}):= \left( \sum_{K\in\mathcal{M}}h_{K}^2 \|\mathbf{v}-\mathsf{P}_K \mathbf{v}\|_{\mathbf{L}^2(K)}^2 \right)^{\frac{1}{2}}.
\end{align*}

With these ingredients in hand, we now proceed to obtain local efficiency properties for the indicator $\mathcal{E}_{st,K}$ defined in \S\ref{Real:fully_discrete}.

\begin{theorem}[local efficiency of $\mathcal{E}_{st,K}$]\label{eq:efficiency_est}
Let $\bar{u}$ be a local solution of \eqref{eq:min_functional}--\eqref{eq:brinkamn_problem_state} such that \eqref{equivalence_condition} holds for $\tau > 0$, and let $(\bar{\mathbf{y}},\bar{\mathsf{p}})$ and $(\bar{\mathbf{z}},\bar{\mathsf{r}})$ be the corresponding state and ajoint state variables. Let $\bar{u}_h$ be a local minimum of the fully discrete scheme, and let $(\bar{\mathbf{y}}_h,\bar{\mathsf{p}}_h)$ and $(\bar{\mathbf{z}}_h,\bar{\mathsf{r}}_h)$ be the corresponding discrete state and adjoint state variables. Then, for $K\in\T$, the local error indicator $\mathcal{E}_{st,K}$ satisfies the bound
\begin{multline}\label{eq:efficiency_estimates_est}
\mathcal{E}_{st,K} \lesssim\|\nabla \mathbf{e}_{\bar{\mathbf{y}}} \|_{\mathbf{L}^2(\mathcal{N}_{K})}+\| e_{\bar{\mathsf{p}}} \|_{L^2(\mathcal{N}_{K})}
+ h_{K}\|e_{\bar{u}}\|_{L^2(\mathcal{N}_{K})}\\
+h_{K}\|\mathbf{e}_{\bar{\mathbf{y}}}\|_{\mathbf{L}^2(\mathcal{N}_{K})}+\mathrm{osc}_{\mathscr{T}}(\mathbf{f},\mathcal{N}_{K}),
\end{multline}
where $\mathcal{N}_{K}$ is defined in \eqref{eq:patch}. The hidden constant is independent of the continuous and discrete optimal variables, the size of the elements in the mesh $\T$, and $\#\T$.
\end{theorem}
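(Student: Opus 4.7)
The plan is to apply a Verf\"urth-type bubble function argument to the error identity \eqref{eq:brinkamn_problem01_aux_error}, estimating the three contributions of $\mathcal{E}_{st,K}^2$ separately: the element residual $h_K\|\mathcal{R}_K^{st}\|_{\mathbf{L}^2(K)}$, the interelement jump $h_K^{1/2}\|\mathcal{J}_{\gamma}^{st}\|_{\mathbf{L}^2(\partial K\setminus\partial\Omega)}$, and the divergence contribution $\|\mathrm{div}\,\bar{\mathbf{y}}_{h}\|_{L^2(K)}$. The main bookkeeping point is that the bilinear term enters \eqref{eq:brinkamn_problem01_aux_error} through the decomposition $\bar{u}\bar{\mathbf{y}}-\bar{u}_{h}\bar{\mathbf{y}}_{h}=e_{\bar{u}}\bar{\mathbf{y}}+\bar{u}_{h}\mathbf{e}_{\bar{\mathbf{y}}}$: the first piece, bounded using $\|\bar{\mathbf{y}}\|_{\mathbf{L}^{\infty}(\Omega)}\lesssim 1$ from Theorem \ref{theorem_regularity_Linfty}, will produce the weight $h_K\|e_{\bar{u}}\|_{L^2}$ in \eqref{eq:efficiency_estimates_est}, while the second, using $\|\bar{u}_{h}\|_{L^{\infty}(\Omega)}\leq \mathsf{b}$, will produce the weight $h_K\|\mathbf{e}_{\bar{\mathbf{y}}}\|_{\mathbf{L}^2}$.

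For the element residual we will first replace $\mathbf{f}$ by its elementwise $L^2$-projection to obtain the polynomial residual $\mathcal{R}_K^{st,\mathsf{P}}:=\mathsf{P}_K\mathbf{f}+\Delta\bar{\mathbf{y}}_{h}-\bar{u}_{h}\bar{\mathbf{y}}_{h}-\nabla\bar{\mathsf{p}}_{h}$, and test \eqref{eq:brinkamn_problem01_aux_error} with $\mathbf{v}=\mathcal{R}_K^{st,\mathsf{P}}\varphi_{K}$ (extended by zero outside $K$) and $\mathsf{q}=0$. The standard bubble norm equivalence $\|\mathcal{R}_K^{st,\mathsf{P}}\|_{\mathbf{L}^2(K)}^2\lesssim\int_{K}|\mathcal{R}_K^{st,\mathsf{P}}|^2\varphi_{K}\,\mathrm{d}x$, together with the scaled inverse bounds $\|\mathbf{v}\|_{\mathbf{L}^2(K)}\lesssim\|\mathcal{R}_K^{st,\mathsf{P}}\|_{\mathbf{L}^2(K)}$ and $\|\nabla\mathbf{v}\|_{\mathbf{L}^2(K)}\lesssim h_K^{-1}\|\mathcal{R}_K^{st,\mathsf{P}}\|_{\mathbf{L}^2(K)}$, Cauchy--Schwarz applied to each term on the right-hand side of \eqref{eq:brinkamn_problem01_aux_error}, and multiplication by $h_K$, will yield the desired estimate for $h_K\|\mathcal{R}_K^{st,\mathsf{P}}\|_{\mathbf{L}^2(K)}$; a triangle inequality then transfers the $h_K\|\mathbf{f}-\mathsf{P}_K\mathbf{f}\|_{\mathbf{L}^2(K)}$ piece into the oscillation term $\mathrm{osc}_{\mathscr{T}}(\mathbf{f},\{K\})$.

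For the jump term we proceed analogously with the edge bubble $\varphi_{\gamma}$ and a polynomial extension $\mathcal{J}_{\gamma}^{st,\mathrm{ext}}$ of $\mathcal{J}_{\gamma}^{st}$ to the two-element patch $\mathcal{N}_{\gamma}$, testing \eqref{eq:brinkamn_problem01_aux_error} with $\mathbf{v}=\mathcal{J}_{\gamma}^{st,\mathrm{ext}}\varphi_{\gamma}$; the scalings $\|\mathbf{v}\|_{\mathbf{L}^2(K')}\lesssim h_{\gamma}^{1/2}\|\mathcal{J}_{\gamma}^{st}\|_{\mathbf{L}^2(\gamma)}$ and $\|\nabla\mathbf{v}\|_{\mathbf{L}^2(K')}\lesssim h_{\gamma}^{-1/2}\|\mathcal{J}_{\gamma}^{st}\|_{\mathbf{L}^2(\gamma)}$ for $K'\in\mathcal{N}_{\gamma}$, combined with the interior residual bound just derived on each $K'\in\mathcal{N}_{\gamma}$—used to absorb the volumetric residuals that emerge—will deliver control of $h_K^{1/2}\|\mathcal{J}_{\gamma}^{st}\|_{\mathbf{L}^2(\gamma)}$ by patch norms over $\mathcal{N}_{\gamma}\subset\mathcal{N}_K$ plus $\mathrm{osc}_{\mathscr{T}}(\mathbf{f},\mathcal{N}_{\gamma})$. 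The divergence contribution is immediate: since $\mathrm{div}\,\bar{\mathbf{y}}=0$, one has $\|\mathrm{div}\,\bar{\mathbf{y}}_{h}\|_{L^2(K)}=\|\mathrm{div}\,\mathbf{e}_{\bar{\mathbf{y}}}\|_{L^2(K)}\leq\|\nabla\mathbf{e}_{\bar{\mathbf{y}}}\|_{\mathbf{L}^2(K)}$. Summing the three bounds yields \eqref{eq:efficiency_estimates_est}. We do not foresee any essential obstacle; the only point demanding care is the aforementioned splitting of the bilinear term, which is baked into the identity \eqref{eq:brinkamn_problem01_aux_error} and produces precisely the $h_K$-weighted $\|e_{\bar{u}}\|$ and $\|\mathbf{e}_{\bar{\mathbf{y}}}\|$ contributions appearing on the right-hand side of the statement.
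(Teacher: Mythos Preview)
Your proposal is correct and follows essentially the same approach as the paper's own proof: the paper also proceeds via Verf\"urth-type bubble function arguments on the error identity \eqref{eq:brinkamn_problem01_aux_error}, defining the polynomial residual $\tilde{\mathcal{R}}_K^{st}=\mathsf{P}_K\mathbf{f}+\Delta\bar{\mathbf{y}}_h-\bar{u}_h\bar{\mathbf{y}}_h-\nabla\bar{\mathsf{p}}_h$, testing with $\varphi_K\tilde{\mathcal{R}}_K^{st}$ and $\varphi_\gamma\mathcal{J}_\gamma^{st}$, invoking $\|\bar{\mathbf{y}}\|_{\mathbf{L}^\infty}\lesssim\|\mathbf{f}\|_{\mathbf{L}^2(\Omega)}$ from Theorem~\ref{theorem_regularity_Linfty} and $\bar{u}_h\in\mathbb{U}_{ad}\subset L^\infty(\Omega)$ to handle the split bilinear term, and treating the divergence contribution via $\mathrm{div}\,\bar{\mathbf{y}}=0$. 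The ingredients and their roles match exactly.
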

\begin{proof}
We proceed in four steps.

\emph{Step 1.} Let $K\in\T$. Define $\tilde{\mathcal{R}}_{K}^{st}:=(\mathsf{P}_{K}\mathbf{f}+\Delta \bar{\mathbf{y}}_{h}-\bar{u}_{h}\bar{\mathbf{y}}_{h}-\nabla\bar{\mathsf{p}}_{h})|_K$. A first bound for the term $h_K^2\|\mathcal{R}_{K}^{st}\|_{\mathbf{L}^2(K)}^2$ follows from the triangle inequality:
\begin{align}\label{eq:bound01_eff}
h_K^2\|\mathcal{R}_{K}^{st}\|_{\mathbf{L}^2(K)}^2\lesssim h_K^2 \|\tilde{\mathcal{R}}_{K}^{st}\|_{\mathbf{L}^2(K)}^2 + \text{osc}_{\mathscr{T}}(\mathbf{f},K)^2.
\end{align}
To bound $h_K^2 \|\tilde{\mathcal{R}}_{K}^{st}\|_{\mathbf{L}^2(K)}^2$, we set $\mathbf{v}=\varphi_K \tilde{\mathcal{R}}_{K}^{st}$ and $\mathsf{q}=0$ in \eqref{eq:brinkamn_problem01_aux_error}, where $\varphi_K$ denotes the bubble function defined in \eqref{def:standard_bubbles}, and use standard properties of $\varphi_K$ and inverse inequalities. These arguments show that
\begin{multline}\label{eq:bound02_eff}
\|\tilde{\mathcal{R}}_{K}^{st}\|_{\mathbf{L}^2(K)} \lesssim h_{K}^{-1} [\|\nabla \mathbf{e}_{\bar{\mathbf{y}}}\|_{\mathbf{L}^2(K)}
+
\| e_{\bar{\mathsf{p}}} \|_{L^2(K)}]+\GC{\|\bar{\mathbf{y}}\|_{\mathbf{L}^{\infty}(K)}\|e_{\bar{u}}\|_{L^2(K)}}
\\
+
\GC{h_K^{-d/2}\|\bar{u}_h\|_{L^2(K)}\| \mathbf{e}_{\bar{\mathbf{y}}}\|_{\mathbf{L}^2(K)}}+\|\mathbf{f}-\mathsf{P}_{K}\mathbf{f}\|_{\mathbf{L}^2(K)}.
\end{multline}
\GC{We note that $\|\bar{u}_{h}\|_{L^{2}(K)}\leq\mathsf{b}|K|^{1/2}\lesssim h_{K}^{d/2}$ because $\bar{u}_{h}\in\mathbb{U}_{ad,h}$. On the other hand, Theorem \ref{theorem_regularity_Linfty} guarantees that $\|\bar{\mathbf{y}}\|_{\mathbf{L}^{\infty}(K)}\leq \|\bar{\mathbf{y}}\|_{\mathbf{L}^{\infty}(\Omega)}\lesssim \|\mathbf{f}\|_{\mathbf{L}^2(\Omega)}$. We can thus obtain}
\begin{multline}\label{eq:bound03_eff}
h_K\|\tilde{\mathcal{R}}_{K}^{st}\|_{\mathbf{L}^2(K)}  
\lesssim  
\|\nabla \mathbf{e}_{\bar{\mathbf{y}}}\|_{\mathbf{L}^2(K)}
+\| e_{\bar{\mathsf{p}}} \|_{L^2(K)}
+\AAF{h_K\|e_{\bar{u}}\|_{L^2(K)}}
\\
+
h_K\| \mathbf{e}_{\bar{\mathbf{y}}}\|_{\mathbf{L}^2(K)}
+
\text{osc}_{\mathscr{T}}(\mathbf{f},K).
\end{multline}
If we replace this bound into \eqref{eq:bound01_eff}, we obtain the desired bound for \GC{$h_K^2 \|\mathcal{R}_{K}^{st}\|_{\mathbf{L}^2(K)}^2$.}

\emph{Step 2.} Let $K\in\T$ and $\gamma\in \mathscr{S}$. We now control $h_K^{\frac{1}{2}}\|\mathcal{J}_{\gamma}^{st}\|_{\mathbf{L}^2(\gamma)}$. To do so, we set $\mathbf{v}=\varphi_{\gamma}\mathcal{J}_{\gamma}^{st}$ and $\mathsf{q}=0$ in \eqref{eq:brinkamn_problem01_aux_error} and \GC{proceed with arguments similar to those used to obtain estimate} \eqref{eq:bound03_eff}:
\begin{multline}\label{eq:bound04_eff}
	h_{K}^{\frac{1}{2}}\|\mathcal{J}_{\gamma}^{st}\|_{\mathbf{L}^2(\gamma)}\lesssim \sum_{K'\in \mathcal{N}_{\gamma}} 
	\Big(\|\nabla \mathbf{e}_{\bar{\mathbf{y}}}\|_{\mathbf{L}^2(K')}
	+\| e_{\bar{\mathsf{p}}} \|_{L^2(K')}\AAF{+h_{K'}}\GC{\|e_{\bar{u}}\|_{L^2(K')}}
\\
+ h_{K'}\| \mathbf{e}_{\bar{\mathbf{y}}}\|_{\mathbf{L}^2(K')}+\text{osc}_{\mathscr{T}}(\mathbf{f},K')\Big),
\end{multline}
\AAF{where we have also used that $h_{K}^{-\frac{1}{2}}\|\varphi_{\gamma} \mathcal{J}_{\gamma}^{st}\|_{\mathbf{L}^{2}(K)}+h_{K}^{\frac{1}{2}}\|\nabla (\varphi_{\gamma} \mathcal{J}_{\gamma}^{st}) \|_{\mathbf{L}^{2}(K)}\lesssim \|\mathcal{J}_{\gamma}^{st}\|_{\mathbf{L}^{2}(\gamma)}$.}

\emph{Step 3.} Let $K\in \T$. The control of the term $\|\text{div }\bar{\mathbf{y}}_{h}\|_{L^2(K)}^2$ follows easily from the incompressibility condition $\text{div }\bar{\mathbf{y}}=0$. In fact,
\begin{align}\label{eq:bound05_eff}
	\|\text{div }\bar{\mathbf{y}}_{h}\|_{L^2(K)}=\|\text{div }\mathbf{e}_{\bar{\mathbf{y}}}\|_{L^2(K)}\lesssim \|\nabla\mathbf{e}_{\bar{\mathbf{y}}}\|_{\mathbf{L}^2(K)}.
\end{align}

\emph{Step 4.} The proof concludes by collecting the estimates \eqref{eq:bound03_eff}, \eqref{eq:bound04_eff}, and \eqref{eq:bound05_eff}.
\end{proof}

In the following, we study local efficiency properties of the indicator $\mathcal{E}_{adj,K}$ defined in \S\ref{Real:fully_discrete}.

\begin{theorem}[local efficiency of $\mathcal{E}_{adj,K}$]\label{eq:efficiency_adj}
In the framework of Theorem \ref{eq:efficiency_est}, we have that, for $K\in\T$, the local error indicator $\mathcal{E}_{adj,K}$ satisfies the bound 
\begin{multline}\label{eq:efficiency_estimates_adj}
\mathcal{E}_{adj,K} \lesssim\|\nabla \mathbf{e}_{\bar{\mathbf{z}}} \|_{\mathbf{L}^2(\mathcal{N}_{K})}+\| e_{\bar{\mathsf{r}}} \|_{L^2(\mathcal{N}_{K})}
+
\AAF{h_{K}}
\GC{\|e_{\bar{u}}\|_{L^2(\mathcal{N}_{K})}}\\+h_{K}\|\mathbf{e}_{\bar{\mathbf{z}}}\|_{\mathbf{L}^2(\mathcal{N}_{K})}+h_{K}\|\mathbf{e}_{\bar{\mathbf{y}}}\|_{\mathbf{L}^2(\mathcal{N}_{K})}+\mathrm{osc}_{\mathscr{T}}(\mathbf{y}_{\Omega},\mathcal{N}_{K}),
\end{multline}
where $\mathcal{N}_{K}$ is defined in \eqref{eq:patch}. The hidden constant is independent of the continuous and discrete optimal variables, the size of the elements in the mesh $\T$, and $\#\T$.
\end{theorem}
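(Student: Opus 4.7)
The plan is to mirror, step by step, the four-stage bubble function argument used in the proof of Theorem \ref{eq:efficiency_est}, but starting from the adjoint residual identity \eqref{eq:brinkamn_problem02_aux_error} in place of \eqref{eq:brinkamn_problem01_aux_error}. The key new feature compared to the state case is the presence of the term $(\mathbf{e}_{\bar{\mathbf{y}}},\mathbf{v})$ on the right-hand side of \eqref{eq:brinkamn_problem02_aux_error}; this term is exactly what produces the additional contribution $h_K \|\mathbf{e}_{\bar{\mathbf{y}}}\|_{\mathbf{L}^2(\mathcal{N}_K)}$ in \eqref{eq:efficiency_estimates_adj}, and handling it cleanly is the only real novelty over Theorem \ref{eq:efficiency_est}.

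In Step 1, for a fixed $K\in\T$, I introduce the modified residual $\tilde{\mathcal{R}}_{K}^{adj}:=(\bar{\mathbf{y}}_h-\mathsf{P}_K\mathbf{y}_\Omega+\Delta\bar{\mathbf{z}}_h-\bar{u}_h\bar{\mathbf{z}}_h+\nabla\bar{\mathsf{r}}_h)|_K$ and use the triangle inequality to pass from $h_K^2\|\mathcal{R}_K^{adj}\|_{\mathbf{L}^2(K)}^2$ to $h_K^2\|\tilde{\mathcal{R}}_K^{adj}\|_{\mathbf{L}^2(K)}^2+\mathrm{osc}_\T(\mathbf{y}_\Omega,K)^2$. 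Then I test \eqref{eq:brinkamn_problem02_aux_error} with $\mathbf{v}=\varphi_K\tilde{\mathcal{R}}_K^{adj}$ and $\mathsf{q}=0$, employing the standard properties of $\varphi_K$ (support, positivity, $L^2$-equivalence with element values, and inverse inequalities) to obtain
\begin{equation*}
h_K\|\tilde{\mathcal{R}}_K^{adj}\|_{\mathbf{L}^2(K)}\lesssim \|\nabla\mathbf{e}_{\bar{\mathbf{z}}}\|_{\mathbf{L}^2(K)}+\|e_{\bar{\mathsf{r}}}\|_{L^2(K)}+h_K\|e_{\bar{u}}\|_{L^2(K)}+h_K\|\mathbf{e}_{\bar{\mathbf{z}}}\|_{\mathbf{L}^2(K)}+h_K\|\mathbf{e}_{\bar{\mathbf{y}}}\|_{\mathbf{L}^2(K)}+\mathrm{osc}_\T(\mathbf{y}_\Omega,K),
\end{equation*}
where I absorb $\|\bar{\mathbf{z}}\|_{\mathbf{L}^\infty(K)}\lesssim \|\mathbf{f}\|_{\mathbf{L}^2(\Omega)}+\|\mathbf{y}_\Omega\|_{\mathbf{L}^2(\Omega)}$ (from Theorem \ref{theorem_regularity_Linfty}) and $\|\bar{u}_h\|_{L^\infty(\Omega)}\leq \mathsf{b}$ into the hidden constant.

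In Step 2, for an internal side $\gamma\in\mathscr{S}_K$, I test \eqref{eq:brinkamn_problem02_aux_error} with $\mathbf{v}=\varphi_\gamma\mathcal{J}_\gamma^{adj}$ (extended by zero outside $\mathcal{N}_\gamma$) and $\mathsf{q}=0$. Standard edge bubble estimates together with the Step 1 bound applied on each $K'\in\mathcal{N}_\gamma$ yield
\begin{equation*}
h_K^{1/2}\|\mathcal{J}_\gamma^{adj}\|_{\mathbf{L}^2(\gamma)}\lesssim \sum_{K'\in\mathcal{N}_\gamma}\bigl(\|\nabla\mathbf{e}_{\bar{\mathbf{z}}}\|_{\mathbf{L}^2(K')}+\|e_{\bar{\mathsf{r}}}\|_{L^2(K')}+h_{K'}\|e_{\bar{u}}\|_{L^2(K')}+h_{K'}\|\mathbf{e}_{\bar{\mathbf{z}}}\|_{\mathbf{L}^2(K')}+h_{K'}\|\mathbf{e}_{\bar{\mathbf{y}}}\|_{\mathbf{L}^2(K')}+\mathrm{osc}_\T(\mathbf{y}_\Omega,K')\bigr).
\end{equation*}
Step 3 handles the divergence term exactly as in Theorem \ref{eq:efficiency_est}: since $\mathrm{div}\,\bar{\mathbf{z}}=0$, we have $\|\mathrm{div}\,\bar{\mathbf{z}}_h\|_{L^2(K)}=\|\mathrm{div}\,\mathbf{e}_{\bar{\mathbf{z}}}\|_{L^2(K)}\lesssim \|\nabla\mathbf{e}_{\bar{\mathbf{z}}}\|_{\mathbf{L}^2(K)}$. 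Step 4 consists of collecting the three bounds and using the finite overlap of the patches $\mathcal{N}_K$ to conclude \eqref{eq:efficiency_estimates_adj}. No genuinely hard step appears; the argument is a routine adaptation of Theorem \ref{eq:efficiency_est} with bookkeeping of the extra $(\mathbf{e}_{\bar{\mathbf{y}}},\mathbf{v})$ contribution.
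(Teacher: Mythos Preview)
Your proposal is correct and follows essentially the same four-step bubble function argument as the paper's own proof: introduce $\tilde{\mathcal{R}}_K^{adj}$ and test \eqref{eq:brinkamn_problem02_aux_error} with the element bubble, then with the edge bubble, handle the divergence term via $\mathrm{div}\,\bar{\mathbf{z}}=0$, and collect. Your identification of the extra $(\mathbf{e}_{\bar{\mathbf{y}}},\mathbf{v})$ term as the only new ingredient is exactly the point the paper makes implicitly; the only remark is that the ``finite overlap'' comment in your Step 4 is unnecessary for a local bound already posed on $\mathcal{N}_K$.
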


\begin{proof}
We proceed in four steps.

\emph{Step 1:} Let $K\in\T$. Define $\tilde{\mathcal{R}}_{K}^{adj}:=(\bar{\mathbf{y}}_{h}-\mathsf{P}_{K}\mathbf{y}_{\Omega}+\Delta \bar{\mathbf{z}}_{h}\!-\bar{u}_{h}\bar{\mathbf{z}}_{h}\!+\!\nabla\bar{\mathsf{r}}_{h})|_K$. To bound $h_K^2\|\mathcal{R}_{K}^{adj}\|_{\mathbf{L}^2(K)}^2$, we first apply the triangle inequality to obtain the basic bound
\begin{align}\label{eq:bound06_eff}
h_K^2\|\mathcal{R}_{K}^{adj}\|_{\mathbf{L}^2(K)}^2\lesssim h_K^2 \|\tilde{\mathcal{R}}_{K}^{adj}\|_{\mathbf{L}^2(K)}^2 + \text{osc}_{\mathscr{T}}(\mathbf{y}_{\Omega},K)^2.
\end{align}
To bound $h_K^2 \|\tilde{\mathcal{R}}_{K}^{adj}\|_{\mathbf{L}^2(K)}^2$, we set $\mathbf{v}=\varphi_K \tilde{\mathcal{R}}_{K}^{st}$ and $\mathsf{q}=0$ in \eqref{eq:brinkamn_problem02_aux_error}. \AAF{Applying similar arguments to those used to obtain the estimate \eqref{eq:bound03_eff}, we can obtain that}
\begin{multline}\label{eq:bound08_eff}
h_K\|\tilde{\mathcal{R}}_{K}^{adj}\|_{\mathbf{L}^2(K)}  \lesssim  \|\nabla \mathbf{e}_{\bar{\mathbf{z}}}\|_{\mathbf{L}^2(K)}
+\| e_{\bar{\mathsf{r}}} \|_{L^2(K)}
+ h_{K} \|e_{\bar{u}}\|_{L^2(K)} 
\\+h_K\| \mathbf{e}_{\bar{\mathbf{z}}}\|_{\mathbf{L}^2(K)}+h_K\| \mathbf{e}_{\bar{\mathbf{y}}}\|_{\mathbf{L}^2(K)}+\text{osc}_{\mathscr{T}}(\mathbf{y}_{\Omega},K).
\end{multline}
If we replace this bound into \eqref{eq:bound06_eff}, we obtain the desired bound for $h_K^2 \|\mathcal{R}_{K}^{adj}\|_{\mathbf{L}^2(K)}^2$.

\emph{Step 2.} Let $K\in\T$ and $\gamma\in \mathscr{S}$. We now control $h_K^{\frac{1}{2}}\|\mathcal{J}_{\gamma}^{adj}\|_{\mathbf{L}^2(\gamma)}$. To do so, we set $\mathbf{v}=\varphi_{\gamma}\mathcal{J}_{\gamma}^{adj}$ and $\mathsf{q}=0$ in \eqref{eq:brinkamn_problem02_aux_error} and continue with similar arguments as we developed for \eqref{eq:bound08_eff}. In fact, we have the bound
\begin{multline}\label{eq:bound09_eff}
h_{K}^{\frac{1}{2}}\|\mathcal{J}_{\gamma}^{adj}\|_{\mathbf{L}^2(\gamma)}\lesssim \sum_{K'\in \mathcal{N}_{\gamma}} \Big(\|\nabla \mathbf{e}_{\bar{\mathbf{z}}}\|_{\mathbf{L}^2(K')} +\| e_{\bar{\mathsf{r}}} \|_{L^2(K')} +
\AAF{h_{K'}}\GC{\|e_{\bar{u}}\|_{L^2(K')}}
\\
+
h_{K'}\| \mathbf{e}_{\bar{\mathbf{y}}}\|_{\mathbf{L}^2(K')}
+
h_{K'}\| \mathbf{e}_{\bar{\mathbf{z}}}\|_{\mathbf{L}^2(K')}+\text{osc}_{\mathscr{T}}(\mathbf{y}_{\Omega},K')\Big).
\end{multline}

\emph{Step 3:} Let $K\in \T$. Since $\text{div }\bar{\mathbf{z}}=0$, we immediately obtain that
\begin{align}\label{eq:bound10_eff}
	\|\text{div }\bar{\mathbf{z}}_{h}\|_{L^2(K)}=\|\text{div }\mathbf{e}_{\bar{\mathbf{z}}}\|_{L^2(K)}\lesssim \|\nabla\mathbf{e}_{\bar{\mathbf{z}}}\|_{\mathbf{L}^2(K)}.
\end{align}

\emph{Step 4:} The proof concludes by gathering the bounds \eqref{eq:bound08_eff}, \eqref{eq:bound09_eff}, and \eqref{eq:bound10_eff}.
\end{proof}

Finally, we present a global efficiency result for the estimator $\mathcal{E}_{ocp,\T}$.

\begin{theorem}[global efficiency of $\mathcal{E}_{ocp,\T}$]\label{eq:efficiency_ocp}
In the framework of Theorem \ref{eq:efficiency_est}, we have that, for $K\in\T$, the estimator $\mathcal{E}_{ocp,\T}$ satisfies the global bound
\begin{multline}\label{eq:efficiency_estimates_ocp}
\mathcal{E}_{ocp,\T} \lesssim\|\nabla \mathbf{e}_{\bar{\mathbf{y}}} \|_{\mathbf{L}^2(\Omega)}+\|\nabla \mathbf{e}_{\bar{\mathbf{z}}} \|_{\mathbf{L}^2(\Omega)}+\| e_{\bar{\mathsf{p}}} \|_{L^2(\Omega)}+\| e_{\bar{\mathsf{r}}} \|_{L^2(\Omega)}\\+\|e_{\bar{u}}\|_{L^2(\Omega)}
+
\mathrm{osc}_{\mathscr{T}}(\mathbf{f},\T)
+
\mathrm{osc}_{\mathscr{T}}(\mathbf{y}_{\Omega},\T).
\end{multline}
The hidden constant is independent of the continuous and discrete optimal variables, the size of the elements in the mesh $\T$, and $\#\T$.
\end{theorem}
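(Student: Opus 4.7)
The plan is to decompose $\mathcal{E}_{ocp,\T}^2 = \mathcal{E}_{st,\T}^2 + \mathcal{E}_{adj,\T}^2 + \mathcal{E}_{ct,\T}^2$ and control each contribution. The state and adjoint parts follow immediately from the two preceding local efficiency theorems; the genuinely new piece is the control estimator $\mathcal{E}_{ct,\T} = \|\breve{u} - \bar{u}_{h}\|_{L^2(\Omega)}$.

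\emph{Step 1 (state and adjoint estimators).} I would sum the squared local bounds \eqref{eq:efficiency_estimates_est} over $K\in\T$ and exploit the shape regularity of $\T$, which guarantees that each element of $\T$ appears in a uniformly bounded number of patches $\mathcal{N}_{K}$. This yields a global estimate for $\mathcal{E}_{st,\T}$ of the form $\mathcal{E}_{st,\T} \lesssim \|\nabla \mathbf{e}_{\bar{\mathbf{y}}}\|_{\mathbf{L}^2(\Omega)} + \|e_{\bar{\mathsf{p}}}\|_{L^2(\Omega)} + h \|e_{\bar{u}}\|_{L^2(\Omega)} + h \|\mathbf{e}_{\bar{\mathbf{y}}}\|_{\mathbf{L}^2(\Omega)} + \mathrm{osc}_{\T}(\mathbf{f},\T)$, where $h = \max_{K\in\T} h_{K}$ is uniformly bounded by $\mathrm{diam}(\Omega)$. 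An identical summation applied to \eqref{eq:efficiency_estimates_adj} controls $\mathcal{E}_{adj,\T}$ in terms of $\|\nabla \mathbf{e}_{\bar{\mathbf{z}}}\|_{\mathbf{L}^2(\Omega)}$, $\|e_{\bar{\mathsf{r}}}\|_{L^2(\Omega)}$, $\|e_{\bar{u}}\|_{L^2(\Omega)}$, $\|\mathbf{e}_{\bar{\mathbf{y}}}\|_{\mathbf{L}^2(\Omega)}$, $\|\mathbf{e}_{\bar{\mathbf{z}}}\|_{\mathbf{L}^2(\Omega)}$, and $\mathrm{osc}_{\T}(\mathbf{y}_{\Omega},\T)$. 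The lower-order $\mathbf{L}^2$ error terms are trivially dominated by the $\mathbf{H}^1$ error terms via Poincaré's inequality.

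\emph{Step 2 (control estimator).} By the triangle inequality,
\begin{equation*}
\mathcal{E}_{ct,\T} \leq \|\breve{u} - \bar{u}\|_{L^2(\Omega)} + \|e_{\bar{u}}\|_{L^2(\Omega)}.
\end{equation*}
For the first term, I would combine the projection identities \eqref{eq:representation} and \eqref{eq:representation_aux} with the $1$-Lipschitz property of $\Pi_{[\mathsf{a},\mathsf{b}]}$ to obtain $\|\breve{u} - \bar{u}\|_{L^2(\Omega)} \leq \alpha^{-1} \|\bar{\mathbf{y}}\cdot\bar{\mathbf{z}} - \bar{\mathbf{y}}_{h}\cdot\bar{\mathbf{z}}_{h}\|_{L^2(\Omega)}$. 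I would then split $\bar{\mathbf{y}}\cdot\bar{\mathbf{z}} - \bar{\mathbf{y}}_{h}\cdot\bar{\mathbf{z}}_{h} = \mathbf{e}_{\bar{\mathbf{y}}}\cdot\bar{\mathbf{z}} + \bar{\mathbf{y}}_{h}\cdot \mathbf{e}_{\bar{\mathbf{z}}}$ and apply H\"older's inequality with $\mathbf{L}^4 \times \mathbf{L}^4$, the Sobolev embedding $\mathbf{H}_0^1(\Omega) \hookrightarrow \mathbf{L}^4(\Omega)$, and the stability bounds $\|\nabla \bar{\mathbf{z}}\|_{\mathbf{L}^2(\Omega)} \lesssim \|\mathbf{f}\|_{\mathbf{L}^2(\Omega)} + \|\mathbf{y}_{\Omega}\|_{\mathbf{L}^2(\Omega)}$ and $\|\nabla \bar{\mathbf{y}}_{h}\|_{\mathbf{L}^2(\Omega)} \lesssim \|\mathbf{f}\|_{\mathbf{L}^2(\Omega)}$ to conclude
\begin{equation*}
\|\breve{u} - \bar{u}\|_{L^2(\Omega)} \lesssim \|\nabla \mathbf{e}_{\bar{\mathbf{y}}}\|_{\mathbf{L}^2(\Omega)} + \|\nabla \mathbf{e}_{\bar{\mathbf{z}}}\|_{\mathbf{L}^2(\Omega)}.
\end{equation*}

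\emph{Step 3 (collection).} Adding the bounds obtained in Steps 1 and 2, and absorbing the $h$-weighted $\mathbf{L}^2$ terms into the corresponding $\mathbf{H}^1$ quantities, yields \eqref{eq:efficiency_estimates_ocp}. No step is genuinely hard: the patch overlap argument is routine, and the main technical point is the $\mathbf{L}^4$ splitting used to control $\|\bar{\mathbf{y}}\cdot\bar{\mathbf{z}} - \bar{\mathbf{y}}_{h}\cdot\bar{\mathbf{z}}_{h}\|_{L^2(\Omega)}$ by the $\mathbf{H}^1$ errors on the state and adjoint variables.
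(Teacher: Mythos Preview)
Your proposal is correct and follows essentially the same approach as the paper: sum the local efficiency bounds \eqref{eq:efficiency_estimates_est} and \eqref{eq:efficiency_estimates_adj} over $K\in\T$ using finite overlap of the patches, handle $\mathcal{E}_{ct,\T}$ via the triangle inequality, the projection formulas \eqref{eq:representation}--\eqref{eq:representation_aux}, the Lipschitz property of $\Pi_{[\mathsf{a},\mathsf{b}]}$, the splitting $\bar{\mathbf{y}}\cdot\bar{\mathbf{z}} - \bar{\mathbf{y}}_{h}\cdot\bar{\mathbf{z}}_{h} = \mathbf{e}_{\bar{\mathbf{y}}}\cdot\bar{\mathbf{z}} + \bar{\mathbf{y}}_{h}\cdot\mathbf{e}_{\bar{\mathbf{z}}}$ together with $\mathbf{H}_0^1(\Omega)\hookrightarrow\mathbf{L}^4(\Omega)$ and the stability bounds on $\bar{\mathbf{z}}$ and $\bar{\mathbf{y}}_{h}$, and finally absorb the $h$-weighted $\mathbf{L}^2$ terms via Poincar\'e's inequality. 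The paper's proof is identical in structure and in every substantive step.
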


\begin{proof}
We begin the proof by considering the definition of the error estimator $\mathcal{E}_{st,\T}$ given in \eqref{eq:estimator_st} and the local efficiency estimate \eqref{eq:efficiency_estimates_est} to conclude that
\begin{multline}\label{eq:efficiency_estimates_est_global}
\mathcal{E}_{st,\T} \lesssim\|\nabla \mathbf{e}_{\bar{\mathbf{y}}} \|_{\mathbf{L}^2(\Omega)}+\| e_{\bar{\mathsf{p}}} \|_{L^2(\Omega)}
+
\GC{\text{diam}(\Omega)\|e_{\bar{u}}\|_{L^2(\Omega)}}
\\
+\text{diam}(\Omega)\|\mathbf{e}_{\bar{\mathbf{y}}}\|_{\mathbf{L}^2(\mathcal{N}_{K})}+\text{osc}_{\mathscr{T}}(\mathbf{f},\T).
\end{multline}
Similarly, considering the definition of the error estimator $\mathcal{E}_{adj,\T}$ in \eqref{eq:estimator_AD} and the local efficiency estimate \eqref{eq:efficiency_estimates_adj}, we reach the following conclusion:
\begin{multline}\label{eq:efficiency_estimates_adj_global}
\mathcal{E}_{adj,\T} \lesssim\|\nabla \mathbf{e}_{\bar{\mathbf{z}}} \|_{\mathbf{L}^2(\Omega)}
+
\| e_{\bar{\mathsf{r}}} \|_{L^2(\Omega)}
+
\GC{\text{diam}(\Omega)\|e_{\bar{u}}\|_{L^2(\Omega)}}\\
+
\text{diam}(\Omega)\|\mathbf{e}_{\bar{\mathbf{z}}}\|_{\mathbf{L}^2(\Omega)}+\text{diam}(\Omega)\|\mathbf{e}_{\bar{\mathbf{y}}}\|_{\mathbf{L}^2(\Omega)}+\text{osc}_{\mathscr{T}}(\mathbf{y}_{\Omega},\T).
\end{multline}
Our aim now is to control the term $\mathcal{E}_{ct,\T}$ with the above estimates. To this end, we invoke \eqref{eq:estimator_control} and use a simple application of a triangle inequality to obtain
\begin{eqnarray*}
\mathcal{E}_{ct,\T}	&\leq &\|\breve{u}-\bar{u}\|_{L^2(\Omega)}+\|e_{\bar{u}}\|_{L^2(\Omega)}
\\
&\leq &\|\Pi_{[\mathsf{a},\mathsf{b}]}(\alpha^{-1}\bar{\mathbf{y}}_{h}\cdot\bar{\mathbf{z}}_{h})-\Pi_{[\mathsf{a},\mathsf{b}]}(\alpha^{-1}\bar{\mathbf{y}}\cdot\bar{\mathbf{z}})\|_{L^2(\Omega)}+\|e_{\bar{u}}\|_{L^2(\Omega)}.
\end{eqnarray*}
Applying the Lipschitz property of the projection $\Pi_{[\mathsf{a},\mathsf{b}]}$, the Cauchy--Schwarz inequality, and the standard Sobolev embedding $\mathbf{H}_0^1(\Omega)\hookrightarrow\mathbf{L}^4(\Omega)$ in the previous estimate shows that
\begin{align*}
\mathcal{E}_{ct,\T}\leq \alpha^{-1} (\|\nabla \mathbf{e}_{\bar{\mathbf{z}}}\|_{\mathbf{L}^2(\Omega)}\|\nabla \bar{\mathbf{y}}_{h}\|_{\mathbf{L}^2(\Omega)}+\|\nabla \mathbf{e}_{\bar{\mathbf{y}}}\|_{\mathbf{L}^2(\Omega)}\|\nabla \bar{\mathbf{z}}\|_{\mathbf{L}^2(\Omega)})+\|e_{\bar{u}}\|_{L^2(\Omega)}.
\end{align*}
The bounds $\|\nabla \bar{\mathbf{y}}_{h}\|_{\mathbf{L}^2(\Omega)}\lesssim \|\mathbf{f}\|_{\mathbf{L}^2(\Omega)}$ and $\|\nabla \bar{\mathbf{z}}\|_{\mathbf{L}^2(\Omega)}\lesssim  \|\mathbf{f}\|_{\mathbf{L}^2(\Omega)}+ \|\mathbf{y}_{\Omega}\|_{\mathbf{L}^2(\Omega)}$ therefore result in the bound
\begin{align}\label{eq:efficiency_estimates_ct_global}
\mathcal{E}_{ct,\T}\lesssim \|\nabla \mathbf{e}_{\bar{\mathbf{z}}}\|_{\mathbf{L}^2(\Omega)}+\|\nabla \mathbf{e}_{\bar{\mathbf{y}}}\|_{\mathbf{L}^2(\Omega)}+\|e_{\bar{u}}\|_{L^2(\Omega)}.
\end{align}
The desired estimate \eqref{eq:efficiency_estimates_ocp} follows from the application of Poincare's inequality and a collection of the bounds \eqref{eq:efficiency_estimates_est_global}, \eqref{eq:efficiency_estimates_adj_global}, and \eqref{eq:efficiency_estimates_ct_global}. This concludes the proof.
\end{proof}

\subsection{Efficiency analysis: the semidiscrete scheme}\label{efficiency_semi}
We begin the section by defining the local error indicator $\mathfrak{E}_{ocp,K}^2:=\mathfrak{E}_{st,K}^2+\mathfrak{E}_{adj,K}^2$ for $K\in\T$.

We now present an analysis of local efficiency by adapting the arguments from \S\ref{efficiency_fully}. For brevity, we omit the details of the proof.

\begin{theorem}[local efficiency of $\mathfrak{E}_{ocp,K}$]\label{eq:efficiency_est_semi}
Let $\bar{u}$ be a local solution of \eqref{eq:min_functional}--\eqref{eq:brinkamn_problem_state} such that \eqref{equivalence_condition} holds for $\tau > 0$, and let $(\bar{\mathbf{y}},\bar{\mathsf{p}})$ and $(\bar{\mathbf{z}},\bar{\mathsf{r}})$ be the corresponding state and ajoint state variables. Let $\bar{\mathsf{u}}$ be a local minimum of the semidiscrete scheme, and let $(\bar{\mathbf{y}}_h,\bar{\mathsf{p}}_h)$ and $(\bar{\mathbf{z}}_h,\bar{\mathsf{r}}_h)$ be the corresponding discrete state and adjoint state variables, respectively. Then, for $K\in\T$, the local error indicator $\mathfrak{E}_{ocp,K}$ satisfies the bound 
\begin{multline}\label{eq:efficiency_estimates_ocp_semi}
\mathfrak{E}_{ocp,K} \lesssim\|\nabla \mathbf{e}_{\bar{\mathbf{y}}} \|_{\mathbf{L}^2(\mathcal{N}_{K})}+\|\nabla \mathbf{e}_{\bar{\mathbf{z}}} \|_{\mathbf{L}^2(\mathcal{N}_{K})}
+
\| e_{\bar{\mathsf{p}}} \|_{L^2(\mathcal{N}_{K})}
+
\| e_{\bar{\mathsf{r}}} \|_{L^2(\mathcal{N}_{K})}
\\
+
h_{K}\|e_{\bar{u}}\|_{L^2(\mathcal{N}_{K})}
+
h_K \| \mathbf{e}_{\bar{\mathbf{y}}} \|_{\mathbf{L}^2(\mathcal{N}_{K})}
+
h_K\| \mathbf{e}_{\bar{\mathbf{z}}} \|_{\mathbf{L}^2(\mathcal{N}_{K})}
+
\mathrm{osc}_{\mathscr{T}}(\mathbf{f},\mathcal{N}_{K})\\
+
\mathrm{osc}_{\mathscr{T}}(\mathbf{y}_{\Omega},\mathcal{N}_{K})
+
\mathrm{osc}_{\mathscr{T}}(\bar{\mathsf{u}}\bar{\mathbf{y}}_h,\mathcal{N}_{K})
+
\mathrm{osc}_{\mathscr{T}}(\bar{\mathsf{u}}\bar{\mathbf{z}}_h,\mathcal{N}_{K}),
\end{multline}
where $\mathcal{N}_{K}$ is defined in \eqref{eq:patch}. The hidden constant is independent of the continuous and discrete optimal variables, the size of the elements in the mesh $\T$, and $\#\T$.
\end{theorem}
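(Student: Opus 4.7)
The plan is to mirror the fully discrete analysis carried out in Theorems~\ref{eq:efficiency_est} and~\ref{eq:efficiency_adj}, replacing $\bar{u}_{h}$ by $\bar{\mathsf{u}}$ throughout. The starting point is the pair of residual identities analogous to \eqref{eq:brinkamn_problem01_aux_error} and \eqref{eq:brinkamn_problem02_aux_error}, obtained by subtracting the semidiscrete equations \eqref{eq:brinkamn_problem_state_sd} and \eqref{eq:brinkman_problem_adjoint_semi} from their continuous counterparts and performing an elementwise integration by parts. The resulting identities expose the element residuals $\mathfrak{R}_{K}^{st}$ and $\mathfrak{R}_{K}^{adj}$ along with the interelement jumps $\mathfrak{J}_{\gamma}^{st}$ and $\mathfrak{J}_{\gamma}^{adj}$ on the right-hand side, and the errors $\mathbf{e}_{\bar{\mathbf{y}}}$, $e_{\bar{\mathsf{p}}}$, $\mathbf{e}_{\bar{\mathbf{z}}}$, $e_{\bar{\mathsf{r}}}$, $e_{\bar u}$ on the left-hand side.

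The essential new difficulty, which is precisely the reason why the additional oscillations $\mathrm{osc}_{\mathscr{T}}(\bar{\mathsf{u}}\bar{\mathbf{y}}_h,\cdot)$ and $\mathrm{osc}_{\mathscr{T}}(\bar{\mathsf{u}}\bar{\mathbf{z}}_h,\cdot)$ appear in \eqref{eq:efficiency_estimates_ocp_semi}, is that the bilinear products $\bar{\mathsf{u}}\bar{\mathbf{y}}_h$ and $\bar{\mathsf{u}}\bar{\mathbf{z}}_h$ are not piecewise polynomial: the control $\bar{\mathsf{u}} \in \mathbb{U}_{ad}$ is not discretized. To exploit the standard bubble function inverse inequalities, I will work on each element $K$ with the piecewise polynomial surrogate residuals obtained by replacing $\mathbf{f}$, $\mathbf{y}_{\Omega}$, $\bar{\mathsf{u}}\bar{\mathbf{y}}_h$ and $\bar{\mathsf{u}}\bar{\mathbf{z}}_h$ by their $L^2$-projections $\mathsf{P}_{K}\mathbf{f}$, $\mathsf{P}_{K}\mathbf{y}_{\Omega}$, $\mathsf{P}_{K}(\bar{\mathsf{u}}\bar{\mathbf{y}}_h)$, and $\mathsf{P}_{K}(\bar{\mathsf{u}}\bar{\mathbf{z}}_h)$. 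A triangle inequality then reduces the control of $\|\mathfrak{R}_{K}^{st}\|_{\mathbf{L}^2(K)}$ and $\|\mathfrak{R}_{K}^{adj}\|_{\mathbf{L}^2(K)}$ to that of their polynomial versions plus the four oscillation terms appearing in \eqref{eq:efficiency_estimates_ocp_semi}.

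With these polynomial surrogates at hand, I will test the residual identities against $\varphi_{K}\tilde{\mathfrak{R}}_{K}^{st}$, $\varphi_{K}\tilde{\mathfrak{R}}_{K}^{adj}$, $\varphi_{\gamma}\mathfrak{J}_{\gamma}^{st}$, and $\varphi_{\gamma}\mathfrak{J}_{\gamma}^{adj}$, with $\varphi_{K}$ and $\varphi_{\gamma}$ defined in \eqref{def:standard_bubbles}. Standard scaling properties of the bubble functions, together with inverse inequalities, yield bounds analogous to \eqref{eq:bound03_eff}, \eqref{eq:bound04_eff}, \eqref{eq:bound08_eff}, and \eqref{eq:bound09_eff}; the $\mathbf{L}^{\infty}$-bounds on $\bar{\mathbf{y}}$ and $\bar{\mathbf{z}}$ that appear when estimating the term $( e_{\bar{u}}\bar{\mathbf{y}},\mathbf{v})_{\mathbf{L}^2(\Omega)}$ (respectively $( e_{\bar{u}}\bar{\mathbf{z}},\mathbf{v})_{\mathbf{L}^2(\Omega)}$) follow as before from Theorem~\ref{theorem_regularity_Linfty}. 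The contribution $\|\mathrm{div}\,\bar{\mathbf{y}}_h\|_{L^2(K)}$ and $\|\mathrm{div}\,\bar{\mathbf{z}}_h\|_{L^2(K)}$ are bounded directly by $\|\nabla\mathbf{e}_{\bar{\mathbf{y}}}\|_{\mathbf{L}^2(K)}$ and $\|\nabla\mathbf{e}_{\bar{\mathbf{z}}}\|_{\mathbf{L}^2(K)}$, respectively, as in Step~3 of the proofs of Theorems~\ref{eq:efficiency_est} and~\ref{eq:efficiency_adj}.

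The main obstacle, as anticipated, is the non-polynomial character of the reaction terms $\bar{\mathsf{u}}\bar{\mathbf{y}}_h$ and $\bar{\mathsf{u}}\bar{\mathbf{z}}_h$: any attempt to bound the full residuals $\mathfrak{R}_{K}^{st}$ and $\mathfrak{R}_{K}^{adj}$ directly by the bubble function technique fails because the inverse inequality $\|\mathbf{v}\|_{\mathbf{L}^2(K)}\lesssim \|\varphi_{K}^{1/2}\mathbf{v}\|_{\mathbf{L}^2(K)}$ is available only for polynomial $\mathbf{v}$. Collecting the four bounds (two for elements, two for jumps) and the incompressibility contributions, and summing over the patch $\mathcal{N}_{K}$, yields the stated estimate \eqref{eq:efficiency_estimates_ocp_semi}. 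For brevity we omit the routine calculations.
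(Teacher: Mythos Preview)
Your proposal is correct and follows the same approach as the paper, which itself only states that the proof is obtained ``by adapting the arguments elaborated in \S\ref{efficiency_fully}'' and omits the details. You have correctly identified the one genuine modification needed relative to the fully discrete case: because $\bar{\mathsf{u}}$ is not discretized, the reaction terms $\bar{\mathsf{u}}\bar{\mathbf{y}}_h$ and $\bar{\mathsf{u}}\bar{\mathbf{z}}_h$ are not piecewise polynomial, so they must be replaced by their elementwise $L^2$-projections before invoking the bubble function inverse inequalities, and this replacement produces the additional oscillation terms $\mathrm{osc}_{\mathscr{T}}(\bar{\mathsf{u}}\bar{\mathbf{y}}_h,\mathcal{N}_K)$ and $\mathrm{osc}_{\mathscr{T}}(\bar{\mathsf{u}}\bar{\mathbf{z}}_h,\mathcal{N}_K)$ in \eqref{eq:efficiency_estimates_ocp_semi}.
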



\section{Numerical experiments}
\label{sec:numericalexperiments}

\EO{In this section, we present a series of numerical experiments in two dimensions that illustrate the performance of the fully discrete scheme and the semidiscrete scheme described in \S \ref{fully_discrete_framework} and \S \ref{semi_discrete_framework}, respectively. The experiments were performed with a code implemented in \texttt{C++}, where the global linear systems were solved using the multifrontal massively parallel sparse direct solver (MUMPS) \cite{MUMPS2}, and the required integrals were computed using a quadrature rule that is exact for polynomials up to degree 19.}

Before \EO{discussing implementation details, we introduce some basic ingredients. To simplify the presentation of the algorithms, we first define the forms
\[
a: \mathbf{H}_0^1(\Omega) \times \mathbf{H}_0^1(\Omega) \rightarrow \mathbb{R},
\,\,
b: L_0^2(\Omega) \times \mathbf{H}_0^1(\Omega) \rightarrow \mathbb{R},
\,\,
c: L^2(\Omega) \times \mathbf{H}_0^1(\Omega) \times \mathbf{H}_0^1(\Omega) \rightarrow \mathbb{R},
\]
by $a(\mathbf{w},\mathbf{v}) := (\nabla\mathbf{w},\nabla\mathbf{v})_{\mathbf{L}^{2}(\Omega)}$, $b(r,\mathbf{v})=(r,\text{div}~\mathbf{v})_{L^{2}(\Omega)}$, and $c(u,\mathbf{w},\mathbf{v}) = (u\mathbf{w},\mathbf{v})_{\mathbf{L}^{2}(\Omega)}$. As a second ingredient, we note that the projection operator $\Pi_{[\mathsf{a},\mathsf{b}]}$ can be equivalently written as follows:
\begin{equation}
\Pi_{[\mathsf{a},\mathsf{b}]}(\xi) =
\xi+\max\{\mathsf{a}-\xi,0\}-\max\{-\mathsf{b}+\xi,0\}
\quad
\forall \xi \in \mathbb{R}.
\label{eq:new_projection_formula}
\end{equation}
Finally, we set
$\mathrm{E}_{ocp,\T}=
(
\|\nabla \mathbf{e}_{\bar{\mathbf{y}}} \|_{\mathbf{L}^2(\Omega)}^2
+
\|\nabla \mathbf{e}_{\bar{\mathbf{z}}} \|_{\mathbf{L}^2(\Omega)}^2
+
\|e_{\bar{\mathsf{p}}}\|_{L^2(\Omega)}^2
+
\| e_{\bar{\mathsf{r}}} \|_{L^2(\Omega)}^2
+
\|e_{\bar{u}}\|_{L^2(\Omega)}^2
)^{\frac{1}{2}}$.}

\textbf{The fully discrete scheme:} \EO{For a given partition $\T$, we seek $(\bar{\mathbf{y}}_h,\bar{\mathsf{p}}_h,\bar{\mathbf{z}}_h,\bar{\mathsf{r}}_h,\bar{u}_h)$ $\in \mathbf{X}_h \times M_h\times \mathbf{X}_h \times M_h\times \mathbb{U}_{ad,h}$ that solves the discrete optimality system \eqref{eq:brinkamn_problem_state_fd}--\eqref{eq:variational_inequality_discrete}. This system is solved using the adaptive algorithm described in \textbf{Algorithm 1}, which involves the semi-smooth Newton method described in \textbf{Algorithm 2}. Below, we briefly explain how \textbf{Algorithm 2} is obtained (see \cite[Appendix A.1]{MR2971171} and \cite{MR3523574}). We first note that an optimal control $\bar{u}_h$ is such that
\[
 \bar{u}_{h}|_{K} =
\Pi_{[\mathsf{a},\mathsf{b}]}
\left(
\frac{(\bar{\mathbf{y}}_{h},\bar{\mathbf{z}}_{h})_{\mathbf{L}^{2}(K)}}{\alpha|K|}
\right)
\quad
\forall K \in \T.
\]
We rewrite this expression using \eqref{eq:new_projection_formula} and apply the standard Newton method to the system composed of \eqref{eq:brinkamn_problem_state_fd}, \eqref{eq:brinkman_problem_adjoint_fully}, and the derived expression. When deriving the semi-smooth Newton method, the following characteristic functions appear:
\[
\tilde{\chi}_{[\mathsf{a}]}
=
\left\{
\begin{array}{cl}
1, & \textrm{if }\mathsf{a}-\frac{(\mathbf{y}_{h}^{n},\mathbf{z}_{h}^{n})_{\mathbf{L}^{2}(K)}}{\alpha|K|}\geq 0,\\
0, & \textrm{otherwise},
\end{array}
\right.
\quad
\tilde{\chi}_{[\mathsf{b}]}
=
\left\{
\begin{array}{cl}
1, & \textrm{if }-\mathsf{b}+\frac{(\mathbf{y}_{h}^{n},\mathbf{z}_{h}^{n})_{\mathbf{L}^{2}(K)}}{\alpha|K|}\geq 0,\\
0, & \textrm{otherwise}.
\end{array}
\right.
\]
}

Finally, we introduce the \EO{\emph{total number of degrees of freedom} $\mathsf{Ndof}=2\text{dim} (\mathbf{X}_h)+2\text{dim}(M_h)+\dim(\mathbb{U}_{ad,h})$ and the effectivity index $\mathtt{Eff}_{F}=\mathrm{E}_{ocp}/\mathcal{E}_{ocp,\T}$.}

\textbf{The semidiscrete scheme:} \EO{For a given partition $\T$, we seek $(\bar{\mathbf{y}}_h,\bar{\mathsf{p}}_h,\bar{\mathbf{z}}_h,\bar{\mathsf{r}}_h)\in \mathbf{X}_h \times M_h\times \mathbf{X}_h \times M_h$ that solves the discrete optimality system \eqref{eq:brinkamn_problem_state_sd}--\eqref{eq:brinkman_problem_adjoint_semi}. This system is solved using the adaptive algorithm described in \textbf{Algorithm 1}, which involves the semi-smooth Newton method described in \textbf{Algorithm 3} (see \cite[Appendix A.1]{MR2971171} and \cite{MR3523574}). We note that, by the construction of the scheme, a discrete optimal control $\bar{\mathsf{u}}$ satisfies the following projection formula:
\[
 \bar{\mathsf{u}}=\Pi_{[\mathsf{a},\mathsf{b}]}\left(\frac{1}{\alpha}\bar{\mathbf{y}}_{h}\cdot\bar{\mathbf{z}}_{h}\right).
\]
We rewrite this expression using formula \eqref{eq:new_projection_formula} and substitute it into \eqref{eq:brinkamn_problem_state_sd} and \eqref{eq:brinkman_problem_adjoint_semi}. Then, we apply the standard Newton method to the resulting system. When deriving the semi-smooth Newton method, the following characteristic functions appear:}
\[
\chi_{[\mathsf{a}]}
=
\left\{
\begin{array}{cl}
1, & \textrm{if }\mathsf{a}-\frac{\mathbf{y}_{h}^{n}\cdot\mathbf{z}_{h}^{n}}{\alpha}\geq 0,\\
0, & \textrm{otherwise},
\end{array}
\right.
\quad
\chi_{[\mathsf{b}]}
=
\left\{
\begin{array}{cl}
1, & \textrm{if }-\mathsf{b}+\frac{\mathbf{y}_{h}^{n}\cdot\mathbf{z}_{h}^{n}}{\alpha}\geq 0,\\
0, & \textrm{otherwise}.
\end{array}
\right.
\]

We \EO{note that the numerical implementation of the semidiscrete scheme requires assembling and exact computation of the terms involving the control $\bar{\mathsf{u}} = \Pi_{[\mathsf{a},\mathsf{b}]}(\alpha^{-1}\bar{\mathbf{y}}_{h}\cdot\bar{\mathbf{z}}_{h})$. In particular, exact integration is needed on the elements $K \in \T$ where the control $\bar{\mathsf{u}}$ has kinks. Given the FEM spaces we are using and the fact that the multiplication $\bar{\mathbf{y}}_{h}\cdot\bar{\mathbf{z}}_{h}$ is involved, the regions where the control variable is active/inactive generally have curved boundaries. The exact computation of the terms involving the control is therefore far from trivial. We compute them using a quadrature formula that is exact for polynomials up to degree 19, resulting in an approximate version of the variational discretization scheme. Nonetheless, this approximation achieves optimal experimental rates of convergence for all the variables involved.}

Finally, \EO{we introduce the \emph{total number of degrees of freedom} $\mathsf{Ndof}=2\text{dim} (\mathbf{X}_h)+2\text{dim}(M_h)$ and the effectivity index $\mathtt{Eff}_{S}=\mathrm{E}_{ocp}/\mathfrak{E}_{ocp,\T}$.}

\begin{algorithm}[!h]
\caption{\textbf{: Adaptive algorithm.}}
\label{Algorithm1}
\footnotesize
\textbf{Input:} Initial mesh $\mathscr{T}_{h}$, $\alpha$, $\mathsf{a}$, $\mathsf{b}$. Set $i=0$;
\\
$\boldsymbol{1}$: Solve the discrete problems:
\\
- \eqref{eq:brinkamn_problem_state_fd}--\eqref{eq:brinkman_problem_adjoint_fully}, for the fully discrete scheme,  by using \textbf{Algorithm 2};\\
- \eqref{eq:brinkamn_problem_state_sd}--\eqref{eq:brinkman_problem_adjoint_semi}, for the semidiscrete scheme, by using \textbf{Algorithm 3};
\\
$\boldsymbol{2}$: For each $K\in\mathscr{T}$ compute the local error indicators:\\
- For the fully discrete scheme, $\mathrm{E}_{K}^2:=\mathcal{E}_{st,K}^2 + \mathcal{E}_{adj,K}^2 + \mathcal{E}_{ct,K}^2$ (\S \ref{Real:fully_discrete});
\\
- For the semidiscrete scheme, $\mathrm{E}_{K}^2:=\mathfrak{E}_{st,K}^2 + \mathfrak{E}_{adj,K}^2$ (\S \ref{Real:semi_discrete});
\\
$\boldsymbol{3}$: Mark an element $K\in\mathscr{T}$ for refinement if: $\mathrm{E}_{K} \geq 0.5 \max_{K'\in\T_h}\mathrm{E}_{K'}$.
\\
$\boldsymbol{4}$: From step $\boldsymbol{3}$ construct a new mesh using a longest edge bisection algorithm. Set $i \leftarrow i + 1$ and go to step $\boldsymbol{1}$.
\end{algorithm}

\begin{algorithm}[!h]
\caption{\textbf{: Semi-smooth Newton for the fully discrete scheme.}}
\label{Algorithm2.1}
\footnotesize
\textbf{Input:} Initial guess $(\bar{\mathbf{y}}_{h}^{0},\bar{\mathsf{p}}_{h}^{0},\bar{\mathbf{z}}_{h}^{0},\bar{\mathsf{r}}_{h}^{0},\bar{u}_{h}^{0})$. For $n=0,1,...$
\\
$\boldsymbol{1}$: Compute the incremental $(\delta\mathbf{y},\delta\mathsf{p},\delta\mathbf{z},\delta\mathsf{r},\delta u)\in\mathbf{X}_{h}\times M_{h}\times\mathbf{X}_{h}\times M_{h}\times \mathbb{U}_{ad,h}$, such that
\begin{equation*}
\begin{array}{l}
a\left(\delta\mathbf{y},\mathbf{v}_{h}\right)
+
c(u_{h}^{n},\delta\mathbf{y},\mathbf{v}_{h})
+
c(\delta u,\mathbf{y}_{h}^{n},\mathbf{v}_{h})
-
b(\delta \mathsf{p},\mathbf{v}_{h})
\\
=~
(\mathbf{f},\mathbf{v}_{h})_{\mathbf{L}^2(\Omega)}
-
a(\mathbf{y}_{h}^{n},\mathbf{v}_{h})
-
c(u_{h}^{n},\mathbf{y}_{h}^{n},\mathbf{v}_{h})
+
b(\mathsf{p}_{h}^{n},\text{div }\mathbf{v}_{h}),
\\
b(\mathsf{q}_{h},\text{div }\delta\mathbf{y})
=   
-b(\mathsf{q}_{h},\text{div }\mathbf{y}_{h}^{n})
, 
\\
-(\delta\mathbf{y},\mathbf{w}_{h})_{\mathbf{L}^2(\Omega)}
+
a(\delta\mathbf{z},\mathbf{w}_{h})
+
c(u_{h}^{n},\delta\mathbf{z},\mathbf{w}_{h})
+
c(\delta u,\mathbf{z}_{h}^{n},\mathbf{w}_{h})
+
b(\delta \mathsf{r},\text{div }\mathbf{w}_{h})
\\
=  
-(\mathbf{y}_{\Omega},\mathbf{w}_{h})_{\mathbf{L}^2(\Omega)}
+
(\mathbf{y}_{h}^{n},\mathbf{w}_{h})_{\mathbf{L}^2(\Omega)}
-
a(\mathbf{z}_{h}^{n},\mathbf{w}_{h})
-
c(u_{h}^{n},\mathbf{z}_{h}^{n},\mathbf{v}_{h})
-
b(\mathsf{r}_{h}^{n},\text{div }\mathbf{w}_{h}),
\\
b(\mathsf{s}_{h},\text{div }\delta\mathbf{z})
=   
-b(\mathsf{s}_{h},\text{div }\mathbf{z}_{h}^{n}),
\\
\left(
-1+\tilde{\chi}_{[\mathsf{a}]}+\tilde{\chi}_{[\mathsf{b}]}
\right)
\left(
\frac{(\mathbf{y}_{h}^{n},\delta\mathbf{z})_{\mathbf{L}^{2}(K)}}{\alpha|K|}
+\frac{(\delta\mathbf{y},\mathbf{z}_{h}^{n})_{\mathbf{L}^{2}(K)}}{\alpha|K|}
\right)
+
\delta u_{|K}
\\
=~
\frac{(\mathbf{y}_{h}^{n},\mathbf{z}_{h}^{n})_{\mathbf{L}^{2}(K)}}{\alpha|K|}
+
\max\left\{\mathsf{a}-
\frac{(\mathbf{y}_{h}^{n},\mathbf{z}_{h}^{n})_{\mathbf{L}^{2}(K)}}{\alpha|K|}
,0\right\}
-
\max\left\{-\mathsf{b}+
\frac{(\mathbf{y}_{h}^{n},\mathbf{z}_{h}^{n})_{\mathbf{L}^{2}(K)},0}{\alpha|K|}
\right\}
-
u_{h|K}^{n}
, 
\end{array}
\end{equation*} 
for all $(\mathbf{v}_{h},\mathsf{q}_{h},\mathbf{w}_{h},\mathsf{s}_{h})\in\mathbf{X}_{h}\times M_{h}\times\mathbf{X}_{h}\times M_{h}$, where the last equation holds for all $K\in\mathscr{T}$. \\
$\boldsymbol{2}$: Update: $\mathbf{y}_{h}^{n+1}=\mathbf{y}_{h}^{n}+\delta\mathbf{y}$, $\mathsf{p}_{h}^{n+1}=\mathsf{p}_{h}^{n}+\delta\mathsf{p}$, $\mathbf{z}_{h}^{n+1}=\mathbf{z}_{h}^{n}+\delta\mathbf{z}$, $\mathsf{r}_{h}^{n+1}=\mathsf{r}_{h}^{n}+\delta\mathsf{r}$, $u_{h}^{n+1}=u_{h}^{n}+\delta u$.\\
$\boldsymbol{3}$: If $\|(\delta\mathbf{y},\delta\mathsf{p},\delta\mathbf{z},\delta\mathsf{r},\delta u)\|_{2}\leq 10^{-8}$ stop.  
\end{algorithm}

\begin{algorithm}[!h]
\caption{\textbf{: Semi-smooth Newton for the semidiscrete scheme.}}
\label{Algorithm2.2}
\footnotesize
\textbf{Input:} Initial guess $(\bar{\mathbf{y}}_{h}^{0},\bar{\mathsf{p}}_{h}^{0},\bar{\mathbf{z}}_{h}^{0},\bar{\mathsf{r}}_{h}^{0})$. For $n=0,1,...$
\\
$\boldsymbol{1}$: Compute the incremental $(\delta\mathbf{y},\delta\mathsf{p},\delta\mathbf{z},\delta\mathsf{r})\in\mathbf{X}_{h}\times M_{h}\times\mathbf{X}_{h}\times M_{h}$, such that
\begin{equation*}
\begin{array}{l}
a\left(\delta\mathbf{y},\mathbf{v}_{h}\right)
+
\alpha^{-1}
c\left(
(1-\chi_{[\mathsf{a}]}-\chi_{[\mathsf{b}]})
(\mathbf{y}_{h}^{n}\cdot\delta\mathbf{z}+\delta\mathbf{y}\cdot\mathbf{z}_{h}^{n}),\mathbf{y}_{h}^{n}, \mathbf{v}_{h}
\right)
\\
\quad
-
b(\delta \mathsf{p},\mathbf{v}_{h})
+
c\left(\Pi_{[\mathsf{a},\mathsf{b}]}(\alpha^{-1} \mathbf{y}_{h}^{n}\cdot\mathbf{z}_{h}^{n}),\delta\mathbf{y},\mathbf{v}_{h}\right) 
\\
=~(\mathbf{f},\mathbf{v}_{h})_{\mathbf{L}^2(\Omega)}
-
a(\mathbf{y}_{h}^{n},\mathbf{v}_{h})
-
c(\Pi_{[\mathsf{a},\mathsf{b}]}(\alpha^{-1} \mathbf{y}_{h}^{n}\cdot\mathbf{z}_{h}^{n}),\mathbf{y}_{h}^{n},\mathbf{v}_{h}) 
+
b(\mathsf{p}_{h}^{n},\text{div }\mathbf{v}_{h}),
\\
\vspace*{0.1cm}
b(\mathsf{q}_{h},\text{div }\delta\mathbf{y})
=   
-b(\mathsf{q}_{h},\text{div }\mathbf{y}_{h}^{n})
, 
\\
-(\delta\mathbf{y},\mathbf{w}_{h})_{\mathbf{L}^2(\Omega)}
+
a(\delta\mathbf{z},\mathbf{w}_{h})
+
\alpha^{-1}
c\left((1-\chi_{[\mathsf{a}]}-\chi_{[\mathsf{b}]})
(\mathbf{y}_{h}^{n}\cdot\delta\mathbf{z}+\delta\mathbf{y}\cdot\mathbf{z}_{h}^{n}),\mathbf{y}_{h}^{n}),\mathbf{z}_{h}^{n},\mathbf{w}_{h}\right)
\\
+~c\left(\Pi_{[\mathsf{a},\mathsf{b}]}(\alpha^{-1} \mathbf{y}_{h}^{n}\cdot\mathbf{z}_{h}^{n}),\delta\mathbf{z},\mathbf{w}_{h}\right) 
+
b(\delta \mathsf{r},\text{div }\mathbf{w}_{h})
\\
=  
-(\mathbf{y}_{\Omega},\mathbf{w}_{h})_{\mathbf{L}^2(\Omega)}
+
(\mathbf{y}_{h}^{n},\mathbf{w}_{h})_{\mathbf{L}^2(\Omega)}
-
a(\mathbf{z}_{h}^{n},\mathbf{w}_{h})
-
c\left(\Pi_{[\mathsf{a},\mathsf{b}]}(\alpha^{-1} \mathbf{y}_{h}^{n}\cdot\mathbf{z}_{h}^{n}),\mathbf{y}_{h}^{n},\mathbf{w}_{h}\right) 
-
b(\mathsf{r}_{h}^{n},\text{div }\mathbf{w}_{h}),
\\
b(\mathsf{s}_{h},\text{div }\delta\mathbf{z})
=   
-b(\mathsf{s}_{h},\text{div }\mathbf{z}_{h}^{n}),
\end{array}
\end{equation*} 
for all $(\mathbf{v}_{h},\mathsf{q}_{h},\mathbf{w}_{h},\mathsf{s}_{h})\in\mathbf{X}_{h}\times M_{h}\times\mathbf{X}_{h}\times M_{h}$.\\
$\boldsymbol{2}$: Update: $\mathbf{y}_{h}^{n+1}=\mathbf{y}_{h}^{n}+\delta\mathbf{y}$, $\mathsf{p}_{h}^{n+1}=\mathsf{p}_{h}^{n}+\delta\mathsf{p}$, $\mathbf{z}_{h}^{n+1}=\mathbf{z}_{h}^{n}+\delta\mathbf{z}$, $\mathsf{r}_{h}^{n+1}=\mathsf{r}_{h}^{n}+\delta\mathsf{r}$.\\
$\boldsymbol{3}$: If $\|(\delta\mathbf{y},\delta\mathsf{p},\delta\mathbf{z},\delta\mathsf{r})\|_{2}\leq 10^{-8}$ stop.  
\end{algorithm}


The initial meshes used for the resolution are shown in Fig. \ref{fig:meshes}.

\begin{figure}[!h]
\centering
\begin{minipage}[b]{0.4\textwidth}
\centering
\includegraphics[trim={0 0 0 0},clip,width=1.5cm,height=1.5cm,scale=0.66]{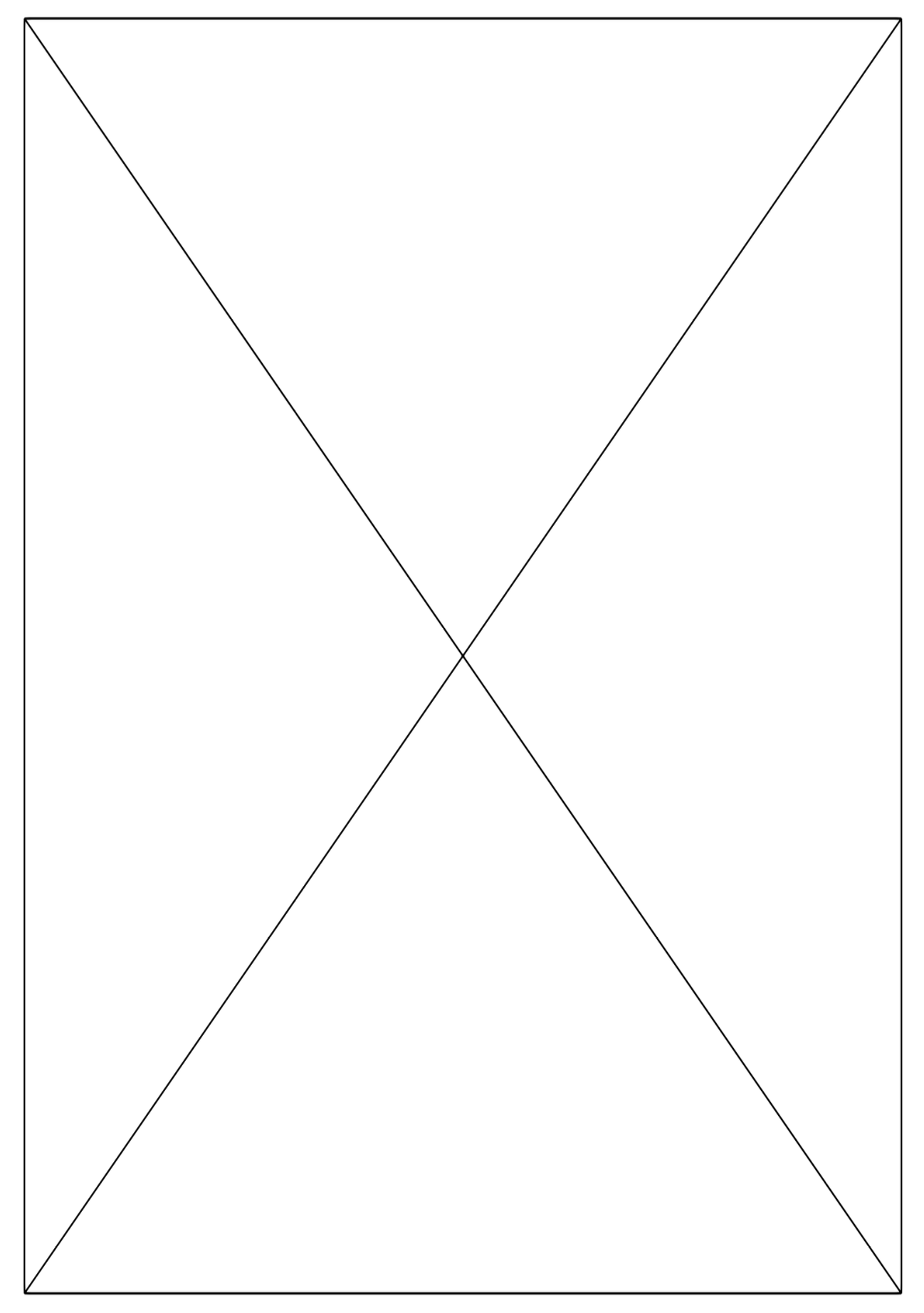} 
\end{minipage}
\begin{minipage}[b]{0.4\textwidth}
\centering
\includegraphics[trim={0 0 0 0},clip,width=2cm,height=1.5cm,scale=0.66]{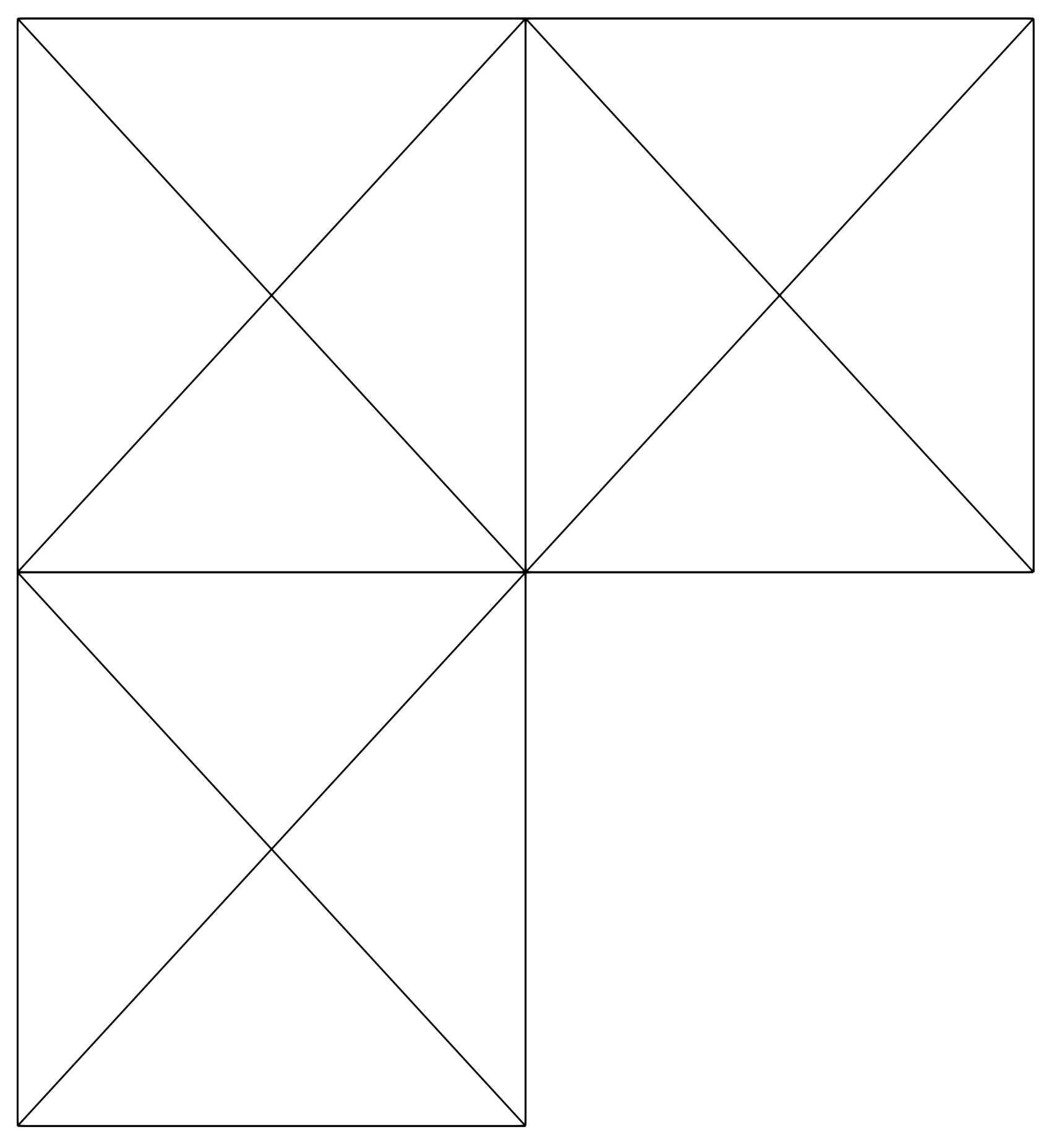} 
\end{minipage}
\caption{Initial meshes for $\Omega=(0,1)^{2}$ (left) and $\Omega=(-1, 1) \times (-1, 1) \setminus [0, 1] \times [-1,0]$ (right).}
\label{fig:meshes}
\end{figure}


\subsection{A sharp interior layer: uniform vs.~adaptive refinement}

In this section, we present a numerical example with a known exact solution: we fix the state and the adjoint velocities and pressures and obtain an exact optimal control. With these ingredients, we can thus compute $\mathbf{f}$ and the desired state $\mathbf{y}_{\Omega}$.

\textbf{Example 1.} We set $\Omega=(0,1)^{2}$, $\mathsf{a}=0.1$, $\mathsf{b}=0.2$, and $\alpha=1.0$. The state and adjoint variables are given by $\bar{\mathbf{y}}=50( \partial_{y}\xi, -\partial_{x}\xi )$, $\bar{\mathsf{p}}=xy(x - 1)(y - 1) - 1/36$, $\bar{\mathbf{z}}=50( \partial_{y}\varsigma , -\partial_{x}\varsigma )$, and $\bar{\mathsf{r}} =\sin(2\pi x)\sin(2\pi y)$. Here, $\xi=(xy(1-x)(1-y))^2$ and $\varsigma=(xy(1-x)(1-y))^2\tan^{-1}((x-0.5)/0.01)$. Due to the structure of the problem, we emphasize that the adjoint velocity field has a sharp interior layer at $x=0.5$.

In \AAF{Figures \ref{fig:test_01_1} (fully discrete scheme) and \ref{fig:test_01_1_2} (semidiscrete scheme), we present the experimental convergence rates obtained with uniform and adaptive refinement for the errors $\mathrm{E}_{ocp,\T}$ and their individual contributions, as well as for the a posteriori error estimators $\mathcal{E}_{ocp,\T}$ and $\mathfrak{E}_{ocp,\T}$ and their individual contributions. In Figures \ref{fig:test_01_2} (fully discrete scheme) and \ref{fig:test_01_2_new} (semidiscrete scheme), we show the meshes obtained after 35 iterations of adaptive refinement, the delineations of the discrete boundaries of the active sets, for the discrete and exact solutions, and the computed optimal discrete adjoint velocities and control variables.}

The \AAF{following observations and remarks are therefore appropriate:
~\\
$\bullet$ \textit{Fully discrete scheme}: \EO{It is observed that with uniform refinement, the experimental convergence rates for $\|\nabla \mathbf{e}_{\bar{\mathbf{z}}}\|_{\mathbf{L}^2(\Omega)}$ (see panel (A.1)) and the associated error estimator $\mathcal{E}_{adj,\T}$ (see panel (A.6)) are not optimal. This behavior also occurs for the total error $\mathrm{E}_{ocp,\T}$ (see panel (A.4)) and the total error estimator $\mathcal{E}_{ocp,\T}$ (see panel (A.8)).} In contrast, with adaptive refinement, all corresponding individual contributions from the total approximation error and the error estimator achieve optimal convergence rates.
~\\
$\bullet$ \textit{Semi-discrete scheme}: As in the fully discrete scheme, it is observed that the experimental convergence rates achieved for $\|\nabla \mathbf{e}_{\bar{\mathbf{z}}}\|_{\mathbf{L}^2(\Omega)}$ (see panel (B.1), $\mathfrak{E}_{adj,\T}$ (see panel (B.6)), the total error $E_{ocp,\T}$ (see panel (B.4)), and the total error estimator $\mathfrak{E}_{ocp,\T}$ (see panel (A.7)) are not optimal. In contrast, when adaptive refinement is employed, all corresponding individual contributions from the total approximation error and the error estimator achieve optimal convergence rates.
~\\
$\bullet$  \textit{Effectivity indices}: All calculated effectivity indices remain stable around the value of $10$ (see panels (A.9) and (B.8)).
~\\
$\bullet$  \textit{Mesh refinement}: It is observed that when adaptive refinement is used, most of the refinement is concentrated around the area where the singular behavior of the adjoint velocity occurs.
~\\
$\bullet$  \textit{Boundary of the discrete active set}: As shown in panels (C.1) and (D.1), the discrete level sets obtained using the approximate variational discretization scheme provide highly accurate approximations to that of the continuous active set, shown in panels (C.2) and (D.2). Notably, this accuracy is achieved without the additional computational cost of the fully discrete scheme.
~\\
$\bullet$  \textit{Experimental convergence rates for the optimal control}:  Panel (A.3) corroborates the linear rate of convergence  predicted in \eqref{eq:error_estimate_fd} for the fully discrete scheme. In contrast, panel (B.3) shows a quadratic rate of convergence for the semidiscrete scheme, which agrees with the estimate derived in \eqref{eq:error_estimate_sd}.}

\begin{figure}[!h]
\centering
\begin{minipage}[b]{0.23\textwidth}
\centering
\includegraphics[trim={0 0 0 0},clip,width=3cm,height=3.0cm,scale=0.66]{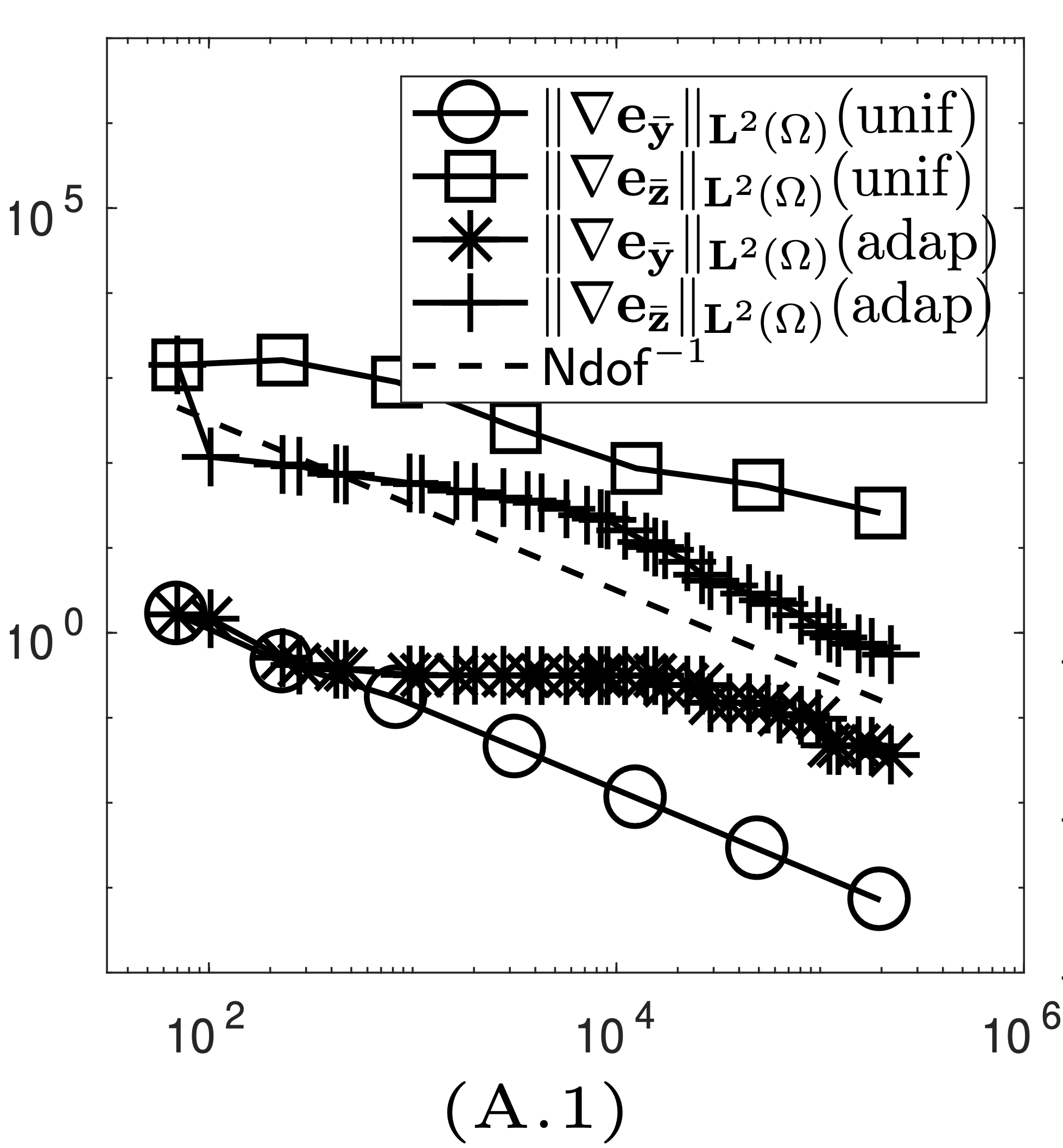} \\
\end{minipage}
\begin{minipage}[b]{0.23\textwidth}
\centering
\includegraphics[trim={0 0 0 0},clip,width=3cm,height=3cm,scale=0.66]{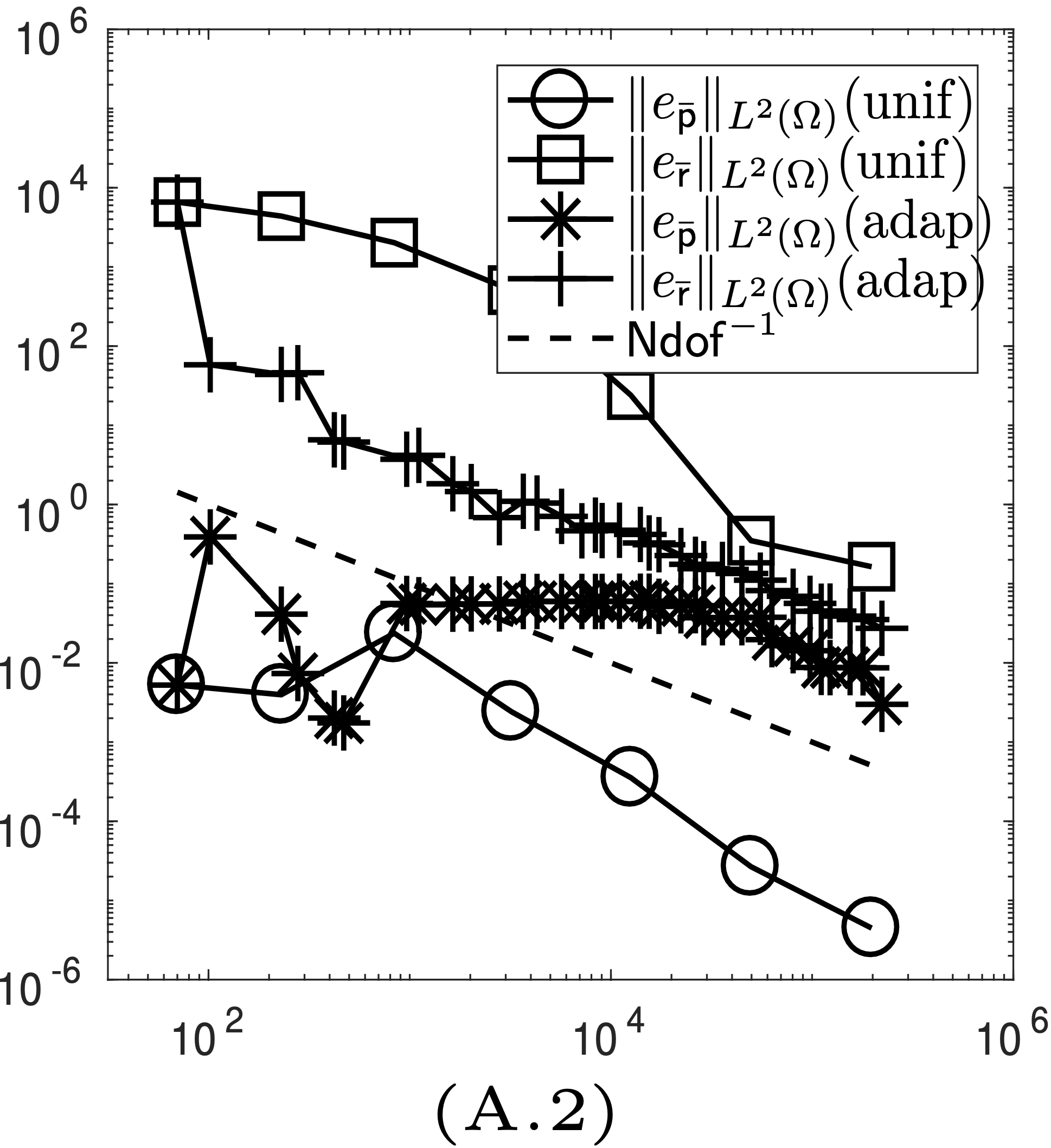} \\
\end{minipage}
\begin{minipage}[b]{0.23\textwidth}
\centering
\includegraphics[trim={0 0 0 0},clip,width=3cm,height=3cm,scale=0.66]{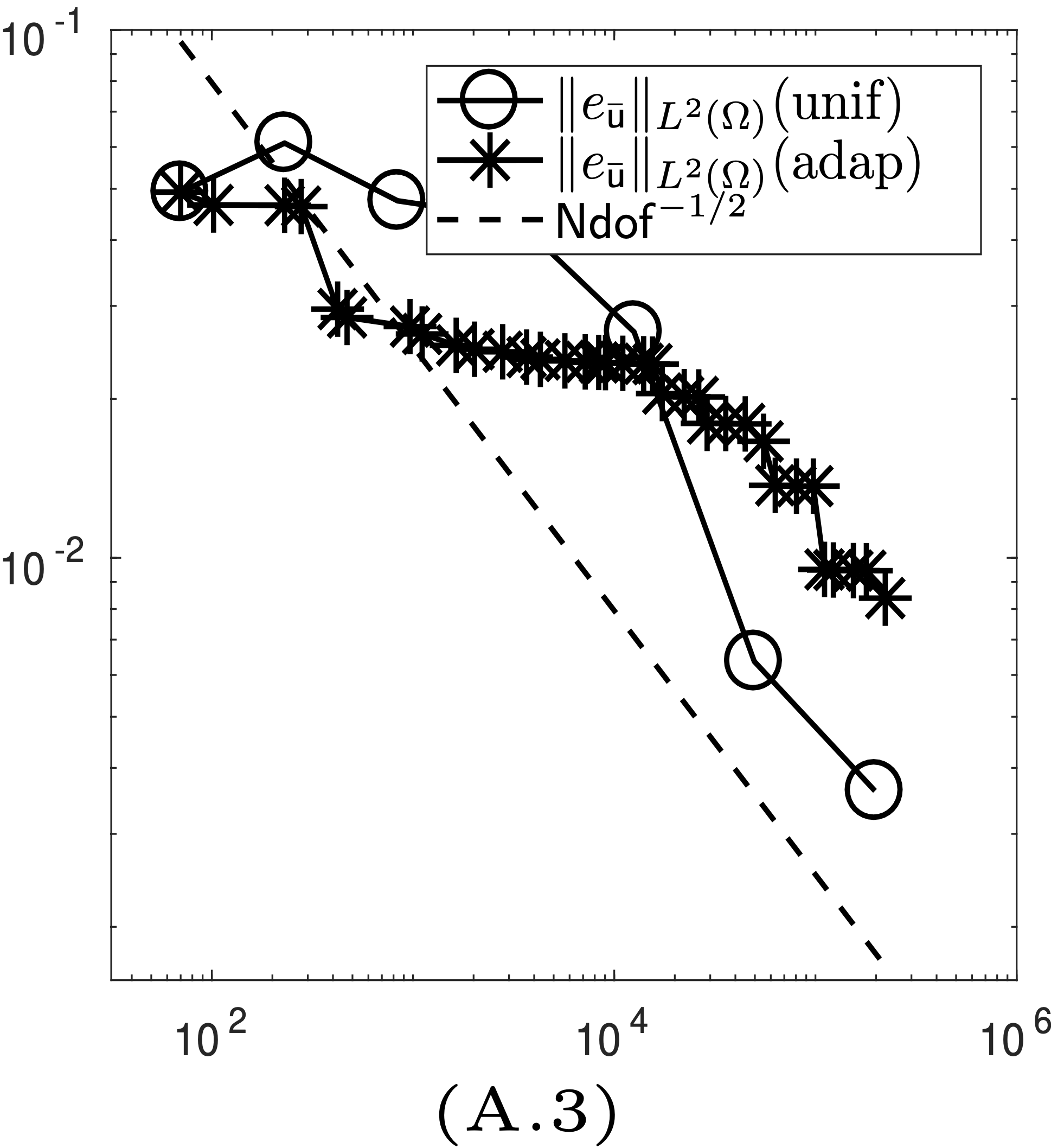} \\
\end{minipage}
\begin{minipage}[b]{0.23\textwidth}
\centering
\includegraphics[trim={0 0 0 0},clip,width=3cm,height=3cm,scale=0.66]{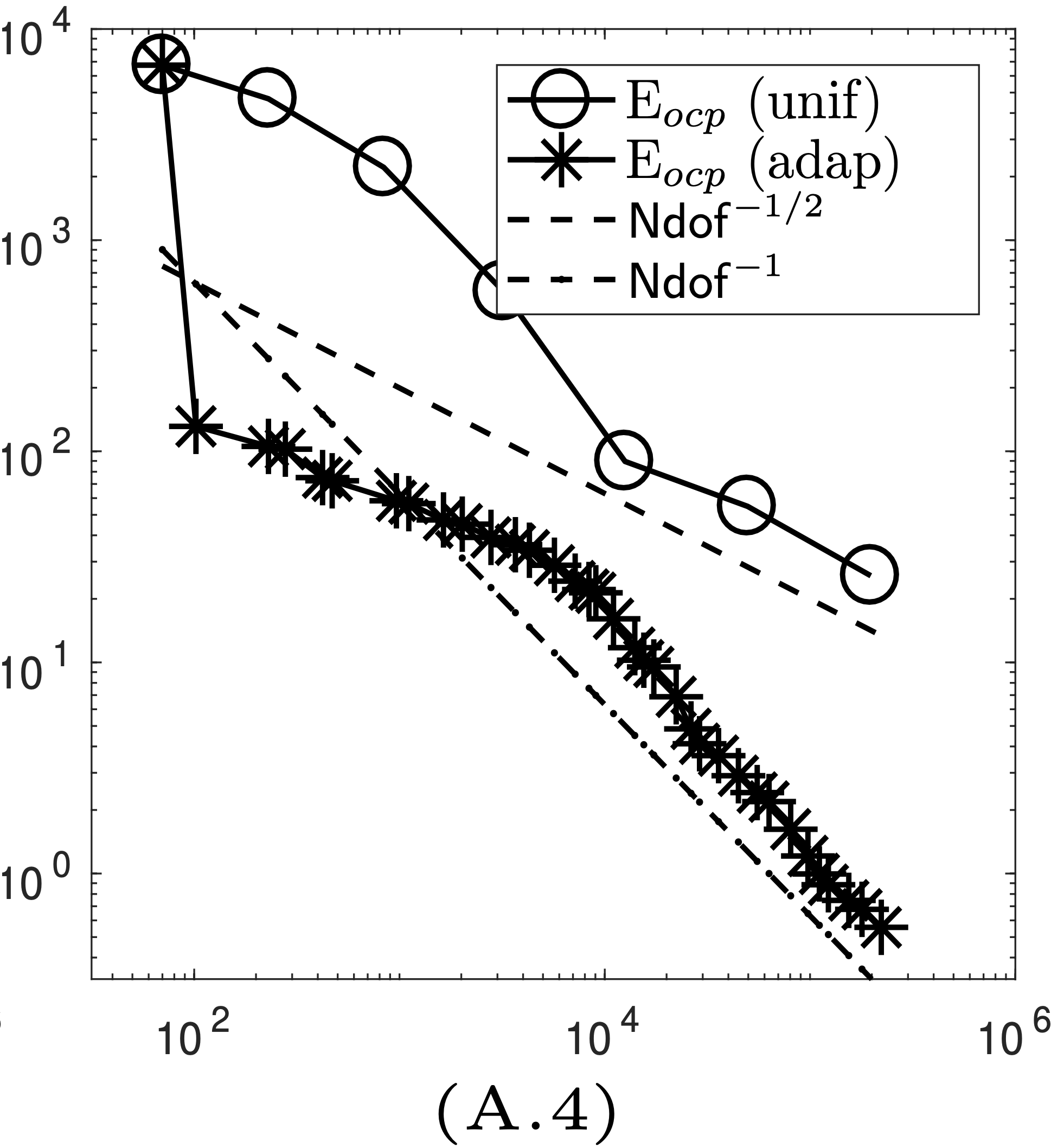} \\
\end{minipage}
\\
\begin{minipage}[b]{0.23\textwidth}
\centering
\includegraphics[trim={0 0 0 0},clip,width=3cm,height=3cm,scale=0.66]{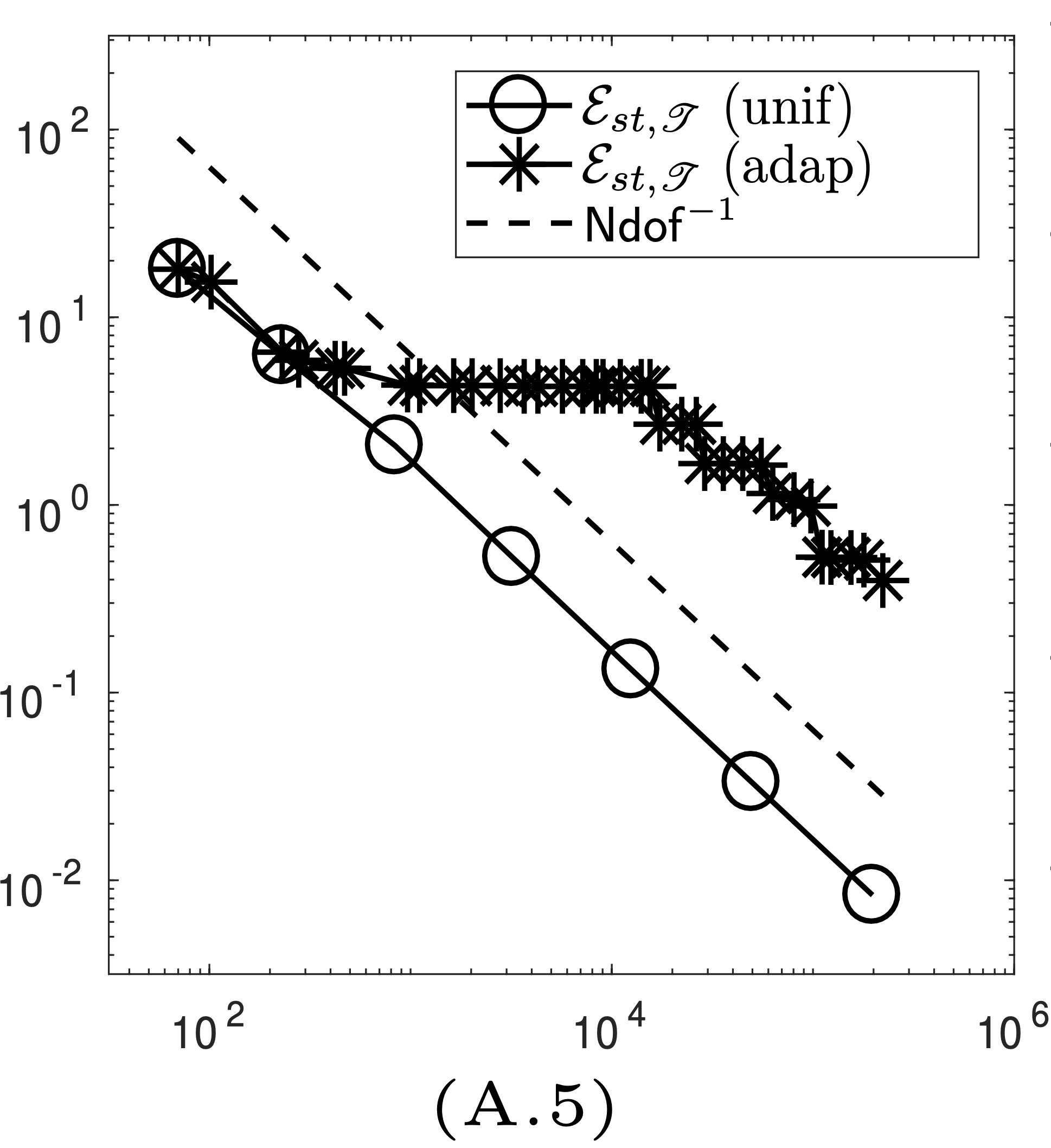} \\
\end{minipage}
\begin{minipage}[b]{0.23\textwidth}
\centering
\includegraphics[trim={0 0 0 0},clip,width=3cm,height=3cm,scale=0.66]{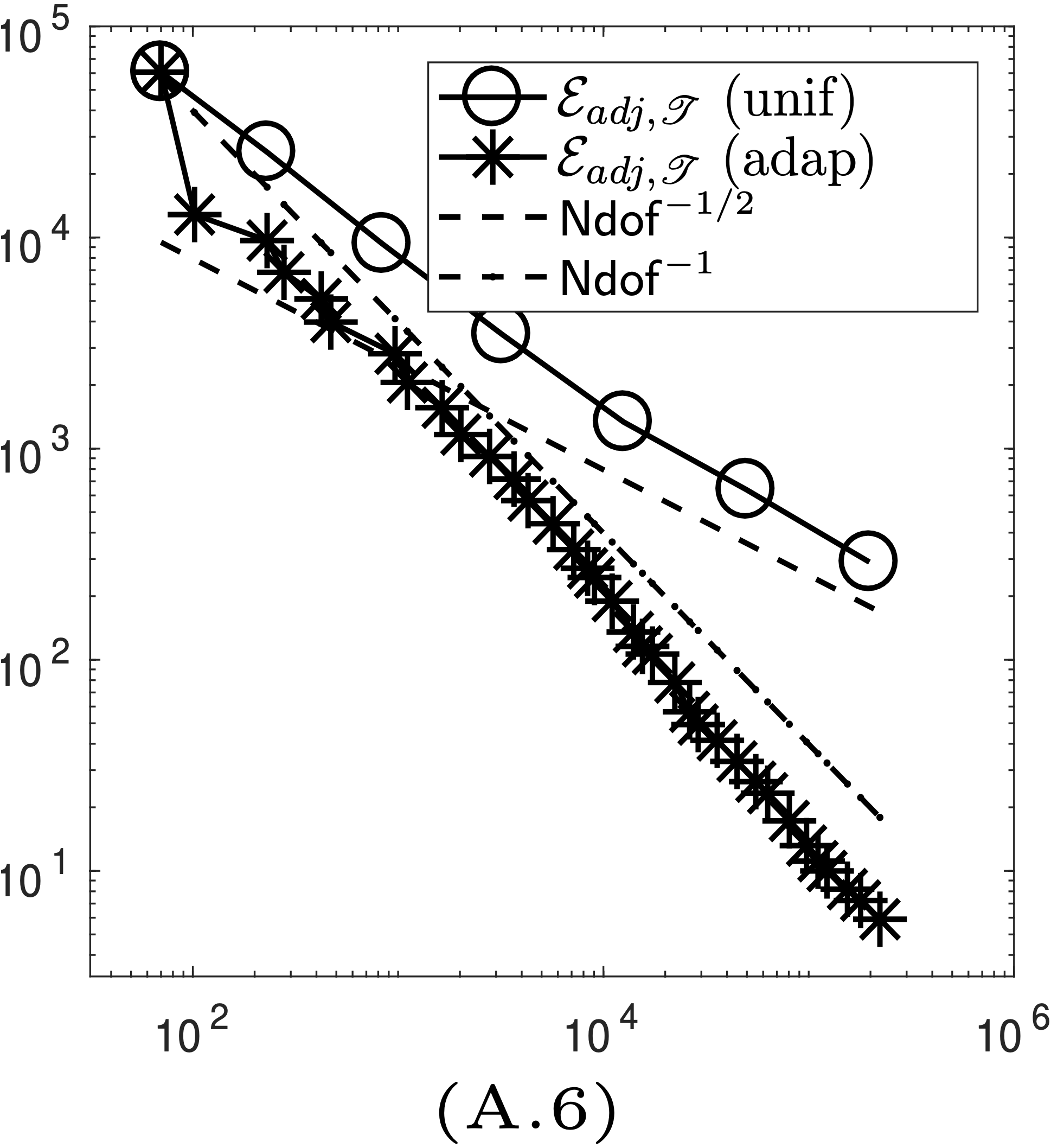} \\
\end{minipage}
\begin{minipage}[b]{0.23\textwidth}
\centering
\includegraphics[trim={0 0 0 0},clip,width=3cm,height=3cm,scale=0.66]{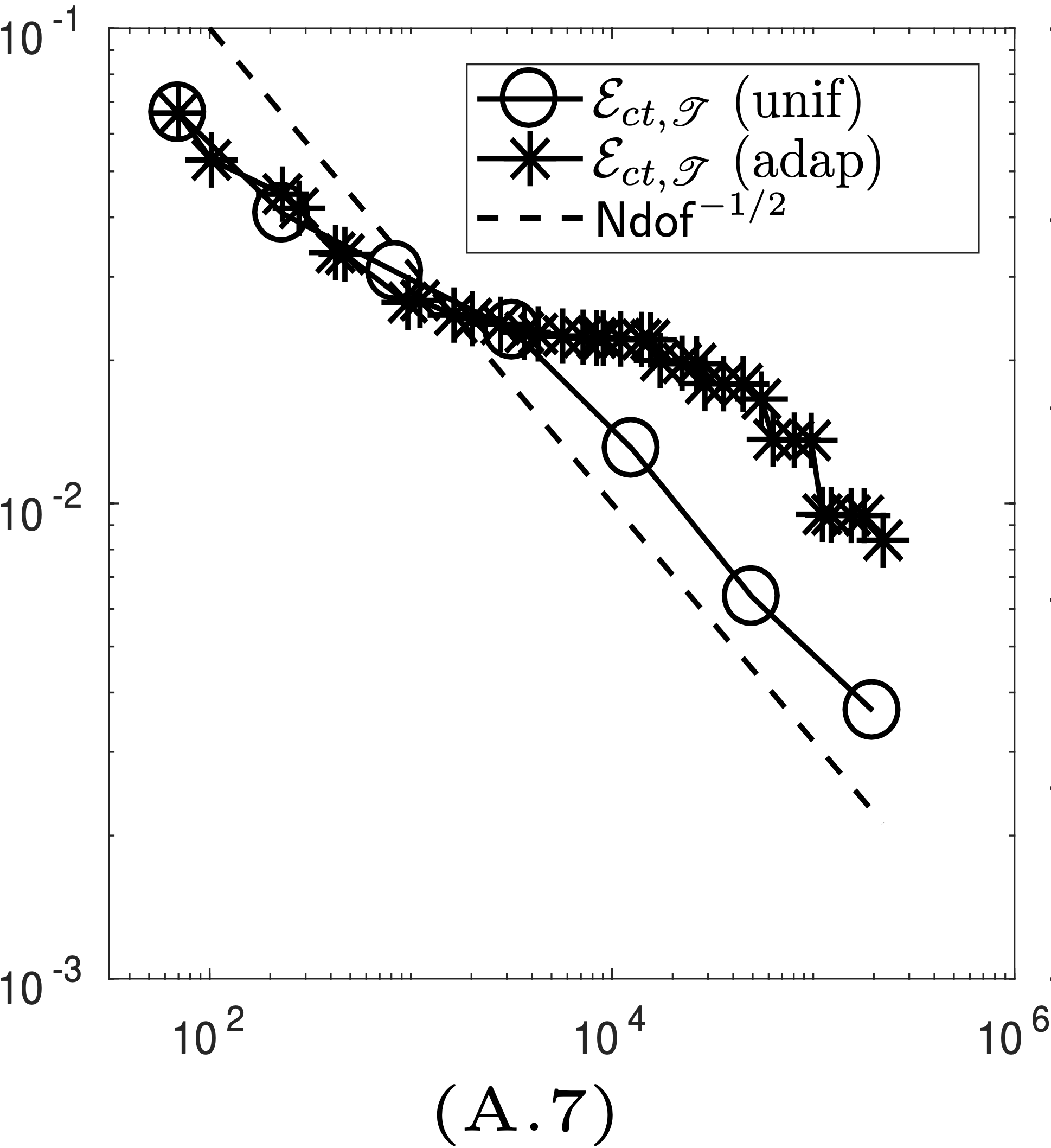} \\
\end{minipage}
\begin{minipage}[b]{0.23\textwidth}
\centering
\includegraphics[trim={0 0 0 0},clip,width=3cm,height=3cm,scale=0.66]{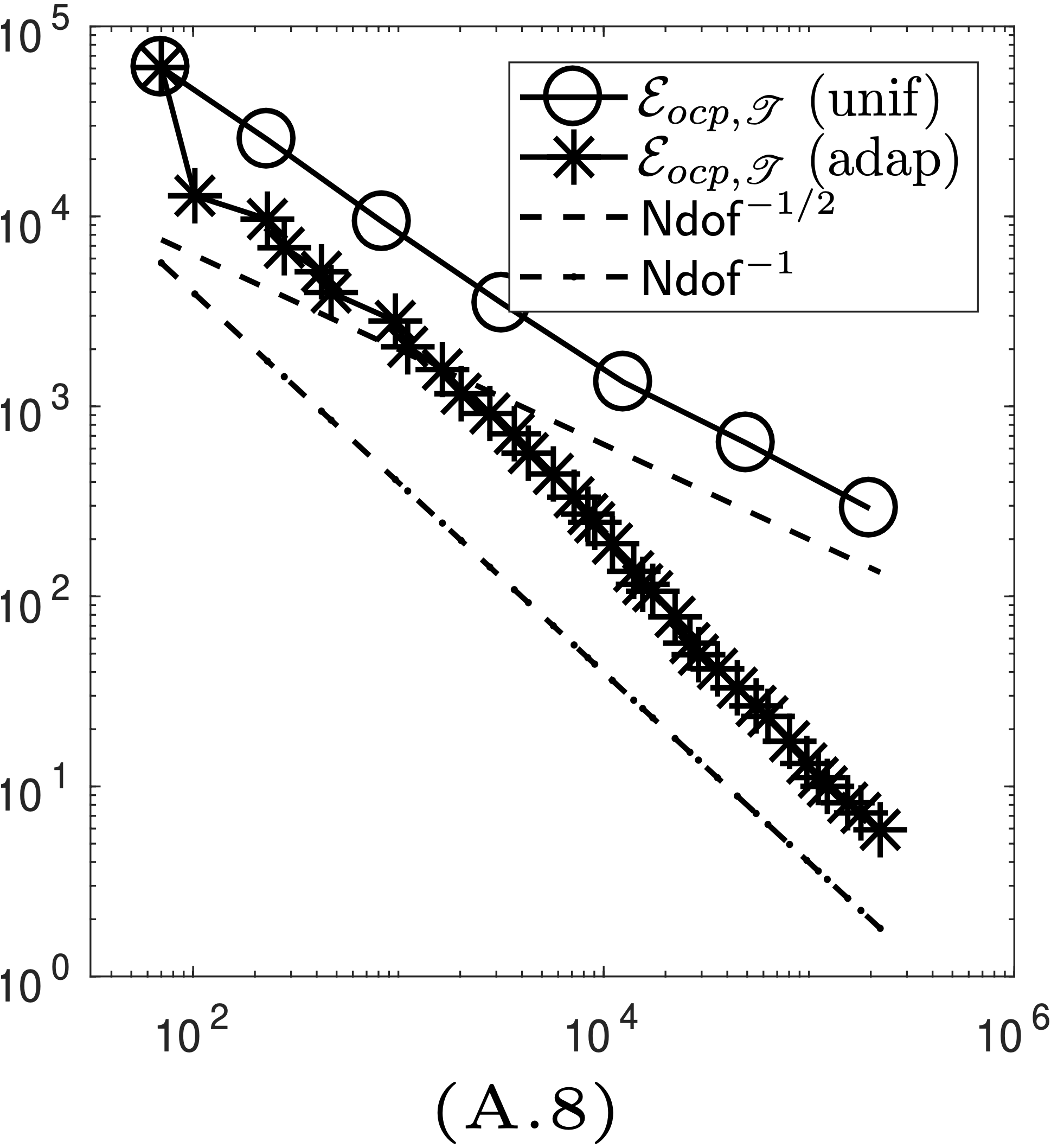} \\
\end{minipage}
\\
\begin{minipage}[b]{0.3\textwidth}
\centering
\includegraphics[trim={0 0 0 0},clip,width=3cm,height=3cm,scale=0.66]{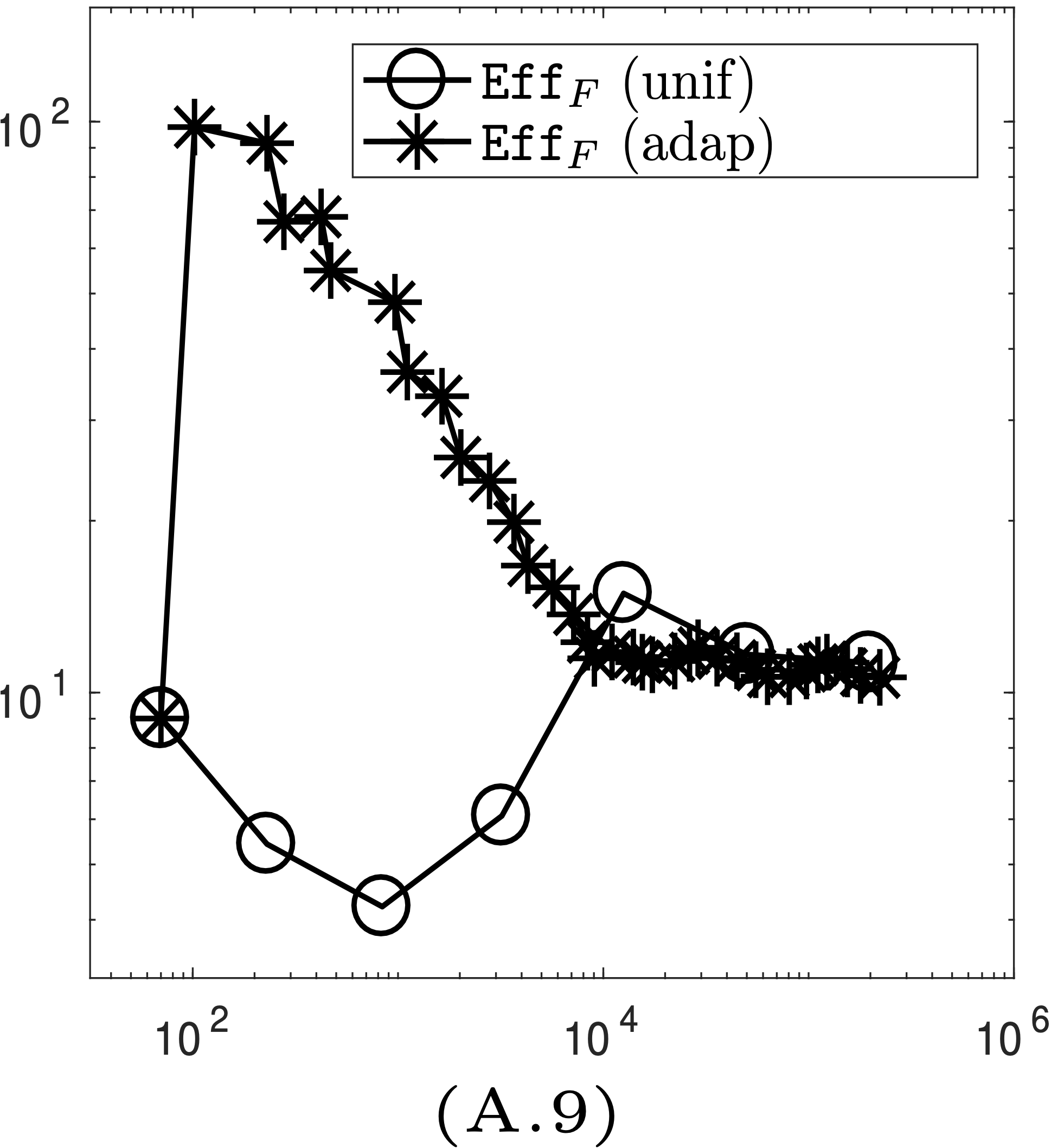} \\
\end{minipage}
\caption{\AAF{Example 1. Experimental convergence rates for the fully discrete scheme with uniform and adaptive refinement: state and adjoint state velocity errors (A.1); state and adjoint state pressure errors (A.2); control errors (A.3); total errors (A.4); state error estimators (A.5); adjoint state error estimators (A.6); control error estimators (A.7); total error estimators (A.8); and effectivity indices (A.9).}}
\label{fig:test_01_1}
\end{figure}

\begin{figure}[!h]
\centering
\begin{minipage}[b]{0.23\textwidth}
\centering
\includegraphics[trim={0 0 0 0},clip,width=3cm,height=3cm,scale=0.66]{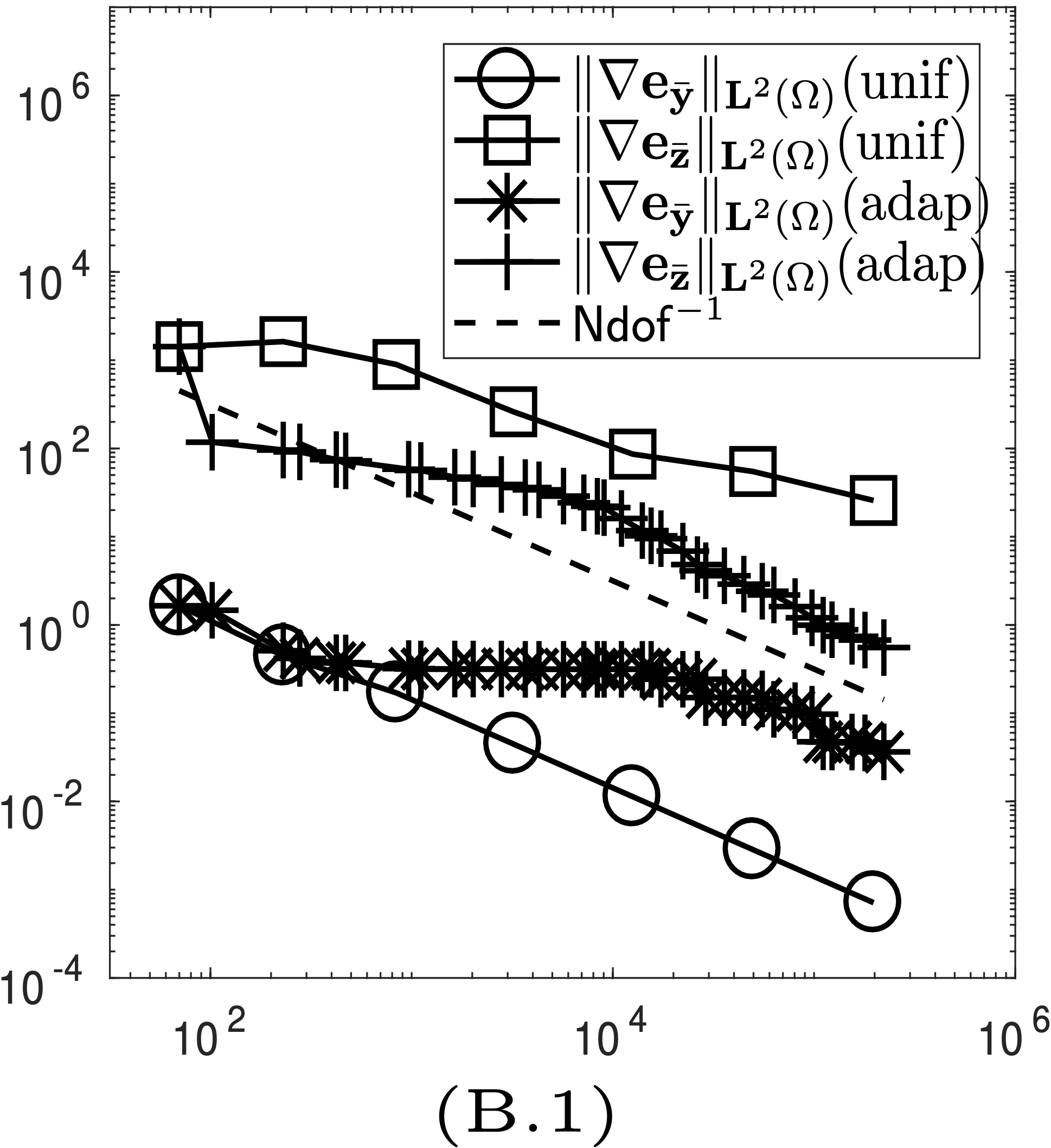} \\
\end{minipage}
\begin{minipage}[b]{0.23\textwidth}
\centering
\includegraphics[trim={0 0 0 0},clip,width=3cm,height=3cm,scale=0.66]{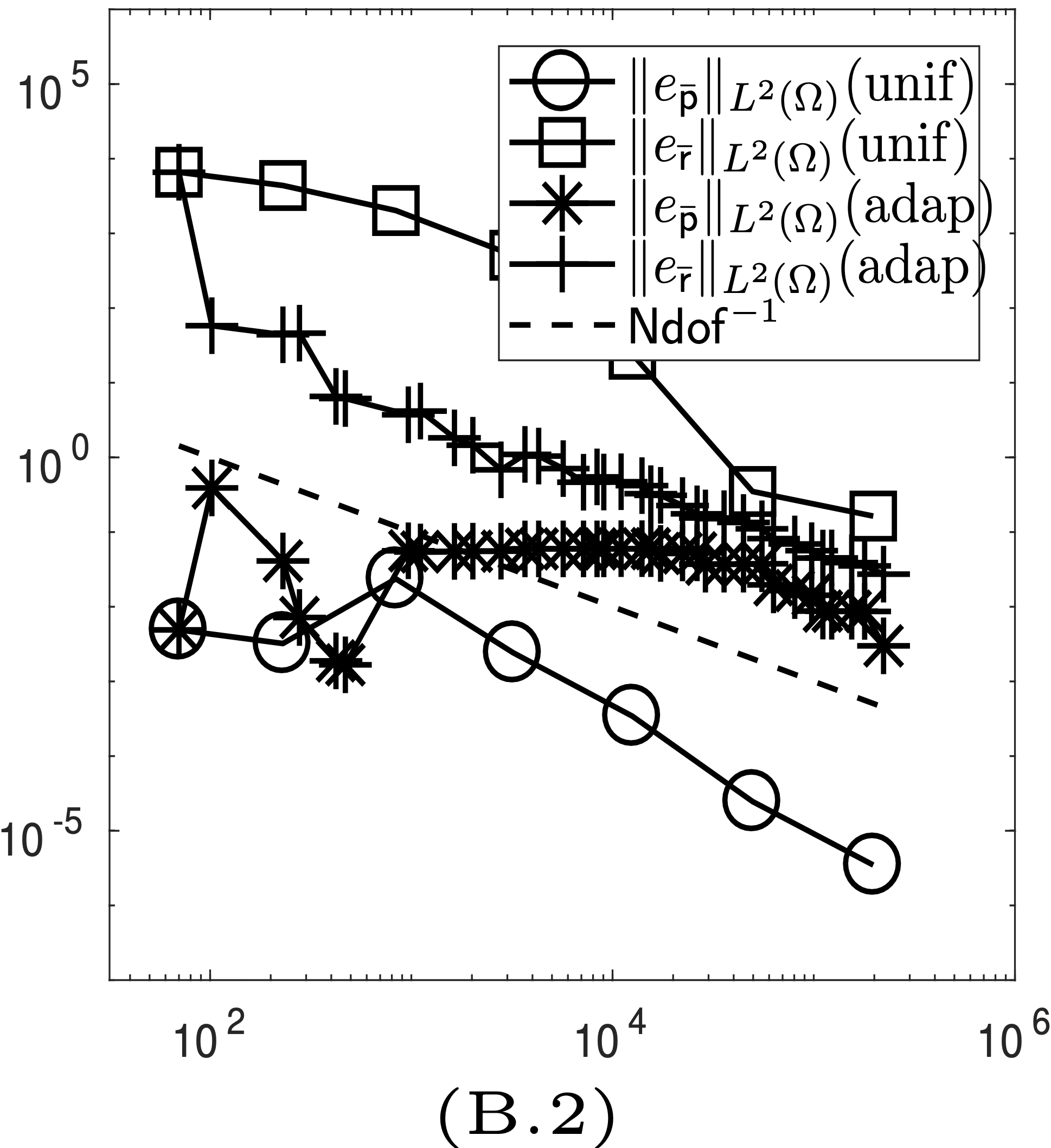} \\
\end{minipage}
\begin{minipage}[b]{0.23\textwidth}
\centering
\includegraphics[trim={0 0 0 0},clip,width=3cm,height=3cm,scale=0.66]{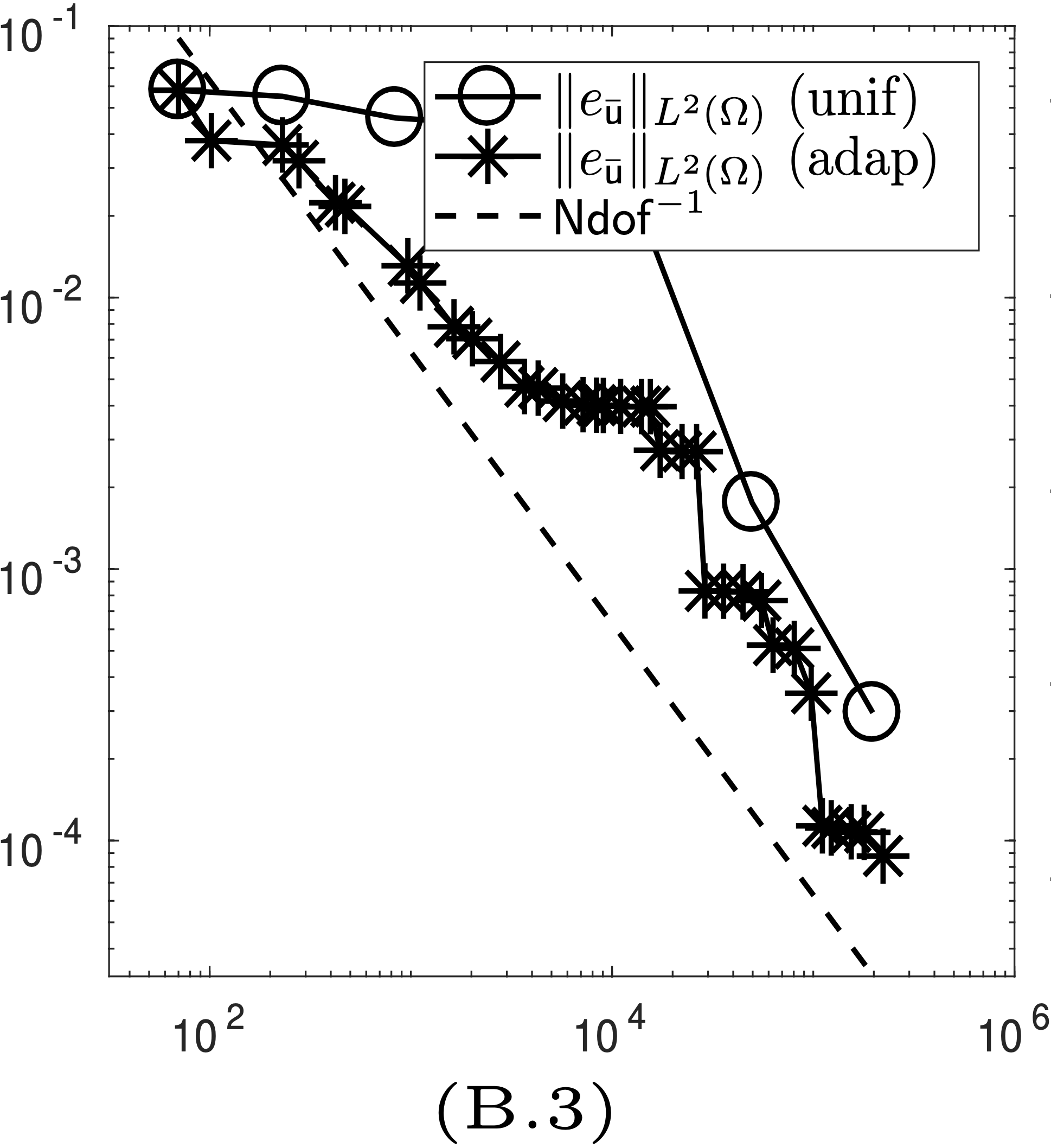} \\
\end{minipage}
\begin{minipage}[b]{0.23\textwidth}
\centering
\includegraphics[trim={0 0 0 0},clip,width=3cm,height=3cm,scale=0.66]{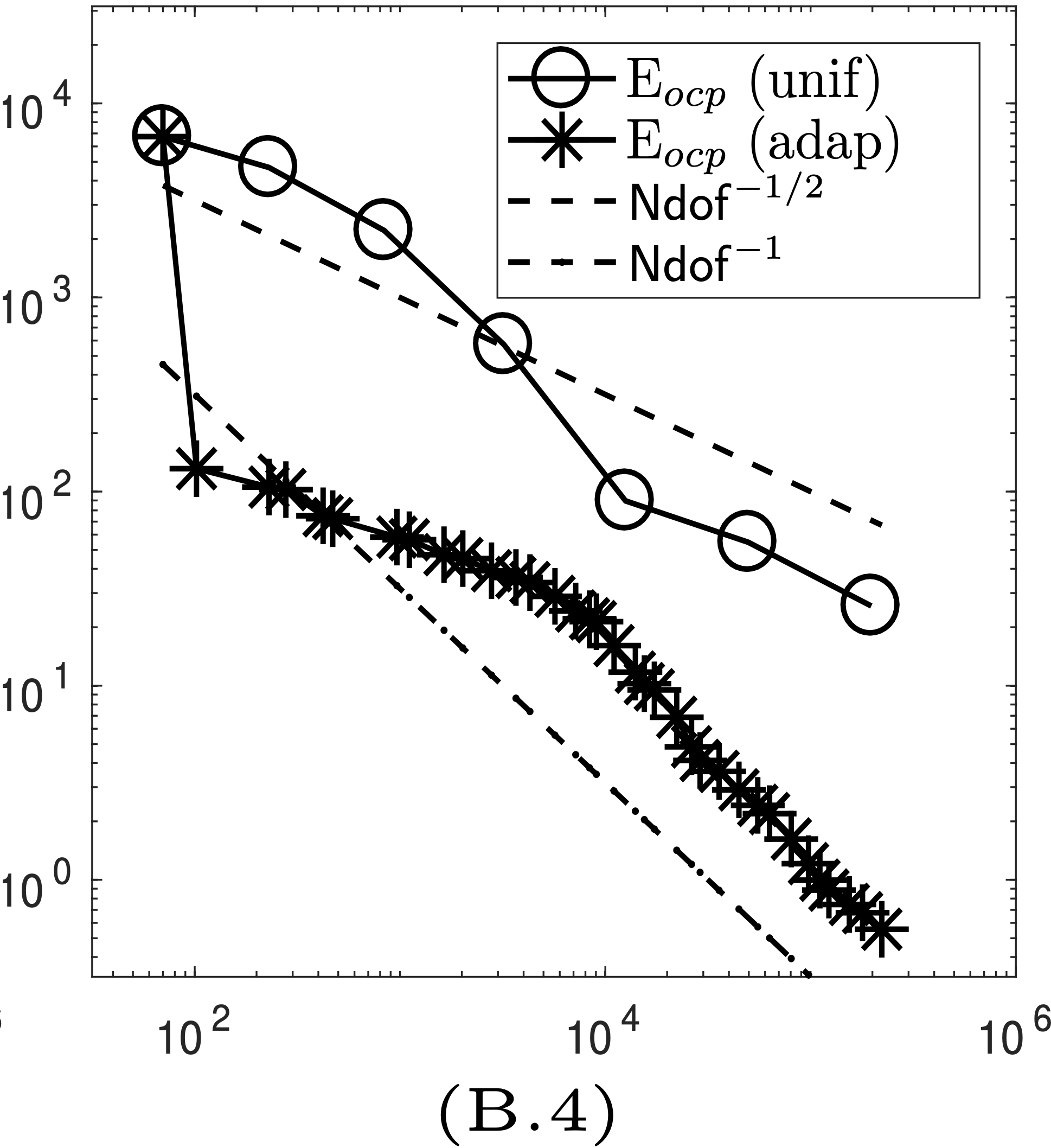} \\
\end{minipage}
\\
\begin{minipage}[b]{0.23\textwidth}
\centering
\includegraphics[trim={0 0 0 0},clip,width=3cm,height=3cm,scale=0.66]{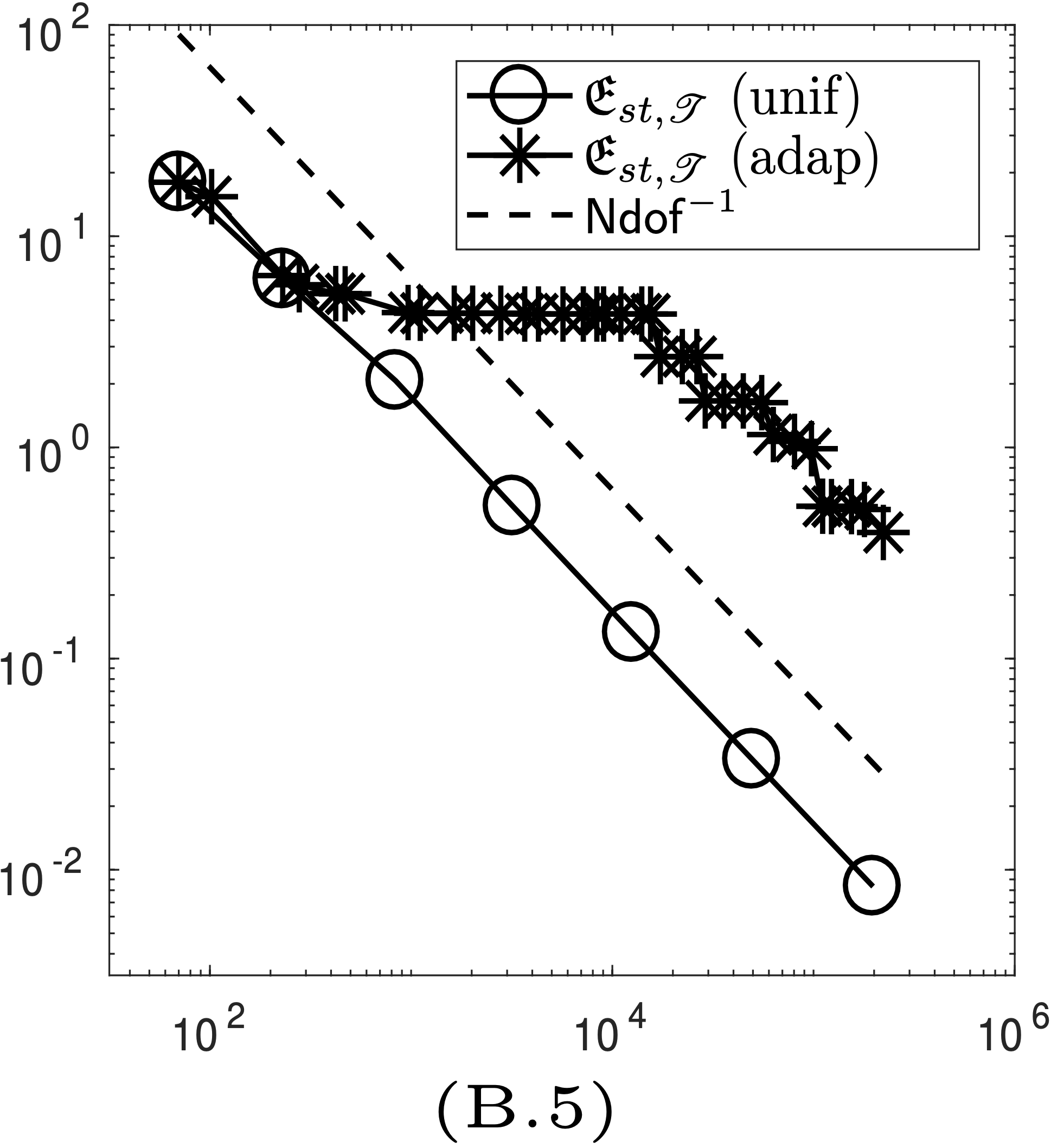} \\
\end{minipage}
\begin{minipage}[b]{0.23\textwidth}
\centering
\includegraphics[trim={0 0 0 0},clip,width=3cm,height=3cm,scale=0.66]{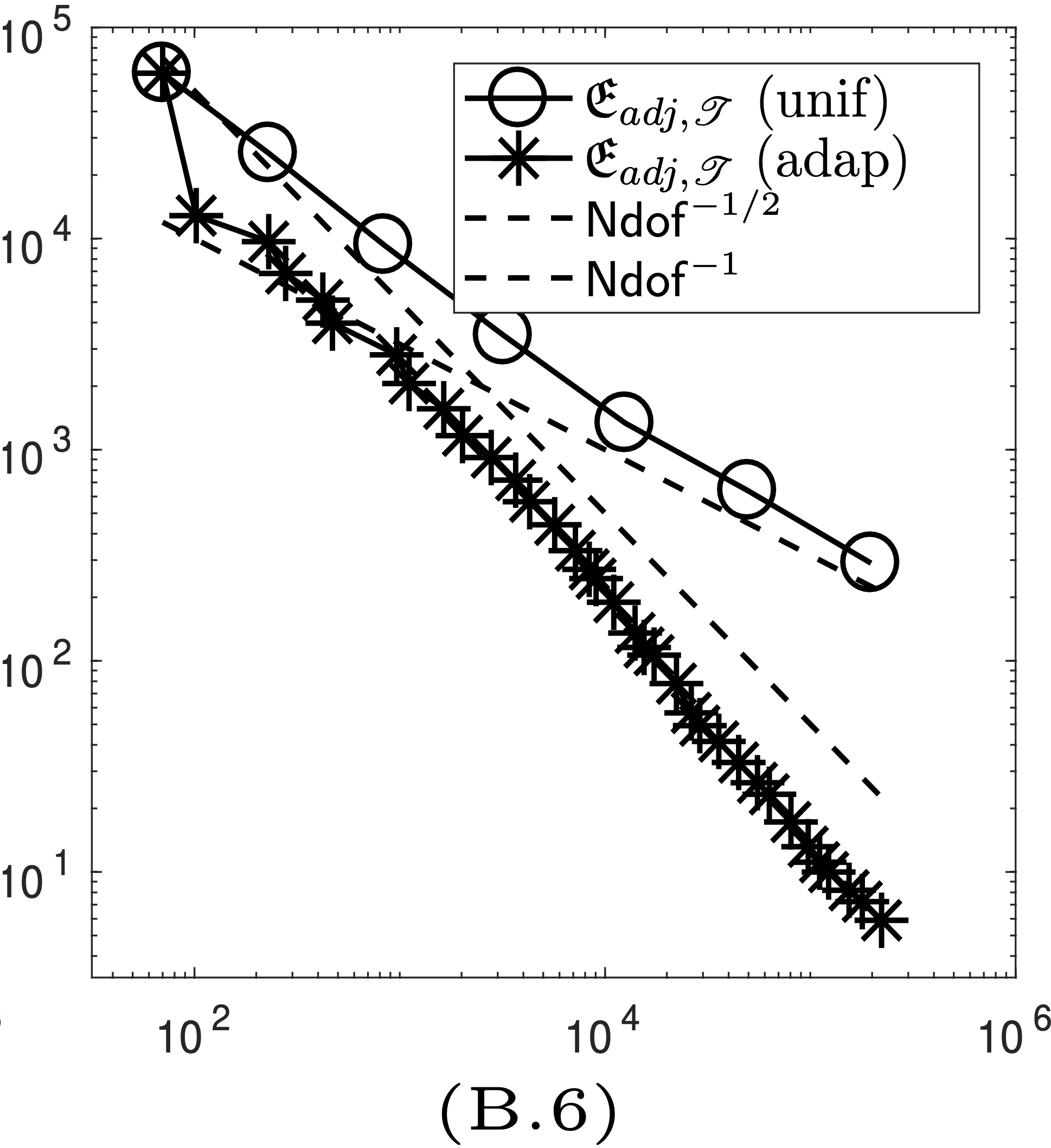} \\
\end{minipage}
\begin{minipage}[b]{0.23\textwidth}
\centering
\includegraphics[trim={0 0 0 0},clip,width=3cm,height=3cm,scale=0.66]{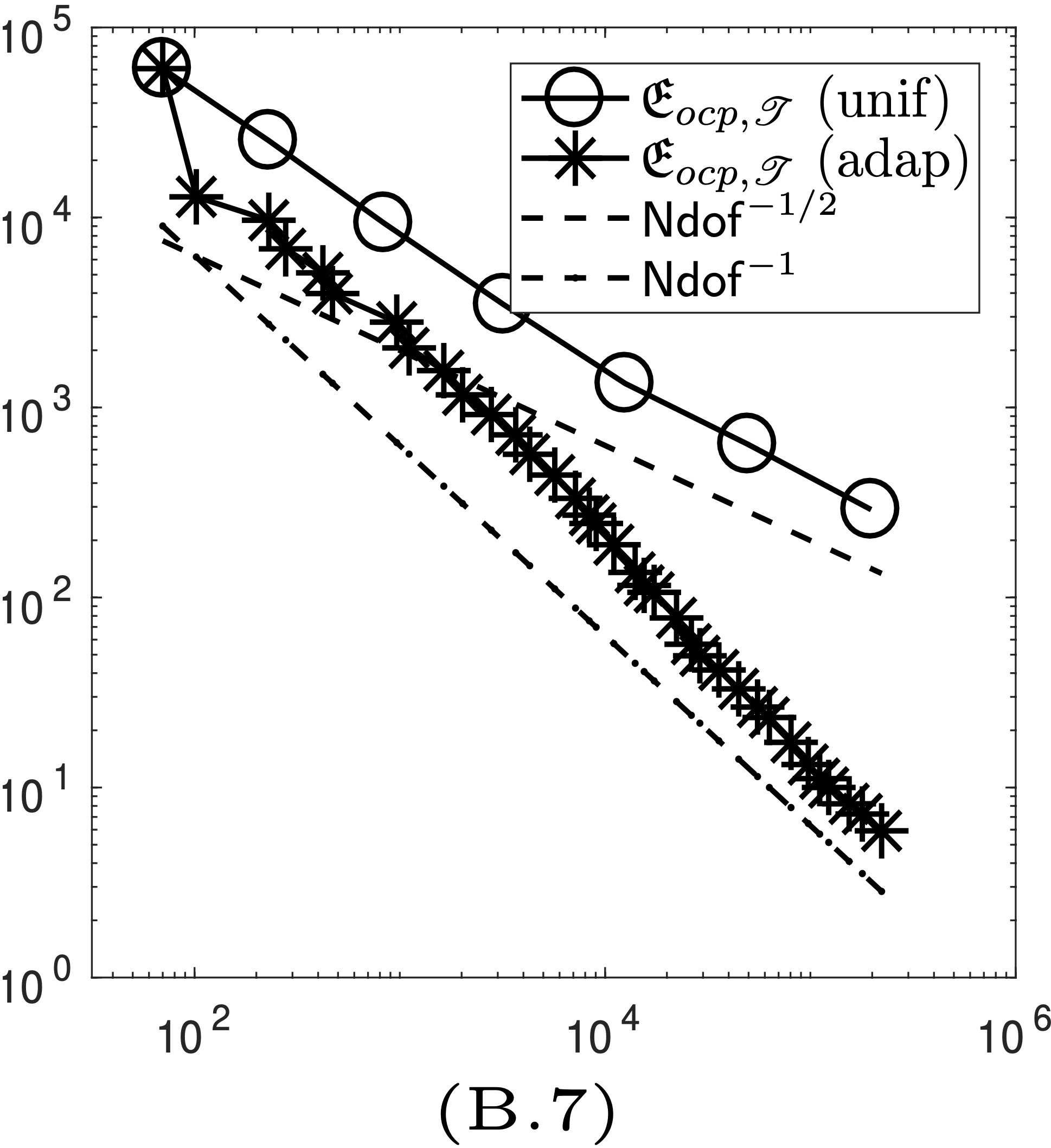} \\
\end{minipage}
\begin{minipage}[b]{0.23\textwidth}
\centering
\includegraphics[trim={0 0 0 0},clip,width=3cm,height=2.9cm,scale=0.66]{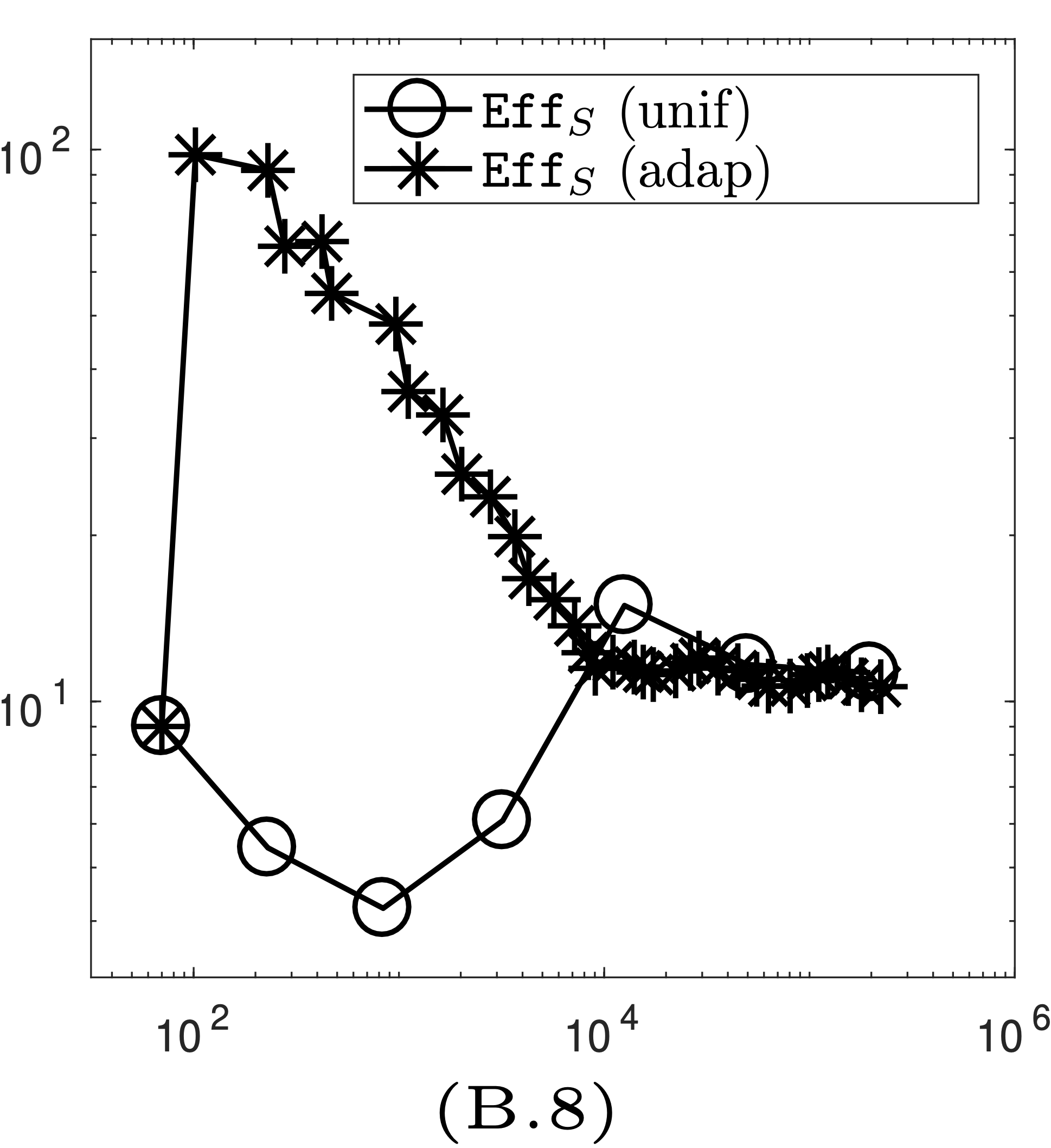} \\
\end{minipage}
\\
\caption{\AAF{Example 1. Experimental convergence rates for the semidiscrete scheme with uniform and adaptive refinement: state and adjoint state velocity errors (B.1); state and adjoint state pressure errors (B.2); control errors (B.3); total errors (B.4); state error estimators (B.5); adjoint state error estimators (B.6); total error estimators (B.7); and effectivity indices (B.8).}}
\label{fig:test_01_1_2}
\end{figure}

\begin{figure}[!h]
\centering
\begin{minipage}[b]{0.24\textwidth}
\centering
\includegraphics[trim={0cm 0cm 0cm 0cm},clip,width=2.7cm,height=3.2cm,scale=0.66]{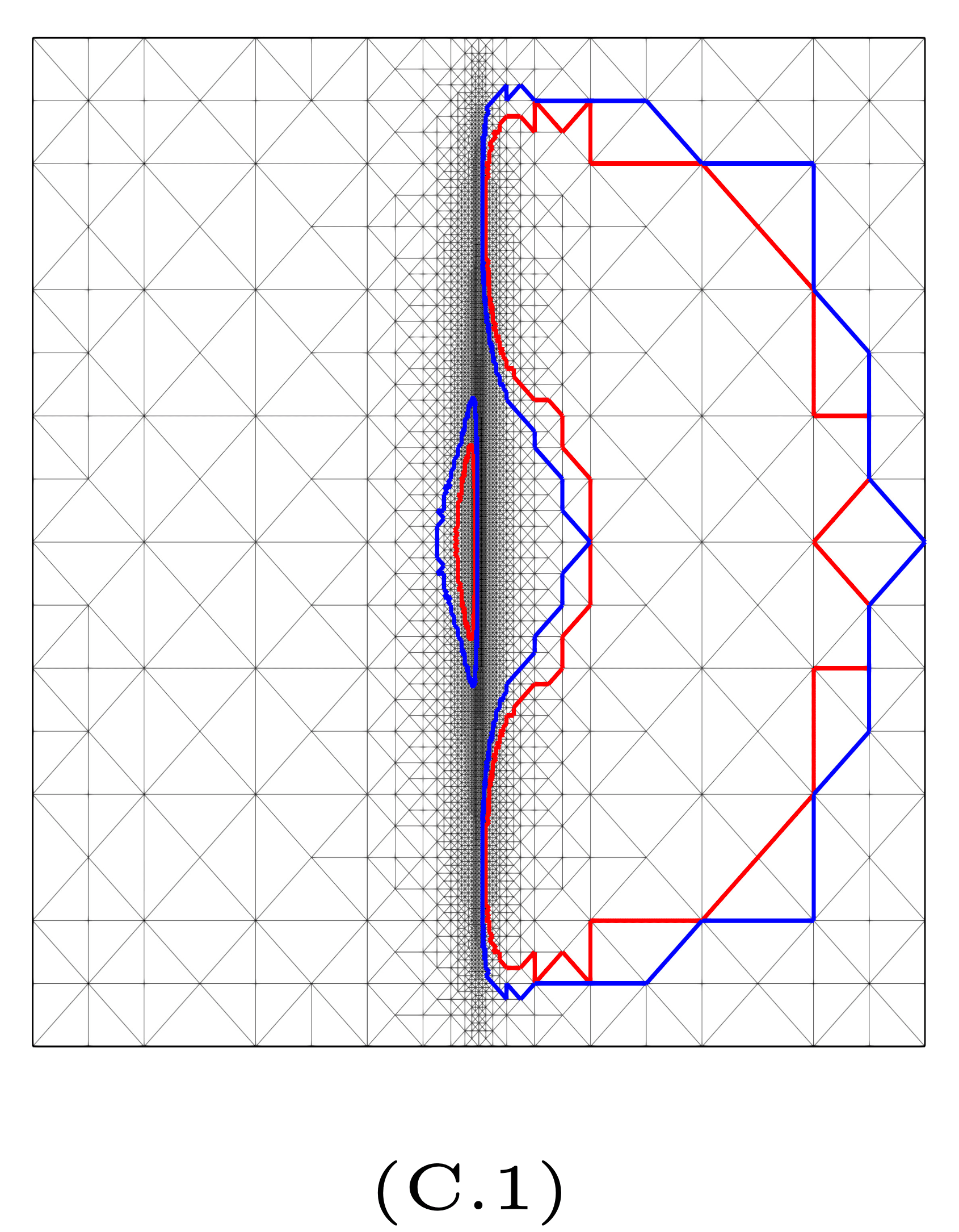} \\
\end{minipage}
\begin{minipage}[b]{0.24\textwidth}
\centering
\includegraphics[trim={0cm 0cm 0cm 0cm},clip,width=2.7cm,height=3.2cm,scale=0.66]{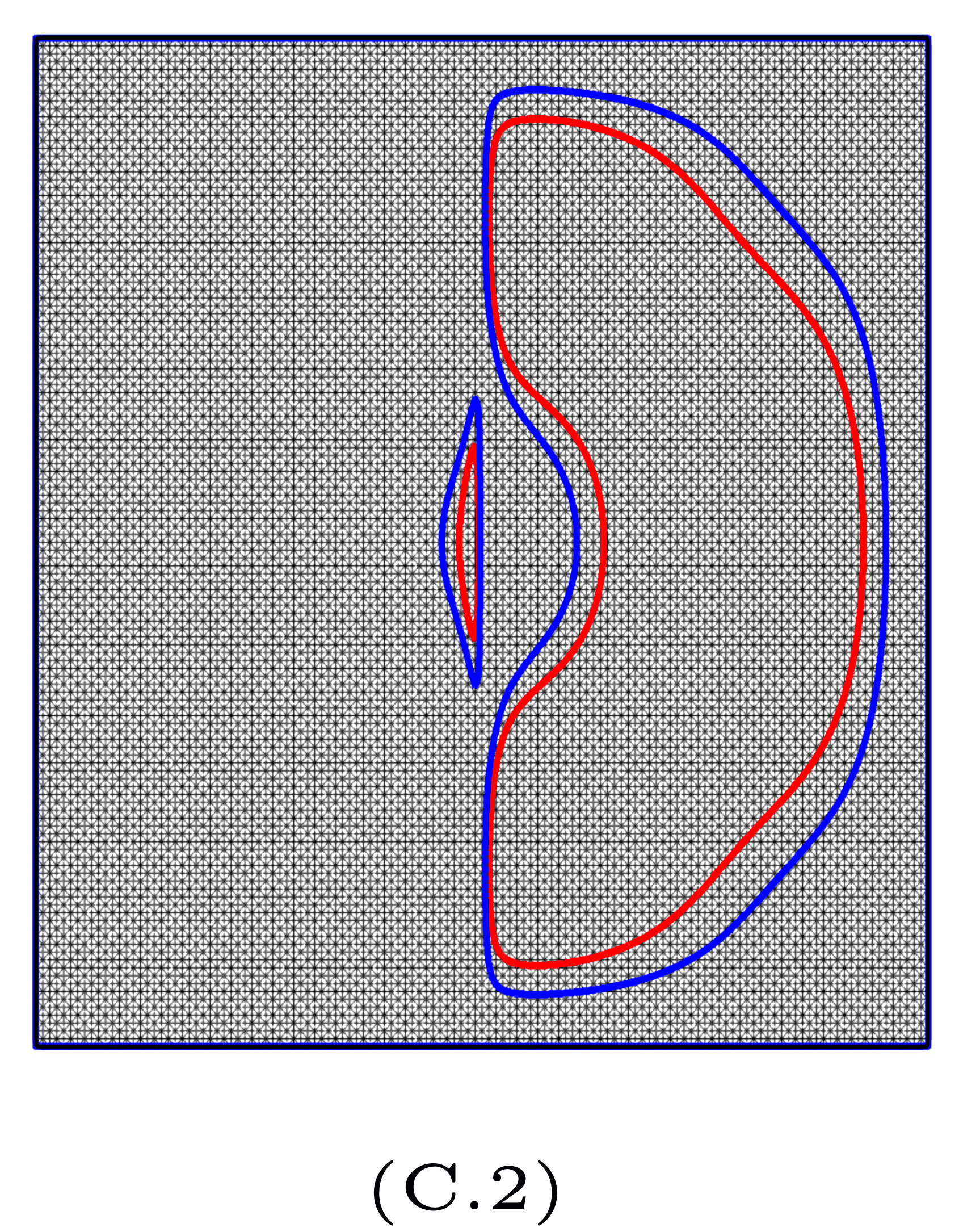}  \\
\end{minipage}
\begin{minipage}[b]{0.24\textwidth}
\centering
\includegraphics[trim={0 0 0 0},clip,width=3cm,height=3.2cm,scale=0.66]{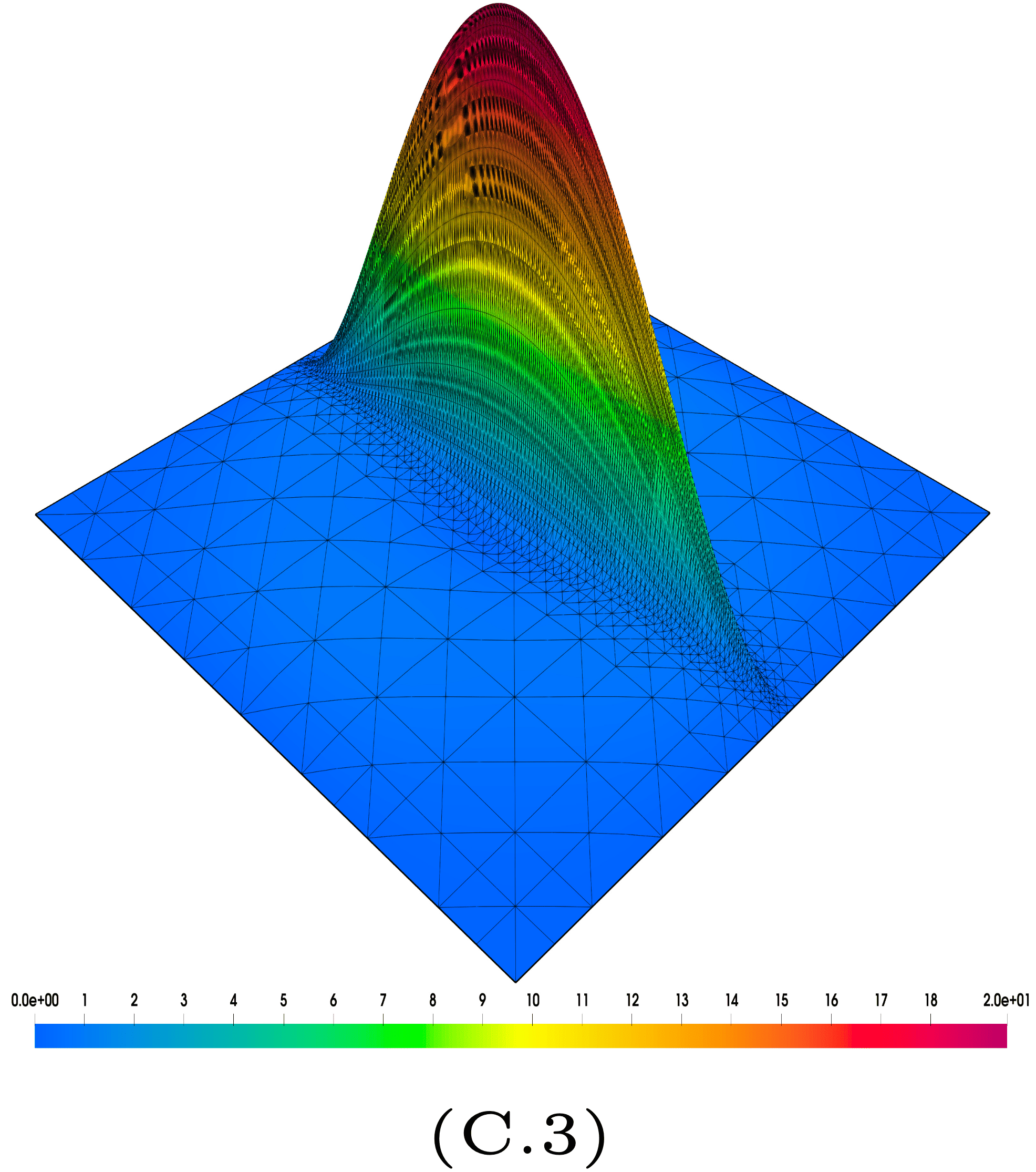}  \\
\end{minipage}
\begin{minipage}[b]{0.24\textwidth}
\centering
\includegraphics[trim={0 0 0 0},clip,width=3cm,height=3.2cm,scale=0.66]{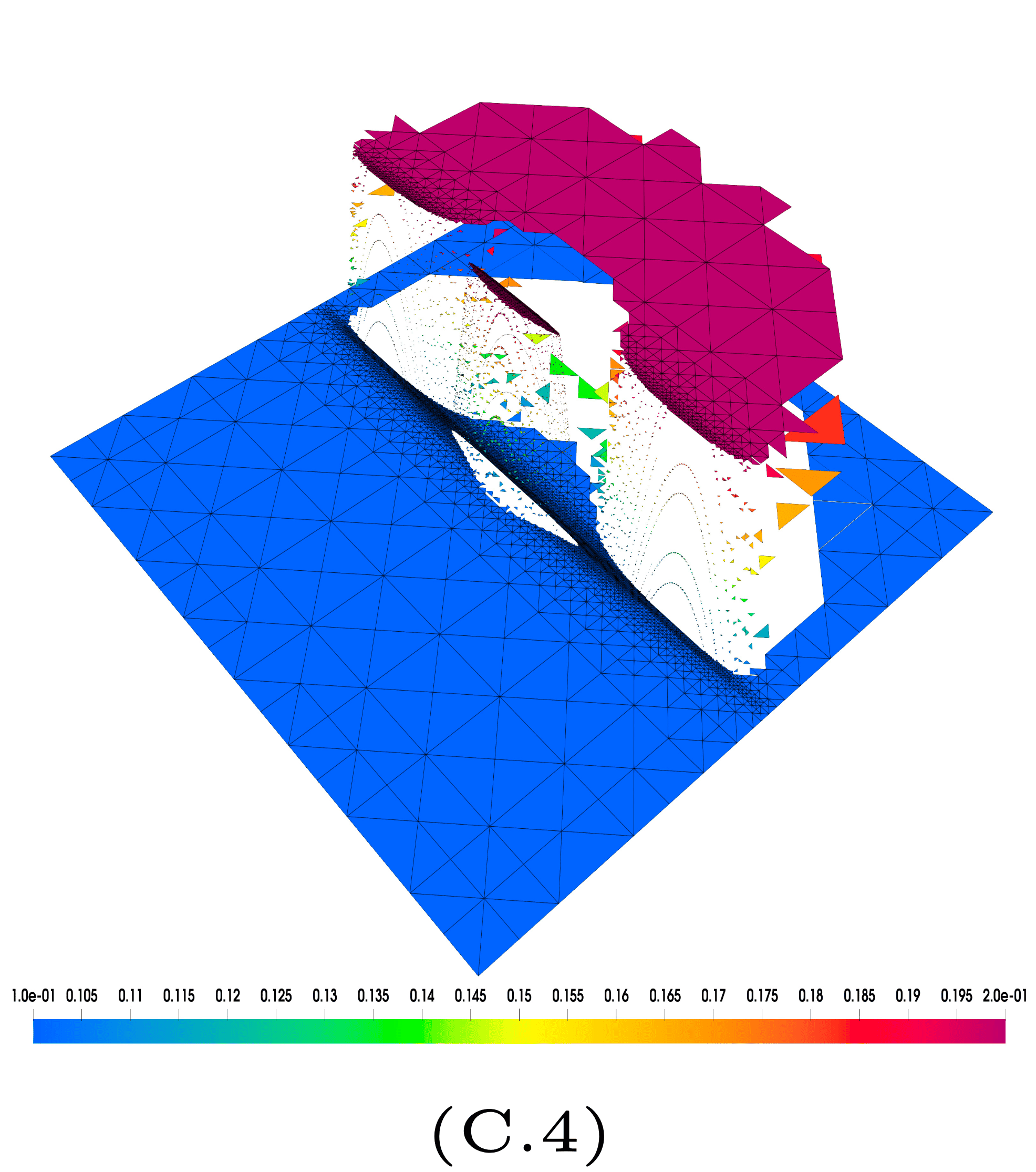}  \\
\end{minipage}
\caption{\AAF{Example 1. For the fully discrete scheme we present the mesh obtained after 35 iterations of adaptive refinement (18.504 triangles and 9.285 vertices), the discrete boundary of the active sets, where the blue line corresponds to the one for $\mathsf{a}=0.1$, and the red line corresponds to the one for $\mathsf{b}=0.2$ (C.1); the exact boundary of the corresponding active sets (C.2),
the discrete optimal adjoint velocity $|\bar{\mathbf{z}}_{h}|$ (C.3); and the discrete optimal control $\bar{u}_{h}$ (C.4).}}
\label{fig:test_01_2}
\end{figure}

\begin{figure}[!h]
\centering
\begin{minipage}[b]{0.24\textwidth}
\centering
\includegraphics[trim={0 0 0 0},clip,width=2.7cm,height=3.2cm,scale=0.66]{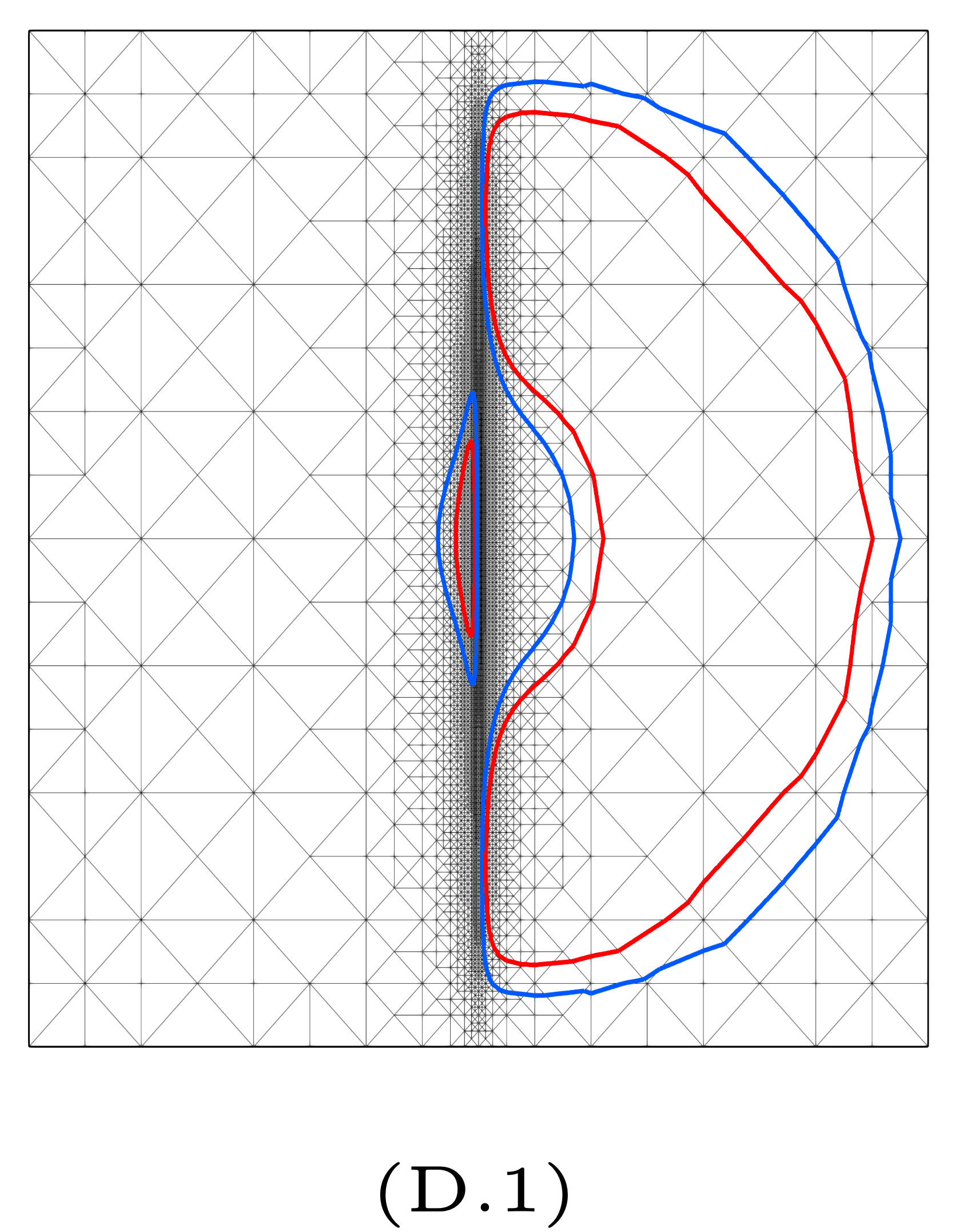} \\
\end{minipage}
\begin{minipage}[b]{0.24\textwidth}
\centering
\includegraphics[trim={0cm 0cm 0cm 0cm},clip,width=2.7cm,height=3.2cm,scale=0.66]{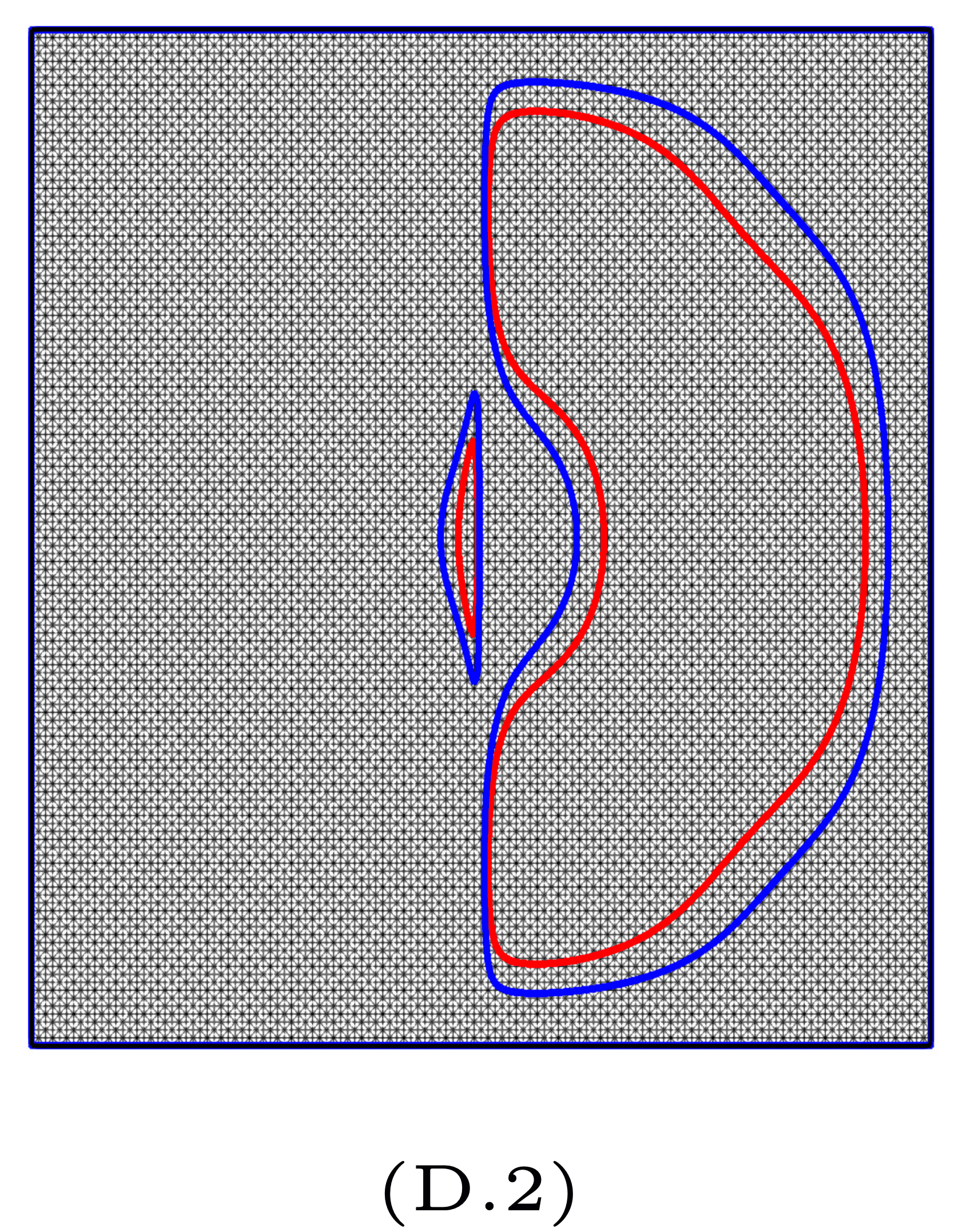} \\
\end{minipage}
\begin{minipage}[b]{0.24\textwidth}
\centering
\includegraphics[trim={0 0 0 0},clip,width=3cm,height=3cm,scale=0.66]{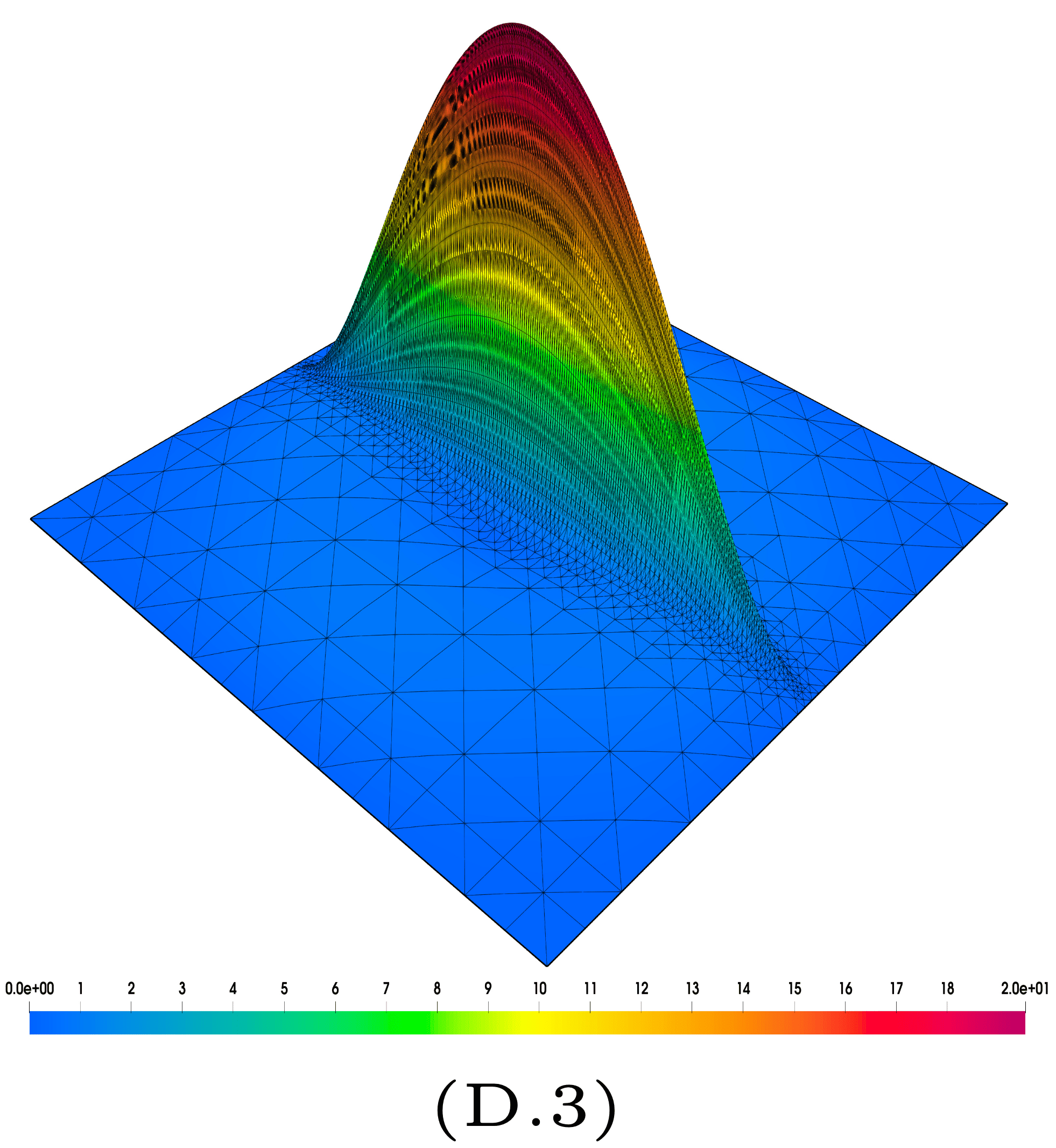} \\
\end{minipage}
\begin{minipage}[b]{0.24\textwidth}
\centering
\includegraphics[trim={0 0 0 0},clip,width=3cm,height=3cm,scale=0.66]{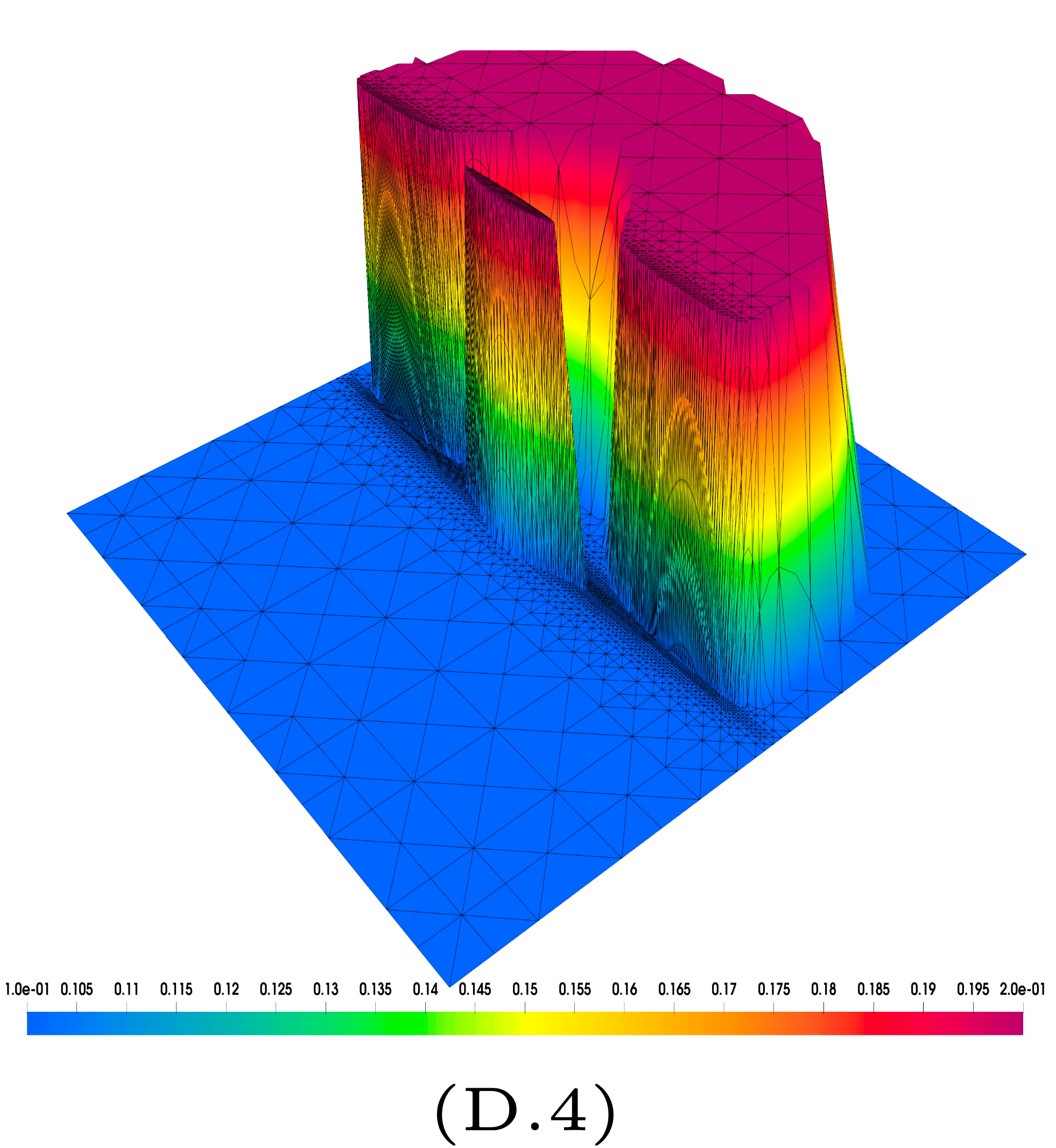} \\
\end{minipage}
\caption{\AAF{Example 1. For the semidiscrete scheme we present the mesh obtained after 35 iterations of adaptive refinement (18.504 triangles and 9.285 vertices), the discrete boundary of the active sets, where the blue line corresponds to the one for $\mathsf{a}=0.1$, and the red line corresponds to the one for $\mathsf{b}=0.2$ (D.1); the exact boundary of the corresponding active sets (D.2); the discrete optimal adjoint velocity $|\bar{\mathbf{z}}_{h}|$ (D.3); and the discrete optimal control $\bar{\mathsf{u}}=\Pi_{[\mathsf{a},\mathsf{b}]}(\alpha^{-1}\bar{\mathbf{y}}_{h}\cdot\bar{\mathbf{z}}_{h})$ (D.4).}}
\label{fig:test_01_2_new}
\end{figure}

\subsection{A non-convex domain}

 In this section, we apply the adaptive strategy to a problem posed on an L-shaped domain for which no analytical solution is known.
 
\textbf{Example 2.} We set $\Omega=(-1, 1) \times (-1, 1) \setminus [0,1] \times [-1,0]$, $\mathsf{a}=1.0$, $\mathsf{b}=5.0$, $\alpha=1.0$, $\mathbf{f}=1000((x+y)^4,\sin(2\pi x)\sin(2\pi y))^{\mathrm{T}}$, and $\mathbf{y}_{\Omega}=1000(\sin(2\pi x)\sin(2\pi y),xy(1-x)(1-y))^{\mathrm{T}}$.

In \AAF{Figures \ref{fig:test_0_1_2} (fully discrete scheme) and \ref{fig:test_0_1_2_1} (semidiscrete scheme), we present experimental rates of convergence for the individual contributions of the total errors and the error estimators. We observe that optimal experimental convergence rates are achieved. In Figures \ref{fig:test_02} (fully discrete scheme) and \ref{fig:test_02_SEMI} (semi discrete scheme), we show the meshes obtained after 50 iterations of adaptive refinement, the delineations of the discrete boundaries
of the active sets (see panels (G.1) and (H.1)), and the discrete optimal control. The figures clearly illustrate that the mesh refinement predominantly concentrates around the geometric singularity.}

\begin{figure}[!h]
\centering
\begin{minipage}[b]{0.22\textwidth}
\centering\hspace*{-0.8cm}
\includegraphics[trim={0 0 0 0},clip,width=2.7cm,height=2.7cm,scale=0.66]{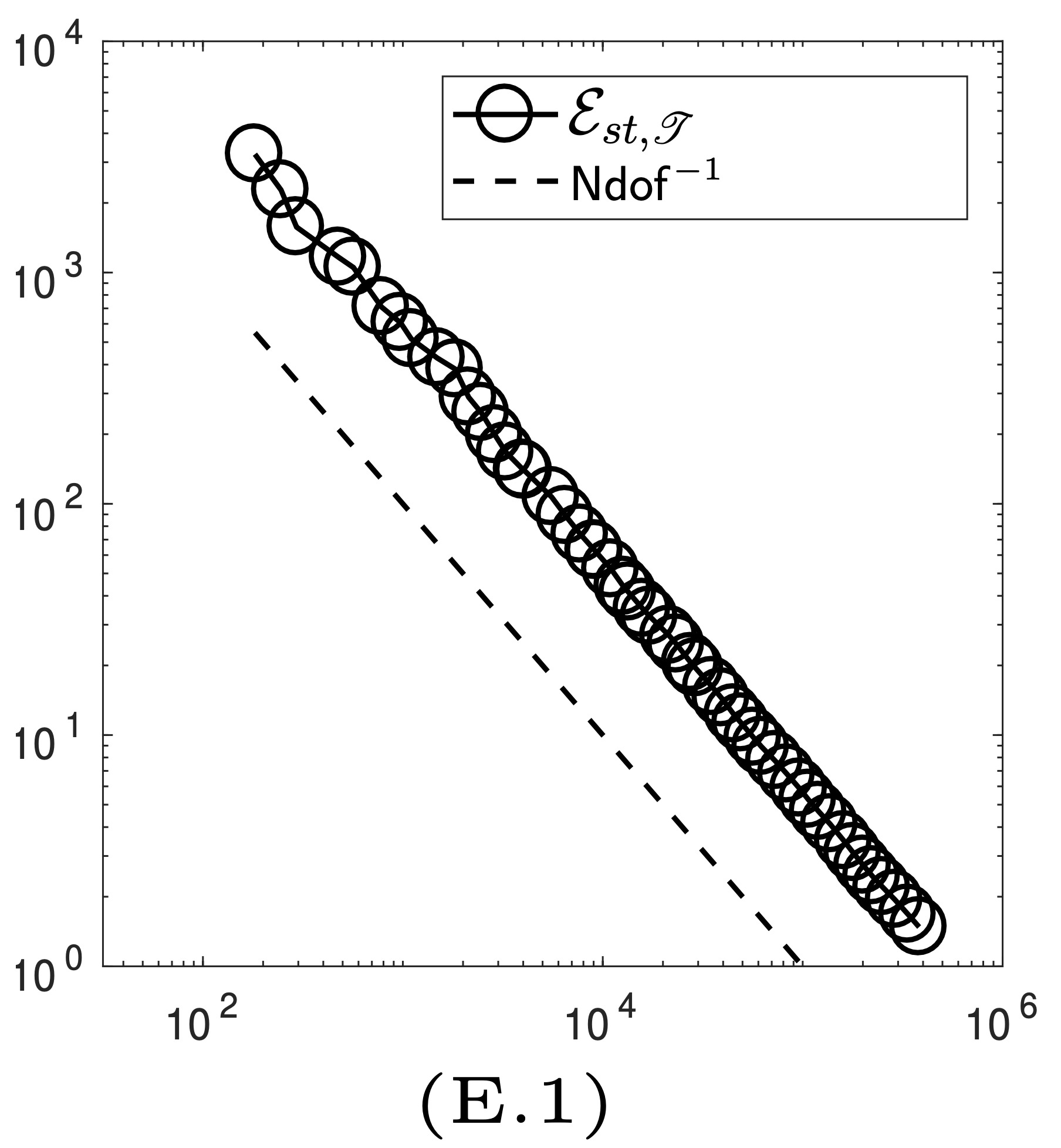} \\
\end{minipage}
\begin{minipage}[b]{0.22\textwidth}
\centering\hspace*{-0.3cm}
\includegraphics[trim={0 0 0 0},clip,width=2.7cm,height=2.7cm,scale=0.66]{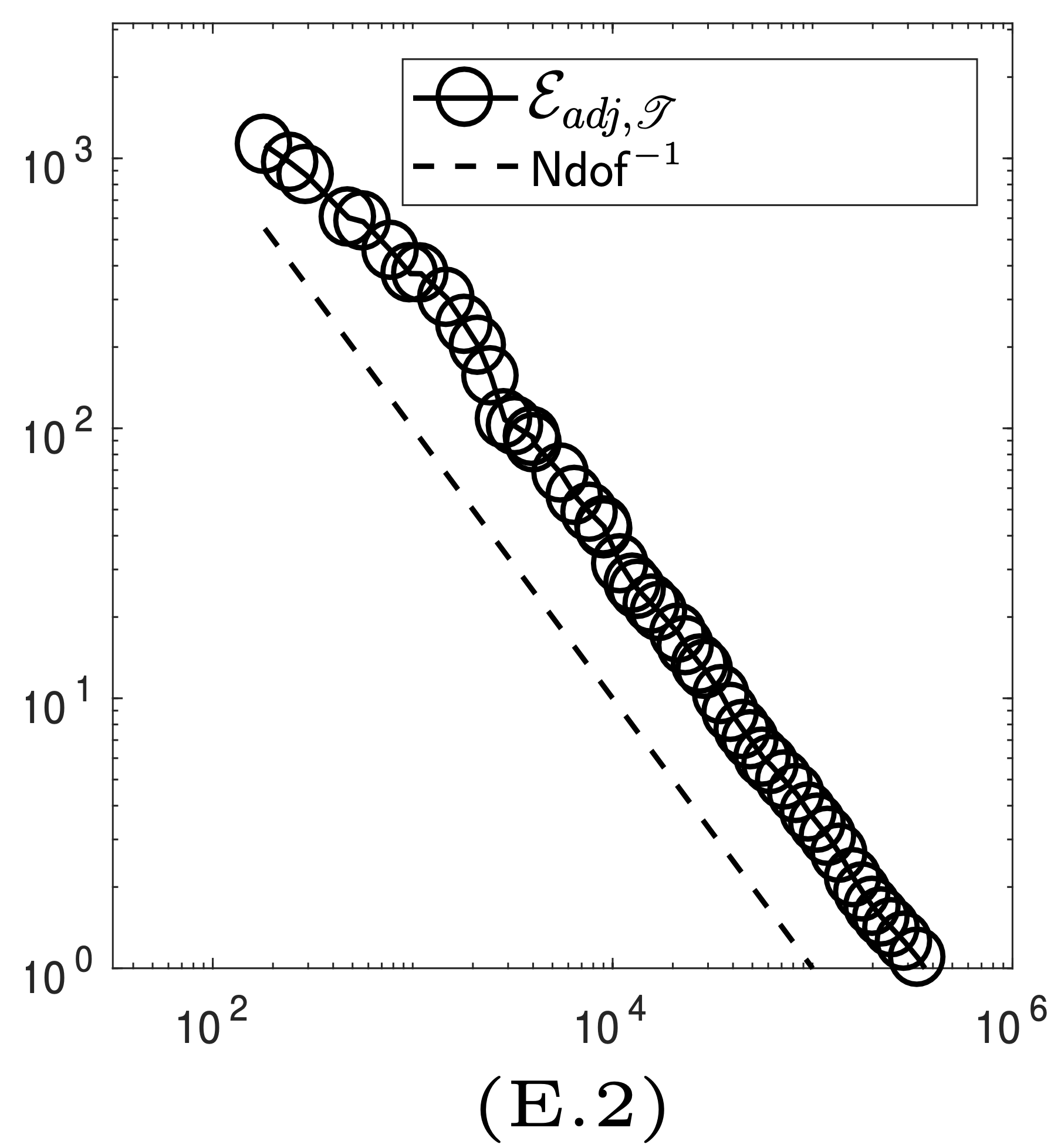} \\
\end{minipage}
\begin{minipage}[b]{0.22\textwidth}
\centering
\includegraphics[trim={0 0 0 0},clip,width=2.7cm,height=2.7cm,scale=0.66]{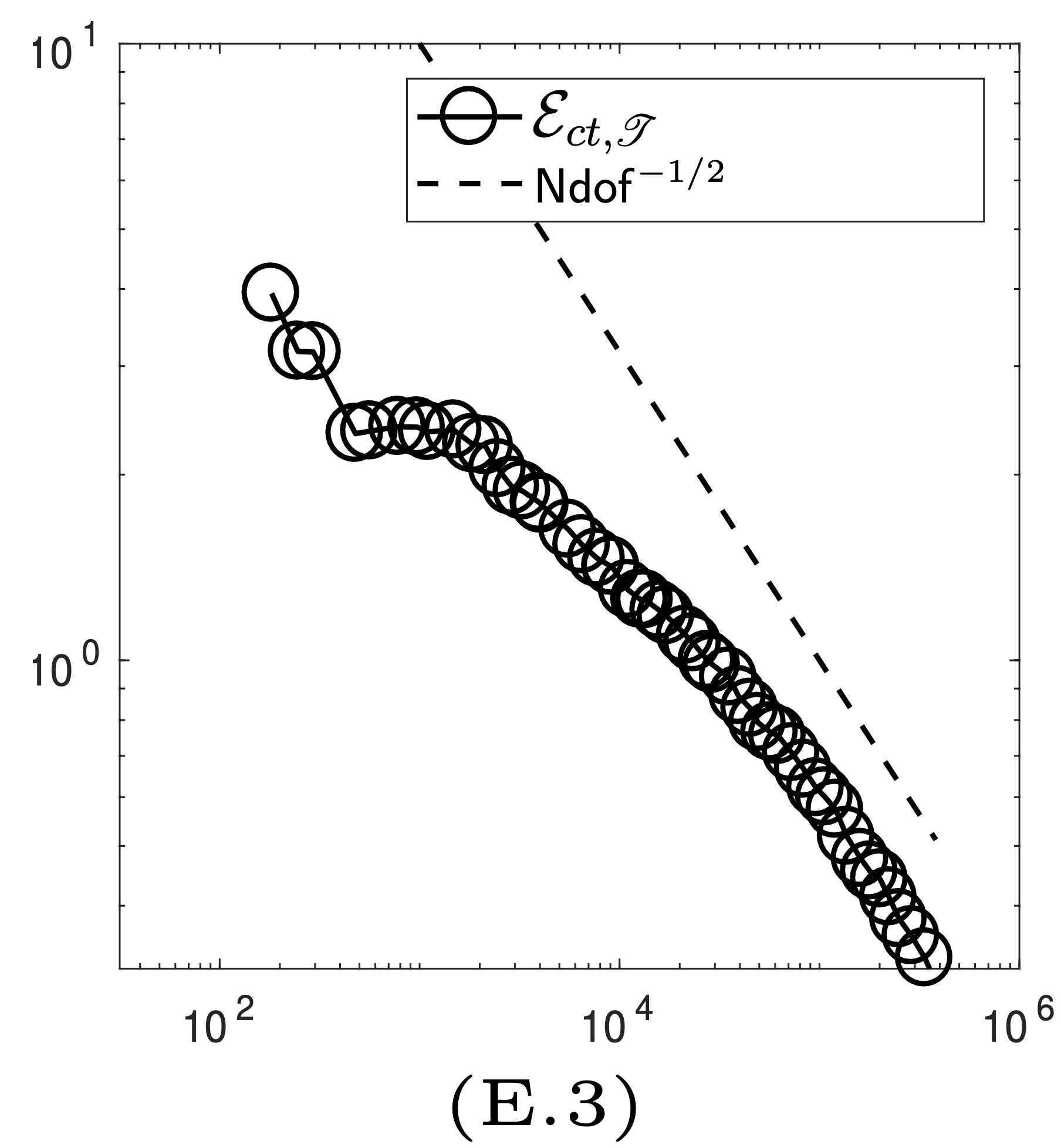} \\
\end{minipage}
\begin{minipage}[b]{0.22\textwidth}
\centering\hspace*{0.3cm}
\includegraphics[trim={0 0 0 0},clip,width=2.7cm,height=2.7cm,scale=0.66]{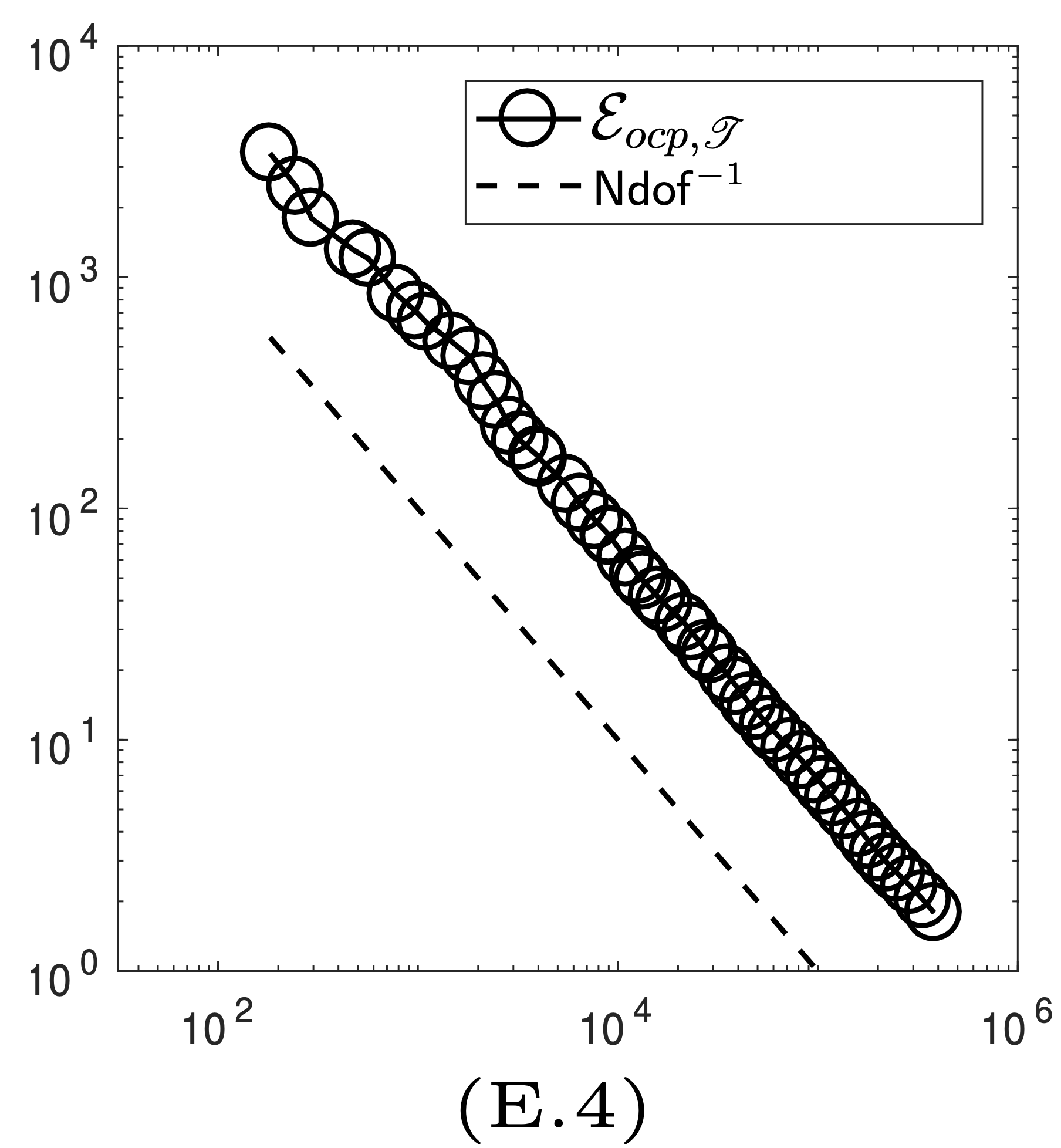} \\
\end{minipage}
\caption{\AAF{Example 2. We present experimental convergence rates for the fully discrete scheme: state error estimator (E.1); adjoint error estimator (E.2); control error estimator (E.3); and total error estimator (E.4).}}
\label{fig:test_0_1_2}
\end{figure}


\begin{figure}[!h]
\centering
\begin{minipage}[b]{0.25\textwidth}
\centering
\includegraphics[trim={0 0 0 0},clip,width=2.7cm,height=2.7cm,scale=0.66]{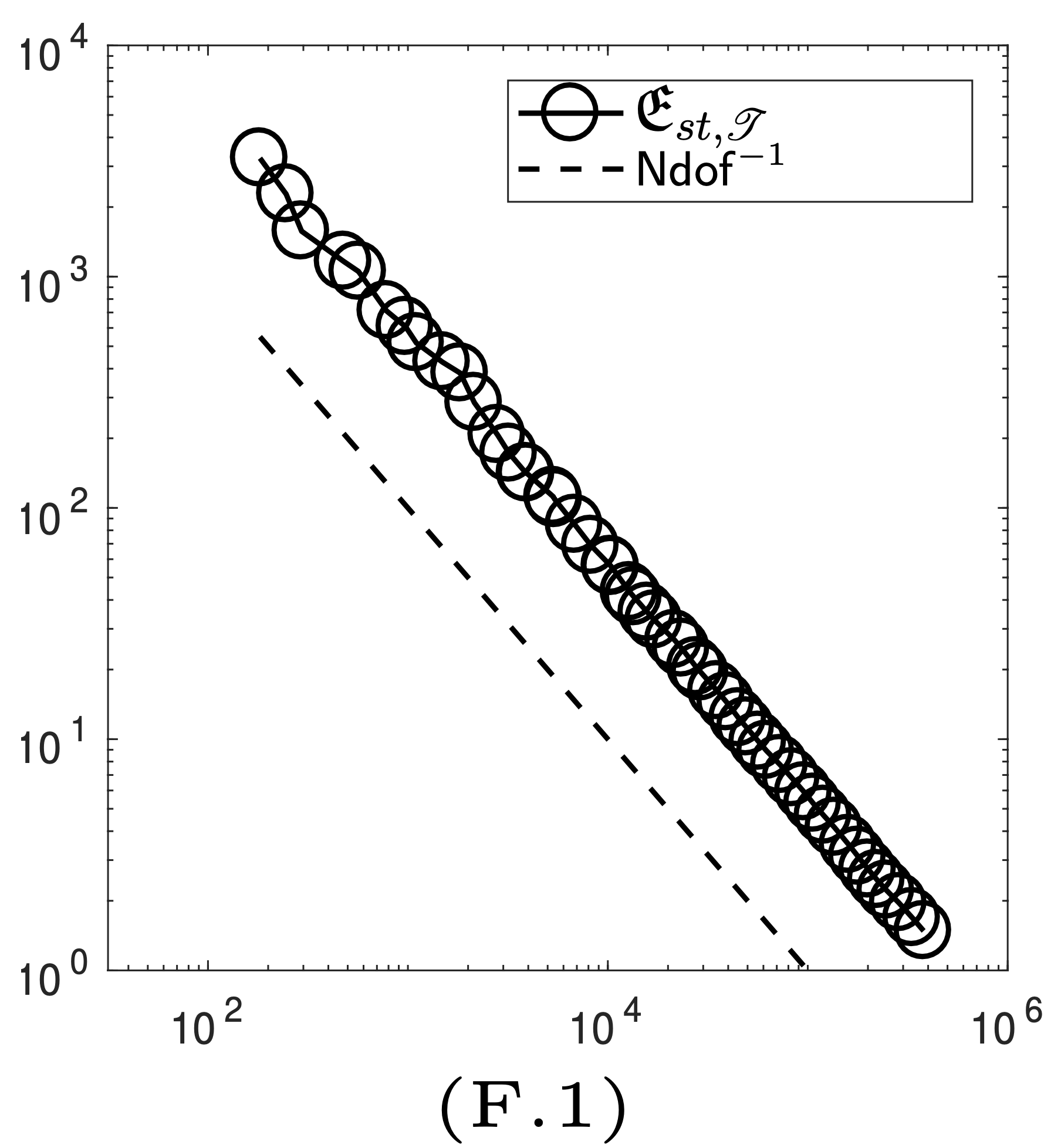} \\
\end{minipage}
\begin{minipage}[b]{0.25\textwidth}
\centering
\includegraphics[trim={0 0 0 0},clip,width=2.7cm,height=2.7cm,scale=0.66]{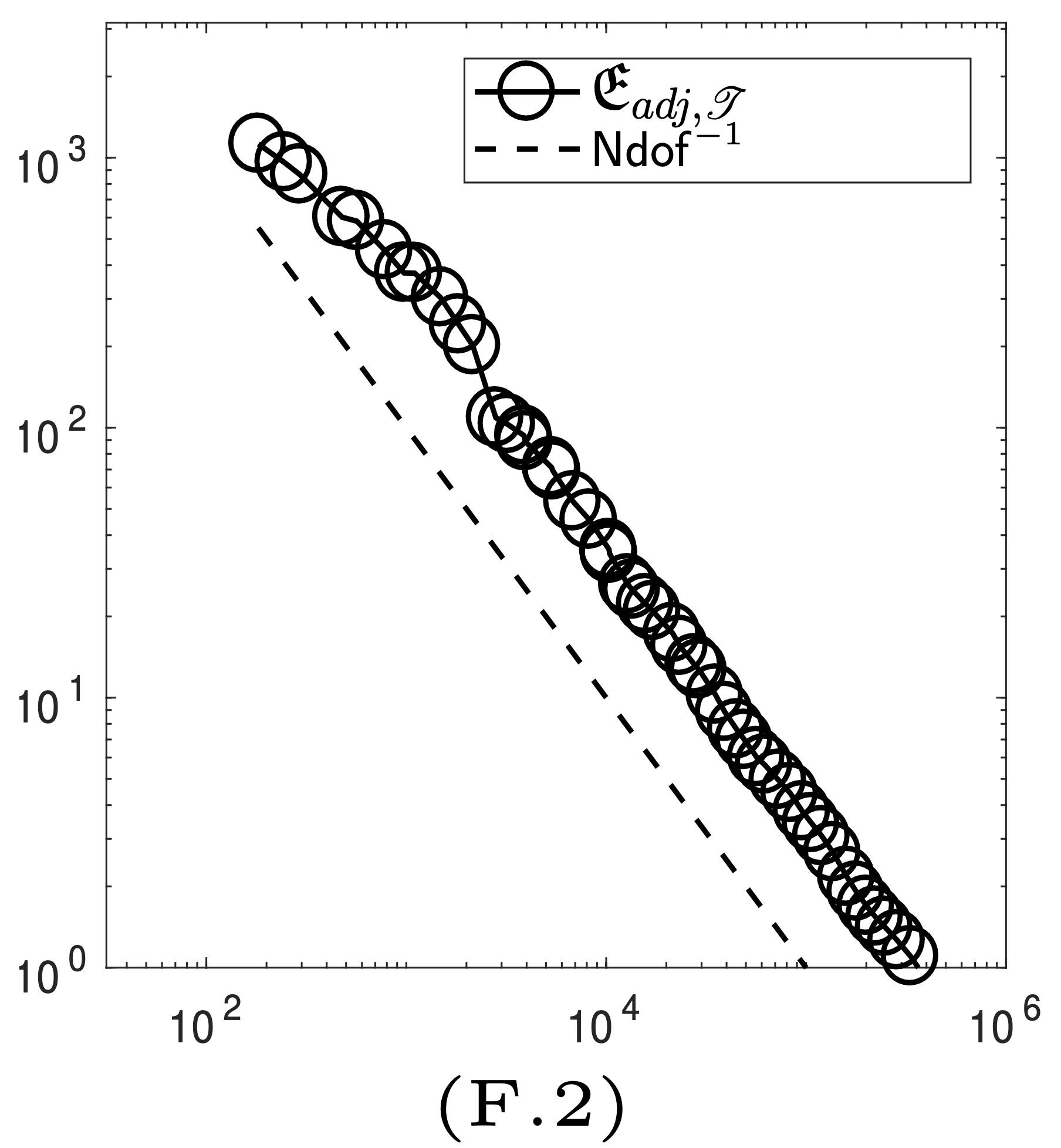} \\
\end{minipage}
\begin{minipage}[b]{0.25\textwidth}
\centering
\includegraphics[trim={0 0 0 0},clip,width=2.7cm,height=2.7cm,scale=0.66]{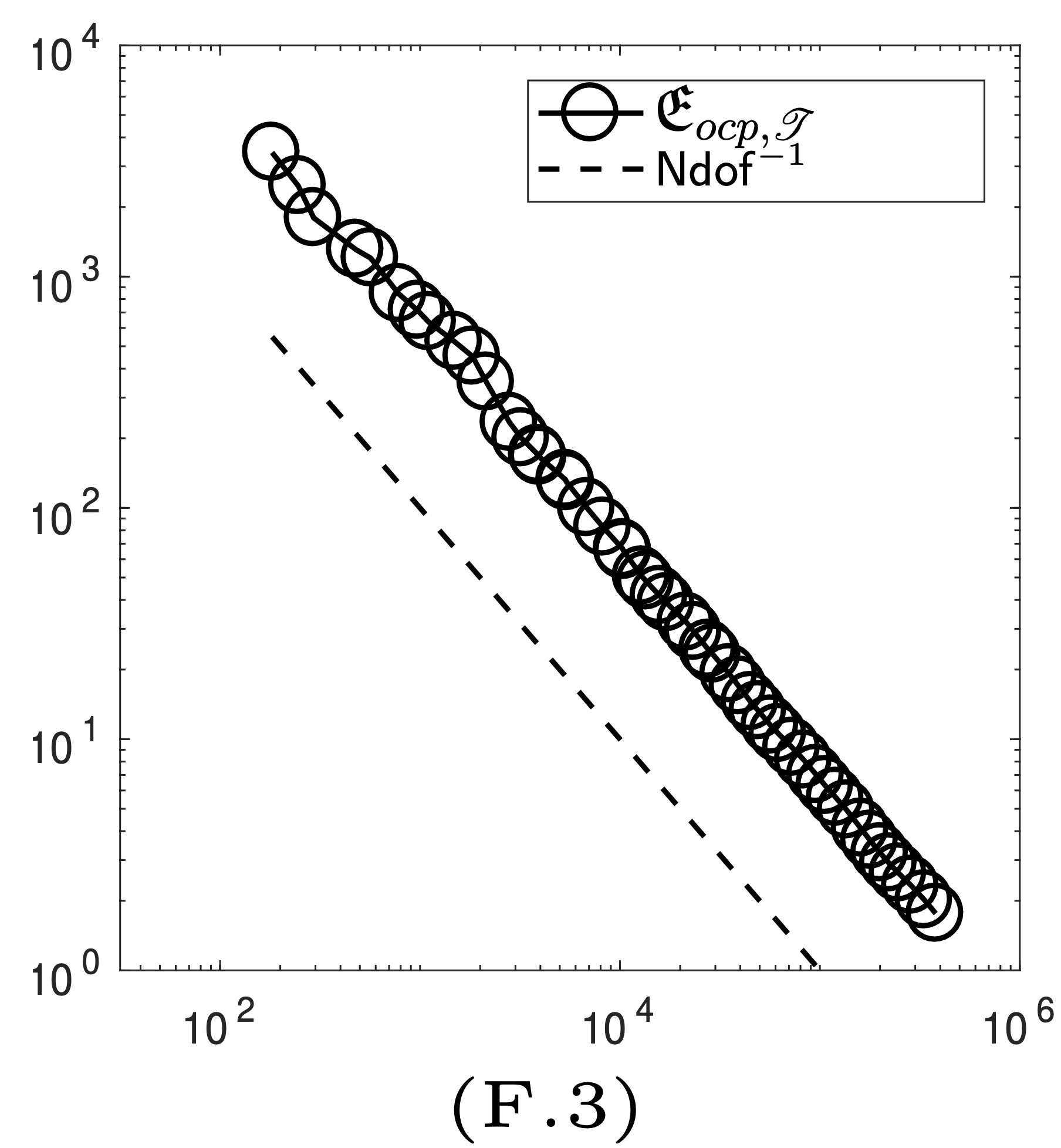} \\
\end{minipage}
\caption{\AAF{Example 2: We present the experimental convergence rates for the semidiscrete scheme: state error estimator (F.1); adjoint error estimator (F.2); the total error estimator (F.3).}}
\label{fig:test_0_1_2_1}
\end{figure}


\begin{figure}[!h]
\centering
\begin{minipage}[b]{0.4\textwidth}
\centering
\includegraphics[trim={0 0 0 0},clip,width=2.7cm,height=3cm,scale=0.66]{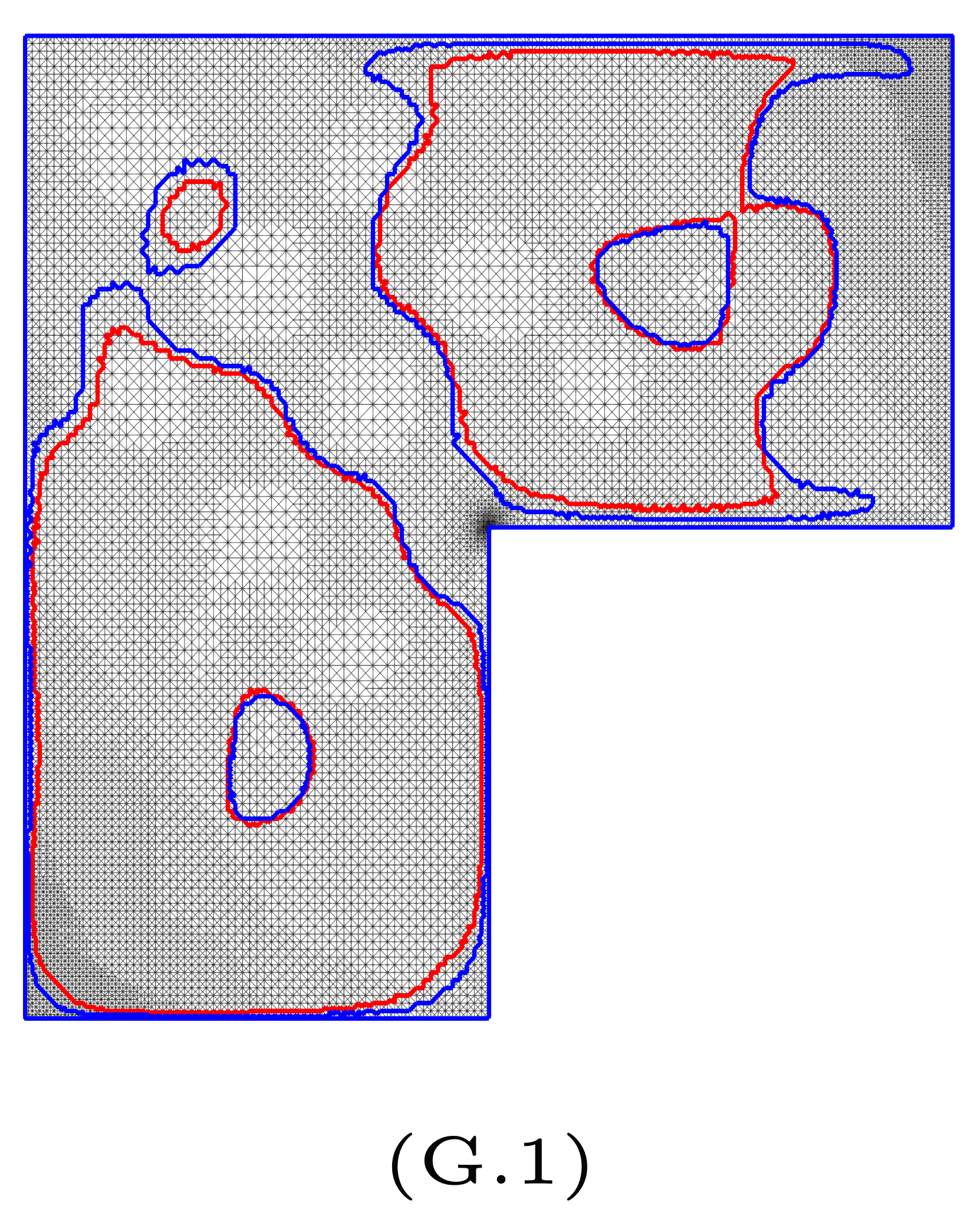} \\
\end{minipage}
\begin{minipage}[b]{0.4\textwidth}
\centering
\includegraphics[trim={0 0 0 0},clip,width=3.5cm,height=3cm,scale=0.66]{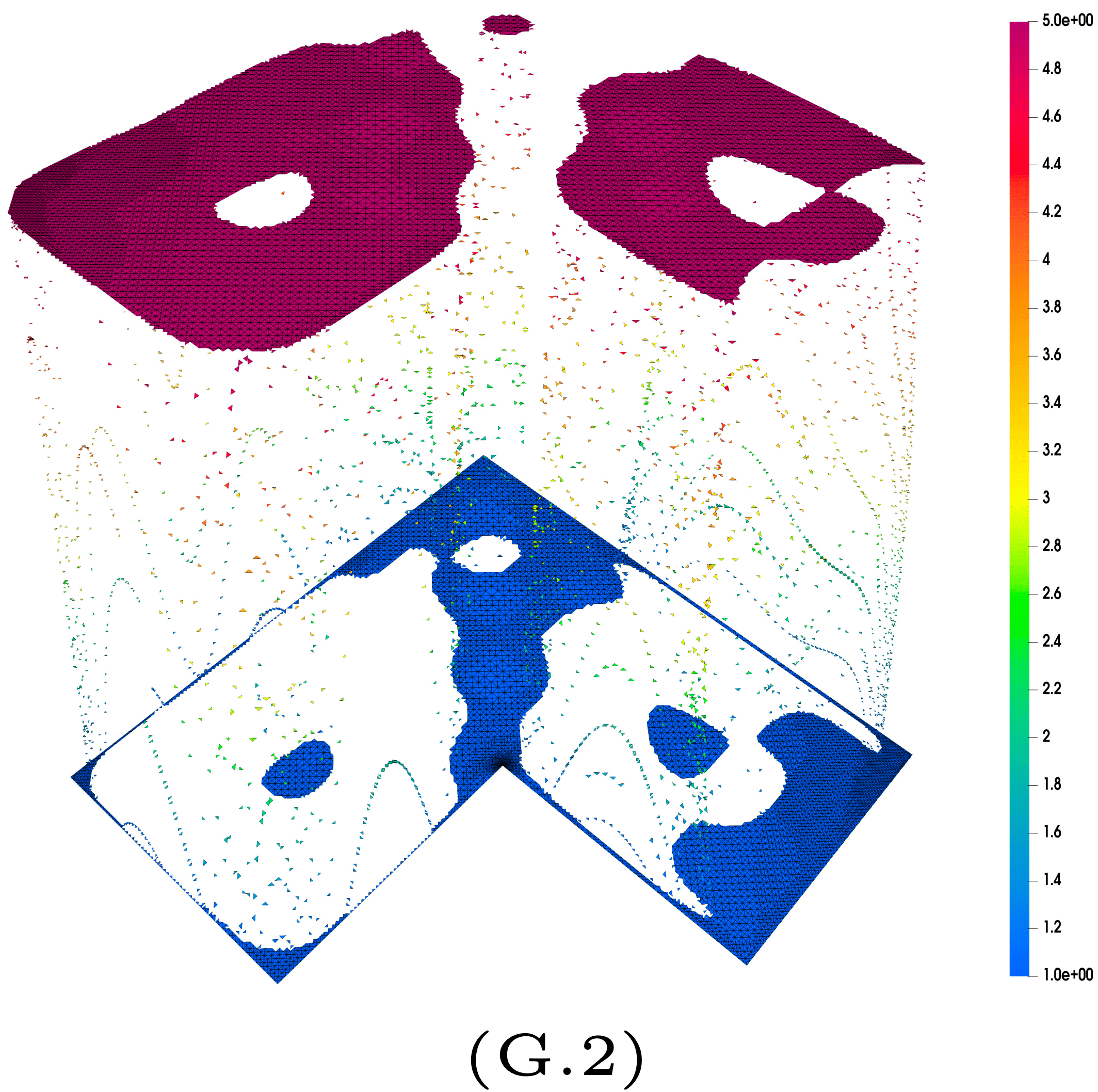} \\
\end{minipage}
\caption{\AAF{Example 2: For the fully discrete scheme we present the mesh obtained after 50 adaptive iterations (31.630 triangles and 94.890 vertices), the discrete boundary of the active sets, where the blue line corresponds to the one for $\mathsf{a}=1.0$, and the red line corresponds to the one for $\mathsf{b}=5.0$ (G.1); and the discrete optimal control $\bar{u}_{h}$ (G.2).}}
\label{fig:test_02}
\end{figure}

\begin{figure}[!h]
\centering
\begin{minipage}[b]{0.4\textwidth}
\centering
\includegraphics[trim={0 0 0 0},clip,width=2.7cm,height=3cm,scale=0.66]{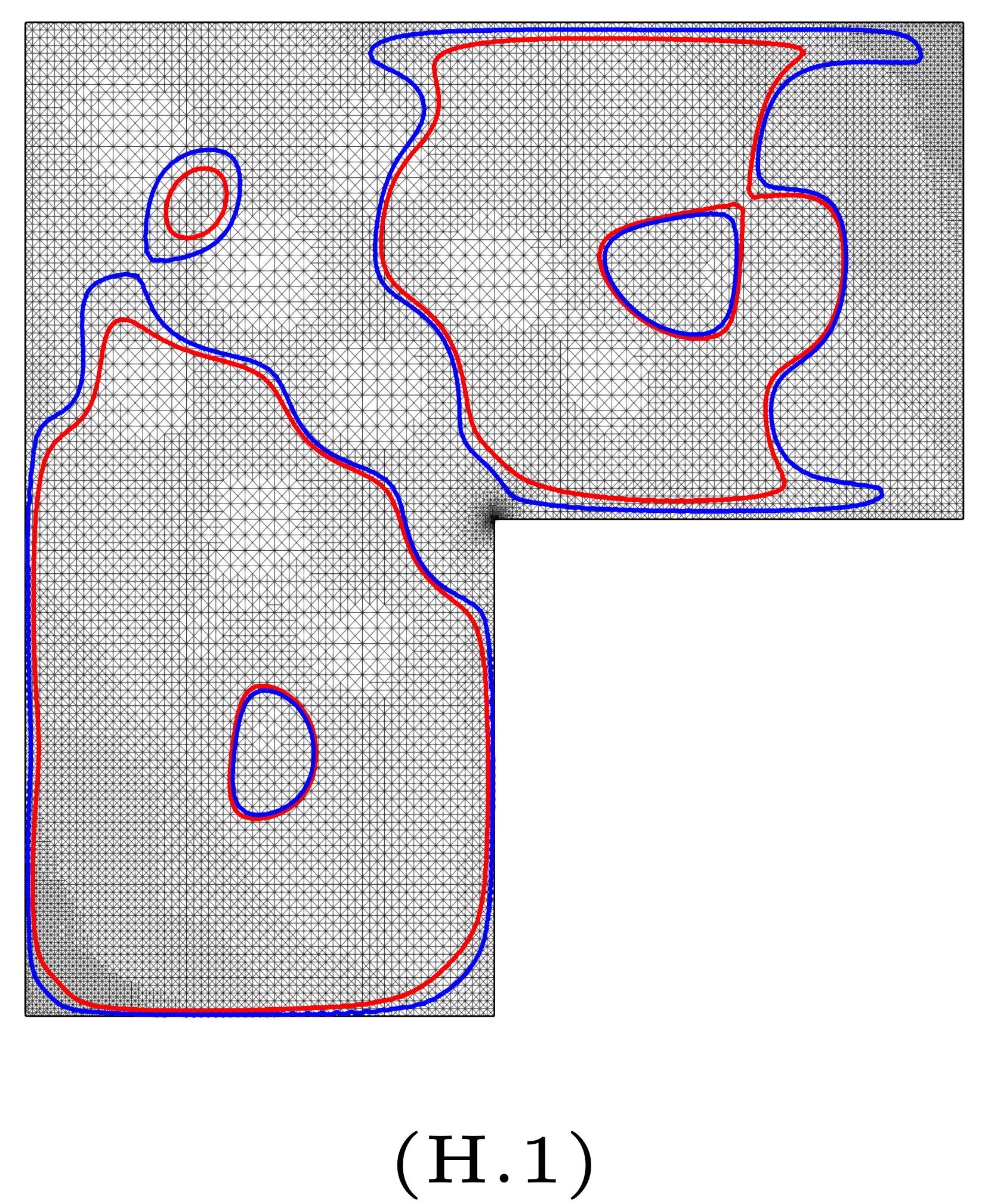} \\
\end{minipage}
\begin{minipage}[b]{0.4\textwidth}
\centering
\includegraphics[trim={0 0 0 0},clip,width=3.5cm,height=3cm,scale=0.66]{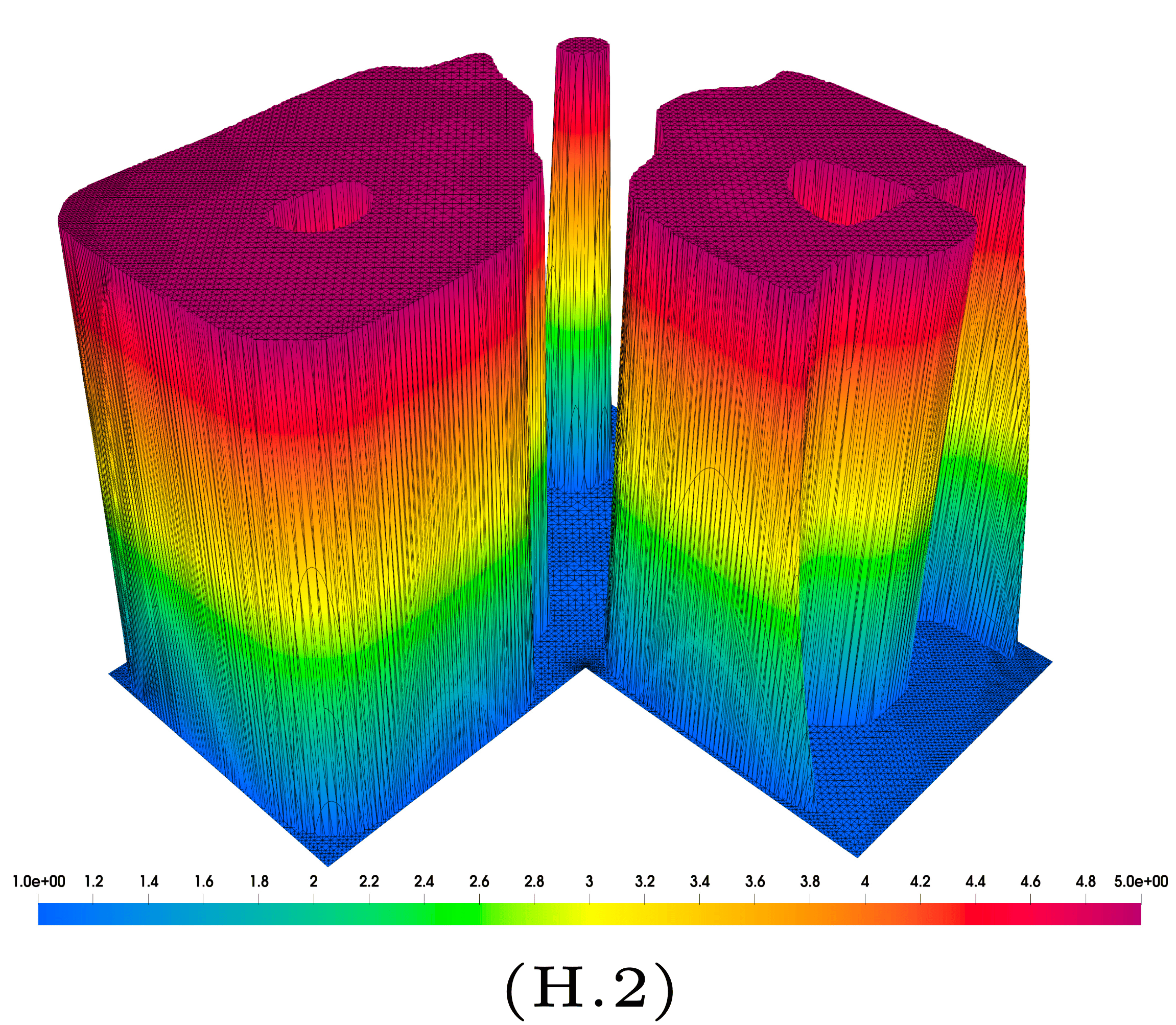} \\
\end{minipage}
\caption{\AAF{Example 2. For the semidiscrete scheme we present the mesh obtained after 50 adaptive iterations (31.434 triangles and 16.046 vertices); the discrete boundary of the active sets, where the blue line corresponds to the one for $\mathsf{a}=1.0$, and the red line correspond to the one for $\mathsf{b}=5.0$ (H.1); and the discrete optimal control $\bar{\mathsf{u}}=\Pi_{[\mathsf{a},\mathsf{b}]}(\alpha^{-1}\bar{\mathbf{y}}_{h}\cdot\bar{\mathbf{z}}_{h})$ (H.2).}}
\label{fig:test_02_SEMI}
\end{figure}

Following \AAF{the series of numerical experiments, several observations are in order:
~\\
$\bullet$ \emph{Error estimators and control approximation:} The adaptive procedure for the fully discrete scheme relies on the computation of three error indicators associated with the discretization of the state, adjoint, and control variables. In contrast, the adaptive strategy for the semidiscrete scheme involves only two error estimators associated with the discretization of the state and adjoint variables. The fully discrete scheme can therefore be influenced by the control approximation, guiding mesh refinement toward the regions where the restrictions of the control variable become active. Conversely, this influence is absent in the semidiscrete approach. Despite this, the semidiscrete scheme delivers better approximation properties for the control variable.
~\\
$\bullet$ \emph{Computational complexity:} An important consideration is the computational complexity associated with each methodology. The fully discrete approach approximates the control variable, resulting in a larger system matrix to solve compared to that in the semidiscrete scheme, which does not discretize the admissible control set. In summary, the fully discrete scheme requires more degrees of freedom than the semidiscrete scheme. It is also important to note that the implementation and resolution of the semi-smooth Newton method used in the semidiscrete method involve higher computational complexity than that for the fully discrete scheme; compare \textbf{Algorithm 2} with \textbf{Algorithm 3}.
}

\bibliographystyle{siamplain}
\bibliography{biblio}
\end{document}